\documentclass[aos, preprint]{imsart}

\RequirePackage{amsthm,amsmath,amsfonts,amssymb}
\RequirePackage[numbers,sort&compress]{natbib}
\RequirePackage[colorlinks,citecolor=blue,urlcolor=blue]{hyperref}
\RequirePackage{graphicx}
\usepackage{color}
 
\usepackage[colorlinks]{hyperref}

\usepackage{amsthm, amsmath, mathtools} 
\usepackage{amsfonts, amssymb}
\usepackage{graphicx, color, latexsym}
\usepackage{enumitem}
\usepackage{booktabs}

\usepackage{eufrak}

\usepackage{tikz}
\usepackage{tikz-qtree}

\startlocaldefs
\theoremstyle{plain}

\newtheorem{theorem}{Theorem}[section]
\newtheorem{lemma}[theorem]{Lemma}
\newtheorem{thm}{Theorem}

\theoremstyle{definition}

{}

\def\1{1\!{\rm l}}
\newcommand{\mbf}{\mathbf}
\newcommand{\ind}[1]{\mbf{1}\{ #1 \}}

\newcommand{\leqa}{\lesssim}
\newcommand{\geqa}{\gtrsim}

\newcommand{\EM}{\ensuremath}

\newcommand{\al}{\alpha}

\newcommand{\ga}{\gamma}

\newcommand{\la}{\lambda}
\newcommand{\La}{\Lambda}

\newcommand{\te}{\theta}
\newcommand{\ta}{\tau}
\newcommand{\veps}{\varepsilon}
\newcommand{\vphi}{\varphi}

\newcommand{\cA}{\EM{\mathcal{A}}}
\newcommand{\cB}{\EM{\mathcal{B}}}
\newcommand{\cC}{\EM{\mathcal{C}}}

\newcommand{\cF}{\EM{\mathcal{F}}}

\newcommand{\cI}{\EM{\mathcal{I}}}
\newcommand{\cJ}{\EM{\mathcal{J}}}

\newcommand{\cL}{\EM{\mathcal{L}}}
\newcommand{\cM}{\EM{\mathcal{M}}}
\newcommand{\cN}{\EM{\mathcal{N}}}
\newcommand{\cP}{\EM{\mathcal{P}}}

\newcommand{\cR}{\EM{\mathcal{R}}}
\newcommand{\cS}{\EM{\mathcal{S}}}
\newcommand{\cT}{\EM{\mathcal{T}}}

\definecolor{blendedblue}{rgb}{0.2,0.2,0.7}

\DeclareMathAlphabet{\mathpzc}{OT1}{pzc}{m}{it}

\newcommand{\RR}{\mathbb{R}}

\newcommand{\given}{\,|\,}

\newtheorem{corollary}{Corollary}

\newtheorem{proposition}{Proposition}

\DeclareMathOperator{\FDR}{FDR}
\DeclareMathOperator{\FNR}{FNR}
\newcommand{\fr}{\cR}

\newcommand{\mockalph}[1]{}

\newcommand{\bi}{\begin{enumerate}[label=\roman*)]}
	\newcommand{\ei}{\end{enumerate}}
\newcommand{\ba}{\begin{array}{rcl}}
	\newcommand{\ea}{\end{array}}

\usepackage{bm} 
\newcommand{\bb}{\bm{b}}

\newcommand{\ol}{\overline}

\graphicspath{{./Figures/}}

\endlocaldefs

\begin{document}
	
	\begin{frontmatter}
		\title{Multiple testing with the Horseshoe}
		\runtitle{Multiple testing with the Horseshoe}
		
		\begin{aug}
			\author[A]{\fnms{Sayantan}~\snm{Banerjee} \ead[label=e1]{sayantanb@iimidr.ac.in}}
			\author[B]{\fnms{Isma\"el}~\snm{Castillo}\ead[label=e2]{ismael.castillo@upmc.fr}}
			\author[B]{\fnms{Fanny}~\snm{Villers}\ead[label=e3]{fanny.villers@upmc.fr}}
			\address[A]{Operations Management \& Quantitative Techniques Area, Indian Institute of Management Indore, Madhya Pradesh 453556, India\printead[presep={ ,\ }]{e1}}
			
			\address[B]{Laboratoire de Probabilité, Statistique et Mod\'elisation (LPSM), Sorbonne University, 4, place Jussieu, 75005 Paris, France \printead[presep={,\ }]{e2,e3}}
		\end{aug}
		
		\begin{abstract}
			We study multiple testing under continuous global--local shrinkage priors, with a focus on the horseshoe prior in high-dimensional sparse settings. While such priors provide adaptive shrinkage and computational scalability, they do not induce exact zeros and hence do not directly yield posterior inclusion probabilities, making principled false discovery control nontrivial. We propose posterior--based decision rules for signal detection that are applicable across a broad class of continuous shrinkage priors and are calibrated to control the false discovery rate (FDR) while retaining high power. In the sparse normal means model, we show that the proposed procedures attain the optimal detection boundary and achieve frequentist asymptotic control of both FDR and false negative rate (FNR). The method is readily implementable via standard posterior sampling, and empirical studies indicate that the realised FDR and FNR closely track their theoretical targets. Applications to high-dimensional regression and Gaussian graphical models further illustrate the scope and practical effectiveness of the approach.
		\end{abstract}
		
		\begin{keyword}[class=MSC]
			\kwd[Primary ]{62G20, 62G15}
		\end{keyword}
		
		\begin{keyword}
		\kwd{Frequentist analysis of Bayesian procedures}
		\kwd{Continuous shrinkage priors}
		\kwd{Multiple testing}
		\kwd{False Discovery Rate}
		\kwd{Sharp asymptotic minimaxity}	
				\end{keyword}
		
	\end{frontmatter}


\tableofcontents 
\addcontentsline{toc}{section}{Table of contents}

\section{Introduction} \label{sec:intro}

Multiple testing methods are commonplace in many scientific areas such as genomics, astrophysics, network science, among many others. A main aim is to build a testing procedure that takes into account the multiplicity, while controlling in a best possible way the type I and type II errors (the errors under the null and under the alternative, respectively). Following a natural approach in statistical testing, a main research direction consists in finding procedures that at least already guarantee a form of type I error control. The False Discovery Rate (FDR), introduced in the seminal paper \cite{bh95}, is probably the most broadly used error of the first type; the Benjamini-Hochberg procedure famously controls it under independence, and many works since then have focused on extending this type of guarantees in more general settings, including ones with dependence (two popular options are empirical Bayes, as described in more details below, and knock--off methods,  e.g. \cite{bc15,candesetal18}). 

Taking a Bayesian point of view, one may place a prior distribution on the parameter of interest that, for instance, favours sparsity as a way to handle multiplicity. One may then use decision theory to suggest testing procedures: assuming that the data has truly been generated  from the Bayesian model, Bayes estimators (or tests, classifiers etc.) are then by definition optimal in terms of the loss function with respect to which they have been defined. Of course, in practice it is quite uncommon for one to be certain about the validity of the exact Bayesian model. But the latter reasoning suggests that, even if one does not know the model or prior, it could possibly be estimated from the data. This idea is at the heart of {\em empirical Bayes} (e.g. Robbins \cite{robbins56}), an approach made popular in the context of multiple testing in particular by Bradley Efron and co-authors \cite{EfronEtAl2001} and that we briefly describe now.

To illustrate this approach of multiple testing, suppose we are in the celebrated two-groups model of Efron:  that is, the observations are of the form `signal plus noise', and the signal is itself drawn randomly from one of two groups: either it equals $0$ (first group, the null) with some probability, or otherwise it is drawn from some absolutely continuous distribution (second group, the alternative). One can view this as drawing the signal from a spike--and--slab prior distribution; both the prior probability assigned to the null and the slab distribution are unknown, and can then, for instance, be estimated from the data. Natural multiple testing procedures are then the ones suggested by decision theory, with parameters of the prior estimated by empirical Bayes. Obtaining theory in these settings may require some effort (as one needs to understand how empirical Bayes estimates behave), but when available, such approach often comes with fairly strong guarantees. For instance, this path was developed for dependent (Hidden Markov-type) data by Wenguang Sun and Tony Cai in a series of papers \cite{SunCai2007, SC09} (see also \cite{acg22} for nonparametric HMMs and \cite{sunspatial15} for spatial data), where near-optimality of such Bayesian multiple testing (e.g. near-optimal power given type I error control) is derived. However, for such guarantees to hold, the {\em a priori} `model' on the signal (HMM, in this case) needs to hold, and theory for a fixed, non-random, signal is, to the best of our knowledge, not available to date.  

In order to investigate the behaviour of Bayesian procedures in a prior-independent way, a common approach is to undertake a frequentist analysis of the posterior distribution, using the Bayesian model to form the posterior, but then analysing the latter under the usual frequentist assumption that the data $X$ has been generated from some distribution $P_{\te_0}$, for a true fixed (non-random) parameter $\te_0$ (see e.g. \cite{gv17, csf24} for overviews on this approach). In recent years, numerous works have studied such behaviour for Bayesian procedures in high-dimensional models (see for instance \cite{banerjee2021bayesian} for a review), mostly from the {\em estimation} point of view. Perhaps the most natural way to model sparsity, especially in view of {\em testing}, is to use a spike--and--slab prior (e.g. \cite{gm93, cv12}) on the parameter $\te\in\RR^p$, with the spike ensuring the presence of exact $0$'s in the posterior; another advantage is that one may then use the posterior inclusion probabilities $\Pi[\te_i=0\given X]$ (which we call $\ell$--values) to test for the null hypothesis $\te_i=0$. Very recently, frequentist guarantees on several Bayesian multiple testing procedures based on spike and slab priors have been obtained: \cite{cr20} showed that $\ell$--value based procedures with these priors have a vanishing FDR over sparse vectors, and that corresponding $q$--values (\cite{Storey2003}) have an FDR close to the target level for strong signals (see also \cite{acr22} for related results). In \cite{acr24}, sharp minimax rates of testing for the FDR+FNR risk  (where the False Negative Rate, or FNR, is used as type II error) are investigated and again the (empirical Bayes) $\ell$--value procedure can be shown to be sharp minimax.    
  
While spike--and--slab priors, and associated $\ell$--value procedures, can be seen as theoretical ideals, the computations of corresponding posterior distributions can be challenging -- there has been steady progress over recent years though, including \cite{ves21, montanari}, but  high dimensionality (above one thousand predictors) remains a challenge. Among more scalable alternatives, we discuss two options: using continuous shrinkage priors (the approach considered thereafter in the paper) and using approximations such as Variational Bayes (VB). The appeal of VB methods in sparse settings is that they scale comparably to other optimisation procedures such as the LASSO: the package {\tt sparsevb} \cite{sparsevb} provides samples from VB posteriors in high-dimensional regression using spike--and--slab for the prior and variational class. Posterior convergence rates of such methods are now available \cite{rayszabo22}, but very little is known so far on their variable-selection properties, even assuming  strong signal conditions.   

The approach that we follow here is based on {\em continuous shrinkage priors} -- which we now review briefly.  The general idea is to define a density over the real line that has both heavy tails (to detect signals) and puts a lot of mass near zero (to enforce approximate sparsity and avoid overfitting). Global--local Gaussian scale mixtures set
\begin{equation}
	\label{eq:gl}
	\theta_i \mid \lambda_i,\tau \sim \cN(0,\tau^2\lambda_i^2),\qquad 
	\lambda_i \sim \pi_\lambda,
\end{equation}
where $\lambda_i$ is a local scale parameter and $\tau$ is a global scale controlling overall sparsity. 
The horseshoe prior \citep{CarvalhoPolsonScott2010} is obtained by taking half--Cauchy distributions for both local and global scales $\lambda_i\sim\mathrm{C}^+(0,1)$ and $\tau\sim\mathrm{C}^+(0,1)$. For sparse sequences, posterior contraction rates and near-minimax $\ell_2$ risk properties were established in \cite{vdp2014hs}, while adaptive behavior under empirical or hierarchical Bayes choices of $\tau$ was analyzed in \cite{vsv17}. Uncertainty quantification and marginal credible sets under the horseshoe were further studied in \cite{vsvuq17}, clarifying regimes in which posterior summaries are well calibrated from a frequentist perspective. Related extensions and alternatives include the horseshoe+ \citep{BhadraEtAl2017HSplus}, Dirichlet--Laplace priors \citep{BhattacharyaEtAl2015DL}, and other global--local constructions with tunable tail behavior \citep{GriffinBrown2010NG,ArmaganDunsonLee2013GDP}. 

In terms of algorithmical properties, one main advantage of shrinkage-type priors over spike--and--slab priors is that one avoids the need to sample from the posterior over all possible models (that is, indicators of nonzero variables), which is often costly, due to the combinatorial nature of the discrete prior employed. Since with continuous priors there is no discrete posterior to sample from, this results in significantly faster sampling using MCMC. As an example, in high-dimensional linear regression, the recent R package {\tt Mhorseshoe} \cite{Mhorseshoepackage, Mhorseshoe} based on the algorithm introduced in \cite{johndrow2020}, produces posterior samples for a dimension of the vector $\te$ in the $10000$'s (see Section \ref{sec:simu}). Many recent contributions explore the use of such priors in a variety of high-dimensional settings, e.g. \cite{GHSCM, Gan19, li2019graphical, sagar2024precision, Chandra26}.

Because continuous shrinkage priors do not produce exact zeros however, variable selection and multiple testing require an additional decision rule. Common empirical approaches include thresholding posterior shrinkage factors or posterior means, declaring significance when marginal credible intervals exclude zero, or applying posterior tail probability rules. However, to the best of our knowledge, there is no theory yet backing up one method or the other. Also, from the methodological point of view, it is not so clear how to use the previous empirical rules to produce a testing procedure that would, say, control the FDR at a level close to a target level $t$. The main purpose of the present work is to fill this gap.

Let us now briefly describe the paper's main contributions
\begin{enumerate}
\item we provide a coherent and 
theoretically grounded way to do multiple testing with continuous shrinkage priors. To do so, 
we introduce three procedures called respectively the $s$--value, $S$--value and C$s$--value procedures. We focus on the horseshoe prior mainly but the method can be applied to any sparse prior;
\item the $s$--value procedure is shown, in the sparse sequence model, to reach the sharp minimax multiple testing risk FDR+FNR over (relevant classes of) sparse signals. We prove under a similar setting that the $S$--value procedures controls the FDR asymptotically at given target level $t$, while maintaining highest possible power;
\item we illustrate empirically the obtained theoretical results; we verify through an extended simulation study that the proposed procedures control the combined multiple testing risk and the FDR much beyond the setting considered for theory, both in terms of models and choice of sparse prior, suggesting its broad applicability and opening the door to future investigations (in particular, theory) in more complex models. 
\end{enumerate}
From the theoretical perspective, this work can be seen as a continuation of a line of work on the horseshoe \cite{CarvalhoPolsonScott2010, vdp2014hs, vsv17} and, in particular, \cite{vsvuq17}, combined with recent insights from Bayesian multiple testing \cite{cr20, acr22, acr24}. We leverage existing fast sampling algorithms with the horseshoe prior to achieve practical performance. 
The results also shed light on empirical Bayes, the method considered herein to have adaptation to sparsity; the empirical Bayes estimate of the horseshoe parameter is shown to concentrate very precisely (with a sharp constant) around the value required to have exact FDR control asymptotically. This goes beyond existing results in the literature, where typically  one-sided concentration only up to a constant was obtained; here we derive concentration with the precise constant, which is necessary to obtain the sharp optimal multiple testing risk with a FDR at a given target level.

{\em Outline of the paper.} Section \ref{sec:proc} introduces the setting, the horseshoe prior and the considered procedures. Theoretical results in the sequence model for the $s$--value and $S$--value procedures are presented in Section \ref{sec:main}. A simulation study covering three different high-dimensional models is presented in Section \ref{sec:simu}, while the brief discussion Section \ref{sec:disc}  puts the results into perspective. Proofs can be found in Section \ref{sec:proofs} and in the Supplementary material. The latter also contains additional simulation results.

\section{Multiple testing procedures for shrinkage priors} \label{sec:proc}

In most of this section for simplicity of presentation we assume observations are from the canonical sparse sequence model (Eq. \eqref{model} below) and that we equip $\te_i$'s with independent horseshoe priors $\Pi_\ta$  with parameter $\tau$ (Eq. \eqref{hs}). Our approach can be used more generally in other sparse high-dimensional models, as well as with any continuous shrinkage prior: this is discussed in Section \ref{subsec:gen}, and reflected in the simulation study in Section \ref{sec:simu}. The choice of the horseshoe hyperparameter $\ta$, which is typically data-driven or given itself a prior, is discussed of the end of this Section.

\subsection{Setting and notation}

{\em Sparse sequence model.} In the sparse normal means model, one observes a sequence $X=(X_1,\ldots,X_n)$
\begin{equation} \label{model}
X_i = \theta_i + \veps_i,
\end{equation}
for  $i=1,\ldots,n$, with $\theta=(\theta_1,\ldots,\theta_n) \in\RR^n$ and $\veps_1,\ldots,\veps_n$ independent $\cN(0,1)$ variables. Given $\te$, the distribution of $X$ is a product of Gaussians and is  denoted by $P_\te$. \\

\noindent {\em Prior and posterior distributions.} We take a Bayesian approach and put a prior distribution $\Pi$ on the unknown parameter $\theta$. The distribution $P_\te=\otimes_{i=1}^n \cN(\te_i,1)$ is then viewed as the conditional distribution of $X$ given $\theta$. The posterior distribution, denoted $\Pi[\cdot\given X]$ is the other conditional, that is the law $\theta\given X$.

In the special case of a prior distribution $\Pi=\otimes_{i=1}^n \Pi_i$ that makes the coordinates of $\te$ independent,  
the posterior distribution is also an independent product, and the marginal distribution on the $i$th coordinate only depends on $X_i$ and is denoted $\Pi_i[\cdot\given X_i]$. \\

\noindent {\em Frequentist analysis of posterior distributions.} We study the posterior distribution $\Pi[\cdot\given X]$ under the frequentist assumption that there exists a true deterministic parameter $\theta_0$, that is, we study this distribution in probability under $X\sim P_{\theta_0}$.\\

\noindent {\em Sparse vectors.} We assume that the true $\te_0$ is sparse, in that it belongs to
\[ \ell_0[s_n] = \left\{\, \te\in \RR^n,\ |\{i:\ \te_i\neq 0\}|\le s_n\, \right\}, \]
where $s_n\le n$ is called sparsity parameter and $|B|$ denotes the cardinality of the finite set $B$. For a vector $\te\in\RR^n$, we denote by $S_\te=\{i:\ \te_i \neq 0\}$ its support and set $S_0:=S_{\te_0}$. We work in the (classical) asymptotic setting $n,s_n\to\infty$ and assume, without restating it explicitly henceforth, that $s_n/n\to 0$.\\

\noindent {\em The multiple testing problem.} Suppose one wants to simultaneously test 
\[ H_{0i}:\ \te_i=0\qquad \text{against}\qquad H_{1i}:\ \te_i\neq 0,\qquad \text{for all }
1\leq i \leq n. \]
To do so, one considers a multiple testing procedure $\vphi(X)=(\vphi_i(X),1\le i\le n)$ taking values in $\{0,1\}^n$. 
The False Discovery Rate (FDR) of $\vphi$ at vector $\te$ is defined as
\[ \FDR(\te,\vphi) = E_{\te}\left[ \frac{\sum_{i=1}^n \ind{\te_{i}=0,\vphi_i=1}}{1\vee  \sum_{i=1}^n \ind{\vphi_i= 1}} \right].
\]
The $\FDR$ is the expected ratio of false positives (that is, false `discoveries') over the total number of discoveries. 
The False Negative Rate (FNR) of $\vphi$ at $\te$ is defined as
\[ 
\FNR(\te,\vphi) 
= E_{\te}\left[\frac{\sum_{i=1}^n \ind{\te_{i}\neq 0} (1-\vphi_i(X))}{1\vee \sum_{i=1}^n \ind{\te_{i}\neq 0}}\right].
\]
The associated \ {\em multiple testing risk}, denoted by $\fr$, is then
\[ \fr(\te,\vphi) :=  \FDR(\te,\vphi)+  \FNR(\te,\vphi). \]

{\em Notation.} We denote by $\phi$ the density of a standard normal variable and by $\ol{\Phi}(x)=\int_{-\infty}^x \phi(u)du$. Also $\phi_{\mu,\sigma^2}(\cdot)$ denotes the Gaussian density  with mean $\mu$ and variance $\sigma^2$.\\

\subsection{Sparse priors and existing approaches}

{\em Spike--and--slab (SAS) prior.} A celebrated choice of sparse prior consists in drawing
\[ \te_1 \,\sim\, (1-w)\delta_0 + w G,\]
for $w\in[0,1]$ a weight parameter and $G$ a {\em slab} distribution, often a Laplace or Cauchy distribution. This prior draws exact zeros with large probability $1-w$ if $w$ is small, which is typically assumed in sparse settings. When using a prior distribution, such as the spike--and--slab prior, that draws exact zeros, a natural way to perform model selection is based on the (non-)inclusion probabilities $\ell_i(X_i)=\Pi[\te_i=0\given X_i]$. The multiple testing rule that rejects the null for the $i$--th coordinate of $\te$ if $\ell_i(X_i)<t$ is particularly appealing: it is the Bayes classifier for a weighted classification risk. Its {\em Bayesian} FDR is bounded from above by $t$ (e.g. \cite{mpr04,cr20}). Its optimality properties for the $\fr$--risk and spike--and--slab priors are studied in \cite{acr24}.

{\em The Horseshoe prior.} The horseshoe prior  $\Pi_\ta$  \cite{CarvalhoPolsonScott2010}  with parameter $\ta>0$  is the prior distribution  on $\te_1$ induced by 
\begin{equation} \label{hs}
\begin{aligned} 
        \te_1 \given \lambda & \,\sim\, \cN(0,\lambda^2\tau^2), \\
    \lambda & \,\sim\, C^+(0,1),
\end{aligned}
\end{equation}
where $C^+(0,1)$ denotes the half--Cauchy distribution. It is a {\em continuous} (as opposed to discrete) scale mixture of normal distributions, in the sense that the mixing parameter $\la$ takes values in $\RR^+$. To gain intuition on the proposed procedures below, it will be helpful to consider also a discrete mixture.
 
{\em Spike--and--Slab LASSO (SSL) prior.} This prior \cite{rockovageorge17} replaces the Dirac mass at $0$ of the SAS prior by a peaked Laplace distribution, for some $\la_0, \la_1>0, w\in[0,1]$,
\begin{equation} \label{ssl}
 \te_1 \,\sim\, (1-w)G_0 + w G_1, 
\end{equation} 
where $G_0=\text{Lap}(\la_0)$ and $G_1=\text{Lap}(\la_1)$ (or another more heavy--tailed distribution \cite{cm18}) for $\la_0$ a suitably large inverse scale parameter and $\la_1$ a constant.

{\em Existing approaches.} 
For continuous shrinkage priors, inclusion probabilities are constant ($\Pi[\te_i=0\given X]=0$, since the prior is absolutely continuous), so one needs to proceed differently. Suppose that, working in the sequence model \eqref{model}, we endow each coordinate $\te_i$ with a continuous shrinkage prior, say the horseshoe $\Pi_\ta$ to fix ideas. A number of choices have been proposed in the literature to perform variable selection; two popular approaches are
\begin{enumerate}
\item {\em Comparison to the posterior mean}. One rejects the null 
$``\te_i=0"$ if the posterior mean $\bar\te_i(X_i)=E[\te_i\given X_i]=\int \te_i d\Pi_\ta(\te_i\given X_i)$ exceeds $X_i/2$, or more generally $\kappa\cdot X_i$, for some $\kappa\in(0,1)$, as considered in \cite{CarvalhoPolsonScott2010};
\item {\em Methods based on credible intervals}. Based on the posterior distribution, one constructs a credible interval $I(X)$ of level $1-\alpha$ for the coordinate $\te_i$, for some $\alpha>0$, that is $\Pi_\ta[\, \te_i\in I(X)\given X_i\,]=1-\al$. One rejects   the null 
$``\te_i=0"$ if $0$ does {\em not} belong to $I(X)$. One may choose credible intervals in different ways: e.g. using the $\al/2$-- and $(1-\al/2)$--quantiles of the posterior on $\te_i$ (two-sided case), as suggested in \cite{vsvuq17} or either the $\al$-- or $(1-\al)$--quantile (one-sided case) depending on the sign of $X_i$ (choosing the latter if $X_i>0$), as suggested for spike--and--slab priors in \cite{cr20}.
\end{enumerate}
However, for continuous shrinkage priors these choices are mostly empirical in that, to the best of our knowledge, they are not backed up by  guarantees that the multiple testing risk $\cR$ and/or the FDR either vanish or are controlled at a certain level for any of these procedures. So far, works in the literature have been mostly limited to the Bayes risk, such as \cite{datta2013}. Two exceptions are the work  \cite{salomond17}, who obtained preliminary results for large signal sizes, and the recent preprint  \cite{pauletal25_preprint}, that mostly assumes known sparsity $s_n$, with a simple empirical estimate of $\ta$ otherwise, but with limited adaptation--to--$s_n$ guarantees. 

One main aim of the present work is thus to find a quantity that would be a natural analogue of the $\ell$--value for spike--and--slab priors. If this can be achieved, then there is a natural roadmap to build procedures that achieve a target FDR level and is fully adaptive with respect to the unknown sparsity $s_n$,  following what has been explored in this direction for $\ell$--values.

\subsection{The $s$--value procedure}

We advocate the use of the following quantity. Recall that for simplicity in this section we work with the sequence model and independent horseshoe $\Pi_\ta$ priors on coordinates. Define the $s$--values $s_i(X)$ as
\begin{align} \label{sval}
 s_i(X) & = s(X_i;\ta), 
\end{align}
where the quantity $s(X_i;\ta)$ is defined from, for $x\wedge y=\min(x,y)$,
\begin{align}  \label{svalx}
 s(x;\ta) & = 2\left( \Pi_\ta(\te_1<0 \given X_1=x) \wedge  \Pi_\ta(\te_1>0 \given X_1=x)\right).
\end{align}
Note the presence of a factor $2$ in front of the minimum in the last display.  

{\em Multiple testing procedure based on $s$--values.} For any given level $t\in(0,1)$, let us define
\begin{equation} \label{svpro}
\vphi^s_t(X) = \1\left\{ s_i(X) < t \right\},\quad 1\le i\le n.
\end{equation}

This procedure is called $s$--value procedure; we prove in Section \ref{sec:main} that $\vphi^s$ is optimal in a number of senses in terms of the $\cR$--risk, provided the parameter $\tau$ is chosen in a well-suited data-driven way, as made precise in Section \ref{sec:ebhb} below. Beyond minimax optimality, its use in practice can have  the drawback that it is somewhat conservative; to circumvent this possible issue, we consider below two other procedures, the $S$-- and $Cs$--value procedures, that rather target a small FDR level $t$ and reach optimal boundaries for such FDR--targetting  procedures.

\subsection{Intuition behind the $s$--values: comparing to the SSL prior}

At this point it is not completely clear why the $s$--values for the horseshoe prior can act as a natural proxy for inclusion probabilities. In order to gain intuition on this, we make a comparison with the spike--and--slab LASSO  prior \eqref{ssl} of \cite{rockovageorge17}, with a sparsity parameter $w$ chosen equal to $\ta$ (in can indeed be seen that both parameters play a very similar role for such priors, see \cite{vsv17}). This prior, like $\Pi_\ta$,  puts zero mass on the event $\{\te_i=0\}$, but unlike $\Pi_\ta$ it is a discrete two point mixture, so the posterior probability to belong to the `spike'--part of the mixture is well-defined and is the natural proxy for the inclusion probability of SAS priors. 

More precisely, with a SSL prior with parameter $w$ on the vector $\theta$  in model \eqref{model}, it is easy to check that 
that the posterior distribution of $\theta$ given $X=x$ is a two point mixture
\begin{equation} \label{sslmix}
	\Pi_w[\cdot\given X] 
	\sim \bigotimes_{i=1}^n\, (1-w(X_i))G_{0,X_i}(\cdot) + w(X_i) G_{1,X_i}(\cdot),
\end{equation}
for some data--dependent continuous distributions $G_{k,X_i}$ and weights $w(X_i)$ (see  Appendix \ref{app_ssl} for details). In particular, $1-w(x)$, which has a simple explicit expression,  is the probability to belong to the `slab' part of the mixture and is the analogue of the $\ell$--value for a spike--and--slab prior. In Appendix \ref{app_ssl}, we give more intuition and compare quantitatively $1-w(x)$ for the choice of sparsity parameter $w=\ta$ and the $s$--value \eqref{svalx}, showing a very close agreement. 

On the left column of Figure \ref{fig:illus_ssl_hs}, we represent the two components  $G_{0,X}$ and $G_{1,X}$ in the SSL posterior distribution, along with their mixture \eqref{sslmix}.
For two values of $x \ge 0$, we represent in blue the posterior probability $\Pi(\te<0\given X=x)$. When $x=0$, $\Pi(\te<0\given X=0)=1/2$. The  $s$--value is defined as {\em twice} this probability: the reason for this is that the probability $\Pi(\te<0\given X=x)$ only catches (around) {\em half} of the mass corresponding to the `spike'  around 0. 
In particular, we see that the procedure for shrinkage priors has to be different compared to spike--and--slab priors: indeed  $\Pi(\te<0\given X=x)$ does not capture {\em any} mass of the spike part of the posterior, while $\Pi(\te\le 0\given X=x)$ already captures the {\em whole} mass of the spike part (which stands {\em exactly} at zero due to the Dirac mass). 
As it turns out, it can be seen that for continuous shrinkage priors the presence of the factor $2$ implies that the $s$--value procedure is equivalent to a simple {\em two--sided} procedure based on credible intervals, where one rejects if $0$ does not belong to the centered two-sided quantile credible interval of level $1-t$ (see Appendix \ref{app:linkmci} for a proof and more discussion). 
\begin{figure}[h]
\includegraphics[scale=0.7]{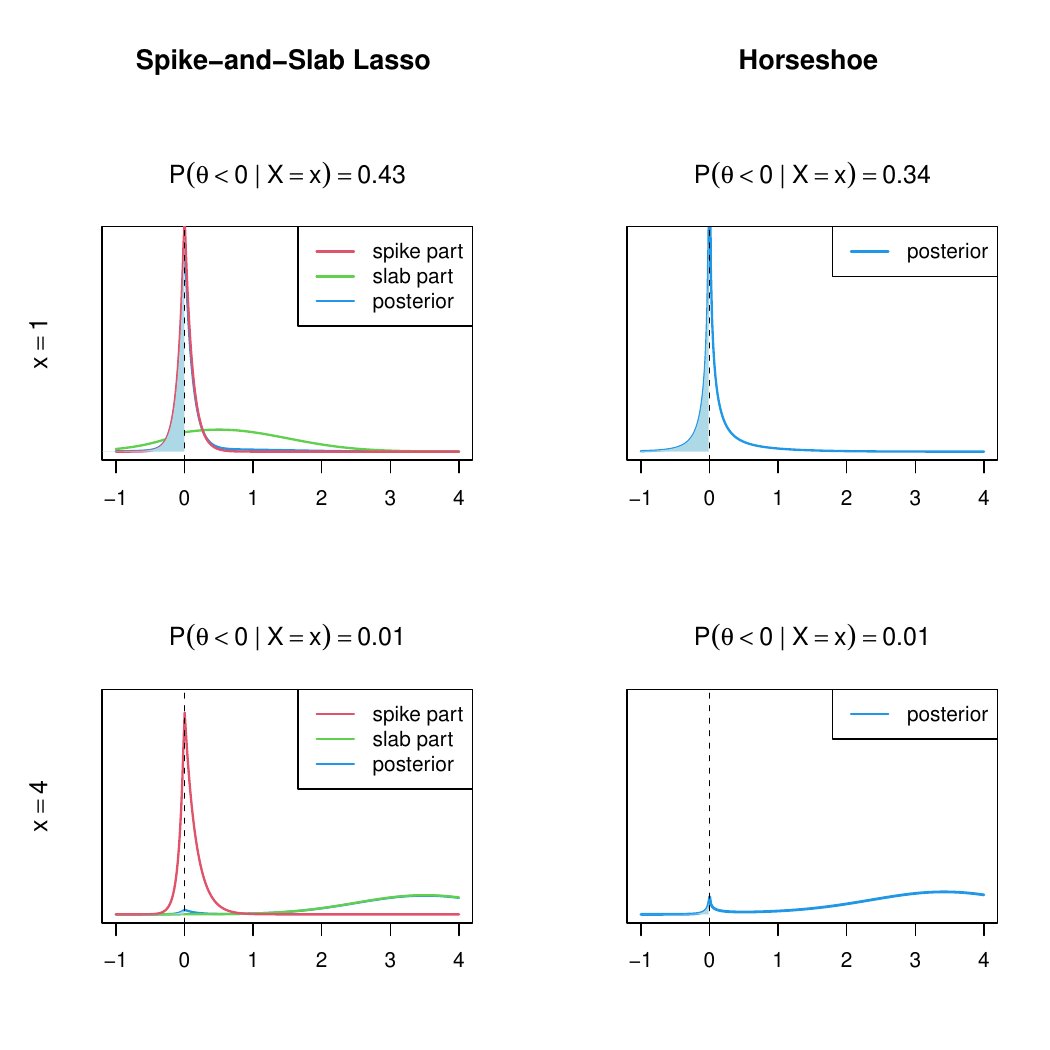}
\centering
	\caption{Comparison of posterior densities for the SSL prior (left column) and Horseshoe prior (right), for $x=1$ (top row) and $x=4$ (bottom) and $\la_0=10, \la_1=1/2, w=0.15$; the shaded area is half of the $s$--value.}  
	\label{fig:illus_ssl_hs}
\end{figure}

\subsection{The $S$--value and the C$s$--value}

While controlling both type I and type II errors simultaneously is certainly a desirable goal in general, given the asymmetrical meaning of both errors in practice, it is often of interest to seek for procedures that keep the type I error, here the FDR, close to a given target level $t\in(0,1)$. As we shall see below, it turns out that  the $s$--value procedure is not exactly adapted to this task, as its FDR vanishes.

{\em Case of SAS priors: $q$--value and C$\ell$--value procedure}. In the case of `model selection' priors with exact zeroes, decision theory can suggest procedures that control the Bayesian FDR. One prominent example is the $q$--value introduced by Storey \cite{Storey2003}, see also  \cite{efron08}: in the setting of the sequence model, with $q_i=q_i(X_i)$ and $q_i(u)=\Pi[\te_i=0\given |X_i|\ge |u|]$, this procedure rejects indices $i$'s for which $q_i$ is smaller than a target level $t$. This procedure has Bayesian FDR very close to the target level $t$ (see e.g. \cite{cr20}, Proposition 1).

Another procedure is based on ranking inclusion probabilities; for SAS priors, using the $\ell$--values $\ell_i=\ell_i(X_i)=\Pi[\te_i=0\given X_i]$, one can form a procedure $\vphi^{C\ell}$ \cite{mpr04, SunCai2007}, called C$\ell$--value procedure (as cumulative $\ell$--values) in \cite{acr22}, that rejects the first $\hat{k}$ smallest $\ell$--values, where $\hat{k}$ is the largest integer $k$ such that the average $(1/k)\sum_{i=1}^k \ell_{(i)}$ still stays below a target level $t\in(0,1)$.  Such procedure controls the Bayesian FDR at a level very close to the target $t$. 
 
{\em $S$--value.} We define, by analogy with the way the $q$--value is constructed from the inclusion probabilities, a $S$--value by replacing the conditioning on $X_1=x$ in \eqref{svalx} by $X_1\ge x$ and $X_1\le x$ depending on the sign of $x$ \begin{align}  \label{Svalx}
 S(x;\ta) & = 2\left( \Pi_\ta(\te_1<0 \given X_1\ge x) \wedge  \Pi_\ta(\te_1>0 \given X_1\le x)\right).
\end{align}
Note again the presence of a factor $2$ in front of the minimum in the last display.  

For any given level $t\in(0,1)$, let us define $S_i(X)=S(X_i;\ta)$ and 
\begin{equation} \label{Svpro}
\vphi^S_t(X) = \1\left\{ S_i(X) < t \right\},\quad 1\le i\le n.
\end{equation}

{\em C$s$--value.} Again by analogy with the procedure called C$\ell$--values in \cite{acr22} (that can be intuitively introduced as an $\ell$--value type procedure that maximises the posterior FDR, see \cite{acr22}), one defines the  $Cs$--value procedure, that rejects the $\hat{k}=\hat{k}_{C_s}$ smallest $s$--values, where $\hat{k}_{C_s}$ is defined as
	\begin{equation} \label{kchap}
	\hat{k} = \max\left\{1 \le k \le n:\ \ \frac{1}{k} \sum_{i=1}^k s_{(i)}(X) \le t \right\},
	\end{equation}
where $s_{(1)}(x) \le \ldots \le s_{(n)}(x)$ are the ordered elements of $\{s(x_i;\tau), 1\le i \le n\}$. That is,
\begin{equation}\label{Csval}	
	\vphi^{Cs}_t(X) = \1\left\{ s_i(X) \le s_{\large(\hat{k}\large)}(X) \right\},\quad 1\le i\le n.
\end{equation}

\subsection{More general models and shrinkage priors} \label{subsec:gen}

Now leaving the setting of the sequence model \eqref{model}, suppose we are given a model $\cP=\{P_\te^{(n)},\ \te\in\Theta\}$ and observations $Z$, where here $\Theta\subset\RR^p$ for some $p\ge 1$ (typically large in high-dimensional settings). Two examples considered in the simulations Section \ref{sec:simu} are high-dimensional linear regression and Gaussian graphical models. 
 We see below that both the $s$--value and $Cs$--value procedures defined above in the sequence model have natural generalisations.

Following a Bayesian approach, we put a prior $\Pi$ on the unknown parameter $\theta$ and  interpret the conditional distribution $Z\given \theta$ as $P_\te^{(n)}$. One can then form the posterior distribution, that is the conditional distribution $\te\given Z$, denoted $\Pi[\cdot\given Z]$,  and study it as above under a frequentist perspective by assuming that in reality $Z\sim P_{\te_0}^{(n)}$ for some fixed $\te_0\in \Theta$. Since we have sparse models as well as scalability of posterior sampling algorithms  in mind (and we will see indeed in Section \ref{sec:simu} that we can deal with genuinely high-dimensional models), one takes $\Pi$ to be a continuous shrinkage prior; beyond the horseshoe, one can think for instance of the spike--and--slab LASSO \cite{rockovageorge17}, other continuous or discrete scale mixtures of normal distributions \cite{cm18, salometal} or even choices that make the coordinates of $\te$ only conditionally independent, such as the Dirichlet--Laplace prior \cite{BhattacharyaEtAl2015DL}.

{\em The s--value and Cs--value procedures in general settings.} 
For each coordinate $i\in\{1,\ldots,p\}$ of the vector $\te$, one defines an $s$--value simply from the marginal distribution of $\te_i$ given the data $Z$
\begin{align}  \label{svalxgen}
 s_i(z) & = 2\left( \Pi(\te_i<0 \given Z=z) \wedge  \Pi(\te_i>0 \given Z=z)\right).
\end{align}
and one sets, again for any $i\in\{1,\ldots,p\}$,
\begin{align} \label{svalgen}
 s_i(Z) & = s_i(Z;\ta),\qquad \quad \vphi^s_t(Z)_i = \1\left\{ s_i(Z) < t \right\},
\end{align}
where we call $\vphi^s_t(Z)=(\vphi^s_t(Z)_1,\ldots,\vphi^s_t(Z)_p)$ the $s$--value multiple testing procedure in this more general framework. 
This time, coordinates of $\te$ are not necessarily independent, and  conditioning is with respect to the whole data $Z$; also, $s_i(z)$ may vary across indices $i$. 
Similar to the case of $s$--values in the sequence model, one may view the last display as a proxy for the $\ell$--value procedure, which here would reject the null, for a prior that puts mass to exact zeros (such as the spike--and--slab prior), whenever $\ell_i(z)=\Pi[\te_i=0\given Z]$ is smaller than $t$. The latter $\ell$--value procedure has similar decision-theoretical support as the $\ell$--value procedure in the sequence model: in particular, it controls the Bayesian FDR at level $t$.  
In turn, the $Cs$--value procedure is defined exactly in the same way as in \eqref{kchap}--\eqref{Csval}, replacing $X$ therein by $Z$ and using the general definition of $s_i(Z)$. It is inspired by the $C\ell$--value procedure, which in this more general context as well can be shown to have a Bayesian FDR of at most $t$ (and in fact very close to $t$). 

The behaviour of these procedures will be studied in the simulations Section \ref{sec:simu}.  

\subsection{Choice of hyperparameters and marginal maximum likelihood} \label{sec:ebhb}

Still for a moment in the general setting of the previous subsection, let us note that the prior distribution often crucially depends on one or more hyperparameter(s). Considering the case of the horseshoe with hyperparameter $\ta$ to fix ideas, a fully Bayesian way to choose $\ta$ is via hierarchical Bayes, by drawing it itself at random: that is, the prior scheme $\eqref{hs}$ is now viewed as the distribution $\te_1\given \ta$, and $\ta$ itself is drawn according to a prior distribution; the half--Cauchy prior on $\ta$ is a standard choice for the horseshoe.

Another popular way of choosing $\ta$ is via  empirical Bayes: one replaces $\ta$ by a data-driven choice $\hat\ta(Z)$. The choice we consider is the marginal maximum likelihood estimator (MMLE), which is obtained by forming the marginal likelihood of $\ta$, that is the marginal density of the data $Z$ (given $\ta$) in the Bayesian model, and maximising it with respect to $\ta$. Since this is the choice we consider below for the sequence model, let us focus on this setting.

{\em Empirical $s$-- and $S$--value procedures in the sequence model.}  Following \cite{vsv17}, we set, for $h_\ta$ the horseshoe density with parameter $\ta$, and in the setting of the sequence model \eqref{model},
\begin{equation} \label{defmmle}
\hat\ta = \hat\ta(X) = \,\underset{\ta\in[1/n,1]}{\text{argmax}}\ \prod_{i=1}^n \int \phi(X_i-\te)
h_\ta(\te)d\te.
\end{equation}
For such $\hat\ta$  data-driven estimate of $\ta$, and $t\in(0,1)$ a given level, we denote 
\begin{align}
\hat\vphi^{s}_t(X) & = \1\left\{ s(X;\hat\ta) < t \right\},  \label{svhat}\\
\hat\vphi^{S}_t(X) & = \1\left\{ S(X;\hat\ta) < t \right\}, \label{Svhat}
\end{align}
respectively the $s$-- and $S$--value multiple testing procedures based on `empirical' $s$--values. In the sequence model, given the relatively simple expression \eqref{defmmle}, one can argue that empirical Bayes is faster to implement, as there is only a one--dimensional function to optimise with respect to $\ta$. Note, however, that the integral in \eqref{defmmle} does not have an explicit form, so it needs to be approximated;  the R {\tt horseshoe} package \cite{horseshoe_package} does this by numerical integration. In order to carry out simulations in the high-dimensional settings considered in Section \ref{sec:simu}, we propose a new way to implement this approximation  that scales faster with the dimension $n$ (see Appendix \ref{app:seqmod}).

For simplicity in this paper for the theory part we shall restrict to the empirical Bayes approach to choose $\ta$. However, given the arguments in \cite{vsv17, vsvuq17} when studying the hierarchical Bayes (HB) version of the horseshoe prior, one can expect  the results in Section \ref{sec:main} to go through as well for the HB approach. Indeed, the proof for HB consists in showing that the posterior distribution of $\ta$ concentrates around the MMLE $\hat\ta$ in \eqref{defmmle}, when a half-Cauchy prior is chosen on $\ta$. More generally, often EB and HB can be expected to have similar behaviours, although this may not always be the case: in particular, for spike--and--slab priors with Laplace slabs, it is in fact shown in \cite{cm18} that EB achieves a slightly sub-optimal posterior convergence rate (and that this is not the case for Cauchy slabs, for which EB and HB behave similarly, similar to the horseshoe case). In complex models, the marginal likelihood may be more difficult to compute and optimise, and sampling from the hierarchical Bayes posterior is often more convenient: we follow this approach for linear regression and graphical models in Section \ref{sec:simu}.

\section{Main results}  \label{sec:main}

We now derive theoretical results for two of the multiple testing procedures introduced above, the $s$--value and $S$--value procedures, in the sequence model. We show that these procedures respectively achieve two information-theoretic boundaries: the $s$--value procedure achieves the optimal minimax combined multiple testing risk $\cR=$FDR+FNR; the $S$--value procedure is optimal among procedures that achieve a target FDR level asymptotically. We comment briefly on the $Cs$--value procedure 
in Section \ref{sec:mainSval}. 

\subsection{Frequentist optimality of multiple testing risks}

Optimality of arbitrary multiple testing procedures $\vphi$ with respect to the $\fr$--risk has recently been investigated in \cite{acr24}, with previous related results including \cite{fromontetal16} (family-wise error rates), \cite{rabinovich20} (for specific signal classes and procedures) and \cite{butucea18} (classification risk). It is shown in \cite{acr24} that the hardness of the multiple testing problem can be expressed in terms of the survival function of the noise, namely $\bar\Phi(u):=P[\cN(0,1)>u]$ for the Gaussian noise considered here. 

Let us introduce the set, for any given real number $b$,
\begin{equation} \label{defclb}
 \cL_0[s_n;b]= \bigg\{ \theta\in \ell_0[s_n] \::\: |\theta_{i}| \geq \: \sqrt{2\log \frac{n}{s_n}}  + b\ \text{ for all } i\in S_\te,\ |S_\te|=s_n \bigg\}. 
\end{equation}
For this class, the minimax multiple testing risk is
\begin{equation} \label{mmr}
 \inf_{\vphi} \sup_{\te\in\cL_0[s_n;b]} \fr(\te,\vphi) = \bar\Phi(b)+o(1), 
\end{equation}
where the infimum is over all possible procedures $\vphi$. 
To put this into perspective, and also for use later in the paper, it is also useful to consider the class of `boundary signals' in $\cL_0[s_n;b]$
\begin{equation} \label{defclbe}
 \cL_{0,=}[s_n;b]= \bigg\{ \theta\in \ell_0[s_n] \::\: |\theta_{i}| = \sqrt{2\log \frac{n}{s_n}}  + b\ \text{ for all } i\in S_\te,\ |S_\te|=s_n \bigg\}. 
\end{equation}
It can be checked that vectors in $\cL_{0,=}[s_n;b]$ are `least-favourable' in the sense that the supremum in \eqref{mmr} can be restricted to $\te\in\cL_{0,=}[s_n;b]$ without changing the bound. Sometimes for simplicity of exposition this class is considered in the literature instead of \eqref{defclb}, see e.g. \cite{rabinovich20}. 

Optimality can also be investigated in a more precise minimax--local sense, for {\em arbitrary} signals. Define, for $\bb=(b_1,\ldots, b_{s_n})$ a vector of real numbers such that $\sqrt{2 \log(n/s_n)} +b_j>0$ for all $j$, the class
\begin{align} \label{thetab} 
 \Theta_{\bb}  =
\Big\{ \theta\in \ell_0[s_n] \::\: 
 \exists\,  & 
 i_1, \ldots,  i_{s_n} \mbox{ all distinct, } \ 
 |\theta_{i_j}| \ge \sqrt{2 \log(n/s_n)} +b_j>0
 \Big\}.
\end{align}
Note that the union of all possible classes $ \Theta_{\bb} $ when $\bb$ varies gives the set of sparse vectors with exactly $s_n$ non--zero coordinates. Further denote
\begin{equation} \label{lambdab}
\Lambda_n (\bb)=s_n^{-1} \sum_{j =1 }^{s_n} \bar\Phi\left(b_j \right).
\end{equation}
Then it can be shown (\cite{acr24}) that 
\[  \inf_\vphi \sup_{\te\in\Theta_{\bb}}  \fr (\theta,\vphi)= \La_n(\bb) + o(1). \]
In this paper for simplicity of exposition we refrain from stating minimax--local results and focus on the simpler class \eqref{defclb}. We expect that minimax--local results can be obtained (albeit with somewhat more lengthy and technical proofs) in a similar way as in \cite{acr24}: we comment more precisely on this below each Theorem in the following two subsections.

\subsection{The $s$--value is optimal for the $\cR$--risk} \label{sec:mainsval}

Our first main result shows that the $s$--value multiple testing procedure is optimal in terms of the multiple testing risk $\fr=\FDR+\FNR$.
\begin{thm} \label{thm:hsrisk}
Let $b$ be a fixed, arbitrary, real number. Let 
$\hat\vphi_t^{s}$ denote the $s$--value procedure with data-driven choice $\hat{\ta}$ of the horseshoe parameter $\ta$ and fixed level $t\in(0,1)$ as in \eqref{defmmle}--\eqref{svhat}. Then, as $n, s_n\to \infty$,
\[ 
\sup_{\te_0 \in  \cL_0[s_n;b]} \ \left\{ 
\FDR(\te_0,\hat\vphi_t^{s}) + \FNR(\te_0,\hat\vphi_t^{s}) \right\}= \bar{\Phi}(b) + o(1).
\]
\end{thm}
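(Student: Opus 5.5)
The lower bound $\ge \bar\Phi(b)+o(1)$ follows at once from the minimax statement \eqref{mmr}, since the infimum there runs over all procedures, $\hat\vphi^s_t$ included. So the work is the upper bound. The idea is to reduce the $s$--value procedure to a thresholding rule $\1\{|X_i|>\hat r\}$ on the data. We then show that the data-driven threshold $\hat r$ sits at $\sqrt{2\log(n/s_n)}+o(1)$ uniformly over $\cL_0[s_n;b]$ with probability tending to one.

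\emph{Step 1: deterministic analysis of $s(x;\ta)$.} For fixed $\ta$, the posterior of $\te_1$ given $X_1=x$ under the horseshoe is symmetric in the sign of $x$. Moreover $\Pi_\ta(\te_1<0\given X_1=x)$ is decreasing in $x$ (monotone likelihood ratio of the Gaussian location family). Hence $s(x;\ta)<t$ if and only if $|x|>r(\ta,t)$ for a unique threshold $r(\ta,t)$. We will sharpen the horseshoe posterior expansions of \cite{vdp2014hs,vsvuq17} to locate this threshold. Writing the posterior as a mixture over $\la$, the mass of $\{\te_1<0\}$ for $x>0$ comes essentially from the ``spike'' region $\la\tau\lesssim 1$. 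There the conditional posterior is roughly centered at $0$ and contributes about half of its mass. The posterior weight of that region relative to the ``slab'' region is of order $\ta^{-1}e^{-x^2/2}\,x\,(\text{polylog factors})$. Consequently
\[ r(\ta,t)=\sqrt{2\log(1/\ta)}+O\!\left(\frac{\log\log(1/\ta)}{\sqrt{\log(1/\ta)}}\right) \]
for fixed $t$, uniformly for $\ta$ in a range $[n^{-1},(s_n/n)^{c}]$. The point is that $t$ enters only through $\log t$, which is absorbed into the $o(1)$ term.

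\emph{Step 2: concentration of the MMLE.} Next we show that uniformly over $\te_0\in\cL_0[s_n;b]$,
\[ \log(1/\hat\ta)=\log(n/s_n)+o(\sqrt{\log(n/s_n)}) \]
with probability $\to1$. Combined with Step 1, this gives $\hat r=\sqrt{2\log(n/s_n)}+o(1)$. For the upper bound on $\hat\ta$ we would adapt \cite{vsv17}, which yields $\hat\ta\lesssim (s_n/n)\sqrt{\log(n/s_n)}$. For the lower bound we need more, and this is where the boundary signals matter: all $s_n$ signals exceed $\sqrt{2\log(n/s_n)}+b$, so the score equation for $\ta$ has of order $s_n$ contributions pushing $\ta$ up. This forces $\hat\ta\ge (s_n/n)(\log(n/s_n))^{-C}$. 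To get it, we would analyse the derivative of the log marginal likelihood in $\ta$. Noise coordinates contribute about $-n\ta\cdot c$, while signal coordinates contribute about $s_n/\ta$. Balancing these two terms gives polylog-accurate control, which is all we need. \textbf{This step is the main obstacle}, since existing results are one-sided and only up to constants. Here we require two-sided control, though polylogarithmic precision in $\ta$ suffices.

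\emph{Step 3: risk computation.} On the event $|\hat r-\sqrt{2\log(n/s_n)}|\le\delta_n\to0$, the FNR is at most
\[ s_n^{-1}\sum_{i\in S_0}P(|X_i|\le \sqrt{2\log(n/s_n)}+\delta_n)\le \bar\Phi(b-\delta_n)+o(1)=\bar\Phi(b)+o(1). \]
For the FDR, we dominate $\hat\vphi^s_t$ on this event by the fixed threshold $\sqrt{2\log(n/s_n)}-\delta_n$. The expected number of false discoveries is then $2n\bar\Phi(\sqrt{2\log(n/s_n)}-\delta_n)=o(s_n)$, because the Mills ratio supplies an extra $1/\sqrt{\log(n/s_n)}$ factor that beats $e^{\delta_n\sqrt{2\log(n/s_n)}}$ provided $\delta_n=o(\log\log(n/s_n)/\sqrt{\log(n/s_n)})$. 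Step 2 must therefore be sharp to that order; if it is not, we refine it to that precision. Meanwhile the number of true discoveries is at least of order $s_n(1-\bar\Phi(b))/2$ with high probability by concentration. Hence $\FDR=o(1)$, and adding the probability of the complementary event yields $\FDR+\FNR\le\bar\Phi(b)+o(1)$.
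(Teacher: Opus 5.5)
Your architecture (threshold reduction, concentration of $\hat\tau$, separate FNR and FDR bounds) is the same as the paper's. Two pieces are sound as they stand. First, the monotonicity of $|x|\mapsto s(x;\tau)$ via the likelihood-ratio ordering of the Gaussian-location posterior holds for any prior; the paper only observes it numerically and instead sandwiches $s(x;\hat\tau)$ between explicit functions of $h(x)$. Second, for the FNR a polylogarithmic lower bound on $\hat\tau$ is indeed enough.

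The genuine gap is in the FDR half of Step~3. Step~1 only gives $r(\tau,t)=\sqrt{2\log(1/\tau)}+O(\log\log(1/\tau)/\sqrt{\log(1/\tau)})$, with unspecified sign. Your own Step~3 computation shows that a downward error of exactly this order is fatal. Take $\hat\tau\asymp(s_n/n)\sqrt{\log(n/s_n)}$, which is the true order. Even if $r(\hat\tau,t)$ were exactly $\sqrt{2\log(1/\hat\tau)}$, one would have $r^2=2\log(n/s_n)-\log\log(n/s_n)+O(1)$. Hence $2n\bar\Phi(r)\asymp s_n$, and the FDR is bounded away from $0$. So sharpening Step~2 cannot close the argument, contrary to what you suggest. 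The missing precision is in Step~1: you need the signed second-order term of the threshold.

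Concretely, you need the spike-versus-slab posterior odds with their exact polynomial prefactor, $\asymp\tau^{-1}x^2e^{-x^2/2}$, rather than $x$ times unspecified polylogarithmic factors. The usable form is the lower bound $s(x;\tau)\ge(1-o(1))/(1+\tau h(x))$ on $[\sqrt2,\tau^{-1/4}]$, with $h(x)=\sqrt{2\pi}\,e^{x^2/2}/(x^2/2)$, as in Lemma~\ref{lem:boundsv}. Inverting $h$ (Lemma~\ref{lemla}) gives $r(\tau,t)^2\ge2\log(1/\tau)+2\log\log(1/\tau)-C_t$. Combined with the upper bound $\hat\tau\le d_2(s_n/n)\sqrt{\log(n/s_n)}$ from \cite{vsv17}, which needs no refinement, this yields $\hat r^2\ge2\log(n/s_n)+\log\log(n/s_n)-C$. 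Thus the threshold sits \emph{above} $\sqrt{2\log(n/s_n)}$ by about $\log\log(n/s_n)/(2\sqrt{2\log(n/s_n)})$, and $E\,FP\lesssim s_n/\log(n/s_n)=o(s_n)$. A prefactor $x^k$ would instead give $E\,FP\asymp s_n(\log(n/s_n))^{-k/2}$, so the exponent really matters.

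Separately, the scalings in your Step~2 heuristic are off.
\begin{itemize}
\item The noise part of the score, $\tau^{-1}\sum_{i\notin S_0}m_\tau(X_i)$, is about $-C^*n/\sqrt{2\log(1/\tau)}$ with $C^*=(2/\pi)^{3/2}$, not $-cn\tau$. Balancing $-cn\tau$ against $s_n/\tau$ would give $\tau\asymp\sqrt{s_n/n}$.
\item A signal coordinate contributes about $1/\tau$ only when $\tau\gg X_i^2e^{-X_i^2/2}$. For boundary signals this means $\tau\gg(s_n/n)\log(n/s_n)$.
\item One therefore works with the roughly $\Phi(b-\delta)s_n$ signals having $|X_i|>\sqrt{2\log(n/s_n)}+\delta/2$. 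The few signals with small $|X_i|$, where $m_\tau$ can be negative, must be controlled separately.
\end{itemize}
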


\noindent {\em Remark (optimality).} Theorem \ref{thm:hsrisk} shows that the $s$--value procedure \eqref{svpro} reaches the optimal minimax testing risk over the class 
$\cL_0[s_n;b]$. In particular, it matches the behaviour of $\ell$--values for spike--and--slab priors in this setting (\cite{acr24}, Theorem 1). \\

\noindent {\em Remark (other signal shapes).} The above result is of minimax nature: it can be checked that the optimal constant $\bar\Phi(b)$ is attained for signals on the boundary (those in \eqref{defclbe}), and the behaviour of the risk can be better for certain signals within the class. A more general statement (albeit more technical to state and prove) of Theorem \ref{thm:hsrisk} would consist in replacing $\cL_0[s_n;b]$ in the supremum in the last display by $\Theta_{\bb}$ in \eqref{thetab} for some given (possibly $n$--dependent) vector $\bb$. Then $\bar{\Phi}(b)$ in the last statement would be replaced by $\Lambda_n (\bb)$ in \eqref{lambdab}.\\

This result is illustrated through simulations in Section \ref{sec:simu}, see Figure \ref{fig:all.Horseshoe.s10} (see also Appendix \ref{Appendix:sequencemodel} for an explicit comparison of procedures with the combined risk $\FDR + \FNR$).

The proof of Theorem \ref{thm:hsrisk} relies on several steps. As a first step, one proves that the $s$--value, which by definition is a ratio of two integrals with respect to a half-Cauchy density (numerator and denominator are both {\em continuous} mixtures), can be well approximated for most signals by a ratio of two simpler quantities, where the idea is to relate these to an expression similar to the one obtained for a {\em discrete} mixture prior, such as spike--and--slab or spike--and--slab LASSO. Since those are inequalities only, one cannot rely on a precise `threshold' of signal from which the method starts to reject (as was done for a spike--and--slab prior in \cite{cr20}). Rather, sufficiently precise inequalities on integrals with respect to the prior suffice; to do so, one partly relies on (and needs to refine) a number of bounds obtained in \cite{vsv17, vsvuq17}. A second major step consists in the study of the marginal maximal likelihood estimate $\hat\ta$, which is shown to concentrate {\em up to sufficiently large constants} around $s_n/n$ times a logarithmic factor. The corresponding upper--bound was obtained in  \cite{vsv17}; here we prove a lower--bound counterpart -- which is possible in our context given that the statement of the minimax testing rates guarantees by definition a certain amount of `separation' from the null hypotheses --.  This result will itself need to be much refined for Theorem \ref{thm:fdrcontrol} below (therein we need a concentration of $\hat\ta$ with a {\em matching} constant for both upper and lower-bound asymptotically). As a last step, one shows that both FDR and FNR (which are ratios) are controlled as desired, using the mentioned inequalities and concentration of both numerator and denominators in the ratios involved for the two types of errors.

\subsection{The $S$--value and target FDR control} \label{sec:mainSval}

 The next theorem shows that  the $S$--value procedure achieves a desired target FDR level while keeping the FNR as small as possible; its proof is more delicate and in order  to have a more transparent argument, we have focused on a class of signals of specific form, which corresponds to signals at the `boundary' of the class considered in Theorem \ref{thm:hsrisk}. As remarked below and similar to the comments made below Theorem  \ref{thm:hsrisk}, the results could be extended to cover almost arbitrary shapes of signals.

\begin{thm} \label{thm:fdrcontrol}
Let $b$ be a fixed, arbitrary, real number and fixed level $t\in(0,1/2]$. Let 
$\hat\vphi_t^{S}$ denote the $S$--value procedure  \eqref{Svhat} with data-driven choice $\hat{\ta}$ of $\ta$ as in \eqref{defmmle}. 
Then 
\[ 
\FDR(\te_0,\hat\vphi_t^S) \to t,
\qquad\quad
\FNR(\te_0,\hat\vphi_t^S) \to \bar\Phi(b),
\] 
as $n,s_n\to\infty$, where the convergence is uniform over any $\te_0 \in  \cL_{0,=}[s_n;b]$.
\end{thm}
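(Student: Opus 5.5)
The plan is to reduce the $S$--value procedure, for $\ta$ in a suitable deterministic window, to a thresholding rule $\1\{|X_i|\ge \chi_n(\ta)\}$ with an explicit threshold. Then $\hat\ta$ is shown to concentrate sharply enough that the threshold is pinned down up to $o(1)$, and FDR and FNR follow from concentration of binomial counts.

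\textbf{Step 1: $S$ as a tail-ratio.} For $x>0$ and small $\ta$, the $S$--value is monotone decreasing in $x$. So $\{S(X_i;\ta)<t\}=\{|X_i|>\chi(\ta)\}$, where $\chi(\ta)$ solves $S(\chi;\ta)=t$. By symmetry of the horseshoe and Bayes' formula,
\[
\tfrac12 S(x;\ta)=\frac{\int_{-\infty}^0 \bar\Phi(x-\te)h_\ta(\te)d\te}{\int \bar\Phi(x-\te)h_\ta(\te)d\te}.
\]
Using the bounds on $h_\ta$ from \cite{vsv17, vsvuq17}, in the regime $\chi\asymp\sqrt{2\log(1/\ta)}$ the numerator is $\tfrac12\bar\Phi(x)(1+o(1))$: the mass near zero acts like a spike of weight $\approx1$, and half of it lies on $\te<0$. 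The denominator is $\bar\Phi(x)(1+o(1))+m_\ta(x)$, where $m_\ta(x)=\int_{\te>0}\bar\Phi(x-\te)h_\ta(\te)d\te$ is dominated by the slab tail $h_\ta(\te)\approx \ta/(\sqrt{2\pi}\,\te^2)\cdot c$ for $|\te|\gtrsim 1$. Hence $S(x;\ta)\approx \bar\Phi(x)/(\bar\Phi(x)+m_\ta(x))$, the exact analogue of the $q$--value with a discrete mixture. The threshold $\chi(\ta)$ then solves $\bar\Phi(\chi)(1-t)/t=m_\ta(\chi)\approx C\ta/\chi^{2}\cdot\chi$ (up to a polynomial factor I will compute precisely), giving $\chi(\ta)=\sqrt{2\log(1/\ta)}+O(\log\log/\sqrt{\log})$ with an explicit second-order term depending on $t$.

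\textbf{Step 2: sharp concentration of $\hat\ta$.} This is the main obstacle. Write the score equation for the MMLE as $\sum_i \psi_\ta(X_i)=0$, where $\psi_\ta=\partial_\ta\log\int\phi(\cdot-\te)h_\ta$. I will compute $E_{\te_0}\psi_\ta(X_i)$ separately for null coordinates, where it is $\approx -c\,\ta^{-1}\cdot\ta\cdot(\text{poly-log})$, and for signals at $\sqrt{2\log(n/s_n)}+b$, where it is $\approx \ta^{-1}P(\text{signal exceeds the }\ta\text{-threshold})$. Balancing the two gives a deterministic $\ta^*_n$ with $n\ta^*_n\cdot(\text{log factor})\asymp s_n(1-\bar\Phi(b))$. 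The key is that the constant is exact, so that $\hat\ta/\ta^*\to1$ (or at least $\log\hat\ta-\log\ta^*=o(1)$), not merely up to constants as in \cite{vsv17}. To get this I will use monotonicity of the score in $\ta$, variance bounds of order $s_n/\ta^2$, and Chebyshev on the score evaluated at $\ta^*(1\pm\veps)$. Restricting to $\cL_{0,=}[s_n;b]$ makes the signal contribution explicit. Uniformity over the class comes from the fact that only $|S_0|=s_n$ and the common signal value enter.

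\textbf{Step 3: FDR and FNR.} On the event $\hat\ta\in[\ta^*(1-\veps),\ta^*(1+\veps)]$, monotonicity of $\chi(\cdot)$ sandwiches the procedure between two deterministic threshold tests $\1\{|X_i|>\chi(\ta^*(1\pm\veps))\}$. For a deterministic threshold $\chi$, the number of false discoveries is Binomial$(n-s_n,2\bar\Phi(\chi))$ and the number of true discoveries is Binomial$(s_n,p_1)$ with $p_1\to 1-\bar\Phi(b)$, since $\chi-\sqrt{2\log(n/s_n)}\to0$ once $\ta^*$ is plugged in. That the $\log\log$ corrections cancel is exactly what Step 1's calibration and the definition of $\ta^*$ must deliver. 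Both counts are of order $s_n\to\infty$ and concentrate, so the FDR tends to $V/(V+T)$ evaluated at means. By construction of $\chi$ through $S=t$, together with the identity $n\,\bar\Phi(\chi)\approx$ (slab mass), this ratio equals $t+o(1)$, and the FNR equals $\bar\Phi(b)+o(1)$. Letting $\veps\to0$ concludes the argument. The restriction $t\le1/2$ guarantees that the threshold lies in the region where the approximations of Step 1 hold.
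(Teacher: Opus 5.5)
Your architecture matches the paper's. You approximate $S(x;\ta)$ by $1/(1+C^*\ta H(x)/2)$ with $H(u)=1/(u\bar\Phi(u))$ and $C^*=(2/\pi)^{3/2}$. (Your tail-ratio heuristic gives exactly this once the denominator is written as $\bar\Phi(x)(1+o(1))$ plus only the \emph{slab} part, $\approx C^*\ta/(2x)$; as written, your $m_\ta$ counts the spike twice.) You then pin $\hat\ta$ down, with the exact constant, around $\ta^*\sim\Phi(b)s_n\zeta_{s_n/n}/(C^*n)$, and finish with binomial counts.

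The genuine gap is in Step 2, which you rightly call the crux. The pointwise computation is sound. At $\ta^*(1\pm\veps)$ the mean score is of order $\veps n/\zeta_{s_n/n}$ with the right signs, and its standard deviation is of order $n/(\zeta_{s_n/n}\sqrt{s_n})$. But Chebyshev there only gives the sign of the score at two points. To localise the argmax of $\mathbb{M}_\ta$ over $[1/n,1]$ you then invoke monotonicity of the score in $\ta$, and that is not a property you can use for free: $\ta\mapsto\mathbb{M}_\ta(X)$ is not concave in general. Write $g_\ta=\phi*h_\ta$ for the marginal density. For instance, $g_\ta(0)=\phi(0)\,E[(1+\ta^2\la^2)^{-1/2}]$ with $\la\sim C^+(0,1)$, and a direct computation gives $\partial_\ta\log g_\ta(0)=-2/\pi+(1-4/\pi^2)\ta+o(\ta)$. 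So coordinates with $X_i$ near $0$ contribute score terms that \emph{increase} in $\ta$ near $0$. Sign information at two points therefore does not rule out other local maxima outside your window. In particular it says nothing about $\ta$ of order one, where none of your expansions of $E\,\psi_\ta$ are valid.

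What is needed is sign control \emph{uniformly} in $\ta$: the score is positive on all of $[1/n,\ta^*(1-\veps)]$ and negative on all of $[\ta^*(1+\veps),d_2\ta_n(s_n)]$. You also need the a priori bound $\hat\ta\le d_2\ta_n(s_n)$ with high probability (Theorem 1 of \cite{vsv17}). The paper obtains the uniform control in two parts. Write $m_\ta=\ta\psi_\ta$ in your notation.
\begin{itemize}
\item \emph{Null part.} A uniform law, $\sup_{1/n\le\ta\le\veps_n}|\cM_0(X,\ta)\zeta_\ta/n+C^*|=o_P(1)$ with $\cM_0(X,\ta)=\ta^{-1}\sum_{i\notin S_0}m_\ta(X_i)$ (Lemma C.3 and Proposition C.2 of \cite{vsv17}).
\item \emph{Signal part.} Bernstein events, free of $\ta$, counting signal observations above $\zeta_{s_n/n}+\delta/2$ and below $\zeta_{s_n/n}-\delta/2$. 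Combine these with $m_\ta\approx\mu_\ta=\ta/(\ta+\pi x^2e^{-x^2/2}/2)$ for large $|x|$ and $-1\le m_\ta\le C$ otherwise. Since $\mu_\ta(x)/\ta$ is explicit and monotone in $\ta$, this bounds $\cM_1(X,\ta)$ simultaneously for all $\ta$, after which $\delta\to0$.
\end{itemize}

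A milder version of the same issue appears in Step 3: monotonicity of $\ta\mapsto\chi(\ta)$ is not established either. You do not need it. The approximation $(1+o(1))/(1+C^*\ta H(x)/2)$, with $o(1)$ uniform over the window, is itself monotone in $\ta$. It therefore directly sandwiches $\{S(X_i;\hat\ta)<t\}$ between two deterministic tests $\{|X_i|>\La(\cdot)\}$, which is what the paper does.

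Two of your other choices are fine. Monotonicity in $x$ holds: by the monotone likelihood ratio of the Gaussian location family, $\Pi_\ta(\te<0\given X=y)$ is nonincreasing in $y$, and $S/2$ averages it over $y\ge x$. Your direct binomial computation of the FNR lower bound on $\cL_{0,=}[s_n;b]$ is a valid shortcut around the paper's sparsity-preserving argument.
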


\noindent {\em Remark (minimax optimality).} In view of the optimality results obtained in  \cite{acr24}, 
Theorem \ref{thm:fdrcontrol} shows that the asymptotic values of both FDR and FNR of the S--value procedure are optimal (at least when comparing to most `relevant' multiple-testing procedures that control the FDR at level $t$, see below).  This statement may sound surprising at first, since the $\cR=$FDR+FNR risk of $ \hat\vphi_t^S$ equals asymptotically $t+\bar\Phi(b)$ and so is strictly larger than $\bar\Phi(b)$. One would think  that, since the $S$--value procedure has a FDR of $t$ (and indeed, it can be shown to be less conservative than the $s$--value procedure) asymptotically, it could perhaps be able to compensate by a lower FNR in order to still reach a $\cR$ risk of $\bar\Phi(b)$. This is however impossible: Theorem 3 in \cite{acr24} indeed proves a lower bound of $\bar\Phi(b)$ on the FNR {\em only}, for all `sparsity preserving' procedures (that is, procedures whose number of discoveries does not overshoot the true sparsity $s_n$ by a certain multiplicative factor, a very reasonable requirement satisfied for all standard sparsity tests or procedures, including the LASSO estimator).   \\

\noindent{\em Practical choice of $t$.} From a practical point of view, if $t$ is a small number (such as $5\%$),  it may be acceptable to lose slightly in terms of overall $\cR$ risk and compensate in terms of finite-sample performance by having a less conservative procedure. Indeed, while the $s$--value procedure at level $T$ (we deliberately chose another notation to distinguish from $t$) is shown to reach the optimal $\cR$ risk asymptotically for any fixed level $T\in(0,1)$, this is true for any $T$, suggesting it is not in the `boundary' range of procedures that start to be less conservative. Also, its FDR has to go to zero (since the FNR has to be at least $\bar\Phi(b)$ for sparsity-preserving procedures, which can be shown to be the case for $\hat\vphi^s_t$). The simulations in Section \ref{sec:simu} show that choosing a small target level $t>0$ and the S--value procedure strikes a good balance between finite-sample performance and near-optimality of the $\cR$--risk. 
\\
 
\noindent {\em Remark (other signal shapes).} Similar to the case of Theorem \ref{thm:hsrisk}, although we have focused here on the class $\cL_{0,=}[s_n;b]$ for clarity of exposition, it is possible to derive results for other signals, including ones of different amplitudes. Inspection of the proof of Theorem \ref{thm:fdrcontrol} shows that, for a more general class $\Theta_{\bb, =}$ in \eqref{thetab} (that is, the same as $\Theta_{\bb}$, except with an equality instead of $\ge$ in the definition), the arguments go through as soon as the optimality quantity $\La_n(\bb)$ in \eqref{lambdab} has a limit as $n,s_n\to\infty$, a mild requirement satisfied for instance for all the signal examples considered in \cite{acr24}.\\
 
\noindent {\em Proof ingredients.} A main difficulty for the proof of Theorem \ref{thm:fdrcontrol} compared to that of  Theorem \ref{thm:hsrisk} is that it requires sharp concentration, including the exact asymptotic constant, of both the number of false positives (denoted FP) and true positives (TP below) of the $S$--value procedure. Let us focus for instance on the control of the FDR. Using concentration arguments, one can show
\[  \FDR(\te_0,\hat\vphi_t^S)  \, \sim \, \frac{E (FP)}{E (FP) + E(TP)}.
 \]
A main step is then to show that the MMLE $\hat\ta$ concentrates around a value $\ta^*$ that verifies
\[    \ta^* \sim \Phi(b)s_n\zeta_{s_n/n}/\{C^* (n-s_n)\},\] 
where $\zeta_{s_n/n}=\sqrt{2\log(n/s_n)}$ and $C^*=(2/\pi)^{3/2}$. 
Then further concentration arguments show
\begin{align}
 E (TP) & \sim  s_n\Phi(b) \sim C^* (n-s_n)\ta^*/\zeta_{s_n/n}, \label{Std}\\
 E (FP) & \sim C^* (n-s_n)r(\ta^*,t)/\zeta_{s_n/n}, \label{Sfd}
\end{align} 
with $r(\ta,t)=\ta t (1-\ta)^{-1} (1-t)^{-1}$. 
This in turn yields that 
$\FDR(\te_0,\hat\vphi_t^S)\sim t$ as desired. 
It is remarkable that the MMLE $\hat\ta$ strikes the precise balance that enables FDR control, even if the horseshoe prior, a continuous-shrinkage prior, does not model exact zeroes. 

Earlier work on frequentist FDR control of Bayesian multiple testing procedures has mostly focused on the $\cR$--risk, for which it is enough to have a rough bound on the MMLE up to constants, as is the case for Theorem \ref{thm:hsrisk}. In the few existing results with non--vanishing FDR,  the considered regime was one of `large signals' \cite{cr20, acr22}: namely, all non--zero $\te_i$ being well above $\sqrt{2\log{n/s_n}}$, which made it simpler to derive concentration of $\hat\ta$, as in this regime one has exact model selection, namely the procedure can recover all non-zero coordinates with high probability.

In this work we do not consider theory for the $Cs$--value procedure \eqref{Csval}. However, based on the results derived in \cite{acr22} for spike--and--slab priors, it can be expected that Theorem \ref{thm:fdrcontrol} carries over to the $Cs$--value procedure as well. Indeed, proofs in  \cite{acr22}, who obtain theory for $C\ell$--values for spike--and--slab priors (as well as results in the quite different setting of Hidden Markov Models in \cite{SC09, acg22}, where $C\ell$--values are also considered), suggest that the $C\ell$--value procedure can be viewed as an `empirical' version of the $q$--value procedure; since the $C\ell$--value procedure in \cite{acr22}, or the $Cs$--values procedures considered below in Section \ref{sec:simu}, also feature empirical Bayes choices of hyperparameters, they are in a sense `doubly empirical' and somewhat more technical to study theoretically, so we refrain from investigating theory for this procedure in this work; we do recommend it in practice though when targeting a specific FDR level, given excellent empirical performance illustrated in Section \ref{sec:simu}.\\

Finally, we expect the results above to carry over to other shrinkage priors such as the spike--and--slab LASSO prior (SSL) (see Appendix \ref{app_ssl} for more details), and at least in part to more complex sparse models as well. We refer to the Discussion Section \ref{sec:disc} for more on this.

\section{Simulations} \label{sec:simu}

We first present simulations in the sparse sequence model, showing a strong agreement with the behaviour expected from our theory, with both empirical Bayes (EB) and hierarchical Bayes (HB) choices of the horseshoe parameter $\ta$ (for EB, we propose new approximations to handle high dimensional settings). We then examin two more complex models: high-dimensional linear regression and Gaussian graphical models. We show that $s$--value and $Cs$--value procedures continue to perform remarkably well even beyond the theoretically studied sequence setting.

All the codes used in the simulations will be made available on the authors' webpage.

\subsection{Sparse sequence model}

We consider model  \eqref{model} with $n=10^4, s_n=10$ and, for simplicity, constant alternatives: $\theta_{0,i} = \mu$ if $1 \le i \le s_n$ and $0$ otherwise (the nonzero signals could be placed at other arbitrary places). Across different simulations, we vary the signal strength $\mu$ in the grid $\{0, 0.5, 1, 1.5, \ldots, 8\}$. Simulations for $n=10^4$ and $s_n=100$ are presented in Appendix \ref{Appendix:sequencemodel}.
We consider a horseshoe prior distribution on the unknown parameter $\theta$ with parameter $\tau$ chosen in an Empirical Bayes way as defined in \eqref{defmmle}.  We refer to the Appendix Section \ref{Appendix:tauMMLE} for more details on how to compute $\hat\ta$. The case of hierarchical Bayes is discussed below.

 We compare the behaviour of 
 the $s$--value  \eqref{svpro},  the $S$--value    \eqref{Svpro} and $Cs$--value \eqref{Csval} procedures, with `target levels' $t \in  \{0.05, 0.1, 0.2\}$. We also compare these to  
 the thresholding method (referred to as `Thresh' in Figure \ref{fig:all.Horseshoe.s10}) introduced by \cite{CarvalhoPolsonScott2010}. Writing the posterior mean for $\te_i$ as $c_iX_i$, with $c_i\in[0,1]$, the procedure `Thresh' selects variable $i$ if $c_i \geq \kappa$ for some user-selected cut-off $\kappa$ (with the default choice $\kappa = 0.5$, and we also consider  $\kappa = 0.8$).  

To implement the $s$-- and $Cs$--value procedures, one needs to access the $s$--values $s(X_i;\tau)$ or equivalently $\Pi_\ta[\te_i<0 \given X_i]$; one can estimate these from a  sample of the posterior distribution using the R {\tt horseshoe} package  \cite{horseshoe_package}. An important algorithmic point here is that since the mass $\Pi_\ta[\te_i<0 \given X_i]$ depends only on $X_i$ (but not on $i$ itself), we do not need to evaluate these for all $i\in\{1,\ldots,n\}$, but only on `sufficiently many $X_i$'s' so as to locate at which signal strength the procedure starts to reject: this idea is explained in more details in  Appendix \ref{Appendix:comparison.VdP.dichotomie} and is used in the simulations presented below. 
To implement the $S$--value procedure, we use an approximation of the $S$--value provided by the  theoretical bounds from Section \ref{sec-bSval}, see Appendix \ref{Appendix:SvalueEquivalent} for more details.  

The FDR and the True Discovery Rate (TDR, equal to 1-FNR) of each procedure are evaluated empirically with $100$ replications.  
Figure \ref{fig:all.Horseshoe.s10}  displays the results. The FDR of the $s$--value procedure remains very small and always controlled at the target level $t$ whatever the value of $\mu$, with a large TDR (equivalently, small FNR) for sufficiently large $\mu$, which is in close agreement with Theorem \ref{thm:hsrisk}. This is also in line with the empirical observations in \cite{vsvuq17}, Section 5: one of the procedures with credible intervals is in fact an $s$--value procedure if one sets $t=.05$ (they investigated dimensions up to $n=1600$ though, while here we cover higher-dimensional cases with $n=10^4$). As \cite{vsvuq17} noted, the procedure is somewhat conservative, and does not tend to catch moderate signals. 

The $S$--value procedure, on the other hand, has an FDR very close to the target level when the signal is sufficiently large, and otherwise has even smaller FDR under small signal; as expected the procedure is less conservative than the $s$--value procedure, with a higher TDR. 
 The behaviour of the C$s$--value procedure is similar to that of the $S$--value, albeit with an estimated FDR slightly above (but close to) the target level. This is in line with empirical observations in \cite{cr20} for the C$\ell$--value procedure with spike--and--slab priors.  
Finally, the  `Thresh' method based on the posterior mean has an FDR that is too large in all situations. It could be lowered by taking a different cut-off than 0.5, but this method gives no guarantee on the control of the FDR.  

Together with the theory derived in Section \ref{sec:main}, our study thus answers the questions raised in \cite{vsvuq17} for multiple testing with the horseshoe: in particular, we provide both a procedure optimal in terms of $\cR$--risk (the $s$--value); and a less conservative procedure, the $S$--value, that has provably optimal TDR given a certain target FDR level (and similarly for the C$s$--value procedure, empirically at least).  

Our empirical results can be extended in several directions, some of which we explore in the Appendix. The first is hierarchical Bayes: with the popular half--Cauchy prior on $\ta$, we obtain quite similar results to the empirical Bayes setting above. For this we use the off--the--shelf R package {\tt Mhorseshoe} \citep{Mhorseshoe, Mhorseshoepackage} (also used for linear regression below) with identity design matrix. It is likely that the sampler could be adapted to handle the specific case of the identity matrix faster; but here we are already able to cover sample sizes that are essentially comparable to those in \cite{ves21} (see their Section 5), who carried out a specific empirical study of approximations of spike--and--slab posteriors. Another direction consists in having different continuous shrinkage priors. The case of the spike--and--slab LASSO \cite{rockovageorge17} is considered in the Appendix, but we expect the results to hold for others priors as well (and in fact investigate such a case in Section \ref{sec:gm}). One could also put a prior on the noise variance, see Section \ref{sec:linreg}.

\begin{figure}[!t]
	\includegraphics[scale=0.6]{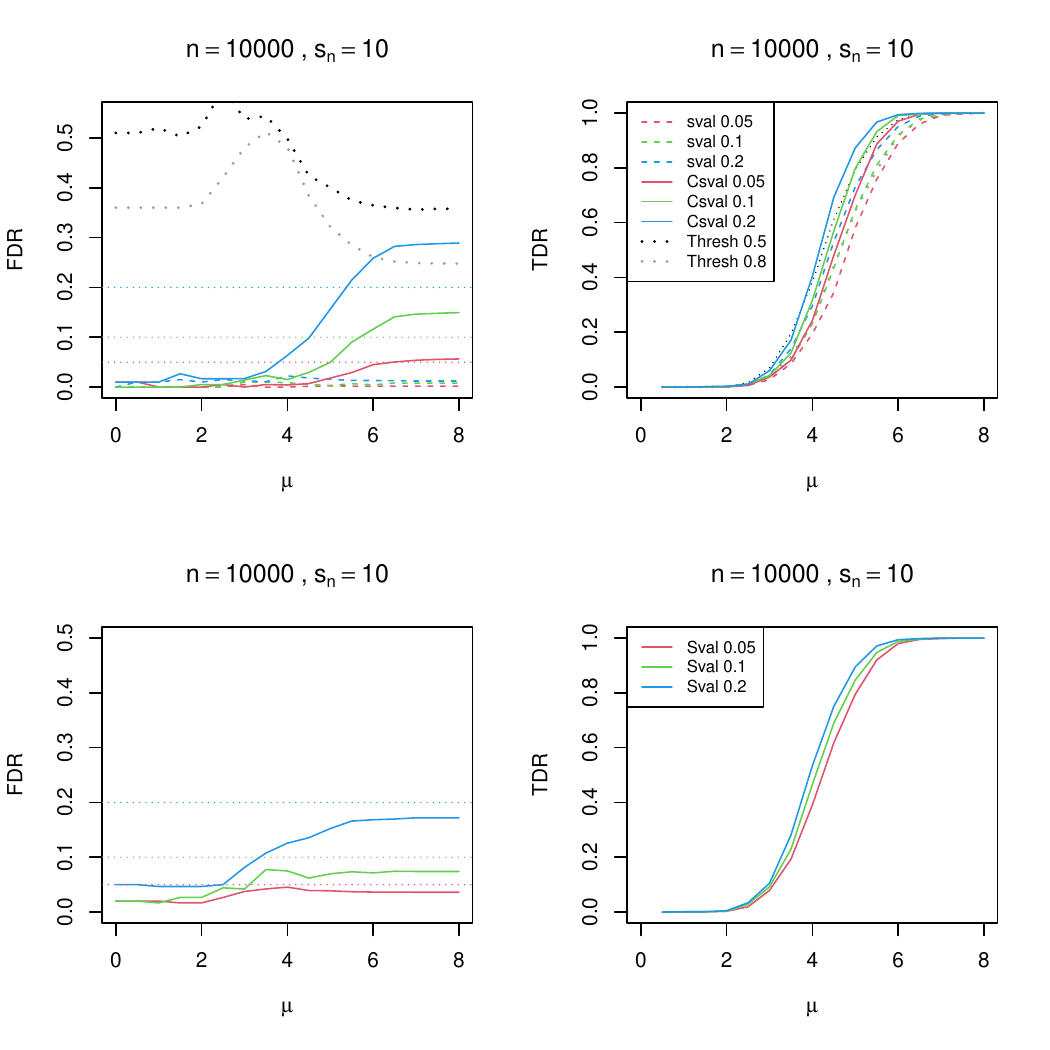}
	\centering
	\caption{In the sequence model, empirical FDR and TDR of the $s$--value and $Cs$-- value procedures with EB choice $\hat{\ta}$ of $\ta$, target level $t \in  \{0.05, 0.1, 0.2\}$, and those of the thresholding method with $\kappa=0.5$ or $0.8$ (top graph), as a function of signal $\mu$ (taken equal on non-zero coordinates); FDR and TDR of the $S$--value procedure with target level $t \in  \{0.05, 0.1, 0.2\}$ (bottom graph); $n = 10000; s_n=10$; averages are over $100$ replications. 	}
	\label{fig:all.Horseshoe.s10}
\end{figure}

\subsection{High-dimensional linear regression} \label{sec:linreg}

Consider observations $Z, X$ from the model 
\begin{equation} \label{regression}
	Z = X \theta +  \sigma \veps,  \hspace{1cm} \veps \sim \cN(0, I_n),
\end{equation}
with response $Z \in \RR^n $, design matrix $X \in \RR^{n \times p}$, parameter vector $\theta \in \RR^p$ and noise level $\sigma >0$. We
are interested in the high-dimensional sparse setting, where $p$ is possibly much larger than $n$  and many of the coefficients $\theta_i$ are zero. Following \cite{johndrow2020}, we consider a hierarchical  horseshoe prior  distribution on $\theta, \sigma$ as follows, independently over $i=1, \ldots,p$, 
\begin{equation} \label{hs_regression}
	\begin{aligned} 
		\theta_i \given \sigma^2, \ta, \lambda_i & \,\sim\, \cN(0,\sigma^2 \tau^2  \lambda_i^2 ) \\
		\lambda_i & \,\sim\, C^+(0,1), \hspace{0.5cm}    \\
		\tau & \,\sim\, C^+(0,1), \\
		\sigma^2 &  \,\sim\, \mbox{InvGamma}(w/2,w/2). \\
	\end{aligned}
\end{equation}
We use the R package {\tt Mhorseshoe} \citep{Mhorseshoe, Mhorseshoepackage}
to generate corresponding posterior samples, using the approximate algorithm proposed in \cite{johndrow2020}. This enables one to investigate settings with $p$ in the 10000's. 
Once the posterior sample of $\theta$ is generated, one can compute the corresponding Monte Carlo approximation of the $s$--values $s_i(Z)$  in \eqref{svalgen}.

We evaluate the behaviour of the  $s$--value and $Cs$--value procedures  in the regression model \eqref{regression} with true $\sigma_0= 1$ and $\theta_0$ having $s$ nonzero coefficients, each equal to $\mu$, for some fixed $\mu$  in $[0,0.2]$ and for different values of $(n, p, s)$. We consider a random design $X$ with $n$ i.i.d. rows drawn from a multivariate $\cN_p (0, \Sigma)$  law with the following choices of  $\Sigma$
\begin{enumerate}
	\item[(i)] $\, $  $\Sigma = I_p$ the identity $p\times p$ matrix, corresponding to uncorrelated features;
	\item[(ii)] $\, $   $\Sigma =\Sigma_{\rho}^{AR}$ the $p\times p$ autoregressive matrix  $(\Sigma_{\rho}^{AR})_{jk} = \rho^{|j-k|}$, with $\rho\in(0,1)$;
	\item[(iii)]  $\, $    $\Sigma =\Sigma_{\rho}$ the $p\times p$ matrix with $(\Sigma_{\rho})_{jk} = \rho$ if $j \ne k$ and $1$ otherwise, and $\rho\in(0,1)$.
	\end{enumerate}
Higher values of $\rho$ and the case $\Sigma = \Sigma_{\rho}$ both correspond to  stronger correlations, the later with $\rho$ close to $1$ thus representing the scenario that deviates most from the independent design.  Note that the typical `interesting' range of $\mu$  is lower than in the sequence model, since under Gaussian random design the signal--to--noise ratio is much higher, with $\mu$ of the order $\sqrt{\log{n}/n}$ (instead of order $\sqrt{\log{n}}$) representing the expected boundary of detection.   

We report empirical values of FDR and TDR for  $(n,p)=(3000,6000)$ and $(5000,10000)$, averaged over 20 simulations for design (i) and for  $(n,p)=(2000,4000)$ averaged over 50 simulations for designs (ii) and (iii).
The design matrix $X$ in \eqref{regression} can either be generated once for all replications or generated separately for each replication. We performed simulations in both cases. Generating $X$ for each simulation introduces a slight additional variability across replications but the results are similar.  
In the simulations presented below, we show results for $X$ generated at each replication for the uncorrelated design, and for $X$ generated once for all replications for the correlated design (see Appendix \ref{Appendix:regressionmodel} for various other settings that confirm these findings). 

Figure \ref{fig:regression.Horseshoe.uncorrelated} displays the results for the uncorrelated design (i). 
The $s$--value and $Cs$--value procedures perform similarly in the uncorrelated regression model, and the findings are qualitatively similar to those obtained in the sequence model, confirming an excellent behaviour in this case as well. The $s$--value procedure maintains an FDR close to $0$ regardless of the value of $\mu$, whereas the FDR of the $Cs$--value procedure is close to the target level, just slightly exceeding it for large signals. As a consequence, the $Cs$--value procedure is more powerful for moderate signals.

Figure \ref{fig:regression.Horseshoe.correlated_n2000_p4000} displays the results for the two correlated designs (ii) and (iii),  for different values of $\rho$. When a small correlation is added to the features (design (ii) with small or moderate values of $\rho$), the procedures perform similarly to the uncorrelated design. When the correlations between the features are stronger, the behaviour somewhat differs especially for small signal, although the FDR remain nearly controlled (design (iii), $\rho=.2$ and $.5$). Note that the later setting corresponds to rather strong correlations, where the posterior could be `biased' -- understanding this bias and the settings where it may appear for Bayesian sparse procedures is an interesting question that goes much beyond the scope of this work --.  Nevertheless, we observe that even in fairly strongly correlated cases (e.g. all settings of design (ii)), both $s$-- and C$s$--value procedures still perform remarkably well in terms of control of the FDR and power (see also Section \ref{sec:disc} for a discussion of open problems on this).

\begin{figure}[!h]
	\includegraphics[scale=0.55]{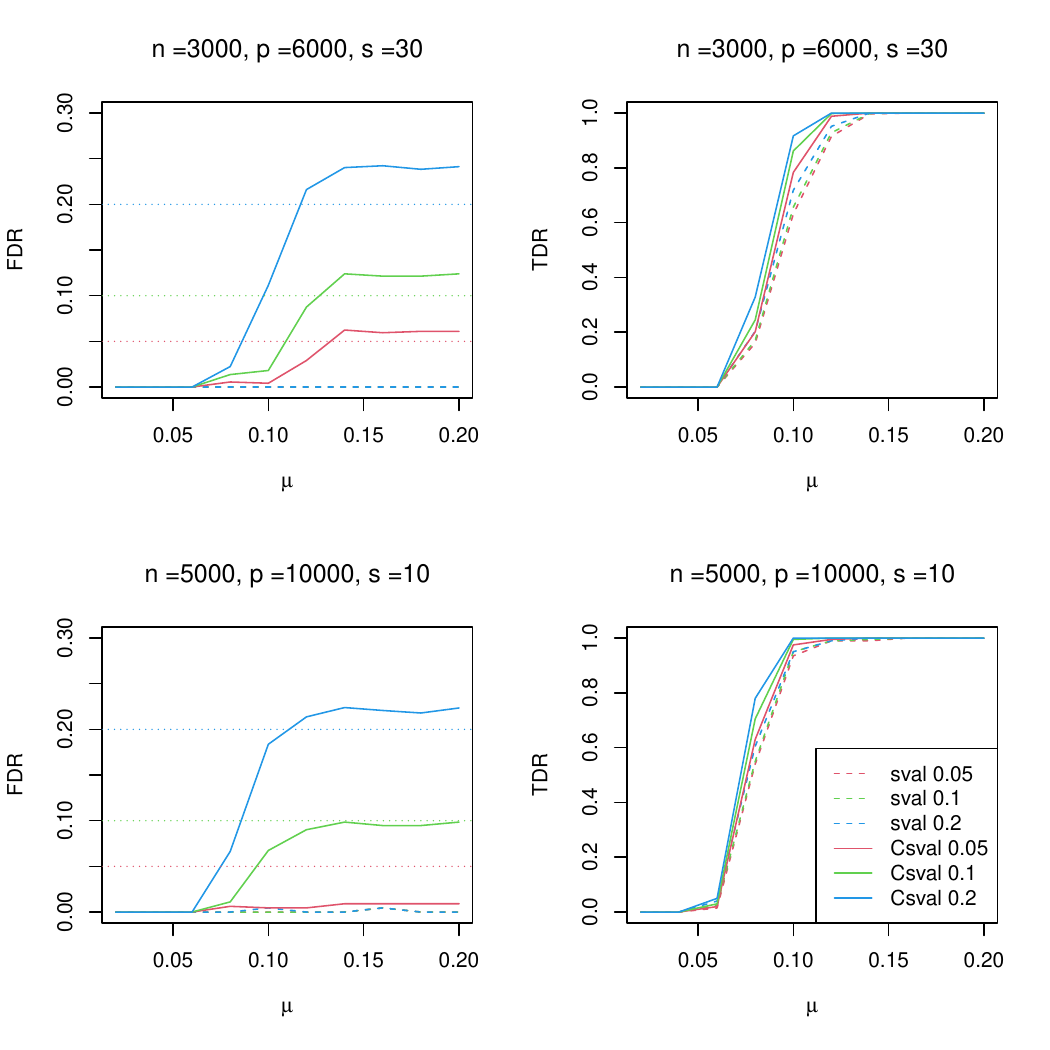}
	\centering 
		\caption{Linear Regression with uncorrelated features (i), performance of $s$--value and C$s$--value procedures. }
	\label{fig:regression.Horseshoe.uncorrelated}
\end{figure}

\begin{figure}[!t]
	\includegraphics[scale=0.55]{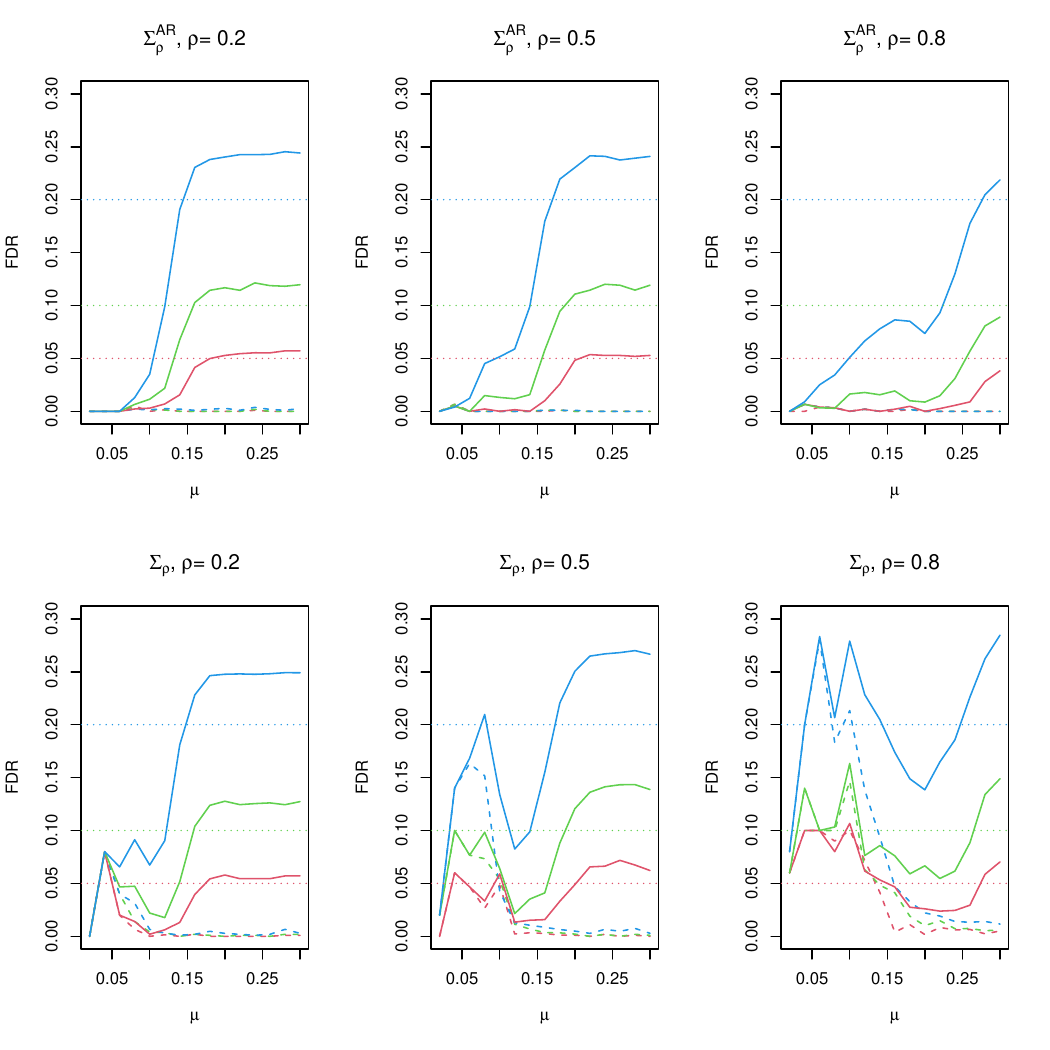}
	\centering
	\caption{Regression with  correlated designs (ii) and (iii), with $n=2000, p=4000, s=20$; the colour coding is as in Figure  \ref{fig:regression.Horseshoe.uncorrelated}.}
	\label{fig:regression.Horseshoe.correlated_n2000_p4000}
\end{figure}

\subsection{Gaussian Graphical Models} \label{sec:gm}

Let $X = (X_1,\dots,X_p)^\top \sim \mathcal{N}_p(0,\Sigma_0)$ with covariance $\Sigma_0$  in the set $\mathbb{S}_{++}^p$  of $p\times p$ symmetric positive definite matrices and let $\Omega_0 = \Sigma_0^{-1}$ denote the corresponding precision matrix. A Gaussian graphical model (GGM) represents the conditional independence structure of $X$ through an undirected graph $G_0 = (V,E_0)$, where $V = \{1,\dots,p\}$ and $E_0 \subseteq \{(i,j): 1 \leq i < j \leq p\}$ is the set of edges. In particular,
\[
(i,j) \notin E_0 
\quad \Longleftrightarrow \quad 
X_i \perp X_j \mid X_{-ij}
\quad \Longleftrightarrow \quad 
\Omega_{0,ij} = 0, \qquad i \neq j,
\]
so that the graph is fully encoded by the sparsity pattern of $\Omega_0$.

Given independent observations $X^{(1)},\dots,X^{(n)} \sim \mathcal{N}_p(0,\Sigma_0)$, the Gaussian log-likelihood for $\Omega \in \mathbb{S}_{++}^p$ is
\[
\ell_n(\Omega) 
= \frac{n}{2} \left( \log \det \Omega - \operatorname{tr}(S_n \Omega) \right),
\quad 
S_n = \frac{1}{n} \sum_{k=1}^n X^{(k)} X^{(k)\top}.
\]
In high-dimensional settings, estimation of $\Omega_0$ requires structural regularization, typically via sparsity assumptions. While penalized likelihood approaches are widely used, Bayesian formulations offer a coherent framework for both estimation and uncertainty quantification.

To this end, we adopt a continuous global--local shrinkage prior on the precision matrix, namely the graphical horseshoe (GHS) prior \cite{li2019graphical}. For $1 \le i < j \le p$,
\[
\Omega_{ij} \mid \lambda_{ij}, \tau \sim \mathcal{N}(0, \tau^2 \lambda_{ij}^2), 
\qquad 
\lambda_{ij} \sim \mathrm{C}^+(0,1), 
\qquad 
\tau \sim \mathrm{C}^+(0,1),
\]
with $\Omega_{ji} = \Omega_{ij}$ and support restricted to $\mathbb{S}_{++}^p$. The local scales $\lambda_{ij}$ enable edge-specific adaptivity, while the global parameter $\tau$ controls overall sparsity. The heavy-tailed half-Cauchy prior ensures significant shrinkage of small coefficients and minimal bias for large signals, yielding a continuous relaxation of graph selection.

The resulting posterior distribution is
\[
\pi(\Omega \mid X^{(1:n)}) \propto 
\exp\!\left\{ \frac{n}{2} \big( \log \det \Omega - \operatorname{tr}(S_n \Omega) \big) \right\} \cdot \pi(\Omega),
\]
which combines likelihood-driven inference with adaptive shrinkage. Under standard sparsity conditions, this formulation achieves optimal contraction behavior while avoiding combinatorial model selection over the graph space. We refer to the review paper \cite{banerjee2021bayesian}, Section 5, for more context and discussion on Bayesian methods for graphical models.

Recovering the conditional independence structure of $X$ amounts to identifying the zero and non-zero entries of the precision matrix. In the recent paper \cite{GHSCM}, the authors compare four frequentist methods with several Bayesian methods based on the GHS prior described above. We consider the same simulation settings to compare the performance of our $s$--value and $Cs$--value procedures for testing whether an entry of the precision matrix is zero with the results reported in \cite{GHSCM}.
Four network structures are considered: a sparse random network and a scale--free network generated using the R package \texttt{BDgraph}, as well as a scale--free network and a hub network generated using the R package \texttt{huge}. We refer to \cite{GHSCM} for details on the default arguments used and provide only the network structure illustrations as presented there, to facilitate visualization (note that the authors employ two data generators because the results may vary depending on the generator). As in \cite{GHSCM}, we perform simulations for $n=120$ and $p=100, 200$. FDR and TDR 
are evaluated empirically with $100$ replications. In the context of graph inference, the FDR corresponds to the average proportion of errors among the detected edges, whereas the TDR measures the average proportion of true edges that are successfully recovered. Results are given in Tables 
\ref{tab:GGM.p100} and 	\ref{tab:GGM.p200}.

\begin{figure}[t]
	\centering
	\begin{minipage}{0.22\textwidth}
			\centering
		\includegraphics[width=\textwidth]{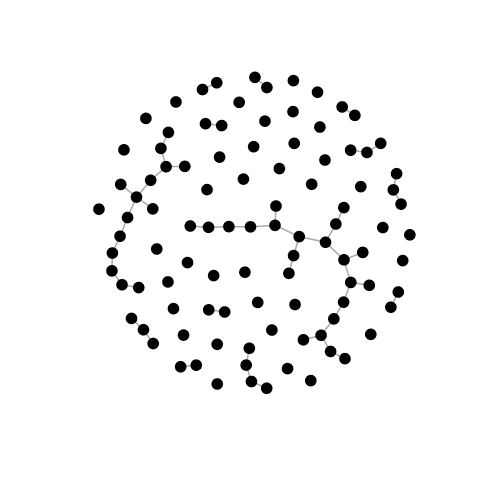}
		\end{minipage}
	\hfill
	\begin{minipage}{0.22\textwidth}
		\centering
		\includegraphics[width=\textwidth]{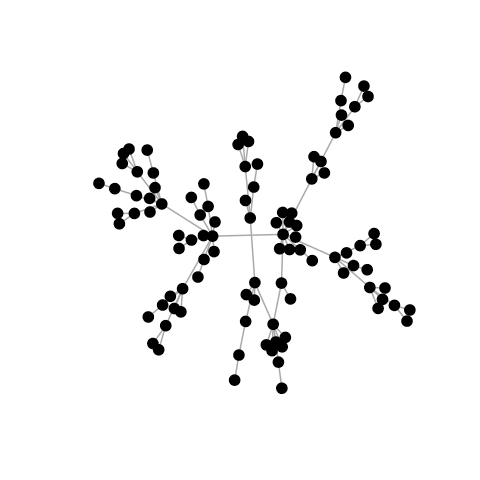}	
\end{minipage}
	\hfill
\begin{minipage}{0.22\textwidth}
		\centering
		\includegraphics[width=\textwidth]{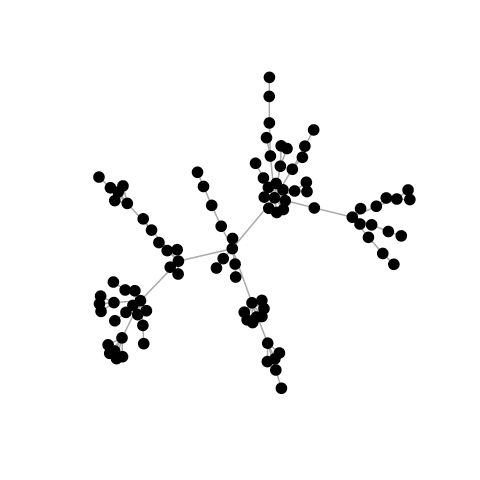}
	\end{minipage}
	\hfill
\begin{minipage}{0.22\textwidth}
		\centering
		\includegraphics[width=\textwidth]{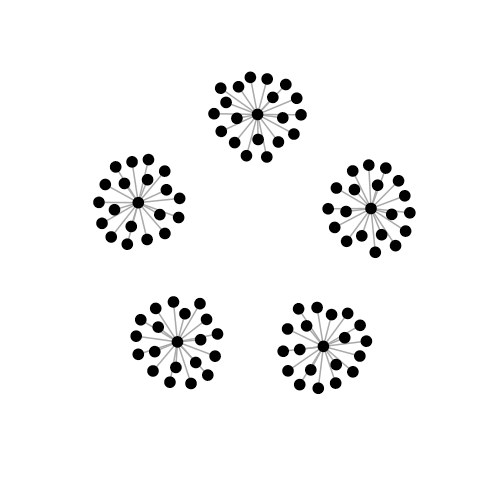}
	\end{minipage}
	\caption{Illustration of the graph structures considered in the simulation study. From left to right, random and scale--free networks were generated using the R package {\tt BDgraph}, and a scale--free network and a hub network generated using the R package {\tt huge}.}
	\label{fig:graph_topology}
\end{figure}

\begin{table}[ht]
	\centering
	\caption{FDR and TDR for the $s$--value and $Cs$--value procedures  with target level $t \in  \{0.05, 0.1\}$ under the four network structures; $n=120$ and $p=100$}
	\label{tab:GGM.p100}
	\begin{tabular}{cccc|cccc}
		\toprule
		\multicolumn{4}{c|}{Random} & \multicolumn{4}{c}{Scale-free (BDgraph)} \\
		\midrule
		t & method & FDR & TDR & $\alpha$ & method & FDR & TDR \\
		\midrule
		0.05 & s--value    & 0.004 & 0.498 & 0.05 & s--value    & 0.005 & 0.457 \\
		0.05 & Cs--value & 0.059 & 0.585 & 0.05 & Cs--value & 0.072 & 0.557 \\
		0.10 & s--value    & 0.007 & 0.523 & 0.10 & s--value    & 0.013 & 0.487 \\
		0.10 & Cs--value & 0.133 & 0.610 & 0.10 & Cs--value & 0.149 & 0.592 \\
		\midrule[\heavyrulewidth]
		\multicolumn{4}{c|}{Scale-free (huge)} & \multicolumn{4}{c}{Hub} \\
		\midrule
		t & method & FDR & TDR & $\alpha$ & method & FDR & TDR \\
		\midrule
		0.05 & s--value    & 0.015 & 0.088 & 0.05 & s--value    & 0.015 & 0.357 \\
		0.05 & Cs--value & 0.042 & 0.141 & 0.05 & Cs--value & 0.054 & 0.556 \\
		0.10 & s--value    & 0.038 & 0.120 & 0.10 & s--value    & 0.032 & 0.459 \\
		0.10 & Cs--value & 0.091 & 0.196 & 0.10 & Cs--value & 0.115 & 0.697 \\
		\bottomrule
	\end{tabular}
\end{table}

\begin{table}[ht]
	\centering
	\caption{FDR and TDR for the $s$--value and $Cs$--value procedures  with target level $t \in  \{0.05, 0.1\}$ under the four network structures; $n=120$ and $p=200$.} 
	\label{tab:GGM.p200}
		\begin{tabular}{cccc|cccc}
		\toprule
		\multicolumn{4}{c|}{Random} & \multicolumn{4}{c}{Scale-free (BDgraph)} \\
		\midrule
		t & method & FDR & TDR & $\alpha$ & method & FDR & TDR \\
		\midrule
		0.05 & s--value    & 0.006 & 0.504 & 0.05 & s--value    & 0.005 & 0.460 \\
		0.05 & Cs--value & 0.074 & 0.588 & 0.05 & Cs--value & 0.070 & 0.559 \\
		0.10 & s--value    & 0.011 & 0.528 & 0.10 & s--value    & 0.011 & 0.488 \\
		0.10 & Cs--value & 0.146 & 0.613 & 0.10 & Cs--value & 0.144 & 0.588 \\
		\midrule[\heavyrulewidth]
		\multicolumn{4}{c|}{Scale-free (huge)} & \multicolumn{4}{c}{Hub} \\
		\midrule
		t & method & FDR & TDR & $\alpha$ & method & FDR & TDR \\
		\midrule
		0.05 & s--value    & 0.039 & 0.030 & 0.05 & s--value    & 0.012 & 0.347 \\
		0.05 & Cs--value & 0.081 & 0.046 & 0.05 & Cs--value & 0.044 & 0.535 \\
		0.10 & s--value    & 0.070 & 0.040 & 0.10 & s--value    & 0.022 & 0.439 \\
		0.10 & Cs--value & 0.116 & 0.067 & 0.10 & Cs--value & 0.096 & 0.660 \\
		\bottomrule
	\end{tabular}
\end{table}

If we compare with the methods studied in \cite{GHSCM} (whose FDR and TDR results are reported in Table 2 of their paper), we observe that these competing methods do not control the FDR in most cases. While some of them achieve high TDR, this often comes at the cost of a very large FDR. In contrast, the $s$--value and $Cs$--value procedures provide control of the FDR at a desired level. Besides maintaining a  control of the FDR, the $s$--value and $Cs$--value procedures are still able to achieve high TDR values, thereby offering an attractive compromise between power and reliability in edge selection, which is essential for the interpretability of inferred graphs in practice.

 In the GGM context, the $s$-- and $Cs$--value procedures can be viewed as an additional layer applied to posterior samples, enabling graph inference with FDR control.

\section{Discussion} \label{sec:disc}

In this work we propose a principled approach to multiple testing with the horseshoe prior, which applies more generally to other continuous shrinkage priors. We demonstrate that the proposed procedures, that include an Empirical Bayes step to calibrate the prior's hyperparameter, achieve sharp minimax behaviour in the sparse sequence model, adaptive to the unknown sparsity parameter $s_n$, in close agreement with simulation results. We briefly mention below several possible extensions of our results; these also open the door to investigating a number of open questions that we also briefly discuss.

Here we have focused mainly on the horseshoe prior with an Empirical Bayes choice of the sparsity parameter $\tau$. Using similar arguments, one could endow $\ta$ itself with a prior and obtain analogous results for the resulting Hierarchical Bayes method. Another natural extension would be to obtain theory for other shrinkage priors: we conjecture that all the results of the paper remain valid for example for the Spike and Slab LASSO prior \cite{rockovageorge17}. Another natural shrinkage prior to investigate, somewhat farther from the horseshoe, would be the Dirichlet--Laplace prior \cite{BhattacharyaEtAl2015DL}, in particular in light of the excellent empirical results obtained in \cite{Chandra26} and Appendix \ref{app_dl}. 
 Finally, although for simplicity for theory we focused mostly on the $s$-- and $S$--value procedures, we believe that all results for the latter also hold in exactly the same form for the (arguably more practical for complex models) C$s$--value procedure, which could be checked following the arguments of \cite{acr22}.  

Our simulations study suggests that the theoretically proved properties continue to hold much beyond the sequence model. A very natural setting to derive further theory would be  high-dimensional linear regression, where we see empirically that the FDR control remains equally good compared to the sparse sequence model, at least under weakly or moderately correlated designs. This goes beyond the scope of the present work -- for instance, one may note that a minimax multiple testing framework for this setting  (analog to the results of \cite{acr24} for sparse sequences) has not yet been investigated --. Similarly, while we have seen strong empirical evidence in favour of the procedures for GGMs, theory for these is also not available yet. We very much hope that such settings will be addressed in future contributions. 

\section{Proof of the first main result} \label{sec:proofs}
In this Section we provide (part of) the proof of Theorem \ref{thm:hsrisk}, whose outline is given below. The proof of Theorem \ref{thm:fdrcontrol} is given in Appendix \ref{app:proofSval}.

\subsection{Expressions for $s$-- and $S$--values}
 Before giving an outline of the proof of Theorem \ref{thm:hsrisk}, we state and prove a simple lemma. 
\begin{lemma} \label{lem:Sval}
Let $x\ge 0$ and $\ta\in(0,1)$. Then, for the horseshoe prior with parameter $\ta$, the $s$--value is given by
\[ s(x;\ta) = 2
\frac{\int_0^{\infty} \Phi \left( - \la \tau x/\sqrt{1 + (\la\tau)^2}\right) \phi_{0,1 + (\la \tau)^2}(x)  d\cC^+(\la)}{\int_0^{\infty}  \phi_{0,1 + (\la\tau)^2}(x)  d\cC^+(\la)}=: 2\frac{N(x)}{D(x)},
\]
where $\cC^+$ denotes a standard half-Cauchy distribution. 
The $S$--value is given by 
\[ S(x;\ta) = 2 \frac{\int_x^{\infty} N(y)dy}{ \int_x^{\infty} D(y)dy}. \]

\end{lemma}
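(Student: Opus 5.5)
The plan is to condition on the local scale $\la$ and use Gaussian conjugacy, then handle the minimum in the definitions of $s$ and $S$ with a symmetry argument.

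\emph{Step 1 (conditional posterior).} Given $\la$, set $v=(\la\ta)^2$. Then $\te_1\sim\cN(0,v)$ and $X_1\given\te_1\sim\cN(\te_1,1)$. Two consequences follow:
\begin{enumerate}[label=\roman*)]
\item the marginal density of $X_1$ given $\la$ is $\phi_{0,1+v}(x)$;
\item $\te_1\given X_1=x,\la\sim\cN\big(xv/(1+v),\,v/(1+v)\big)$.
\end{enumerate}
Hence, writing $\Phi$ for the standard normal cdf,
\[
\Pi_\ta(\te_1<0\given X_1=x,\la)=\Phi\Big(-\tfrac{xv/(1+v)}{\sqrt{v/(1+v)}}\Big)=\Phi\big(-\la\ta x/\sqrt{1+(\la\ta)^2}\big).
\]

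\emph{Step 2 (integrating out $\la$).} The posterior of $\la$ given $X_1=x$ has density proportional to $\phi_{0,1+(\la\ta)^2}(x)\,d\cC^+(\la)$. Integrating Step 1 against it gives $\Pi_\ta(\te_1<0\given X_1=x)=N(x)/D(x)$. Here $D$ is exactly the marginal density $m$ of $X_1$, and $N(y)=\Pi_\ta(\te_1<0\given X_1=y)\,m(y)$ is the joint density of the event $\{\te_1<0\}$ and $X_1=y$.

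\emph{Step 3 (identifying the minimum for $s$).} For $x\ge0$, each conditional probability in Step 1 is at most $1/2$, so $N(x)\le D(x)/2$. Thus $\Pi_\ta(\te_1<0\given x)\le1/2\le\Pi_\ta(\te_1>0\given x)$, using that $\Pi_\ta(\te_1=0\given x)=0$ since the prior is absolutely continuous. The minimum in \eqref{svalx} is therefore the first term, which gives $s(x;\ta)=2N(x)/D(x)$.

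\emph{Step 4 (the $S$--value).} By Fubini and Step 2,
\[
\Pi_\ta(\te_1<0\given X_1\ge x)=\frac{\int_x^\infty N(y)\,dy}{\int_x^\infty D(y)\,dy}=:\frac{A}{B},
\]
and $A/B\le1/2$ by Step 3.

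It remains to show this is the minimum in \eqref{Svalx}, and this is the only non-routine step. The prior is symmetric, so $D$ is even and $\Pi_\ta(\te_1>0\given y)=1-\Pi_\ta(\te_1>0\given -y)$. Split $(-\infty,x]=(-\infty,-x)\cup[-x,x]$.
\begin{itemize}
\item On $(-\infty,-x)$, the change of variable $y\mapsto-y$ turns the contribution into $\int_x^\infty N=A$ over mass $B$.
\item On the symmetric interval $[-x,x]$, pairing $y$ with $-y$ shows the posterior probability of $\{\te_1>0\}$ integrates to exactly $C/2$, where $C=\int_{-x}^xD$.
\end{itemize}
Hence $\Pi_\ta(\te_1>0\given X_1\le x)=(A+C/2)/(B+C)$. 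This is a mediant of $A/B$ and $1/2$, so it is at least $A/B$ because $A/B\le1/2$. Therefore the minimum is $A/B$, which yields the stated formula $S(x;\ta)=2\int_x^\infty N/\int_x^\infty D$.
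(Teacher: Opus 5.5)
Your proof is correct and follows the paper's route: you condition on $\la$, use Gaussian conjugacy to get $\Pi_\ta(\te_1<0\given X_1=x,\la)=\Phi(-\la\ta x/\sqrt{1+(\la\ta)^2})$ and the marginal $\phi_{0,1+(\la\ta)^2}(x)$, integrate out $\la$, and for the $S$--value integrate numerator and denominator over $[x,\infty)$. The one addition is your explicit check that for $x\ge 0$ the minimum in \eqref{svalx} and \eqref{Svalx} is attained by the $\{\te_1<0\}$ term, via $N\le D/2$ and the mediant identity $\Pi_\ta(\te_1>0\given X_1\le x)=(A+C/2)/(B+C)$; the paper's proof leaves this step implicit, so your version is the more complete of the two.
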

\begin{proof}[Proof of Lemma \ref{lem:Sval}]
By definition of the horseshoe prior as a mixture, the distribution of $(X_1,\te_1)$ given $\la_1$ is 
\[ \cN\left(\, 0 \,,\, 
\begin{pmatrix}
1+(\ta\la_1)^2 & (\ta\la_1)^2\\
(\ta\la_1)^2 & (\ta\la_1)^2
\end{pmatrix}\,
\right).
 \]
It follows that the conditional distribution $\te_1\given  X_1,\la_1$ is $\cN(X_1/(1+(\ta\la_1)^{-2}),1/(1+(\ta\la_1)^{-2}))$ and the marginal distribution $X_1\given \la_1$ is $\cN(0,1+(\ta\la_1)^2)$. 
One deduces that the joint conditional density of $(\te_1,X_1)\given \la_1$ at point $(t,x)$ is
\[  \phi_{x/(1+(\ta\la_1)^{-2}),1/(1+(\ta\la_1)^{-2})}(t)\phi_{0,1+ (\la_1 \tau)^2}(x). \]
From this one deduces the expressions of the $s$--value and $S$--value. For the first, one notes
\[
\Pi(\te_1<0 \given X_1, \lambda_1) = \Phi \left( - \dfrac{\lambda_1 \tau X_1}{\sqrt{1 + (\lambda_1\tau)^2}}\right),
\]
so that $\Pi(\te_1<0 \given X_1)=N(X_1)/D(X_1)$ with 
\[
N(X_1) = \int_0^{\infty} \Phi \left( - \dfrac{\la \tau X_1}{\sqrt{1 + (\la\tau)^2}}\right) \phi_{0,1 + (\la \tau)^2}(X_1) \frac{2}{\pi} \dfrac{d\la}{1 + \la^2}, 
\]
and
\[
D(X_i) = \int_0^{\infty}  \phi_{0,1 + (\la\tau)^2}(X_1) \frac{2}{\pi}\dfrac{d\la}{1 + \la^2}.
\]
For the $S$--value and $X\ge x$, it suffices to integrate each expression from $x$ to $+\infty$, so that
\begin{equation} \label{condexp}
 \Pi(\te_1<0 \given X_1\ge x) = \frac{\int_x^{\infty} N(y)dy}{ \int_x^{\infty} D(y)dy}. \end{equation}  
\end{proof}

\subsection{Outline of the proof of Theorem \ref{thm:hsrisk}} The proof has three main steps  
\begin{enumerate}
\item deriving precise estimates on the $s$--value for the horseshoe prior and fixed value of $\tau$;
\item handling a random $\ta$ and showing that the MMLE $\hat\ta$ concentrates not too far from an `oracle' value $\ta=\ta_n(s_n)=(s_n/n)\sqrt{\log(n/s_n)}$ for signals in $\cL_0[s_n;b]$;
\item showing an upper bound on (FDR+FNR)--risk that matches the minimax rate. 
\end{enumerate}
The required estimates of $s$--values (Step 1) as well as the concentration property of $\hat\ta$ (Step 2) are provided in Section \ref{subsec:lemmas}. The proofs of the corresponding properties can be found in Appendix \ref{app_svalbounds}. Section \ref{sec:proofthm1} proves that the FNR is asymptotically upper bounded by the target minimax rate. To conclude the proof, it then suffices to show that the FDR vanishes asymptotically: this is slightly more complex than for the FNR and verified in Appendix \ref{app_endproofthm1}.

\subsection{Proof of Theorem \ref{thm:hsrisk}}   \label{sec:proofthm1}

\noindent {\em Step 1 (bounds on the $s$--value)}. Lemma \ref{lem:boundsv} below  provides estimates for the $s$--value $s(x;\ta)$ that we use freely in the proof below. \\

\noindent {\em Step 2.}  The second main technical ingredient is to prove that with  high--probability, the marginal maximum likelihood estimate $\hat\ta$ of $\ta$ concentrates appropriately.  
Define, for any $1\le s\le n$,
\begin{equation} \label{taun}
\ta_n(s) = \frac{s}{n}  \sqrt{\log(n/s)}.
\end{equation}  
Let us introduce the following event on the MMLE estimator $\hat\ta$ 
\begin{equation} \label{evento}
\cA:= \left\{\hat\ta(X) \in \left[d_1 \ta_n(s_n) , d_2 \ta_n(s_n)\right] \right\},
\end{equation}
where $d_1, d_2$ are as in Proposition \ref{propmle} below: the latter result shows that the event $\cA$ holds with high probability. 
In the sequel, we also use as shorthand notation, for $\ta\in(0,1)$,
\begin{equation} \label{defshort}
 \ta_+:= d_2 \ta_n(s_n),\  \ \ta_-:= d_1 \ta_n(s_n), \ \ M_\ta:=\ta^{-1/4}. 
\end{equation}

\noindent {\em Step 3.} We start with a control of the FNR. Define the function, for $u>0$, 
\begin{equation} \label{defh}
h(u)= \left( \frac{u^2}{2} \phi(u) \right)^{-1} = \sqrt{2\pi} \frac{e^{u^2/2}}{u^2/2}.
\end{equation}
Since $v\to e^{v}/v$ is strictly increasing on $[1,+\infty)$, the map $h:[\sqrt{2},+\infty)\to [\sqrt{2\pi}e,+\infty)$ admits an inverse $\la(\cdot)$
\begin{equation} \label{defla}
\la(\cdot):=h^{-1}(\cdot),
\end{equation}  
that is $\la(h(x))=x$ and  $h(\la(y))=y$ for $x\in [\sqrt{2},+\infty)$ and $y\in [\sqrt{2\pi}e,+\infty)$.\\

For $\te_0\in\cL_0[s_n;b]$, the number of false negatives of the procedure $\hat\vphi_t^{s}$ can be written as
\begin{align*} 
\sum_{i\in S_{\theta_0}} \1\{ s(X_i;\hat\ta)>t \}
& = \sum_{i\in S_{\theta_0}} \1\{ s(X_i;\hat\ta)>t \} \1_{|X_i|\in[\sqrt{2},M_{\ta_+}]}
+\sum_{i\in S_{\theta_0}} \1\{ s(X_i;\hat\ta)>t \}\1_{|X_i|\notin[\sqrt{2},M_{\ta_+}]} \\
& = \quad \qquad \qquad\qquad (I)  \qquad\qquad \quad + \quad\qquad \qquad\qquad (II). 
\end{align*}
Since $P_{\te_0}[\cA^c]=o(1)$ by Proposition \ref{propmle} for $\cA$ as in \eqref{evento}, one can restrict to the event $\cA$ for the analysis of these terms.
 Since $\ta\to M_\ta$ in \eqref{defshort} is decreasing, one has $M_{\hat\tau}>M_{\ta_+}$ on $\cA$.

Dealing first with term (I), using the first part of Lemma \ref{lem:boundsv} (since $M_{\hat\tau}>M_{\ta_+}$ on $\cA$),
\begin{align*}
 (I)\1_{\cA} & \le \sum_{i\in  S_{\theta_0}} \1\left\{ \frac{1+O(\ta_+^{1/4})}{1 + \ta_- h(X_i)/4}>t \right\} \1_{|X_i|\in[\sqrt{2},M_{\ta_+}]} \\
 & = \sum_{i\in  S_{\theta_0}} \1\left\{ h(X_i)< \frac{4}{\ta_-}\left(\frac{1-t}{t}+O\left(\ta_+^{1/4}\right)\right) \right\}\1_{|X_i|\in[\sqrt{2},M_{\ta_+}]}\\
& \le \sum_{i\in  S_{\theta_0}} \1\left\{ |X_i| < \la\left(\frac{4}{\ta_-}\left(\frac{1-t}{t}+O\left(\ta_+^{1/4}\right)\right)\right) \right\},
 \end{align*}
 where we use the inverse $\la$ of $h$ from \eqref{defla}, after noting that the term under brackets in the last display diverges, so is larger than $\sqrt{2}$ for large enough $n$.   
Let us further denote 
\begin{equation} \label{tautp}
 \ta_{t,+}:=\frac{4}{\ta_-}\left(\frac{1-t}{t}+O(\ta_+^{1/4})\right). 
\end{equation}  
For $i\in S_{\te_0}$ and  if $\te_{0,i}>0$, it holds $\1\left\{ |X_i| < \la(\ta_{t,+}) \right\} \le 
\1\left\{ \veps_i < \la(\ta_{t,+})-\te_{0,i} \right\}$; similarly,  if $\te_{0,i}<0$, one writes $\1\left\{ |X_i| < \la(\ta_{t,+}) \right\} \le 
\1\left\{ \veps_i > -\la(\ta_{t,+})-\te_{0,i} \right\}$. Assuming without loss of generality that $\te_{0,i}>0$ and using $\te_0\in\cL_0[s_n;b]$,
\[ \1\left\{ |X_i| < \la(\ta_{t,+}) \right\} 
\le  \1\left\{ \veps_i < -b + \la(\ta_{t,+}) - \sqrt{2\log(n/s_n)} \right\}.
\]
The upper--bound on $\la$ provided in Lemma \ref{lemla} gives, for $\ta_{t,+}$ as in \eqref{tautp},
\begin{align*} \la(\ta_{t,+})  - & \sqrt{2\log(n/s_n)} 
 = \frac{\la(\ta_{t,+})^2 - 2\log(n/s_n)}{ \la(\ta_{t,+}) + \sqrt{2\log(n/s_n) }}\\
& \le \frac{2\log(s_n \ta_{t,+}/n)+2\log\{2\log(\ta_{t,+})\}}{ \la(\ta_{t,+}) +\sqrt{2\log(n/s_n) }}.
\end{align*}
The numerator in the last expression of the above display is bounded from above by 
\[ 2\log\{(4/d_1)(1-t+o(1))\}-\log\log(n/s_n)+2\log(2\log\{(4/d_1)(n/s_n)(1-t+o(1))\}),\]
which is less than  $C+\log\log(n/s_n)$ for large enough $n/s_n$ for some $C>0$, so that
\[ \1\left\{ |X_i| < \la(\ta_{t,+}) \right\} 
\le  \1\left\{ \veps_i < -b + \frac{C+\log\log(n/s_n)}{\sqrt{2\log(n/s_n)}}\right\}.
\]
From this one deduces that $E_{\te_0}[ (I)\1_{\cA}] \le s_n \Phi(-b+o(1))=s_n\{\bar\Phi(b)+o(1)\}$. 

To bound from above the term (II), one uses 
$\1\{|X_i|\notin [\sqrt{2},M_{\hat\ta}]\}\le \1\{|X_i|\notin [\sqrt{2},M_{\ta_+}]\}$
and further splits 
$\1\{|X_i|\notin [\sqrt{2},M_{\ta_+}]\}=\1\{|X_i|<\sqrt{2}\} + \1\{|X_i|>M_{\ta_+}]\}$. 
The contribution of the first term to (II) is upper-bounded by, using that $\te_0\in\cL_0[s_n;b]$,
\[ s_n P\left[|\veps_1|>\sqrt{2\log(n/s_n)}+b-\sqrt{2}\right]=o(s_n).\]
To bound the contribution of $\1\{|X_i|>M_{\ta_+}]\}$ to (II), the second part of Lemma \ref{lem:boundsv} gives 
\[ \1\{ s(X_i;\ta)>t, |X_i|>M_{\ta_+} \}
\le \1\{ CX_i^2\phi(X_i)/{\ta_-} >t , |X_i|>M_{\ta_+}\},\]
using $\ta_-\le \hat\ta\le 1$; the last set is empty for $n/s_n$ large enough, using that $M_{\ta_+}^2\phi(M_{\ta_+})/{\ta_-}=o(1)$ since
 $\ta_- \asymp  \ta_+ \asymp \ta_n(s_n)$. Putting the previous bounds together leads to
  \[ \limsup_{n,s_n} \sup_{\te_0\in\cL_0[s_n;b]} \FNR(\te_0, \hat\vphi_t^s) \le \bar\Phi(b). \] 
To complement this bound, one similarly needs the following control of the FDR of the $s$--value procedure:  uniformly over $\cL_0[s_n]$
 \[ \lim_{n,s_n} \sup_{\te_0\in\cL_0[s_n;b]} \FDR(\te_0, \hat\vphi_t^s) = 0. \] 
 The proof of the last display is generally similar to that for the FNR, but slightly more technical  (due to the random denominator appearing in the FDR) and provided in the Appendix  \ref{app_endproofthm1}.    
 This shows that the multiple testing risk $\cR=\FDR+\FNR$ of the $s$--values procedure is asymptotically bounded from above by $\bar\Phi(b)$ for large enough $n$. Combining this with the minimax lower bound \eqref{mmr} (see \cite{acr24}, Theorem 1)
 concludes the proof of Theorem \ref{thm:hsrisk}. $\qed$

\subsection{Main technical Lemmas}
\label{subsec:lemmas} 

Recall the definition $\ta_n(s) = (s/n) \sqrt{\log(n/s)}$.
\begin{proposition} \label{propmle}
Let $\hat\ta$ be the marginal maximum likelihood given by \eqref{defmmle}. Let $b$ be a fixed, arbitrary, real number. There exist positive constants $d_1 < d_2$ such that,  for $\cL_0[s_n;b]$ the class of signals \eqref{defclb},
\[ 
\sup_{\te_0 \in  \cL_0[s_n;b]} P_{\te_0}\Big( \hat\ta(X)
\notin \left[d_1 \ta_n(s_n) , d_2 \ta_n(s_n)\right] \Big) =o(1).
\] 
\end{proposition}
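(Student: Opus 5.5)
The upper bound $\hat\ta\le d_2\ta_n(s_n)$ with probability tending to one is essentially Theorem 1 of \cite{vsv17}, which holds uniformly over $\ell_0[s_n]$. I would invoke it directly, checking only that its constant is uniform over $\cL_0[s_n;b]\subset \ell_0[s_n]$. The new part is the lower bound $\hat\ta\ge d_1\ta_n(s_n)$, which uses the separation condition defining $\cL_0[s_n;b]$.

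\emph{Setup.} Write the log marginal likelihood as $\ell_n(\ta)=\sum_i \log m_\ta(X_i)$ with $m_\ta=\phi * h_\ta$, and its score as $S_n(\ta)=\sum_i \partial_\ta \log m_\ta(X_i)$. Following \cite{vsv17}, I would decompose
\[ \partial_\ta \log m_\ta(x) = \ta^{-1}\, m_\ta^{1}(x), \]
with the following behaviour of $m_\ta^{1}$:
\begin{itemize}
\item for $|x|\lesssim \zeta_\ta:=\sqrt{2\log(1/\ta)}$, one has $m_\ta^{1}(x)\approx \ta\,\psi(x)-c\ta$, where $E_0[\psi(\veps)]$ is bounded;
\item for $|x|\gg\zeta_\ta$, one has $m_\ta^{1}(x)\approx 1$.
\end{itemize}
More precisely, the noise coordinates give
\[ E_0\,\partial_\ta\log m_\ta(\veps)\approx -C_1\,\zeta_\ta^{-1}\quad\text{(up to constants)}. \]
A signal coordinate with $|x|$ above the threshold $\la(\cdot)$ (the inverse of $h$, as in \eqref{defla}) contributes about $\ta^{-1}$. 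Hence the stationarity equation balances $(n-s_n)\zeta_\ta^{-1}$ against $\ta^{-1}$ times the number of detected signals, which is the heuristic origin of $\ta\asymp \ta_n(s_n)$.

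\emph{Lower bound.} To show $\hat\ta\ge d_1\ta_n(s_n)$, I would prove that $S_n(\ta)>0$ for all $\ta\in[1/n, d_1\ta_n(s_n)]$ with probability $1-o(1)$. Since the MMLE maximizes over $[1/n,1]$, this forces $\hat\ta>d_1\ta_n$. Split $S_n=S_n^{\text{null}}+S_n^{\text{sig}}$.

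\textbf{Null part.} Its expectation is $\ge -C(n-s_n)\zeta_\ta^{-1}$, using the bounds on $m_\ta^{1}$ from \cite{vsv17} (Lemma C.x there). Its variance is $O(n\,\ta^{-1}\zeta_\ta^{-1})$, since $(m^1_\ta)^2$ integrates to order $\ta\zeta_\ta^{-1}$ under $\phi$ after dividing by $\ta^2$. Chebyshev then gives
\[ S_n^{\text{null}}\ge -2C\,n\,\zeta_\ta^{-1}. \]
This needs uniformity in $\ta$ over the interval. I would obtain it from a grid of $O(\log n)$ points together with monotonicity of $\ta\, m^1_\ta$-type quantities, as in \cite{vsv17}.

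\textbf{Signal part.} For a signal coordinate with $|\te_{0,i}|\ge\sqrt{2\log(n/s_n)}+b$ and $\ta\le d_1\ta_n$, we have
\[ \zeta_\ta\ge \sqrt{2\log(n/s_n)}-O(\log\log), \]
so with probability at least $\Phi(b)-o(1)$ the observation $|X_i|$ exceeds the level at which $m_\ta^1(X_i)\ge 1/2$. Here I would use the reverse bound of Lemma \ref{lem:boundsv}-type estimates on the horseshoe marginal: $m_\ta^1(x)\ge 1-O(\ta h(x)^{-1}\cdot)$, which follows since $m_\ta(x)\approx \phi(x)+c\,\ta\, x^{-2}$. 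Concentration of a binomial count then gives $S_n^{\text{sig}}\ge c\,s_n\Phi(b)\,\ta^{-1}$, because signal contributions are bounded below by $-O(\zeta_\ta^{-1})$ anyway.

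\textbf{Conclusion.} We obtain $S_n(\ta)\ge c s_n\ta^{-1}-2Cn\zeta_\ta^{-1}>0$ once $\ta\le d_1 (s_n/n)\zeta_\ta$. Since $\zeta_\ta\asymp\sqrt{\log(n/s_n)}$ on the relevant range, this is exactly $\ta\le d_1\ta_n(s_n)$ for $d_1$ small.

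\emph{Main obstacle.} The hard part is the signal step: obtaining a clean lower bound $m_\ta^1(x)\ge 1/2$ at the precise level $|x|\approx\sqrt{2\log(n/s_n)}+b-o(1)$ when $\ta$ is as small as $d_1\ta_n$. This requires sharp two-sided asymptotics of the horseshoe marginal $m_\ta(x)$ in the transition zone $|x|\approx\zeta_\ta$, beyond the one-sided bounds of \cite{vsv17}. I would handle it with the ratio approximation $N/D$-style expansions and the function $h$ from \eqref{defh}. The key point is that $b$ is fixed, so a constant-factor slack in $d_1$ absorbs the $\log\log$ corrections.
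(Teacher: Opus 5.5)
Your upper bound and your overall plan for the lower bound agree with the paper: you show that the score $S_n(\ta)$ is positive on all of $[1/n,d_1\ta_n(s_n)]$, with the null part of order $-C^*n/\zeta_\ta$ imported from \cite{vsv17}. However, the signal step fails as written. You claim that for every $\ta\le d_1\ta_n(s_n)$ a fraction $\Phi(b)-o(1)$ of the signals has $m^1_\ta(X_i)\ge 1/2$, and hence $S_n^{\mathrm{sig}}(\ta)\ge c\,s_n\Phi(b)\ta^{-1}$.

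By Lemma C.7(vi) of \cite{vsv17}, your $m^1_\ta$ (the paper's $m_\ta$) satisfies $m^1_\ta(x)=\mu_\ta(x)(1+o(1))$ for large $|x|$, where $\mu_\ta(x)=\ta/(\ta+\pi x^2e^{-x^2/2}/2)$. So $m^1_\ta(x)\ge 1/2$ essentially requires $\frac{\pi}{2}x^2e^{-x^2/2}\le\ta$, i.e.\ $x^2\ge 2\log(1/\ta)+2\log\log(1/\ta)+O(1)$. This level \emph{increases} as $\ta$ decreases. At $\ta=1/n$ with, say, $s_n=n^{1/2}$, it is about $\sqrt{2\log n}$, while the signals sit at $\sqrt{\log n}+b$. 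So only a vanishing fraction of signals reach it, and in fact $\ta^{-1}\sum_{i\in S_0}m^1_\ta(X_i)\approx\sum_{i\in S_0}(\ta+\frac{\pi}{2}X_i^2e^{-X_i^2/2})^{-1}\ll s_n\ta^{-1}$ there.

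Your inequality $\zeta_\ta\ge\sqrt{2\log(n/s_n)}-O(\log\log)$ is a \emph{lower} bound on this detection level, which is the wrong direction for your claim. It only delivers what you want for $\ta\asymp\ta_n(s_n)$, which is also the only regime your ``main obstacle'' paragraph considers. Hence $S_n(\ta)\ge c s_n\ta^{-1}-2Cn\zeta_\ta^{-1}$ is not established on the whole interval. Positivity of the score near $d_1\ta_n(s_n)$ alone does not exclude a maximiser further to the left.

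The missing idea, which is the paper's route, is to bound the score contribution $\ta^{-1}m^1_\ta(X_i)\approx(\ta+\frac{\pi}{2}X_i^2e^{-X_i^2/2})^{-1}$ from below directly, rather than thresholding $m^1_\ta$ at $1/2$. This is also what your heuristic $\phi(x)+c\ta x^{-2}$ for the marginal density gives after differentiating in $\ta$. Lemma C.7(vi) of \cite{vsv17} makes it rigorous, so no new transition-zone asymptotics are needed.

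Consider the event, of probability $1-o(1)$ by binomial concentration using $\te_0\in\cL_0[s_n;b]$, that at least $s_n\Phi(b-\delta)$ signals satisfy $|X_i|>\zeta_{s_n/n}+\delta/2$. On it, $X_i^2e^{-X_i^2/2}\le 4(s_n/n)\zeta_{s_n/n}^2e^{-\delta\zeta_{s_n/n}/2}=o(\ta_n(s_n))$. So these signals contribute at least $s_n\Phi(b-\delta)(1+o(1))/(\ta+o(\ta_n(s_n)))$, a bound that does not degrade as $\ta\downarrow 1/n$. The remaining signals contribute nonnegatively, except those with $|X_i|\le\log\log(n/s_n)$. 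These are few, and satisfy $\ta^{-1}m^1_\ta(X_i)\ge -Ce^{X_i^2/2}$, since $m^1_\ta\ge -I_{1/2}/I_{-1/2}$ and $I_{-1/2}\ge\pi/(2\ta)$. Comparing with $C^*n/\zeta_\ta$ then yields positivity of the score for all $\ta\le\zeta_{s_n/n}(s_n/n)\Phi(b-\delta)(1+o(1))/C^*$, even with the sharp constant.

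A secondary point concerns the null part. Its variance is of order $n+n\ta^{-1}\zeta_\ta^{-3}$, because the region where $m^1_\ta\approx 1$ has probability $\asymp\ta\zeta_\ta^{-3}$; it is not $n\ta^{-1}\zeta_\ta^{-1}$. With your bound, Chebyshev fails when $n\ta\lesssim\zeta_\ta$. So near $\ta=1/n$ you should invoke Lemma C.3 and Proposition C.2 of \cite{vsv17} directly for the uniform statement, as the paper does.
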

\noindent Proposition \ref{propmle} is proved in Appendix \ref{sec:estimtau},  the next two Lemmas in Appendix \ref{sec:bounds}.
 
\begin{lemma}  \label{lem:boundsv}
There exists a $\ta_0\in(0,1)$ such that the $s$--value is bounded as follows. 
For any  $\ta\in[\ta_-,\ta_+]$, for some $0<\ta_-\le \ta_+\le \ta_0$: 
\begin{itemize}
\item uniformly in $x\in[\sqrt{2},M_{\ta_+}]$,
\[ \frac{1-|O(\ta_+^{1/4})|}{1 +  \ta_{+} h(x)} \le s(x;\ta) \le \frac{1+O(\ta_+^{1/4})}{1 + \ta_- h(x)/4}, \]
\item uniformly in $x>M_\ta$,
\[ s(x;\ta) \le C(1+\sqrt{\ta})\frac{x^2}{\ta}\phi(x).\]
\end{itemize}
\end{lemma}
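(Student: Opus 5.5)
Starting from Lemma \ref{lem:Sval}, I write $s(x;\ta)=2N(x)/D(x)$ for $x\ge 0$; the case $x<0$ follows by symmetry. The plan is to split both integrals in $\la$ into a ``spike'' region, where $\la\ta$ is small, and a ``slab'' region, where $\la\ta$ is large.

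\emph{Spike and slab pieces.} Set $u=\la\ta$. In the region $u\le \ta^{\delta}$ for a small power $\delta$ (say $\delta=1/2$), the factor $\Phi(-ux/\sqrt{1+u^2})$ equals $1/2+O(ux)=1/2+O(\ta^{1/4})$ uniformly for $x\le M_\ta=\ta^{-1/4}$. This is precisely why the range $x\le M_{\ta_+}$ appears. In the same region, $\phi_{0,1+u^2}(x)=\phi(x)(1+O(u^2x^2))=\phi(x)(1+O(\ta^{1/2}))$. The half-Cauchy mass of $\{\la\le \ta^{\delta-1}\}$ is $1-O(\ta^{1-\delta})$. Hence the spike contributions are
\[
N_0=\tfrac12\phi(x)(1+O(\ta^{1/4})), \qquad D_0=\phi(x)(1+O(\ta^{1/4})).
\]
On the complementary region, the factor $\Phi(-ux/\sqrt{1+u^2})$ is at most $\Phi(-cx\,\ta^{\delta})\le 1/2$. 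I write $D=D_0+D_1$ and $N\le N_0+N_1$ with $N_1\le \tfrac12 D_1$ (crudely).

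This gives $s=2N/D=(1+O(\ta^{1/4}))/(1+D_1/D_0)$ up to correction terms. I will need a matching inequality in each direction, exploiting that $N_1$ is in fact much smaller than $D_1$ when $x\ge\sqrt2$. The clean way is to use $N=\int \Phi(\cdot)\phi_{0,1+u^2}$ together with $\Phi(-ux/\sqrt{1+u^2})\le 1/2$ everywhere. This yields the lower bound directly,
\[
s\ge \frac{2N_0}{D_0+D_1},
\]
and the upper bound
\[
s\le \frac{2N_0+2N_1}{D_0+D_1}\le \frac{1+O(\ta^{1/4})}{1+D_1/D_0}+\frac{2N_1}{D_1},
\]
after which I show $N_1/D_1$ is small relative to the target.

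\emph{Key estimate: $D_1/D_0\asymp \ta h(x)$.} Substituting $\la=u/\ta$ gives $d\cC^+(\la)\approx (2/\pi)\ta\,du/u^2$ for $u\ge\ta^\delta$, so
\[
\frac{D_1}{D_0}\approx \frac{2\ta}{\pi\phi(x)}\int_{\ta^\delta}^\infty \frac{1}{u^2}\,\phi_{0,1+u^2}(x)\,du .
\]
The remaining integral behaves like $\int_0^\infty (2\pi)^{-1/2}u^{-3}e^{-x^2/(2u^2)}\,du=(2\pi)^{-1/2}x^{-2}$, up to factors in $[c,1]$ coming from $(1+u^2)^{-1/2}$ versus $u^{-1}$ and from the truncation at $\ta^\delta$; the truncated piece is negligible because $e^{-x^2/(2u^2)}$ is tiny for small $u$ once $x\ge\sqrt2$. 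Comparing with $h(x)=\sqrt{2\pi}\,e^{x^2/2}/(x^2/2)$ gives upper and lower constants, which I aim to choose as $1$ and $1/4$ respectively, matching the statement.

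For $N_1$, the integrand carries an extra factor $\Phi(-xu/\sqrt{1+u^2})$, which is small where $D_1$'s mass sits ($u\gtrsim x$). A Mills-ratio bound should give $N_1\le \epsilon D_1$, or at least make it absorbable. This bookkeeping of constants, uniform over $x\in[\sqrt2,M_{\ta_+}]$ and $\ta\in[\ta_-,\ta_+]$, is the main obstacle. Monotonicity in $\ta$ then lets me replace $\ta$ by $\ta_+$ in the lower bound and by $\ta_-$ in the upper bound.

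\emph{Second bullet ($x>M_\ta$).} Use $s\le 2N/D$ with $D\ge D_1\gtrsim \ta\,x^{-2}$: the mass sits at $u\approx x$, and the slab integral gives order $\ta/x^2$ without the $1/\phi$ factor. For the numerator, $\Phi(-ux/\sqrt{1+u^2})$ together with $\phi_{0,1+u^2}(x)$ bounds $N$. The spike part contributes at most $\phi(x)$, while the slab part contributes at most $\ta\int u^{-2}\Phi(-xu/\sqrt{1+u^2})\phi_{0,1+u^2}(x)\,du\lesssim \ta\,\phi(x)$ plus lower order, since $\Phi(-a)\phi_{0,1+u^2}(x)\le\phi(x)$-type Gaussian product identities hold. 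Hence $s\lesssim (1+\ta)\phi(x)x^2/\ta$, which is the claimed bound (the $\sqrt\ta$ slack absorbs constants).
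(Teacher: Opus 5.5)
The step you defer, getting the lower constant $1/4$ in $D_1/D_0\gtrsim \tau h(x)$, cannot be carried out, and your own computation shows why.

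For large $x$ the slab integral gives $D_1\sim 2\tau/(\pi\sqrt{2\pi}\,x^2)$; this is also the second part of Lemma~\ref{lemde}. Since $x^2\phi(x)=2/h(x)$, this means $D_1/D_0\sim \tau h(x)/(\pi\sqrt{2\pi})\approx 0.127\,\tau h(x)$, which is strictly below $\tau h(x)/4$. Now $\tau h(x)\to\infty$ for $x$ near $M_{\tau_+}$. In that part of the range $s(x;\tau)\sim \pi\sqrt{2\pi}/(\tau h(x))\approx 7.9/(\tau h(x))$, while the claimed upper bound is $(1+o(1))\,4/(\tau_- h(x))$. Taking $\tau_-=\tau_+=\tau$, the upper bound as stated is false, so no argument can deliver $1/4$.

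What is true, and what your method gives once repaired, is the bound with $1/4$ replaced by any absolute constant $c<1/(\pi\sqrt{2\pi})$. The lower bound with constant $1$ is fine. The paper reaches $1/4$ through a slip in its last step. With $b(y)=(\sqrt2\pi)^{-1}\int_1^y e^v v^{-3/2}dv$, the inequality $(\sqrt2\pi)b(y)\ge y^{-3/2}e^y/2$ yields $x\,b(x^2/2)\ge e^{x^2/2}/(\pi x^2)=h(x)/(2\pi\sqrt{2\pi})$, not $h(x)/4$. Moreover $\int_1^y e^v dv=e^y-e$, so that inequality needs $y\ge 1+\log 2$; bounded $x$ are absorbed in the $O(\tau^{1/4})$ factor. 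The proof of Theorem~\ref{thm:hsrisk} only uses the existence of some constant, which enters $\lambda(\tau_{t,+})$ through a logarithm. So it survives with, e.g., $c=1/16$, and that is what you should aim for.

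Separately, your control of the slab part of the numerator fails for bounded $x$.

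\emph{The size of $D_1$.} With cutoff $u=\tau^{1/2}$, for bounded $x$ the integral $D_1$ is dominated by $u\in[\tau^{1/2},1]$, where $\phi_{0,1+u^2}(x)\asymp\phi(x)$. The comparison with $e^{-x^2/(2u^2)}$ is illegitimate there, and the ratio to $u^{-1}e^{-x^2/(2u^2)}$ is not confined to $[c,1]$. Hence $D_1\asymp\sqrt{\tau}\,\phi(x)$, and $D_1/D_0=O(\sqrt\tau)+O(\tau h(x))$ rather than $\asymp \tau h(x)$.

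\emph{The size of $N_1$.} On that same region $\Phi(-ux/\sqrt{1+u^2})\approx 1/2$, so $N_1\approx D_1/2$, contrary to your claim $N_1\ll D_1$. Your bound $s\le (1+O(\tau^{1/4}))/(1+D_1/D_0)+2N_1/D_1$ then only gives $s\lesssim 2$ there.

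\emph{The repair.} In effect this is the paper's route: compare the numerator with $\phi(x)$ rather than with $D_1$. For $x\ge 0$, $N(x)=\int_{-\infty}^0\phi(x-\theta)h_\tau(\theta)\,d\theta\le\phi(x)/2$, with $h_\tau$ the horseshoe density. In the paper's variable $z=u^2/(1+u^2)$ this is the bound $\Phi(-\sqrt z\,x)e^{zx^2/2}\le 1$, which gives $N_1=O(\sqrt\tau)\,\phi(x)$. Combined with your $N_0$, this gives $s=\phi(x)(1+O(\tau^{1/4}))/D(x)$. Everything then reduces to two-sided bounds on $D(x)/\phi(x)-1$, which the paper imports via Lemma~\ref{lemde}, with the $O(\sqrt\tau)$ piece absorbed into the prefactor.

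With this repair, your lower bound (keeping the $O(\sqrt\tau)$ term in $D_1/D_0$) and your second bullet go through. In the second bullet the slab part of $N$ is $O(\sqrt\tau)\,\phi(x)$ rather than $O(\tau)\,\phi(x)$, which is harmless.
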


\begin{lemma} \label{lemla}
For any $v>0$ large enough, the following bounds hold for $\la(\cdot)$ as in \eqref{defla}
\begin{align}
\la(v) & \le \sqrt{2\log{v}+2\log\{2\log(v)\}-\log(2\pi)},       \label{laub}\\
\la(v) & \ge \sqrt{2\log{v}+2\log\{\log(v/\sqrt{8\pi})\}-\log(2\pi)}.  \label{lalb}
\end{align}
\end{lemma}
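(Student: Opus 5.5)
The plan is to turn the defining equation $h(\la(v))=v$ into a fixed-point identity for $\la(v)^2$, and then bound the logarithmic term $\log(\la(v)^2)$ on that identity's right-hand side from above and from below using cruder estimates.

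\emph{Step 1: fixed-point identity.} By \eqref{defh}, $h(u)=2\sqrt{2\pi}\,e^{u^2/2}/u^2$. Write $\la=\la(v)$, which lies in $[\sqrt 2,\infty)$ and is well defined for $v\ge \sqrt{2\pi}e$. The relation $h(\la)=v$ reads $e^{\la^2/2}=v\la^2/(2\sqrt{2\pi})$. Taking logarithms gives
\[ \la^2 = 2\log v + 2\log(\la^2) - \log(8\pi) = 2\log v + 2\log(\la^2/2) - \log(2\pi), \]
using $2\log 2-\log(8\pi)=-\log(2\pi)$. Both \eqref{laub} and \eqref{lalb} then follow by bounding $\la^2/2$ inside the logarithm.

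\emph{Step 2: upper bound \eqref{laub}.} It suffices to show $\la^2/2\le 2\log v$, i.e.\ $\la\le 2\sqrt{\log v}$. Since $h$ is strictly increasing on $[\sqrt2,\infty)$, so is its inverse $\la(\cdot)$. Hence it is enough to check $h(2\sqrt{\log v})\ge v$. This holds because
\[ h(2\sqrt{\log v})=\frac{2\sqrt{2\pi}\,v^2}{4\log v}=\frac{\sqrt{2\pi}\,v^2}{2\log v}, \]
which exceeds $v$ as soon as $\sqrt{2\pi}\,v\ge 2\log v$, true for all large $v$. Also $2\sqrt{\log v}\ge\sqrt2$ for large $v$, so this point lies in the domain of $h$. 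Plugging $\la^2/2\le 2\log v$ into the identity of Step 1 gives $\la^2\le 2\log v+2\log\{2\log v\}-\log(2\pi)$, which is \eqref{laub} after taking square roots.

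\emph{Step 3: lower bound \eqref{lalb}.} From the first form of the identity, and since $\la^2\ge 2\ge 1$ gives $\log(\la^2)\ge 0$, we get $\la^2\ge 2\log v-\log(8\pi)=2\log(v/\sqrt{8\pi})$. Hence $\la^2/2\ge\log(v/\sqrt{8\pi})$, which is positive for $v>\sqrt{8\pi}$. Substituting this into the second form of the identity yields $\la^2\ge 2\log v+2\log\{\log(v/\sqrt{8\pi})\}-\log(2\pi)$, i.e.\ \eqref{lalb}.

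The only step needing a little care is the monotonicity argument in Step 2. One must confirm that the comparison point $2\sqrt{\log v}$ lies in the domain $[\sqrt2,\infty)$ where $h$ is increasing, and that the inequality $h(2\sqrt{\log v})\ge v$ holds. Both are true for $v$ large enough, which is why the lemma is stated only for large $v$. Everything else is direct algebra on the logarithmic fixed-point equation.
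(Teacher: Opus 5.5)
Your proof is correct and follows essentially the paper's route. The paper defers to Lemma S--12 of \cite{cr20} with $\ga(u)=2/u^2$ in place of $g$, which amounts to bootstrapping the identity $\la^2=2\log v-2\log\ga(\la)-\log(2\pi)$. For the lower bound it uses the crude bound $\ga\le 2$ on $[1,\infty)$, which is exactly your $\log(\la^2)\ge 0$. For the upper bound it first obtains a crude upper bound on $\la$ using that $\ga$ is decreasing and $\log\ga$ is Lipschitz. You obtain that crude bound $\la(v)\le 2\sqrt{\log v}$ instead directly from the monotonicity of $h$ and the evaluation of $h(2\sqrt{\log v})$, which is a minor difference and makes your argument self-contained and fully explicit.
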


\begin{funding}
This work was funded by ANR BACKUP grant ANR-23-CE40-0018-01 and IIM Indore's research grant GSRC-2023-1.
\end{funding}

\begin{supplement}
\stitle{Supplementary material to `Multiple testing with the horseshoe'}
\sdescription{We provide the rest of the proofs of the results contained in the main article, some technical lemmas as well as additional simulations corroborating the theory.}
\end{supplement}

\bibliographystyle{imsart-number} 
\bibliography{bib_mt_hs}      

\pagebreak

 \appendix 

\title{Supplementary material to `Multiple testing with the horseshoe'}

\vspace{1cm}

This supplement contains the remaining proofs from the main paper in Section \ref{app:proofs}. In Section \ref{app:sims}, more details on the algorithms used for the simulations are provided, together with additional simulations in the several settings considered in the main paper. Section \ref{app_ssl} gives some more details and context on the comparison with the spike--and--slab LASSO prior.

\section{Proofs} \label{app:proofs}

\subsection{End of the proof of Theorem \ref{thm:hsrisk}}
\label{app_endproofthm1}

It remains to show that the FDR vanishes uniformly for vectors in the class $\cL_0[s_n;b]$. To do so, one first derives a bound on the number of true positives of the procedure $\hat\vphi^s_t$, that is
\[ TP := \sum_{i\in S_{\theta_0}} \1\{ s(X_i;\hat\ta)\le t \}.\]

{\em Controlling the number of true positives TP.}  We show that  the event, with $c_1:=\Phi(b)$,
\begin{equation} \label{defb}
\cB = \left\{   TP \ge (c_1-\delta)s_n\right\},
\end{equation}
has a probability going to $1$.  The event can also be written $\{FN\le (c_2+\delta)s_n\}$, with $c_2=\bar\Phi(b)$, using that the number of false negatives $FN$ equals $s_n-TP$. Recall from the beginning of the proof that 
FN can be written as
\begin{align*} 
\sum_{i\in S_{\theta_0}} \1\{ s(X_i;\hat\ta)>t \}
& = \sum_{i\in S_{\theta_0}} \1\{ s(X_i;\hat\ta)>t \} \1_{|X_i|\in[\sqrt{2},M_{\ta_+}]}
+\sum_{i\in S_{\theta_0}} \1\{ s(X_i;\hat\ta)>t \}\1_{|X_i|\notin[\sqrt{2},M_{\ta_+}]} \\
& = \quad \qquad \qquad\qquad (I)  \qquad\qquad \quad + \quad\qquad \qquad\qquad (II). 
\end{align*}
Also, we have already shown earlier that $(II)=o_{P_{\te_0}}(s_n)$ on the event $\cA$. It is thus enough to bound the probability, using the notation $\ta_{t,+}$ as in \eqref{tautp},
\[ P_{\te_0}\left[(I) \ge (c_2+\delta/2)s_n\, ,\, \cA\right]
\le   P_{\te_0}\left[\sum_{i\in S_{\te_0}} 
\1\left\{ |X_i| < \la\left(\ta_{t,+}\right)  \right\}\ge (c_2+\delta/2)s_n \right]. \]
Denoting by $N_1$ the random variable $ \sum_{i\in S_{\theta_0}} \1\{ |X_i|< \la(\ta_{t,+}) \}$, one notes that $N_1$ is a sum of independent Bernoulli variables of parameters, say, $a_i$. This is a Poisson--Binomial distribution $PBin(a)$ with parameters $a=(a_i)$, and by the same reasoning as for the FNR above,
\[ a_i = P_{\te_0}\left[|X_i| < \la(\ta_{t,+}) \right] \le \bar\Phi(b)+o(1)
=c_2+o(1), \]
where the $o(1)$ in fact does not depend on $i$. Since the Poisson--Binomial distribution is stochastically increasing in its parameters (Lemma S-25 in \cite{acr24}), we have that $PBin(a)$ is stochastically dominated by a binomial $Bin(s_n,\bar\Phi(b)+o(1))$ distribution -- denote by $N_2$ a variable following the latter --. Now one can bound from above, for small $\delta>0$, 
\begin{align*}
P_{\te_0}[\cB^c] & \le P[N_1\ge (c_2+\delta/2)s_n]+o(1) \le P[N_2 \ge (c_2+\delta/2)s_n]+o(1)\\
& \le P[N_2-E N_2 \ge (\delta/4)s_n]+o(1),
\end{align*}
for $n,s_n$ large enough, using that $E N_2=s_n(\bar\Phi(b)+o(1))=s_n(c_2+o(1))$. An application of Bernstein's inequality (Lemma \ref{bernstein}) for the binomially distributed $N_2$ now gives $P_{\te_0}[\cB^c]=o(1)$ as desired.\\

{\em FDR control.} Let $FP$ be the number of false positives of the $s$--value procedure and denote \[ \Delta:= \sum_{i\notin S_{\te_0}} \1_{|\veps_i|> M_{\ta_+}}. \]
By definition of $M_{\ta_+}$, we have $E_{\te_0}\Delta\le 2n\bar\Phi(\ta_+^{-1/4})$. Since  by definition
\[ \ta_+\leqa (s_n/n)\sqrt{\log(n/s_n)}\leqa \sqrt{s_n/n},\]  
we have $\ta_+^{-1/4}\geqa (n/s_n)^{-1/8}$ and so, using the standard bound $\bar\Phi(x)\le e^{-x^2/2}$ for $x\ge 0$,
\[ E_{\te_0}\Delta \le 2n\bar\Phi(\ta_+^{-1/4})\le 2s_n (n/s_n)\exp\{-C(n/s_n)^{-1/4}\}=o(s_n).\]   
Then we bound from above
\begin{align*}
FP & \le  \sum_{i\notin S_{\theta_0}} \1\{ s(\veps_i;\hat\ta) < t \}\1_{|\veps_i|\le M_{\ta_+}} 
+ \Delta.
\end{align*}
The indicator in the last display can be bounded from above as follows, on the event $\cA$,
\begin{align*}
  \1\left\{ s(\veps_i,\hat\ta )<t ,  |\veps_i|\le M_{\ta_+} \right\}
& \le \1\left\{  s(\veps_i,\hat\ta)<t , \sqrt{2}\le |\veps_i|\le M_{\ta_+} \right\} + \1\left\{  s(\veps_i,\hat\ta)<t ,  |\veps_i|<\sqrt{2}\right\}
\\
& \le
\1\left\{  \frac{1-|O(\ta_+^{1/4})|}{1+\ta_+ h(\veps_i)}<t \right\} + \1\left\{ 1-|O(\ta_+^{1/4})|<t \right\},
\end{align*}
where to bound  the first indicator in the sum we have used Lemma \ref{lem:boundsv}, while for the second the first estimate of Lemma \ref{lemsval} for $x\in[0,\sqrt{2}]$. Now the last indicator in the above display equals $0$ for large enough $n,s_n$ (uniformly in $i$) since $t<1$. This leads to, on the event $\cA$,
\begin{align*}
 FP & \le \sum_{i\notin S_{\theta_0}} \1\left\{ h(\veps_i) > \frac{1-t-|O(\ta_+^{1/4})|}{t\ta_+} \right\} +\Delta\\
& \le \left[\sum_{i\notin S_{\theta_0}} \1\left\{|\veps_i| > \la(C_1/\ta_n(s_n))\right\} +\Delta\right]=: F_+,
\end{align*} 
for some $C_1>0$ (and large enough $n, s_n$), using the definition of $\ta_+$. 

 Then, with $\cA, \cB$ the events in \eqref{evento}, \eqref{defb} respectively, for $F_+$ as in the last display,
\begin{align*}
 \FDR(\te_0,\hat\vphi_t^s) &= E_{\theta_0}\left[ \frac{FP}{(FP+TP)\vee 1} \right] 
\le E_{\theta_0}\left[\frac{F_+}{F_+ +cs_n} \1\{\cA \cap \cB \} \right] +o(1)\\
& \le E_{\theta_0}\left[\frac{F_+}{F_+ +cs_n}\right]+o(1) \le \frac{E_{\theta_0} F_+}{E_{\theta_0} F_+ +cs_n}+o(1),
\end{align*}
where for the first inequality one uses monotonicity of $x\to x/(a+x)$ for $a\ge 0$, the lower bound on $TP$ on $\cB$ and the fact that $P_{\te_0}[(\cA\cap \cB)^c]=o(1)$; for the last inequality one uses Jensen's inequality together with the fact that $x\to x/(x+d)$ is concave over $x\ge 0$, for any given $d>0$. One can further bound, denoting $z_n:=C_1/\ta_n(s_n)$,
\[ E_{\theta_0} F_+ \le nP[|\veps_1|>\la(z_n)] + o(s_n). \]
Using the lower bound in Lemma \ref{lemla}, the probability in the last display is bounded from above by, using $\bar\Phi(u)\le \phi(u)/u$ for $u>0$, with $z_n=C_1/\ta_n(s_n)$ and $C_1$ as above,
\begin{align*}
P&[|\veps_1|>\la(z_n)]  \le P\left[|\veps_1|>\sqrt{2\log{z_n}+2\log\{\log(Cz_n)\}-\log(2\pi)}\right]\\
& \leqa \phi\left(\sqrt{2\log\left\{C_1 \frac{n}{s_n}\frac{\log(Cz_n)}{\sqrt{\log(n/s_n)}}\right\}-\log(2\pi)}\right)/\sqrt{\log(n/s_n)}\\ 
& \leqa \phi\left(\sqrt{2\log\left\{C_2 \frac{n}{s_n}\sqrt{\log(n/s_n)} \right\}-\log(2\pi)}\right)/\sqrt{\log(n/s_n)}, 
\end{align*}
for some $C_2>0$, where for the last inequality one uses $\log(Cz_n)\geqa \log(n/s_n)$. 
This further gives, for a constant $C_3>0$,
\begin{align*}
P[|\veps_1|>\la(z_n)] 
& \le 
C_3 \frac{s_n}{n}(\log(n/s_n))^{-1}.
\end{align*}
Putting the previous inequalities together leads to $E_{\te_0} F\le C_3 s_n/\log(n/s_n)+o(s_n)=o(s_n)$. Going back to the first inequality on the FDR above, and using again  that $x\to x/(x+d)$ is increasing over $x\ge 0$, for any given $d>0$, one concludes that 
\[ 
 \FDR(\te_0,\hat\vphi_t^s)  \le \frac{E_{\theta_0} F_+}{E_{\theta_0} F_+ +cs_n}+o(1)\le \frac{o(s_n)}{o(s_n)+cs_n}+o(1)=o(1),
\]
so that the FDR of the $s$--value procedure goes to $0$, uniformly over $\cL_0[s_n;b]$, that is
 \[ \lim_{n,s_n} \sup_{\te_0\in\cL_0[s_n;b]} \FDR(\te_0, \hat\vphi_t^{s}) = 0. \] 
 This shows that the multiple testing risk FDR$\,$+$\,$FNR is asymptotically bounded from above by $\bar\Phi(b)$ for large enough $n$. Combining this with the minimax lower bound \eqref{mmr} (see \cite{acr24}, Theorem 1)
 concludes the proof of Theorem \ref{thm:hsrisk}. \qed

\subsection{Proofs for the $s$--value procedure: technical lemmas}
\label{app_svalbounds}

\subsubsection{Bounds on $s$--values}  \label{sec:bounds}

Let us recall the definition, for any $\tau\in(0,1)$ (see \eqref{defshort})
\[ M_\ta = \ta^{-1/4}.\]
Let us also verify that when bounding the $s$--value $s(x;\ta)$, it is enough to deal with the case $x\ge 0$. Indeed, by symmetry around $0$ of both the prior and the likelihood, the Bayesian distribution of $(-X,-\te)$ is the same as that of $(X,\te)$, which implies, for $x<0$, that $\Pi[\te_i>0\given X_i =  x]=\Pi[-\te_i<0\given -X_i = -x]=\Pi[\te_i<0\given X_i = -x]$, so that, for any real $x$,
\[ s(x;\ta) = 2\Pi[\te_i<0\given X_i = |x|]. \]
In particular, for the next lemma deals we can restrict to the case $x\ge 0$. \begin{lemma} \label{lemsval}
Let $\Pi=\Pi_\ta$ be the horseshoe prior with parameter $\tau>0$.  Suppose $\te\sim \Pi$ in the one-dimensional model $X\given \te \sim \cN(\te,1)$. 
Then there exists $\ta_0>0$ such that the following estimates hold uniformly in $\tau\in(0,\ta_0]$,
\begin{itemize}
\item if $\,0\le x\le M_\ta$, for $\Phi(0)=1/2$ 
and $\rho_\ta(\cdot)$ such that $\ \sup_{0 \le x \le M_\ta} |\rho_\ta(x)|\le C \ta^{1/4}$,
\[  \Pi_\ta[\te<0 \given X=x]  
= 
\begin{dcases}
\ \Phi(0)(1+\rho_\ta(x)) & \qquad \text{if }\quad 0\le x\le \sqrt{2},\\
\ \displaystyle{\frac{ \Phi(0) }
{1+ \ta x \int_1^{x^2/2} e^vv^{-3/2} dv /(\sqrt{2}\pi)}
(1+\rho_\ta(x))} & \qquad \text{if }\quad x\in [\sqrt{2},M_\ta];\\
\end{dcases}
\] 
\item if $\,x>M_\ta$, for $C>0$  a large enough universal constant,
\[ 0\le  \Pi_\ta[\te<0 \given X=x]  \le C(1+\sqrt{\ta}) \frac{x^2}{\ta}\phi(x). \]
\end{itemize}
\end{lemma}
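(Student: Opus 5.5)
We start from Lemma \ref{lem:Sval}, which writes $\Pi_\ta[\te<0\given X=x]=N(x)/D(x)$ with
\[ N(x)=\int_0^\infty \Phi\Big(-\frac{\la\ta x}{\sqrt{1+(\la\ta)^2}}\Big)\phi_{0,1+(\la\ta)^2}(x)\,d\cC^+(\la),\qquad D(x)=\int_0^\infty \phi_{0,1+(\la\ta)^2}(x)\,d\cC^+(\la). \]
The plan is to split both integrals at a cutoff $\la\le L_\ta$ versus $\la>L_\ta$, with $L_\ta=\ta^{-1/2}$. Then $\la\ta\le \ta^{1/2}$ on the first part, and this piece behaves like a spike at zero.

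\emph{Small $\la$.} On $\la\le L_\ta$ we have $\la\ta x\le \ta^{1/2}M_\ta=\ta^{1/4}$ for $x\le M_\ta$. Hence $\Phi(-\la\ta x/\sqrt{1+(\la\ta)^2})=\Phi(0)(1+O(\ta^{1/4}))$. Also $\phi_{0,1+(\la\ta)^2}(x)=\phi(x)(1+O(\ta x^2+\ta))=\phi(x)(1+O(\ta^{1/2}))$, and $\cC^+([0,L_\ta])=1-O(\ta^{1/2})$. So the small-$\la$ parts are $N_1=\Phi(0)\phi(x)(1+O(\ta^{1/4}))$ and $D_1=\phi(x)(1+O(\ta^{1/2}))$.

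\emph{Large $\la$.} On $\la>L_\ta$ the factor $\Phi(\cdot)$ is at most $\Phi(-c\,\ta^{1/2}L_\ta x)$, which is small once $x$ is bounded below, and at most $1/2$ always. So $N_2\le \Phi(0) D_2$; a finer bound is not needed, since only the ratio matters. For $D_2$, substitute $u=\la\ta$ and use $d\cC^+(\la)\approx \frac{2}{\pi}\frac{\ta}{u^2}du$ (valid since $\la\ge\ta^{-1/2}$ is large):
\[ D_2\approx \frac{2\ta}{\pi}\int_{\ta^{1/2}}^\infty \frac{e^{-x^2/(2(1+u^2))}}{\sqrt{2\pi(1+u^2)}}\frac{du}{u^2}. \]
Next change variables to $v=x^2/(2(1+u^2))$. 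Relative to the target, the contribution of $u\le \ta^{1/4}$ is negligible. This should produce
\[ D_2=\frac{\ta x}{\sqrt2\,\pi}\,\phi(x)\int_1^{x^2/2}e^v v^{-3/2}\,dv\,(1+O(\ta^{1/4})) \]
for $x\ge\sqrt2$, where the lower limit $v\approx 1$ comes from the region $u\gtrsim x$. The boundary region needs care. For $x\le\sqrt2$ the same computation only gives $D_2=O(\ta)\,\phi(x)$.

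\emph{Combining.} For $x\ge\sqrt2$, the ratio is
\[ \frac{N_1+N_2}{D_1+D_2}\le \frac{\Phi(0)\phi(x)(1+O(\ta^{1/4}))+N_2}{\phi(x)+D_2}. \]
Writing $r=D_2/D_1$, this equals $\Phi(0)(1+O(\ta^{1/4}))/(1+r)$ plus $N_2/(D_1+D_2)$. Getting the stated form with relative error $\rho_\ta$ then requires $N_2/D_2$ to be small relative to $1/(1+r)$. For this we would use $\Phi(-\la\ta x/\sqrt{1+(\la\ta)^2})\le \Phi(-c\min(u,1)x)$. Since most of the $D_2$ mass for large $x$ sits at $u\gtrsim 1$, this factor is exponentially small there. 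The remaining part, $\ta^{1/2}\le u\le 1$, is controlled by $e^{-x^2/(2(1+u^2))}$ being much smaller than $e^{-x^2/4}$ when $u<1$.

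This matching of $N_2$ against $D_2$ uniformly in $x\in[\sqrt2,M_\ta]$, including $x$ near $\sqrt2$ where $r$ is tiny, is the main obstacle. I would handle it by absorbing $N_2\le D_2/2=O(\ta)\phi(x)$ into the error when $r\le\ta^{1/2}$, and using the decay argument otherwise. The case $x\le\sqrt2$ follows directly from $D_2=O(\ta)\phi(x)$.

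\emph{Case $x>M_\ta$.} Bound $N(x)\le \int\Phi(-\la\ta x/\sqrt{1+(\la\ta)^2})\phi_{0,1+(\la\ta)^2}(x)\,d\cC^+$ by splitting into $u\le1$ and $u>1$, and use $\Phi(-a)\le\phi(a)/a$. For $u\le1$, the product of Gaussians equals $\phi(x)$ times a bounded factor, after noting that the exponents add to $-x^2/2$. This gives $N\lesssim (1+\sqrt\ta)\phi(x)$. For the lower bound on $D$, restrict to $u\in[x/2,x]$: there $\phi_{0,1+u^2}(x)\gtrsim 1/x$ and the $\cC^+$ mass is $\gtrsim \ta/x$, so $D\gtrsim \ta/x^2$. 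The ratio gives $C(1+\sqrt\ta)x^2\phi(x)/\ta$.
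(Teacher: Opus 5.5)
There is a genuine gap, and it sits in the regime $x\in[\sqrt2,M_\ta]$ that you flag as the main obstacle.

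\textbf{Where it fails.} The trouble starts with your remark that for $N_2$ ``a finer bound is not needed, since only the ratio matters''. Since $D_1+D_2=\phi(x)(1+r)(1+O(\ta^{1/2}))$, the target $\Phi(0)(1+\rho_\ta)/(1+r)$ is equivalent to $N_1+N_2=\Phi(0)\phi(x)(1+O(\ta^{1/4}))$. So what you need is the \emph{absolute} bound $N_2\le C\ta^{1/4}\phi(x)$, uniformly on $[\sqrt2,M_\ta]$. Comparing $N_2$ with $D_2$ is the wrong benchmark, and $N_2\le\Phi(0)D_2$ is useless once $r\gtrsim1$.

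Your decay argument does not deliver this absolute bound. The requirement $u/\sqrt{1+u^2}\ge c\min(u,1)$ for all $u$ forces $c\le1/\sqrt2$. On $u\ge1$ your bound is therefore at best $\Phi(-x/\sqrt2)D_2$, which is of order $\ta e^{x^2/4}x^{-3}\phi(x)$. On $\ta^{1/2}\le u\le1$, bounding $\Phi$ by $1/2$ leaves a term of order $\ta e^{x^2/4}x^{-2}\phi(x)$. This comes from $u$ near $1$, where the Gaussian factor is about $e^{-x^2/4}$ and the $\cC^+$ mass is of order $\ta$. Both quantities are unbounded relative to $\phi(x)$ on $[\sqrt2,\ta^{-1/4}]$: at $x^2=5\log(1/\ta)$ they are of order $\ta^{-1/4}$ up to logarithms. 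Any fixed constant inside the Gaussian tail loses a factor $e^{cx^2}$, and that is fatal because $x^2$ ranges up to $\ta^{-1/2}$.

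\textbf{The missing ingredient.} You already invoke it for $x>M_\ta$: the exponents cancel exactly. Use it for all $u$, without weakening the constant. With $z=u^2/(1+u^2)$ one has $\phi_{0,1+u^2}(x)=\sqrt{1-z}\,\phi(x)e^{zx^2/2}$ and $\Phi(-ux/\sqrt{1+u^2})=\Phi(-\sqrt z\,x)$. The Mills ratio then gives
\[ \Phi\Big(-\frac{ux}{\sqrt{1+u^2}}\Big)\phi_{0,1+u^2}(x)\le\phi(x)\min\Big\{\tfrac12 e^{u^2x^2/2},\ \frac{1}{\sqrt{2\pi}\,ux}\Big\}. \]
Integrate against $d\cC^+(\la)\le(2/\pi)\la^{-2}d\la$, using the first bound on $\la\in[\ta^{-1/2},(\ta x)^{-1}]$ and the second on $\la>(\ta x)^{-1}$. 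This yields $N_2\le C(\ta^{1/2}+\ta x)\phi(x)=O(\ta^{1/2})\phi(x)$ uniformly on $[0,M_\ta]$, hence $N(x)=\Phi(0)\phi(x)(1+O(\ta^{1/4}))$ there.

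This is exactly the paper's key estimate. The paper obtains it after substituting $1+\ta^2\la^2=1/(1-z)$, splitting at $z=\ta$ and $z=2/x^2$, and using $\Phi(-\sqrt{2v})e^v\le(4\pi v)^{-1/2}$. Dividing by the expansion of $D$ then handles $x\le\sqrt2$ and $x\in[\sqrt2,M_\ta]$ at once, with no case split on the size of $r$.

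\textbf{Minor points.} For bounded $x$ one has $D_2\asymp\ta^{1/2}\phi(x)$, not $O(\ta)\phi(x)$. Your closed-form expression for $D_2$ also misses this additive $O(\ta^{1/2})\phi(x)$ term near $x=\sqrt2$. Both slips are harmless once absorbed into the $(1+O(\ta^{1/2}))$ factor of $D_1+D_2$. Your treatment of $x>M_\ta$, including the direct lower bound $D\gtrsim\ta/x^2$ from $u\in[x/2,x]$, is fine.
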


\begin{proof}
By Lemma \ref{lem:Sval},  for $x\ge 0$, one has $\Pi[ \te <0 \given X=x]=N(x)/D(x)$, where 
\[
N(x) = \int_0^{\infty} \Phi \left( - \dfrac{\la \tau x}{\sqrt{1 + (\la\tau)^2}}\right) \phi_{0,1 + (\la \tau)^2}(x) \frac{2}{\pi} \dfrac{d\la}{1 + \la^2}, 
\]
and
\[
D(x) = \int_0^{\infty}  \phi_{0,1 + (\la\tau)^2}(x) \frac{2}{\pi}\dfrac{d\la}{1 + \la^2}.
\]
In their study of the marginal maximum likelihood, the authors in \cite{vsv17} have studied the expansion of the denominator $D$, which corresponds to the marginal density of $X$ in the Bayesian model.  The expansion, slightly re-expressed for our needs, is given in Lemma \ref{lemde}.

 For the numerator $N$, we still use an expansion in this spirit,   but with a few main differences: to handle  the extra term featuring $\Phi$ that we have here, the split in terms of $x$ is different; this induces different remainder terms, and in particular the regime for large $x$ is new.  For the numerator $N(x)$, we change variables and set $1+\ta^2\la^2=1/(1-z)$, so that $N(x)=\cF(x) \phi(x)\ta/\pi$, with
\[ \cF(x) := \int_0^1 \Phi\left(- \sqrt{z} x\right) 
e^{z x^2/2} \frac{dz}{\sqrt{z}((1-z)\ta^2+z)}.
\]
We now distinguish the cases $|x|\le M_\ta$ and $|x|>M_\ta$, where 
$M_\ta$ is as in \eqref{defshort}. 
Let us also denote, for any $\tau\in[0,1]$,
\begin{equation} \label{itau}
\cI(\ta):=\int_0^{1/\ta} \frac{1}{\sqrt{u}}\frac{1}{1+(1-\ta^2)u} du.
\end{equation}

In the case $|x|\le M_\ta$, one splits the integral defining $\cF(x)$ as $\int_0^1 = \int_0^\ta+\int_\ta^{\frac{2}{x^2}\wedge 1} + \int_{\frac{2}{x^2}\wedge 1}^{1}$, and write the corresponding integrals $\cF(x)=\cF_1(x)+\cF_2(x)+\cF_3(x)$. We first study $\cF_1(x)$ and change variables $z=u\ta^2$ to get
\begin{align*}
\cF_1(x) = \frac{1}{\ta} &\int_0^{1/\ta} \Phi(-\ta x \sqrt{u}) e^{u\ta^2x^2/2}\frac{1}{\sqrt{u}}\frac{1}{1+(1-\ta^2)u} du.
\end{align*}
Note that $\ta x\sqrt{u}\le \sqrt{\ta}x\le \ta^{1/4}\le 1$ on the interval of integration,  using that $|x|\le M_\ta$. 
We then use $-w \le \Phi(-w) -\Phi(0) \le 0$ for $w\ge 0$, as $\Phi'=\phi$ is bounded by $1/\sqrt{2\pi}\le 1$ and $\Phi$ is increasing on $(-\infty,0]$; and also   $|e^w-1|\le we^w\le w e$ for $w\in[0,1]$. Putting these inequalities together,  
one gets, for $\ta$ sufficiently small,
\[ 
(\Phi(0)-\sqrt{\ta}x)(1-\ta x^2e /2)
 \frac{1}{\ta} \cI(\ta)
\le
\cF_1(x)
\le
\Phi(0)(1+\ta x^2 e/2)
 \frac{1}{\ta}  \cI(\ta),
\]
where $\cI(\ta)$ is defined in \eqref{itau}. Lemma \ref{lemitau} provides bounds on  $\cI(\ta)$, which put together with the last display, using $|x|\le M_\ta$, give
\[ 
(\Phi(0)-\ta^{1/4})(1-2\ta^{1/2})
 \frac{\pi}{\ta}(1-C\sqrt{\ta})
\le
\cF_1(x)
\le
\Phi(0)(1+2\ta^{1/2})
 \frac{\pi}{\ta}(1+C\sqrt{\ta}).
 \] 
That is, for small enough $\ta$, and constants $c_1,C_1>0$ large enough,
\[ \frac{\pi}{\ta}\Phi(0)(1-c_1\ta^{1/4}) 
\le \cF_1(x) \le \frac{\pi}{\ta}\Phi(0)(1+C_1\ta^{1/4}). \]
For the integrals $\cF_2(x)$ and $\cF_3(x)$, following \cite{vsv17}, one changes variables setting $zx^2/2=v$ to get
\[ \cF_2(x) = \sqrt{x^2/2} \int_{x^2\ta/2}^{x^2/2\wedge 1} \Phi(- \sqrt{2v}) e^v \frac{1}{\sqrt{v}}\frac{1}{\ta^2x^2/2+(1-\ta^2)v} dv,
\]
and similarly for $\cF_3(x)$. For $\cF_2(x)$, 
one bounds the denominator from below by $(1-\ta^2)v$, the term $e^v$ from above by $e$ and the term featuring $\Phi$ by $1$, and the remaining integral is bounded from above by $Cx\int_{x^2\ta/2}^1 dv/v^{3/2}\le C'\ta^{-1/2}$. Finally we now deal with $\cF_3(x)$.  One may assume that $2/x^2<1$, otherwise the integral equals $0$, so that
\[ \cF_3(x) =\sqrt{x^2/2} \int_{1}^{x^2/2} \frac{1}{\sqrt{v}} \frac{\Phi(- \sqrt{2v}) e^v}{\ta^2x^2/2+(1-\ta^2)v} dv. \]
By bounding the denominator from below in the last display by $(1-\ta^2)v$ and combining with the inequality $\Phi(-u)=\bar{\Phi}(u)\le \phi(u)/u$ for $u>0$, one obtains, for $\ta^2<1/2$,
\[ \cF_3(x) \le \sqrt{x^2} \int_{1}^{x^2/2} \frac{dv}{4 v^2}
\le |x|/4\le \ta^{-1/4}/4,\]
which is of smaller order than $\ta^{-1/2}$. Gathering the previous estimates, one obtains, uniformly over $|x|\le M_\ta$,  for small enough $\ta$, and large enough constants $c_2,C_2>0$ 
\[ \Phi(0)\phi(x)(1-c_2\ta^{1/4}) 
\le N(x) \le  \Phi(0)\phi(x)(1+C_2\ta^{1/4}). \]
Putting this together with the estimate for $D(x)$ above gives the first part of the statement. 

In the case $|x|>M_\ta$, we use the standard bound $\Phi(-w)=\bar{\Phi}(w)\le e^{-w^2/2}$ for $w>0$, to get
\[  \tau\cdot \cF(x) \le \int_0^1  \frac{\ta dz}{\sqrt{z}((1-z)\ta^2+z)}
= \int_0^{1/\tau^2}  \frac{du}{\sqrt{u}(1+(1-\ta^2)u)}
\le \int_0^\infty  \frac{du}{\sqrt{u}(1+(1-\ta^2)u)},
\]
where we again changed variables by setting $z=u\ta^2$. By the first part of Lemma \ref{lemitau}, the last display is upper-bounded by $\pi+O(\ta)$, so that $N(x)\le \phi(x)(1+O(\ta))$ in that case. 
For the denominator $D(x)$, by Lemma \ref{lemde}, we have $D(x)=C\ta x^{-2}(1+O(\sqrt{\ta}))$. 
Putting these two bounds together gives the upper bound in the second part of the statement.
The result follows by noting that $\cF(x)\ge 0$ by definition.
\end{proof}

\begin{lemma} \label{lemitau}
For any $\tau\in(0,1)$, we have
\[ 0 \le  \int_0^\infty \frac{1}{\sqrt{u}} \frac{du}{1+(1-\ta^2)u} -\pi \le  \pi \frac{\ta^2}{1-\ta^2}= O(\ta^2). \]
The integral $I(\tau)$ defined in \eqref{itau} verifies
\[   \pi \frac{\ta^2}{1-\ta^2} - 2\sqrt{\tau} \le  I(\tau) - \pi  \le \pi \frac{\ta^2}{1-\ta^2}.  \]
\end{lemma}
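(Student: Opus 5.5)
The plan is to compute everything in closed form using the antiderivative $\int du/(\sqrt{u}(1+au)) = (2/\sqrt a)\arctan(\sqrt{au})$, valid for $a>0$. Throughout, write $a:=1-\ta^2\in(0,1)$.

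\emph{First claim.} Letting $u\to\infty$ in the antiderivative gives $\int_0^\infty u^{-1/2}(1+au)^{-1}du=\pi/\sqrt a$. Since $a<1$, we have $1\le 1/\sqrt a\le 1/a$. Hence
\[0\le \pi/\sqrt a-\pi\le \pi(1/a-1)=\pi\ta^2/(1-\ta^2),\]
which is the first display.

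\emph{Exact form of $\cI(\ta)$.} The integral \eqref{itau} equals
\[\cI(\ta)=\frac{2}{\sqrt a}\arctan\sqrt{a/\ta}=\frac{\pi}{\sqrt a}-\frac{2}{\sqrt a}\arctan\sqrt{\ta/a},\]
where the second form uses $\arctan y+\arctan(1/y)=\pi/2$. The subtracted tail term is nonnegative. Therefore the upper bound $\cI(\ta)-\pi\le \pi/\sqrt a-\pi\le\pi\ta^2/(1-\ta^2)$ follows at once from the first claim.

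\emph{Lower bound.} Set $y:=\sqrt{\ta/a}$. I would rewrite the difference between the two sides of the lower bound as
\[\cI(\ta)-\pi-\frac{\pi\ta^2}{a}+2\sqrt\ta=\underbrace{\pi\Big(\frac1{\sqrt a}-\frac1a\Big)}_{(A)}+\underbrace{2\sqrt\ta-\frac{2}{\sqrt a}\arctan y}_{(B)}.\]
The goal is to show this quantity is nonnegative.

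For $(A)$, use $1/a-1/\sqrt a=(1-\sqrt a)/a\le (1-a)/a=\ta^2/a$, so that $(A)\ge-\pi\ta^2/a$.

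For $(B)$, apply the alternating series bound $\arctan y\le y-y^3/3+y^5/5$ for $y\ge0$. This gives
\[(B)\ge 2\sqrt\ta\Big(1-\frac1a\Big)+\frac{2\ta^{3/2}}{3a^2}-\frac{2\ta^{5/2}}{5a^3}
=-\frac{2\ta^{5/2}}{a}+\frac{2\ta^{3/2}}{3a^2}-\frac{2\ta^{5/2}}{5a^3}.\]

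Combining the two estimates, the sum is at least
\[\frac{\ta^{3/2}}{a^2}\Big(\frac23-\pi a\sqrt\ta-2a\ta-\frac{2\ta}{5a}\Big).\]
This is nonnegative as soon as $\ta\le\ta_0$ for some explicit small $\ta_0>0$; for instance $\ta\le 1/100$ suffices, since then the bracket is at least $2/3-0.32-0.02-0.005>0$. This yields the stated inequality for $\ta\in(0,\ta_0]$, which is the only range in which Lemma \ref{lemsval} invokes it (there $\ta\le\ta_0$ is small).

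\emph{Main obstacle: the range $\ta\in(\ta_0,1)$.} Here I would check the sign of $(A)+(B)$ directly from the exact formula, and I expect this to fail. As $\ta\to1$ we have $a\to0$ and $\cI(\ta)\to 2$, since $\arctan\sqrt{a/\ta}\sim\sqrt a$. The right-hand side $\pi\ta^2/a-2\sqrt\ta$, however, diverges. So the lower bound can hold literally only for $\ta$ bounded away from $1$.

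My plan is therefore to prove the lower bound as written for $\ta\in(0,\ta_0]$, by the argument above. For general $\ta\in(0,1)$ I would record the version $\cI(\ta)-\pi\ge -2\sqrt\ta/a$, which follows from $\arctan y\le y$ together with $\pi/\sqrt a\ge\pi$. Either form is sufficient for the bounds on $\cF_1$ in the proof of Lemma \ref{lemsval}: in both cases $\cI(\ta)=\pi(1+O(\sqrt\ta))$ uniformly for small $\ta$, which is all that proof uses.
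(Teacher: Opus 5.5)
Your proof is correct, and so is your diagnosis of the statement. The printed lower bound $\pi\tau^2/(1-\tau^2)-2\sqrt{\tau}\le I(\tau)-\pi$ does not hold on all of $(0,1)$: your limit $\tau\to1$ settles it, and it already fails at $\tau=0.1$. It is also not what the paper's own argument establishes.

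The paper works by comparison rather than exact integration. The sandwich $(1-\tau^2)(1+u)\le 1+(1-\tau^2)u\le 1+u$, together with $\int_0^\infty du/(\sqrt{u}(1+u))=\pi$, gives the first display and hence the upper bound on $I(\tau)$. For the lower bound, it replaces the integrand of $I(\tau)$ by the smaller $1/(\sqrt{u}(1+u))$ and bounds the discarded tail by $\int_{1/\tau}^\infty u^{-3/2}du=2\sqrt{\tau}$. This yields $I(\tau)-\pi\ge-2\sqrt{\tau}$ for every $\tau\in(0,1)$, which is presumably the intended statement.

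The paper's two-line route is slightly stronger than your fallback $I(\tau)-\pi\ge-2\sqrt{\tau}/(1-\tau^2)$. Comparing inside the integrand before truncating avoids the factor $1/(1-\tau^2)$.

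Your closed forms $\pi/\sqrt{1-\tau^2}$ and $(2/\sqrt{1-\tau^2})\arctan\sqrt{(1-\tau^2)/\tau}$ are sharper. They are what let you see exactly where the printed inequality holds: the gap $I(\tau)-\pi+2\sqrt{\tau}-\pi\tau^2/(1-\tau^2)$ equals $\tfrac23\tau^{3/2}-\tfrac{\pi}{2}\tau^2+O(\tau^{5/2})$, which is positive for small $\tau$ and already negative at $\tau=0.1$.

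Both versions give $I(\tau)=\pi(1+O(\sqrt{\tau}))$ uniformly for small $\tau$. That is all the bounds on $\mathcal{F}_1$ in Lemma~\ref{lemsval} require.
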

\begin{proof}
A similar result was obtained by \cite{vsv17} within their proof of Lemma C--9 therein; here we make the constants explicit, and provide a proof for completeness. First, one notes that  $(1-\ta^2)(1+u)\le 1+(1-\ta^2)u\le 1+u$. Since $\int_0^\infty du/(\sqrt{u}(1+u)) = \pi$,  this gives the first statement of the lemma. The second statement follows by noting that $\int_{1/\ta}^\infty du/(\sqrt{u}(1+u))\le \int_{1/\ta}^\infty u^{-3/2} du=2\sqrt{\ta}$.
\end{proof}
 
\begin{lemma}[variation on Lemma C.9 in \cite{vsv17}] \label{lemde}
Let $D(x)=  \int_0^{\infty}  \phi_{0,1 + (\la\tau)^2}(x) (1 + \la^2)^{-1} (2/\pi)d\la$. For any real $x$,
\begin{equation} \label{deix}
D(x)=(\pi/\ta + \sqrt{x^2/2} \int_1^{\max(x^2/2,1)} v^{-3/2}e^vdv)(1+O(\sqrt{\ta})) \phi(x)(\ta/\pi),
\end{equation}
where the $O$ term is uniform with respect to $x$. Further, uniformly over $|x|\ge \ta^{-1/4}$, 
\[ D(x) = \frac{2\ta}{\pi\sqrt{2\pi} x^2}(1+O(\sqrt{\ta})).\]   
\end{lemma}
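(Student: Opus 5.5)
The plan is to use the same change of variables as for the numerator in the proof of Lemma \ref{lemsval}. Setting $1+\ta^2\la^2=1/(1-z)$, the $(2/\pi)d\la/(1+\la^2)$ measure transforms so that
\[
D(x)=\frac{\ta}{\pi}\,\phi(x)\int_0^1 e^{zx^2/2}\,\frac{dz}{\sqrt{z}\,((1-z)\ta^2+z)} .
\]
This is the same integrand as in $\cF(x)$, with the factor $\Phi(-\sqrt{z}x)$ removed. I then split $\int_0^1=\int_0^\ta+\int_\ta^{(2/x^2)\wedge 1}+\int_{(2/x^2)\wedge 1}^1$, exactly as before, and write the pieces as $\cG_1+\cG_2+\cG_3$.

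For $\cG_1$, I substitute $z=u\ta^2$, which gives $\ta^{-1}\int_0^{1/\ta}e^{u\ta^2x^2/2}u^{-1/2}(1+(1-\ta^2)u)^{-1}du$. When $\ta x^2$ is small, the exponential equals $1+O(\ta x^2)$. Lemma \ref{lemitau} then yields $\cG_1=(\pi/\ta)(1+O(\sqrt\ta)+O(\ta x^2))$.

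For $\cG_3$, I substitute $v=zx^2/2$, which gives $\sqrt{x^2/2}\int_1^{x^2/2}e^v v^{-1/2}(\ta^2x^2/2+(1-\ta^2)v)^{-1}dv$. Since $\ta^2x^2/2\le \ta^2 v$ on $v\ge1$, the denominator equals $v(1+O(\ta^2))$. This produces exactly $\sqrt{x^2/2}\int_1^{x^2/2}v^{-3/2}e^vdv\,(1+O(\ta^2))$.

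For $\cG_2$, I bound $e^v\le e$ and the denominator from below by $(1-\ta^2)v$, which gives $\cG_2\le C|x|\int_{x^2\ta/2}^{1}v^{-3/2}dv\le C'\ta^{-1/2}=O(\sqrt\ta)\cdot\pi/\ta$. This is absorbed into the error of the leading term $\pi/\ta$.

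The main obstacle is uniformity in $x$. When $\ta x^2$ is not small, the error $O(\ta x^2)$ in $\cG_1$ is not $O(\sqrt\ta)$ relative to $\pi/\ta$. In that regime, however, $\cG_3\asymp |x|^{-2}e^{x^2/2}$ dominates $\pi/\ta$ enormously. So I will bound $\cG_1$ crudely there: using $e^{u\ta^2x^2/2}\le e^{\ta x^2/2}$, it is at most $C\ta^{-1}e^{\ta x^2/2}$, which is negligible relative to $\cG_3$ as a fraction $O(\sqrt\ta)$ of the total. This holds once $\ta x^2\ge \ta^{1/2}$, i.e.\ $x^2\ge\ta^{-1/2}$, since then $e^{x^2/2(1-\ta)}x^{-2}\gg \ta^{-3/2}$. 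I therefore case-split at $x^2=\ta^{-1/2}$ and in each case compare the error to whichever main term dominates. The same reasoning handles $\cG_2$, where a crude bound $e^{v}$ on $v\le \ta x^2/2$-type ranges suffices.

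For the second claim, when $|x|\ge\ta^{-1/4}$, I evaluate the main term. By Laplace's method (an integration by parts), $\int_1^{x^2/2}v^{-3/2}e^vdv=(x^2/2)^{-3/2}e^{x^2/2}(1+O(x^{-2}))$. Hence $\sqrt{x^2/2}\int\cdots=2x^{-2}e^{x^2/2}(1+O(x^{-2}))$, and $\pi/\ta$ is exponentially smaller. Multiplying by $\phi(x)\ta/\pi$ gives
\[
\frac{2\ta}{\pi\sqrt{2\pi}\,x^2}\,\bigl(1+O(x^{-2})+O(\sqrt\ta)\bigr).
\]
Since $x^{-2}\le\sqrt\ta$ in this range, the error is $O(\sqrt\ta)$, which gives the stated form.
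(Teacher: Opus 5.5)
\paragraph{Comparison with the paper's proof.}
Your argument is correct in substance but takes a different route from the paper. The paper does not re-derive the expansion. It invokes Lemma C.9 of \cite{vsv17} directly, with two remarks:
\begin{enumerate}
\item[(i)] For $x^2\le 2$, the term $\sqrt{x^2/2}\int_1^{x^2/2}v^{-3/2}e^v\,dv$ appearing there is $O(1)$, so it can be absorbed into $(\pi/\tau)(1+O(\sqrt{\tau}))$; this is where the $\max(x^2/2,1)$ comes from.
\item[(ii)] For the second claim it uses the sharper form $I_{-1/2}(u)=(\pi/\tau+e^{u^2/2}/(u^2/2))(1+r_\tau(u))$ of Lemma \ref{lemprec}, with $\veps_\tau=\tau^{1/4}$.
\end{enumerate}
You instead reprove C.9 from the substitution and three-way split already used for $N$ in Lemma \ref{lemsval}, with Lemma \ref{lemitau} as the only input. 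The $\max$ then appears automatically, because $\mathcal{G}_3=0$ for $x^2\le 2$. You get the second claim from the first by integration by parts. The $O(x^{-2})$ Laplace error is absorbed because $x^{-2}\le\sqrt{\tau}$, which is the same mechanism as $\veps_\tau^2=\sqrt{\tau}$ in the paper's citation. Your version is self-contained and shows where each $O(\sqrt{\tau})$ comes from. The cost is that uniformity in $x$ must be handled by hand, and that is where one step needs repair.

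\paragraph{The step that needs repair.}
In the $\mathcal{G}_3$ step, the inequality $\tau^2x^2/2\le\tau^2 v$ holds only for $v\ge x^2/2$, so it is reversed on the whole range $[1,x^2/2]$. The denominator equals $v+\tau^2(x^2/2-v)\ge v$. At $v=1$ its relative deviation from $v$ is $\tau^2(x^2/2-1)$, which is not small once $x^2\gtrsim\tau^{-2}$. So ``denominator $=v(1+O(\tau^2))$'' is false pointwise.

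The conclusion survives. Write $A(x)=\sqrt{x^2/2}\int_1^{x^2/2}v^{-3/2}e^v\,dv$.
\begin{enumerate}
\item[(a)] Upper bound: since the denominator is at least $v$, you get $\mathcal{G}_3\le A(x)$ directly.
\item[(b)] Lower bound for $x^2\le\tau^{-1/2}$: here $\tau^2x^2/2\le\tau^{3/2}v$ pointwise, which suffices.
\item[(c)] Lower bound for $x^2>\tau^{-1/2}$: restrict to $v\in[x^2/4,x^2/2]$, where the denominator is at most $v(1+2\tau^2)$. The discarded piece is at most $\sqrt{2}\,|x|e^{x^2/4}$, which is exponentially small relative to $A(x)\asymp x^{-2}e^{x^2/2}$.
\end{enumerate}

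Relatedly, for $x^2>2/\tau$ your split points satisfy $2/x^2<\tau$. The decomposition must then be read with oriented integrals. Alternatively, replace it by $\int_0^{2/x^2}+\int_{2/x^2}^1$ and bound the first piece by $e\,\tau^{-1}(\pi+O(\tau^2))$ via Lemma \ref{lemitau}. With these adjustments, everything other than $\mathcal{G}_3$ is $o(\sqrt{\tau}\,A(x))$ in that regime, and your proof goes through.
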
 
\begin{proof}
The lemma is a small variation on Lemma C.9 of \cite{vsv17} (the link to the notation $I_{1/2}$ used therein is explained in Section \ref{sec:margintec} below, see \eqref{linkde} therein). For the first part of the statement, we note that \eqref{deix} slightly differs from their expression in that we have replaced the upper value in the integration interval $x^2/2$ by $\max(x^2/2,1)$. If $x^2\ge 2$ the expression is the same; if $x^2\le 2$, we note that for any $s\in(0,1]$, the quantity $s^{1/2}\int_s^1 v^{-3/2}e^vdv$ remains bounded. For such $x$'s the estimate in \cite{vsv17} writes $D(x)=(\pi/\ta+O(1))(1+O(\sqrt{\ta}))\phi(x)\ta/\pi$. This can also be written $D(x)=(\pi/\ta)(1+O(\sqrt{\ta}))\phi(x)\ta/\pi$ (our estimate for such $x$'s) so that both estimates coincide up to the precise form of remainder terms.

For the second part of the statement, we directly apply the second part of Lemma C.9 in \cite{vsv17}  with $\veps_\ta$ therein taken equal to $\ta^{1/4}$.
\end{proof}

\begin{lemma}[Bernstein's inequality] \label{bernstein}
Let $Z_i, 1\le i\le N$ be centered independent variables with $|Z_i|\le M$ and $\sum_{i=1}^N \text{Var}(Z_i)\le V$. Then for all $t>0$,
\[ P\left[ \sum_{i=1}^N Z_i >t \right] \le \exp\left\{-\frac12 \frac{t^2}{V+Mt/3} \right\}. \]
\end{lemma}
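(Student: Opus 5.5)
This is the classical Bernstein inequality, and I would prove it by the Chernoff (exponential moment) method. Fix $\lambda>0$ with $\lambda M<3$. By Markov's inequality applied to $e^{\lambda\sum_i Z_i}$ and independence,
\[ P\Big[\sum_{i=1}^N Z_i>t\Big]\le e^{-\lambda t}\prod_{i=1}^N E\, e^{\lambda Z_i}. \]
So the whole argument reduces to a bound on the moment generating function of a single bounded, centered variable.

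\emph{Step 1: moment generating function of one summand.} Write $\sigma_i^2=\text{Var}(Z_i)=E Z_i^2$. Expand the exponential in series and use $E Z_i=0$ together with $|E Z_i^k|\le M^{k-2}\sigma_i^2$ for $k\ge 2$. This gives
\[ E e^{\lambda Z_i}\le 1+\lambda^2\sigma_i^2\sum_{k\ge 2}\frac{(\lambda M)^{k-2}}{k!}. \]
Since $k!\ge 2\cdot 3^{k-2}$ for $k\ge2$, the series is at most $\tfrac12\sum_{j\ge0}(\lambda M/3)^j=\tfrac{1}{2(1-\lambda M/3)}$. Then $1+u\le e^u$ yields
\[ E e^{\lambda Z_i}\le \exp\Big\{\frac{\lambda^2\sigma_i^2}{2(1-\lambda M/3)}\Big\}. \]

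\emph{Step 2: combine and optimise in $\lambda$.} Multiplying over $i$ and using $\sum_i\sigma_i^2\le V$ gives
\[ P\Big[\sum_i Z_i>t\Big]\le \exp\Big\{-\lambda t+\frac{\lambda^2 V}{2(1-\lambda M/3)}\Big\}. \]
Rather than solving the optimisation exactly, I would plug in $\lambda=t/(V+Mt/3)$. First check admissibility: $\lambda M/3=Mt/(3V+Mt)<1$. Next, $1-\lambda M/3=V/(V+Mt/3)$, so the quadratic term equals $\lambda^2(V+Mt/3)/2=\lambda t/2$. The exponent is therefore $-\lambda t/2=-\tfrac12 t^2/(V+Mt/3)$, which is the claimed bound.

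The degenerate case $V=0$ is trivial: then every $Z_i=0$ almost surely and the probability is $0$. There is no real obstacle in this argument. The only step needing care is the elementary comparison $k!\ge 2\cdot3^{k-2}$, which converts the Taylor series into a geometric series and is what produces the constant $1/3$ in front of $Mt$.
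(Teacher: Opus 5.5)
Your argument is correct and complete. It is the standard Chernoff-bound proof of Bernstein's inequality: the moment bound $|E Z_i^k|\le M^{k-2}\sigma_i^2$, the comparison $k!\ge 2\cdot 3^{k-2}$ turning the series into a geometric one, and the choice $\lambda=t/(V+Mt/3)$, which is admissible for $V>0$, with the case $V=0$ correctly handled separately. The paper does not prove this lemma but quotes it as a classical tool, so there is no proof in the paper to compare against; your derivation is the textbook argument it relies on.
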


\subsubsection{Proof of remaining Lemmas}
\label{app:remaininglemsval}

\begin{lemma}  
For any $\ta\in(0,\ta_0]$, any $x$ such that $\sqrt{2}\le x\le M_\ta$, the $s$--value is bounded as follows
\[ \frac{1-|O(\ta^{1/4})|}{1 +  \ta h(x)} \le s(x;\ta) \le \frac{1+O(\ta^{1/4})}{1 + \ta h(x)/4}, \]
where the $O$--term is uniform with respect to $x$.  Further, if $\ta\in[\ta_-,\ta_+]$ for some $0<\ta_-\le \ta_+\le \ta_0$, uniformly in $x\in[\sqrt{2},M_{\ta_+}]$,
\[ \frac{1-|O(\ta_+^{1/4})|}{1 +  \ta_{+} h(x)} \le s(x;\ta) \le \frac{1+O(\ta_+^{1/4})}{1 + \ta_- h(x)/4}. \]
\end{lemma}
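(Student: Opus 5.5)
The first claim follows from Lemma \ref{lemsval} (second case) combined with an elementary two-sided comparison of the integral there with $h(x)$. The monotonicity of the resulting bounds in $\ta$ then gives the second claim.

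\emph{Step 1: reduction via Lemma \ref{lemsval}.} For $\sqrt2\le x\le M_\ta$ and $\ta\le\ta_0$, we have $s(x;\ta)=2\Pi_\ta[\te<0\given X=x]$ with $\Phi(0)=1/2$. Hence
\[ s(x;\ta)=\frac{1+\rho_\ta(x)}{1+\ta J(x)},\qquad J(x):=\frac{x}{\sqrt2\pi}\int_1^{x^2/2} e^v v^{-3/2}\,dv, \]
with $|\rho_\ta(x)|\le C\ta^{1/4}$ uniformly. It therefore suffices to show that, for all $x\ge\sqrt2$,
\[ \tfrac14 h(x)\le J(x)\le h(x). \]

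\emph{Step 2: comparing $J$ with $h$.} Write $y=x^2/2\ge1$, so that $h(x)=\sqrt{2\pi}\,e^y/y$ and $x/\sqrt2=\sqrt y$. The upper bound on $J$ comes from integration by parts. Since $(e^v v^{-3/2})'=e^v v^{-3/2}(1-\tfrac{3}{2v})$, one gets $\int_1^y e^vv^{-3/2}dv\le e^y y^{-3/2}\,c$ for a moderate constant $c$. A cruder route splits the integral at $y/2$, bounding the part on $[1,y/2]$ by $e^{y/2}\cdot$const and the part on $[y/2,y]$ by $(y/2)^{-3/2}e^y$. This yields $J(x)\le C' e^y/y$ uniformly in $y\ge1$.

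For the lower bound, restrict the integral to $[y-1,y]\cap[1,y]$, which gives $\int_1^y e^vv^{-3/2}dv\ge c\,e^y y^{-3/2}$ with an explicit $c$ (for $y$ near $1$, use directly that $J$ and $e^y/y$ are both positive and continuous on compacts). Hence $J(x)\ge c'\,e^y/y$.

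The specific constants $1$ and $1/4$ in the statement have to be checked numerically against $\sqrt{2\pi}$ and $\pi$. I expect this bookkeeping of constants to be the main obstacle. The upper bound $J\le h$ is generous, since asymptotically $J\sim e^y/(\pi y)$, which is much smaller than $\sqrt{2\pi}e^y/y$. The lower bound $J\ge h/4$ is the delicate one: asymptotically $J/h\to 1/(\pi\sqrt{2\pi})\approx0.127<1/4$. So the constant $1/4$ may not be attainable directly. If the numerics fail, I would replace $h/4$ by $c_0h$ for a suitable constant $c_0$ (such as $1/(2\pi\sqrt{2\pi})$) and propagate it. This costs only a change of constant in $\ta_{t,+}$ in the proof of Theorem \ref{thm:hsrisk}, since $\la(\cdot)$ absorbs constants inside a logarithm.

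\emph{Step 3: uniformity in $\ta\in[\ta_-,\ta_+]$.} For the second claim, note that $x\le M_{\ta_+}\le M_\ta$ because $\ta\le\ta_+$ and $\ta\mapsto M_\ta$ is decreasing, so the first claim applies. The remainder satisfies $|O(\ta^{1/4})|\le C\ta_+^{1/4}$. The map $\ta\mapsto 1/(1+\ta a)$ is decreasing for $a\ge0$, so the lower bound is smallest at $\ta=\ta_+$ and the upper bound is largest at $\ta=\ta_-$. This gives the stated uniform bounds.
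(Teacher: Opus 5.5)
Your route is the paper's route. You reduce via Lemma \ref{lemsval} to a two-sided comparison of $J(x)=x\,b(x^2/2)$ with $h(x)$, split at $y/2$ for the upper bound on $J$, use a crude lower bound, and then use monotonicity in $\ta$; your Step 3 is exactly the paper's closing sentence. Your suspicion about the constant $1/4$ is correct, and the error is in the paper, not in your plan.

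\emph{The stated upper bound is false.} The paper's chain $(\sqrt2\pi)b(y)\ge y^{-3/2}e^y/2$ only yields $J(x)\ge e^y/(2\pi y)=h(x)/(2\pi\sqrt{2\pi})$, not $h(x)/4$. No argument can give $1/4$, because $J/h\to 1/(\pi\sqrt{2\pi})\approx 0.127$. To see this concretely, fix $K>0$ and pick $x_\ta$ with $\ta h(x_\ta)=K$. This is possible inside the range, since $x_\ta\approx\sqrt{2\log(1/\ta)}\ll M_\ta$. Then $s(x_\ta;\ta)\to 1/(1+K/(\pi\sqrt{2\pi}))$, which is strictly larger than the limit $1/(1+K/4)$ of the claimed bound. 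So the upper bound in the statement fails as written. The lower bound is fine, since it only needs $J\le h$.

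Your corrected constant $c_0=1/(2\pi\sqrt{2\pi})$ is exactly what the paper's own inequalities deliver. You are also right that the change is harmless downstream: it only shifts a constant inside $\la(\cdot)$ in $\ta_{t,+}$ in the proof of Theorem \ref{thm:hsrisk}, and that constant is absorbed into $C$.

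\emph{One step of yours fails as written.} You have $J(\sqrt2)=0$, because the integral is over $[1,1]$. So $J$ is not positive on compacts containing $y=1$, and no bound $J\ge c'e^y/y$ can hold near $x=\sqrt2$. The paper makes the same slip when it writes $\int_1^y e^v\,dv=e^y-1$ instead of $e^y-e$.

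The fix is to split the range at $y_1=1+\log2$:
\begin{itemize}
\item For $y\ge 1+\log 2$, you have $e^y-e\ge e^y/2$. This gives $J\ge c_0h$; your restriction to $[y-1,y]$ also works once $y\ge2$.
\item For $y\in[1,1+\log2]$, you have $\ta h(x)\le \ta h(\sqrt{2+2\log 2})=O(\ta)$. The trivial bound $s\le 1+O(\ta^{1/4})$, which follows from $J\ge0$, can then be rewritten as $(1+O(\ta^{1/4}))/(1+c_0\ta h(x))$ after enlarging the $O$-constant.
\end{itemize}

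\emph{Finish the check $J\le h$.} Carry out this constant check rather than leaving it to numerics. The split at $y/2$, together with $e^{-y/2}\le 1.2\,y^{-3/2}$ for $y\ge1$, gives $\int_1^y e^v v^{-3/2}dv\le 5.3\,e^y y^{-3/2}$. Hence $J\le (5.3/\pi)\,e^y/y\le 0.68\,h$.
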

\begin{proof}[Proof of Lemma \ref{lem:boundsv}]
Using the expression obtained in  Lemma \ref{lemsval}, it is enough to find bounds for the function $b(y)=\int_1^y e^v v^{-3/2}dv/(\sqrt{2}\pi)$. For any $y\ge 1$, using $\int_1^{y/2} v^{-3/2}\le 2$,
\begin{align*}
(\sqrt{2}\pi)b(y) & = \int_1^{y/2} e^v v^{-3/2}dv+\int_{y/2}^y e^v v^{-3/2}dv \\
& \le 2e^{y/2} + (2/y)^{3/2} \int_{y/2}^y e^vdv \le \left(2e^{-y/2}+\frac{\sqrt{8}}{y^{3/2}}\right)e^y.
\end{align*}
Using the inequality $e^{-y/2}\le 1.2 y^{-3/2}$ for $y\ge 1$, one gets that the last display is bounded from above by $(\sqrt{8}+2.4)e^y/y^{3/2}\le 5.3 e^y/y^{3/2}$, so that, with $h$ defined in \eqref{defh} and $x\ge \sqrt{2}$,
\[ xb(x^2/2)\le \frac{5.3}{\sqrt{2}\pi} \frac{2^{3/2}}{2\sqrt{2\pi}} h(x)\le 0.68h(x) \le h(x).\]
which gives the lower bound in the first inequality of the statement. On the other hand, for any $y\ge 1$,
\[ (\sqrt{2}\pi)b(y) \ge y^{-3/2} \int_1^y e^v dv= y^{-3/2} (e^y - 1)\ge y^{-3/2}e^y/2,\]
using that $e^y\ge 2$ when $y\ge 1$. This yields $xb(x^2/2)\ge h(x)/4$ for $x\ge \sqrt{2}$, leading to the upper bound in the first inequality of the statement.

The second part of the statement follows from the first by noting that, under the condition on $\ta$, all intervals $[\sqrt{2},M_\ta]$ contain $[\sqrt{2},M_{\ta_+}]$ and next bounding $\ta$ in the bounds.
\end{proof}

\begin{proof}[Proof of Lemma \ref{lemla}]
The proof is very similar to that of Lemma S--12 in \cite{cr20}: here $v$ plays the role of the variable $1/u$ therein; and here $\ga(u):=1/(u^2/2)$ plays the role of $g$ therein. Since one may take $v$ larger than a given large constant, in the arguments leading to the bounds, we only need to take an argument $x$ of $\ga(x)$ that is larger than $1$ so that we can use the bound $\|\ga\|_\infty=2$, where $\|\cdot\|_\infty$ here means the supremum norm on $[1,\infty)$. The only other properties of $g$ that are used in \cite{cr20} are that $x\to g(x)$ is decreasing for $x>0$ large enough and that $\log g$ is $\La$--Lipschitz for some $\La>0$, which both clearly hold for $\ga(u)=2/u^2$, so that we can follow the proof as in \cite{cr20}.
\end{proof}

\subsection{Proofs for the $S$--value procedure}
\label{app:proofSval}

\subsubsection{Proof of the second main result}
Similar to the case of $s$--values above, note that by symmetry in the sequence model, for any real $x$,
\[ S(x,\ta) = 2\Pi[\te_i<0\given X_i\ge |x|], \]
so that below, when bounding $S(x,\tau)$, it is enough to consider the case $x\ge 0$. \\

{\em Roadmap of the proof of Theorem \ref{thm:fdrcontrol}.} 
At a high-level, the proof for $S$--values roughly follows the steps of that for $s$--values. However, several steps need (much) sharper bounds, as one needs to control exact constants all along the proof. 

The first step consists in deriving precise estimates from the $S$--value, which is done in Section \ref{sec-bSval}. This enables one to derive a fairly precise approximate threshold for the procedure, namely the signal value from which it starts rejecting the null hypothesis. Next, one derives a precise concentration for the parameter $\hat\ta$ of the horseshoe procedure in Section \ref{sec:mmle2}: the result is stronger compared to that of Theorem \ref{thm:hsrisk}, with matching upper and lower bounds. Finally, upper and lower bounds with high probability are obtained both for the number of true positives TP and the  FDR itself.

\begin{proof}[Proof of Theorem \ref{thm:fdrcontrol}]
Define the function, for $u>0$, 
\begin{equation} \label{defH}
H(u)= 1/ \left(u \bar\Phi(u) \right). 
\end{equation}
The map $H$ is easily seen to be strictly increasing on $[1,+\infty)$, so that  $H$ admits an inverse $\La(\cdot)$
\begin{equation} \label{defLa}
\La(\cdot):=H^{-1}(\cdot),
\end{equation}
that is $\La(H(x))=x$ and  $H(\La(y))=y$ in the appropriate ranges of $x,y$.

Let us introduce the event, for $d=\sqrt{2}\Phi(b)/C^*$ and small $\delta>0$, \begin{equation} \label{eventoS}
\cA_S:= \left\{\hat\ta(X) \in \left[(d-\delta)\ta_n(s_n) , (d+\delta) \ta_n(s_n)\right] \right\}.
\end{equation}
Proposition \ref{propeq}, applied with $\delta$ replaced by $C^*\delta$, and the fact that $\Phi$ is $C$--Lipschitz with $C=1/\sqrt{2\pi}<1$ imply that $P_{\te_0}[\cA_S^c]=o(1)$.

In the sequel, in slight abuse of notation (we use the same notation with possibly different constants in the proof of Theorem \ref{thm:hsrisk}) we denote $\ta_-:= (d-\delta) \ta_n(s_n)$, $\ta_+= (d+\delta) \ta_n(s_n)$. Recall that $M_\ta=\ta^{-1/4}$ and $\mu_n=\log\log(n/s_n)$.\\

{\em FNR control.} The number of false negatives, for $\te_0\in\cL_0[s_n;b]$, is
\begin{align*} 
\sum_{i\in S_{\theta_0}} \1\{ S(X_i;\hat\ta)>t \}
& = \sum_{i\in S_{\theta_0}} \1\{ S(X_i;\hat\ta)>t \} \1_{|X_i|\in[\mu_n,\frac{M_{\hat\ta}}{2}]}
+\sum_{i\in S_{\theta_0}} \1\{ S(X_i;\hat\ta)>t \}\1_{|X_i|\notin[\mu_n,\frac{M_{\hat\ta}}{2}]} \\
& = \quad \qquad \qquad\qquad (I)  \qquad\qquad \quad + \quad\qquad \qquad\qquad (II). 
\end{align*}
Since $P[\cA_S^c]=o(1)$, one can restrict to the event $\cA_S$ for the analysis of these terms.
Dealing first with term (I), using the first part of Lemma \ref{lemSval} with $\ta=\hat\ta$ on the event $\cA_S$ on which $\hat\ta\in[\ta_-,\ta_+]$,
\begin{align*}
 (I)\1_{\cA_S} & \le \sum_{i\in  S_{\theta_0}} \1\left\{ \frac{1+o(1)}{1 + C^*\ta_- H(|X_i|)/2}>t \right\} \1_{|X_i|\in[\mu_n,M_{\hat\ta}/2]} \\
 & \le \sum_{i\in  S_{\theta_0}} \1\left\{ H(|X_i|)< \frac{2}{C^*\ta_-}\left(\frac{1-t}{t}+o(1)\right) \right\} \\ 
& \le \sum_{i\in  S_{\theta_0}} \1\left\{ |X_i| < \La\left(\frac{2}{C^*\ta_-}\left(\frac{1-t}{t}+o(1)\right) \right) \right\}.
 \end{align*}
 Denote $\ta_{t,+}:=\{2/(C^*\ta_-)\}\{(1-t)/t+o(1)\}$. 
For $i\in S_{\theta_0}$ and  if $\te_{0,i}>0$, it holds $\1\left\{ |X_i| < \La(\ta_{t,+}) \right\} \le 
\1\left\{ \veps_i < \La(\ta_{t,+})-\te_{0,i} \right\}$; similarly, one writes $\1\left\{ |X_i| < \La(\ta_{t,+}) \right\} \le 
\1\left\{ \veps_i > -\La(\ta_{t,+})-\te_{0,i} \right\}$ if $\te_{0,i}<0$. Assuming without loss of generality that $\te_{0,i}>0$, using $\te_0\in\cL_0[s_n,b]$,
\[ \1\left\{ |X_i| < \La(\ta_{t,+}) \right\} 
\le  \1\left\{ \veps_i < -b + \La(\ta_{t,+}) - \sqrt{2\log(n/s_n)} \right\}.
\]
Lemma \ref{techlata} applied with $\ta_{t,+}$, as in the last statement of the Lemma, gives  $\La(\ta_{t,+}) - \sqrt{2\log(n/s_n)} = o(1)$. 
From this one deduces that $E_{\te_0}[ (I)\1_{\cA_S}] \le s_n \Phi(-b+o(1))=s_n\{\bar\Phi(b)+o(1)\}$.

To bound from above the term (II), one 
splits the indicator 
$\1\{|X_i|\notin [\mu_n,M_{\hat\ta}/2]\}=\1\{|X_i|<\mu_n\} + \1\{|X_i|>M_{\hat \ta}/2]\}$. 
The contribution of the first term to the expectation $E_{\te_0}(II)$ is upper-bounded by, using $|\te_{0,i}|\ge \sqrt{2\log(n/s_n)}+b$ for $i\in S_\te$,
\[ s_n P\left[|\veps_1|>\sqrt{2\log(n/s_n)}+b-\mu_n\right]=o(s_n),\]
noting that the probability in the last display goes to $0$. 
The contribution of $\1\{|X_i|>M_{\hat\ta}/2]\}$ to (II) is bounded from above using the second part of Lemma  \ref{lemSval}: 
\begin{align*}
 \1\{ S(X_i;\hat\ta)>t, |X_i|>M_{\hat\ta} /2\}
& \le \1\{ (4/C^*) \phi(M_{\ta_+}/2)/{\ta_-} (1+o(1))>t , |X_i|>M_{\ta_+}/2\},\\
& \le  \1\{ (4/C^*) \phi(M_{\ta_+}/2)/{\ta_-} (1+o(1))>t \}
\end{align*}
and the last set is empty for $n/s_n$ large enough, using that $\phi(M_{\ta_+})/{\ta_-}=o(1)$ since
 $\ta_- \asymp  \ta_+ \asymp \ta_n(s_n)$. Putting the previous bounds together leads to, noting that the previous inequalities are uniform over $\cL_0[s_n;b]$  
  \[ \limsup_{n,s_n} \sup_{\te_0\in \cL_0[s_n;b]} \FNR(\te_0, \hat\vphi_t^S) \le \bar\Phi(b). \] 
  In passing, we note that this upper bound enjoys a  stronger uniformity compared to that in the statement of Theorem \ref{thm:fdrcontrol}, which holds over $\cL_{0,=}[s_n;b]$ only, and the convergence of the FNR is in fact uniform over the larger $\cL_0[s_n;b]$. The restriction to $\cL_{0,=}[s_n;b]$ will be needed for the lower bound on the FDR below. 
  
{\em Number of true positives.}   Denote by $TP$ the number of true positives
\[ TP := \sum_{i\in S_{\theta_0}} \1\{ S(X_i;\hat\ta)\le t \}.\]
We now show that  the following event $\cB_S$, with $c_1:=\Phi(b)$ and $\delta>0$ being the same as  in the definition \eqref{eventoS} of the event $\cA_S$,
\begin{equation} \label{defbS}
\cB_S = \left\{   TP \ge (c_1-\delta)s_n\right\}, 
\end{equation}
has a probability going to $1$.  The event can also be written $\{FN\le (c_2+\delta)s_n\}$, with $c_2=\bar\Phi(b)$, using that the number of false negatives FN equals $s_n-TP$. Using the notation above $FN=(I)+(II)$; also, we have already shown above that $(II)=o_P(s_n)$ on the event $\cA_S$. It is thus enough to bound the probability, using the same notation $\ta_{t,+}$ as above,
\[ P_{\te_0}\left[(I) \ge (c_2+\delta/2)s_n, \cA_S\right]
\le   P_{\te_0}\left[\sum_{i\in S_{\theta_0}}  
\1\left\{ |X_i| < \La\left(\ta_{t,+}\right)  \right\}\ge (c_2+\delta/2)s_n \right]. \]
Denoting by $\cN_1$ the random variable $ \sum_{i\in S_{\theta_0}} \1\{ |X_i|< \La(\ta_{t,+}) \}$, one notes that $\cN_1$ is a sum of independent Bernoulli variables of parameters, say, $a_i$. This is a Poisson-Binomial distribution $PBin(a_i)$ with parameters $a_i$, and by the same reasoning as for the FNR above,
\[ a_i = P_{\te_0}\left[|X_i| < \La(\ta_{t,+}) \right] \le \bar\Phi(b)+o(1)
=c_2+o(1), \]
where the $o(1)$ in fact does not depend on $i$. Since the Poisson-Binomial distribution is stochastically increasing in its parameters (Lemma S-25 in \cite{acr24}), we have that $PBin(a_i)$ is stochastically dominated by a binomial $Bin(s_n,\bar\Phi(b)+o(1))$ distribution -- denote by $\cN_2$ a variable following the latter --. Now one can bound from above, for small $\delta>0$, 
\begin{align*}
P_{\te_0}[\cB_S^c] & \le P[\cN_1\ge (c_2+\delta/2)s_n]+o(1) \le P[\cN_2 \ge (c_2+\delta/2)s_n]+o(1)\\
& \le P[\cN_2-E \cN_2 \ge (\delta/4)s_n]+o(1),
\end{align*}
for $n,s_n$ large enough, using that $E\cN_2=s_n(\bar\Phi(b)+o(1))=s_n(c_2+o(1))$. An application of Bernstein's inequality for the binomially distributed $\cN_2$ now gives $P_{\te_0}[\cB_S^c]=o(1)$ as desired.\\

{\em FDR control.} Let $FP$ denote the number of false positives of the $S$--value procedure and 
\begin{equation}\label{Deltadef}
 \Delta:= \sum_{i\notin S_{\theta_0}} \1_{|\veps_i|> M_{\ta_+}/2}. 
\end{equation} 
By definition of $\ta_+$, we have $E_{\theta_0}\Delta\le 2n\bar\Phi(\ta_+^{-1/4}/2)=o(s_n)$. Then
\begin{align*}
FP & \le  \sum_{i\notin S_{\theta_0}} \1\{ S(\veps_i;\hat\ta) < t \}\1_{|\veps_i|\le M_{\ta_+}/2} 
+ \Delta.
\end{align*}
The indicator in the last display can be bounded from above as follows, on the event $\cA_S$,
\begin{align*}
  \1\left\{ S(\veps_i,\hat\ta )<t ,  |\veps_i|\le M_{\ta_+}/2 \right\}
& \le \1\left\{  S(\veps_i,\hat\ta)<t \,,\, \mu_n\le |\veps_i|\le M_{\ta_+}/2 \right\} + \1\left\{  S(\veps_i,\hat\ta)<t ,  |\veps_i|<\mu_n\right\}
\\
& \le
\1\left\{  \frac{1+o(1)}{1+C^*\ta_+ H(|\veps_i|)/2}<t \right\} + \1\left\{ 1+o(1)<t \right\},
\end{align*}
where to bound  the first indicator in the sum we have used Lemma \ref{lemSval} (for $\ta=\hat\ta$, and noting $M_{\ta_+}\le M_{\hat\ta}$ on $\cA_S$), while for the second we use the first estimate of Lemma \ref{lemSval} for $x\in[0,\mu_n]$. Now the last indicator in the above display equals $0$ for large enough $n,s_n$ (uniformly in $i$) since $t<1$. This leads to, setting $\ta_{t,-}:=\{2/(C^*\ta_+)\}\{(1-t)/t+o(1)\}$, on the event $\cA_S$, 
\begin{align*}
 FP & \le \sum_{i\notin S_{\theta_0}} \1\left\{ H(|\veps_i|) > 2\frac{1-t-o(1)}{t C^*\ta_+} \right\} +\Delta\\
& \le \left[\sum_{i\notin S_{\theta_0}} \1\left\{|\veps_i| > \La(\ta_{t,-})\right\} +\Delta\right]=: F_+.
\end{align*} 

 Then, with $\cA_S, \cB_S$ the events in \eqref{eventoS}, \eqref{defbS} respectively, for $F_+$ as in the last display,
\begin{align*}
 FDR &= E_{\theta_0}\left[ \frac{FP}{(FP+TP)\vee 1} \right] 
\le E_{\theta_0}\left[\frac{F_+}{F_+ +(c_1-\delta)s_n} \1\{\cA_S \cap \cB_S \} \right] +o(1)\\
& \le E_{\theta_0}\left[\frac{F_+}{F_+ +(c_1-\delta)s_n}\right]+o(1) \le \frac{E_{\theta_0} F_+}{E_{\theta_0} F_+ +(c_1-\delta)s_n}+o(1),
\end{align*}
where for the last inequality one uses Jensen's inequality together with the fact that $x\to x/(x+d)$ is concave over $x\ge 0$, for any given $d>0$. One can further bound
\[ E_{\theta_0} F_+ \le nP[|\veps_1|>\La(\ta_{t,-})] + o(s_n). \]
Now recall that by definition $H(u)=1/(u\bar\Phi(u))$ and $H(\La(v))=v$, so that 
\begin{align*}
P[|\veps_1|& > \La(\ta_{t,-}) ]  = 2\bar\Phi(\La(\ta_{t,-}))
 = (2/\La(\ta_{t,-}))/H(\La(\ta_{t,-})) =2/\{ \ta_{t,-} \La(\ta_{t,-})\}\\
& = C^*\frac{t}{1-t}\frac{(d+\delta)\ta_n(s_n)}{\La(\ta_{t,-})}(1+o(1))
 =  \frac{s_n}{n}\frac{t}{1-t}\frac{\sqrt{2\log(n/s_n)}}{\La(\ta_{t,-})}(\Phi(b)+\delta_1)(1+o(1))\\
 & =  \frac{s_n}{n}\frac{t}{1-t}(\Phi(b)+\delta_1)(1+o(1)),
\end{align*}
where $\delta_1=\delta C^*/\sqrt{2}$ and where the last line follows from Lemma \ref{techlata}.

Putting the previous inequalities together leads to $E_{\theta_0} F_+\le s_n(t/(1-t))(\Phi(b)+\delta_1)(1+o(1))$. Going back to the first inequality on the FDR above, and using that $x\to x/(x+d)$ is increasing over $x\ge 0$, for any given $d>0$, one concludes that 
\begin{align*}
\FDR(\te_0,\hat\vphi_t^S)  & \le \frac{E_{\theta_0} F_+}{E_{\theta_0} F_+ +(c_1-\delta)s_n}+o(1)\\
& \le 
\frac{s_n(t/(1-t))(\Phi(b)+\delta_1)(1+o(1))}{s_n(t/(1-t))(\Phi(b)+\delta_1)+\Phi(b)s_n-\delta s_n+o(s_n)}+o(1)\\
&\le \frac{t+t\delta_1/\Phi(b)}{1+(t\delta_1-\delta(1-t))/\Phi(b)+o(1)}(1+o(1))+o(1).
\end{align*}
Similar to the FDR argument above, the previous bounds are uniform over $\te_0\in \cL_0[s_n;b]$, so that we can add a supremum over this set at the beginning of the last display. 
By letting $n,s_n$ go to infinity, this implies
 \[ \limsup_{n,s_n} \sup_{\te_0\in\cL_0[s_n;b]} 
 \FDR(\te_0, \hat\vphi_t^{S}) \le  \frac{t+t\delta_1/\Phi(b)}{1+(t\delta_1-\delta(1-t))/\Phi(b)}. \] 
 Recalling that $\delta_1\leqa\delta$ and letting $\delta$ go to $0$ in the last display gives
 \[ \limsup_{n,s_n} \sup_{\te_0\in\cL_0[s_n;b]} 
 \FDR(\te_0, \hat\vphi_t^{S}) \le t.\]
We note in passing that  without checking uniformity in the previous inequalities, one could also have obtained uniformity over $\cL_{0,=}[s_n;b]$ (which is the class from the statement of the Theorem, and is {\em smaller}  than $\cL_0[s_n;b]$) by a direct argument, as follows.

Indeed, for two different $\te_1,\te_2\in \cL_{0,=}[s_n;b]$, one has $\FDR(\te_1,\hat\vphi_t^{S})=\FDR(\te_2,\hat\vphi_t^{S})$: this is because by definition of the class, $\te_1, \te_2$ only differ by the labelling of the coordinates, since signals are the same. On the other hand, if one relabels the coordinates, then the procedure remains identical (since the prior distribution, which makes the coordinates iid, treats the coordinates symmetrically; the same holds for the value of $\hat\ta$), up to the same relabelling of coordinates.   So $\sup_{\te_0\in\cL_{0,=}[s_n,b]} \FDR(\te_0, \hat\vphi_t^{S})$ equals the value  $\FDR(\te_0, \hat\vphi_t^{S})$ for any given $\te_0\in \cL_{0,=}[s_n,b]$, so that this is another way of proving
 \[ \limsup_{n,s_n} \sup_{\te_0\in\cL_{0,=}[s_n,b]} 
  \FDR(\te_0, \hat\vphi_t^{S}) \le t.\]  

This shows that both FDR and FNR of $\hat\vphi_t^S$ are asymptotically bounded {\em from above} by the limits appearing in the statement of Theorem \ref{thm:fdrcontrol}. To complete the proof, it remains to show the corresponding lower bounds. 
 
{\em Lower bound on the FNR.}  Instead of proving the lower bound `by hand', we provide a more conceptual proof based on the notion of sparsity preserving procedure introduced in \cite{acr24}, and that we recall in Section \ref{sec:sparsitypres} below. 
 Let $\cS=\cS(\Theta)$ be the class of all sparsity preserving procedures over $\Theta=\cL_{0,=}[s_n;b]$ up to a constant $C$ as in Proposition \ref{sparsity-pres}. Then
 \begin{equation} \label{fnr-spapres}
  \inf_{\vphi\in \cS(\Theta)}\sup_{\te\in\Theta}\ \FNR(\te,\vphi) = \bar\Phi(b)+o(1).
 \end{equation} 
 This fact is a minor variation on Theorem 3 in \cite{acr24}: the only difference with the more general statement therein is that here we consider a more restrictive class of signals with all coordinates equal to $\sqrt{2\log{n/s_n}}+b$ (instead of `greater or equal to' therein); however, it is easy to check that the lower-bound part of Theorem 3 in \cite{acr24} continues to hold with a supremum $\te\in\Theta$ only, since the prior distribution used for the proof draws signals exactly from the `boundary' class $\Theta=\cL_{0,=}[s_n;b]$. The upper-bound part of Theorem 3 in \cite{acr24}  continues to hold here as well, since the class therein is larger, so the supremum is larger as well.  
 
Combining \eqref{fnr-spapres} and that the $S$--value procedure is sparsity-preserving (Proposition \ref{sparsity-pres}),
\[ \liminf_{n,s_n}  \sup_{\te\in\Theta}\ \FNR(\te,\hat\vphi_t^{S}) \ge \bar\Phi(b).\]
Putting this together with the upper bound obtained above yields
\[\sup_{\te\in\Theta}\ \FNR(\te,\hat\vphi_t^{S}) = \bar\Phi(b)+o(1). \]
Finally, similar to what we noted for the FDR above, note that the supremum in the last display in fact takes the same value for any given $\te\in \Theta=\cL_{0,=}[s_n;b]$.

{\em Lower bound for the FDR.} 
The arguments below ressemble the ones for the upper bound, except we work `in probability' instead of `in expectation'. We give the details for completeness. 
 The number $FP$ of false positives of the S--value procedure verifies
\begin{align*}
FP & = \  \sum_{i\notin S_{\theta_0}} \1\{ S(\veps_i;\hat\ta) < t \}
\ge \sum_{i\notin S_{\theta_0}} \1\{ S(\veps_i;\hat\ta) < t \}\1_{\mu_n<|\veps_i|\le M_{\hat \ta}/2} 
\end{align*}
The indicator in the last display can be bounded from below as follows, on the event $\cA_S$,
\begin{align*} 
  \1&\left\{ S(\veps_i,\hat\ta )<t ,  \mu_n<|\veps_i|\le M_{\hat \ta}/2 \right\}
 \ge \1\left\{   \frac{1+o(1)}{1+C^*\ta_- H(|\veps_i|)/2}<t \,,\, \mu_n\le |\veps_i|\le M_{\ta_+}/2 \right\} 
\\
& \ge \1\left\{  H(|\veps_i|) > \frac{2}{C^*\ta_-}\{(1-t)/t+o(1)\}  \,,\, \mu_n\le |\veps_i|\le M_{\ta_+}/2  \right\}\\
& \ge \1\left\{  |\veps_i| > \Lambda(\ta_{t,+}) \,,\, \mu_n\le |\veps_i|\le M_{\ta_+}/2  \right\},
\end{align*}
where we recall the notation $\ta_{t,+}=\{2/(C^*\ta_-)\}\{(1-t)/t+o(1)\}$, 
and where one uses Lemma  \ref{lemSval} together with the fact that the inverse $\Lambda$ of $H$ is increasing. The first inequalities of Lemma \ref{techlata} applied to $\ta_{t,+}$ give that for $n$ large enough, $\mu_n<\Lambda(\ta_{t,+})<M_{\ta_+}/2$, so that
\begin{align*}
\1& \left\{  |\veps_i| > \Lambda(\ta_{t,+}) \,,\, \mu_n\le |\veps_i|\le M_{\ta_+}/2  \right\}
 = \1\left\{  |\veps_i| > \Lambda(\ta_{t,+}) \,,\,  |\veps_i|\le M_{\ta_+}/2  \right\}\\
&=\1\left\{  |\veps_i| > \Lambda(\ta_{t,+})\right\}-
\1\left\{  |\veps_i| > \Lambda(\ta_{t,+}), |\veps_i|> M_{\ta_+}/2  \right\}\\
&  = \1\left\{  |\veps_i| > \Lambda(\ta_{t,+})\right\}-
\1\left\{ |\veps_i|>M_{\ta_+}/2  \right\}.
\end{align*}
This implies that, on the event $\cA_S$, recalling the definition \eqref{Deltadef} of $\Delta$,
\begin{align*}
 FP & \ge 
 \left[\sum_{i\notin S_{\theta_0}} \1\left\{|\veps_i| > \La(\ta_{t,+})\right\} -\Delta\right]=: F_-,
\end{align*} 
where we have already seen earlier that $E_{\theta_0}\Delta=o(s_n)$ (so that, also  $\Delta=o_{P_{\theta_0}}(s_n)$).

Similarly, one obtains a lower bound on the number of false negatives FN: 
arguing as for the upper-bound on FN obtained above, for $\te_0\in\cL_0[s_n;b]$ and on the event $\cA_S$,
\begin{align*} 
FN = \sum_{i\in S_{\theta_0}} \1\{ S(X_i;\hat\ta)>t \}
& \ge \sum_{i\in S_{\theta_0}} \1\{ S(X_i;\hat\ta)>t \} \1_{|X_i|\in[\mu_n,M_{\hat\ta}/2]}.
\end{align*}
Using  the first part of Lemma \ref{lemSval}, one can further bound from below, on $\cA_S$,
\begin{align*} 
  \1&\{ S(X_i;\hat\ta)>t \}  \1_{|X_i|\in[\mu_n,M_{\hat\ta}/2]}
   \ge 
   \1\left\{ \frac{1+o(1)}{1 + C^*\ta_+ H(|X_i|)/2}>t \right\} \1_{|X_i|\in[\mu_n,M_{\hat\ta}/2]} \\
& \ge  \1\left\{ |X_i| < \La\left(\frac{2}{C^*\ta_+}\left(\frac{1-t}{t}+o(1)\right) \right) \right\}
- \1\left\{ |X_i|\le \mu_n \right\} - \1\left\{ |X_i|> M_{\ta_+}/2 \right\}.
 \end{align*}
Recalling the notation $\ta_{t,-}:=\{2/(C^*\ta_+)\}\{\left(1-t\right)/t+o(1)\}$, one obtains
\begin{align*}
 FN & \ge 
 \left[\sum_{i\in S_{\theta_0}} \1\left\{|X_i| < \La(\ta_{t,-}) \right\} -\Delta_1-\Delta_2\right]=: G_-,
\end{align*} 
with $\Delta_1=\sum_{i\in S_{\theta_0}}  \1\left\{ |X_i|\le \mu_n \right\} 
, \Delta_2 = \sum_{i\in S_{\theta_0}} \1\left\{ |X_i|> M_{\ta_+}/2 \right\}$. As before, this time using that $\theta_0\in\cL_{0,=}[s_n]$, we have that $\Delta_1,\Delta_2$ are $o_{P_{\theta_0}}(s_n)$. 
 
Let us introduce the event, for $r$ a sufficiently large multiple of $\delta$, \begin{equation} \label{events}
\cC_S := \left\{\, FP \ge  \{t/(1-t)\}(\Phi(b)-r)s_n,\quad FN\ge (1-\Phi(b)-r)s_n\,\right\}.
\end{equation}
Similar to the control of the events $\cA_S, \cB_S$ above, we verify below that $P_{\theta_0}[\cC_S]=1+o(1)$ using Bernstein's inequality. Assuming for a moment that this has been obtained, the lower bound on the FDR is obtained as follows, using $TP=s_n-FN$ and recalling $c_1=\Phi(b)$,
\begin{align*} 
 \FDR(\te_0,\hat\vphi_t^{S}) &= E_{\theta_0}\left[ \frac{FP}{(FP+TP)\vee 1} \right] 
\ge E_{\theta_0}\left[\frac{\{t/(1-t)\}(c_1-r)s_n }{\{t/(1-t)\}(c_1-r)s_n +(c_1+r)s_n} \1\{\cC_S \} \right] \\
& \ge \frac{t(c_1-r)s_n }{t(c_1-r)s_n +(1-t)(c_1+r)s_n}P_{\theta_0}[\cC_S]
= \frac{t(1-r/c_1)}{1 + r(1-2t)/c_1}(1+o(1)).
\end{align*}
By letting $n,s_n$ go to infinity, this implies, for $\te_0\in\cL_{0,=}[s_n,b]$, 
 \[ \liminf_{n,s_n} \ \FDR(\te_0, \hat\vphi_t^{S}) \ge  \frac{t-tr/c_1}{1+r/c_1}. \] 
 Recalling that $r\leqa\delta$, that the FDR in the last display takes the same value for any given $\te_0\in\cL_{0,=}[s_n;b]$, and letting $\delta$ go to $0$ in the last display,  gives 
 \[ \liminf_{n,s_n} \ \sup_{\te_0\in \cL_{0,=}[s_n;b]}\  \FDR(\te_0, \hat\vphi_t^{S}) \ge t.\]
  
It now remains to check that $P_{\theta_0}[\cC_S]=1+o(1)$. Up to choosing a slightly larger constant $r$, given the lower bounds obtained above for FP and FN, one can replace these quantities respectively by $F_-$ and $G_-$ and, since $\Delta, \Delta_1, \Delta_2$ are all $o_{P_{\theta_0}}(s_n)$, one is left with studying two sums of independent Bernoulli variables. Considering first the sum appearing in $F_-$, it has expectation
\[ E_{\theta_0}  \sum_{i\notin S_{\theta_0}} \1\left\{|\veps_i| > \La(\ta_{t,+})\right\} = 2(n-s_n)\bar\Phi(\La(\ta_{t,+})).\]
Similar to the computation above on $2\bar\Phi(\La(\ta_{t,-}))$, using the definition of $H$ and $\La$ one gets
 \begin{align*}
 2\bar\Phi(\La(\ta_{t,+}))
& = (2/\La(\ta_{t,+}))/H(\La(\ta_{t,+})) 
=2/\{ \ta_{t,+} \La(\ta_{t,+})\}\\
& = C^*\frac{t}{1-t}\frac{(d-\delta)\ta_n(s_n)}{\La(\ta_{t,+})}(1+o(1))\\
& =  \frac{s_n}{n}\frac{t}{1-t}\frac{\sqrt{2\log(n/s_n)}}{\La(\ta_{t,+})}(\Phi(b)-\delta_1)(1+o(1))\\
 & =  \frac{s_n}{n}\frac{t}{1-t}(\Phi(b)-\delta_1)(1+o(1)),
\end{align*}
where $\delta_1=\delta C^*/\sqrt{2}$ and where the last line follows from the second assertion of Lemma \ref{techlata}. Now if one chooses the constant $r$ in \eqref{events} strictly larger than $\delta_1$ (e.g. $r=2\delta_1=\sqrt{2}C^*\delta$) then an application of Bernstein's inequality (Lemma \ref{bernstein}; the argument being similar to the one for the events $\cA_S, \cB_S$ above, details are left to the reader) with $t$ a small multiple of $s_n$ gives that the probability that the number of false positives FP is bounded from below as in the definition of $\cC_S$ has probability at least $1-\exp\{-cs_n\}$ for small enough $c>0$.

In order to control the sum appearing in $G_-$, it is enough to control
\[E_{\theta_0} \left[\sum_{i\in S_{\theta_0}} \1\left\{|X_i| < \La(\ta_{t,-}) \right\}\right]. \]
For $i\in S_{\theta_0}$ and  if $\te_{0,i}>0$, it holds $\1\left\{ |X_i| < \La(\ta_{t,-}) \right\} \le 
\1\left\{ \veps_i < \La(\ta_{t,-})-\te_{0,i} \right\}$; similarly, one writes $\1\left\{ |X_i| < \La(\ta_{t,-}) \right\} \le 
\1\left\{ \veps_i > -\La(\ta_{t,-})-\te_{0,i} \right\}$ if $\te_{0,i}<0$. Assuming without loss of generality that $\te_{0,i}>0$, using $\theta_0\in\cL_0[s_n,b]$,
\[ \1\left\{ |X_i| < \La(\ta_{t,-}) \right\} 
\le  \1\left\{ \veps_i < -b + \La(\ta_{t,-}) - \sqrt{2\log(n/s_n)} \right\}.
\]
Lemma \ref{techlata} gives $\La(\ta_{t,-}) - \sqrt{2\log(n/s_n)} =o(1)$.  
From this one deduces that the $E_{\te_0}$--expectation of the last display  is less than $s_n \Phi(-b+o(1))=s_n\{\bar\Phi(b)+o(1)\}$. By the same argument as for $F_-$ above, one can apply Bernstein's inequality to get that the number of false negatives FN verifies the lower bound in \eqref{events} with probability at least $1-\exp\{-cs_n\}$,  for small enough $c>0$. Combining with the result for FP above yields
 $P_{\theta_0}[\cC_S]=1+o(1)$ as desired.

 This concludes the proof of Theorem \ref{thm:fdrcontrol}.

\end{proof}

\subsubsection{Bounds for the $S$--value} \label{sec-bSval}

\begin{lemma} \label{lemSval}
Let $\Pi=\Pi_\ta$ be the horseshoe prior with parameter $\tau>0$.  Suppose $\te\sim \Pi$ in the one-dimensional model $X\given \te \sim \cN(\te,1)$. 
Then there exists $\ta_0>0$ such that the following estimates hold uniformly in $\tau\in [1/n,(s_n/n)\log(n/s_n)]=:\cT_n$, for $\mu_n:=(\log\log(n/s_n))^{1/2}$,
\begin{itemize}
\item if $0\le x\le M_\ta/2$, for 
$C^*=(2/\pi)^{3/2}$,

\[  \Pi_\ta[\te<0 \given X\ge x]  
= 
\begin{dcases}
\ (1/2)(1+\rho_\ta(x)) & \qquad \text{if }\quad 0\le x\le \mu_n,\\
\ \displaystyle{\frac{  \bar\Phi(x)}
{2\bar\Phi(x)+ C^*\ta/x}
(1+R_\ta(x))} & \qquad \text{if }\quad x\in [\mu_n,M_\ta/2],\\
\end{dcases}
\]
where  $\sup_{\ta\in\cT_n}\{ \sup_{0 \le x \le \mu_n} |\rho_\ta(x)|+\sup_{\mu_n \le x \le M_\ta/2} |R_\ta(x)|\}=o(1)$;
\item if $x>M_\ta/2$,
\[ 0\le  \Pi_\ta[\te<0 \given X \ge x]  \le (2/C^*) \ta^{-1}\phi(M_\ta/2) (1+Q_\ta(x)), \]
where  $\sup_{\ta\in\cT_n}\sup_{x > M_\ta/2} |Q_\ta(x)|=o(1)$.

 In particular, $\Pi[\te<0 \given X \ge x]=o(1)$ uniformly in $\ta\in \cT_n$ and $x>M_\ta/2$.
\end{itemize}
\end{lemma}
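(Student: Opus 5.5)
We work from the representation of Lemma \ref{lem:Sval}: for $x\ge 0$, $\Pi_\ta[\te<0\given X\ge x]=\int_x^\infty N(y)dy/\int_x^\infty D(y)dy$. The plan is to obtain sharp asymptotics separately for the tail integrals of $D$ and of $N$, using the pointwise expansions already available. These are Lemma \ref{lemde} for $D$ and the expansion of $N$ from the proof of Lemma \ref{lemsval}, namely $N(y)=\Phi(0)\phi(y)(1+O(\ta^{1/4}))$ for $|y|\le M_\ta$ and $N(y)\le \phi(y)(1+O(\ta))$ beyond.

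\emph{Denominator.} By Lemma \ref{lemde}, $D(y)=\phi(y)(1+O(\sqrt\ta))$ plus a second term $\phi(y)(\ta/\pi)\sqrt{y^2/2}\int_1^{y^2/2}v^{-3/2}e^vdv$. For $y$ large this second term is asymptotically $(\ta/\pi)\,(y/\sqrt2)\,(y^2/2)^{-3/2}e^{y^2/2}\phi(y)=\ta\,2/(\pi\sqrt{2\pi}\,y^2)$, which matches the second statement of Lemma \ref{lemde}. Integrating from $x$ to $\infty$ gives
\[ \int_x^\infty D(y)dy=\Big(\bar\Phi(x)+\tfrac{2\ta}{\pi\sqrt{2\pi}}\tfrac1x\Big)(1+o(1)). \]
Note that $2/(\pi\sqrt{2\pi})=C^*/2$, so this equals $\tfrac12(2\bar\Phi(x)+C^*\ta/x)(1+o(1))$. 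To justify the integration, I split $[x,\infty)$ at $\max(x,\mu_n)$ and at $M_\ta$. The approximation $\int_1^{y^2/2}v^{-3/2}e^v dv\sim (y^2/2)^{-3/2}e^{y^2/2}$ has relative error $O(1/y^2)$, which is $o(1)$ once $y\ge\mu_n\to\infty$. The relative errors are uniform, so they integrate.

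\emph{Numerator.} For $x\le M_\ta/2$, I split $\int_x^\infty N=\int_x^{M_\ta}+\int_{M_\ta}^\infty$. On the first piece, $\int_x^{M_\ta}N=\tfrac12(\bar\Phi(x)-\bar\Phi(M_\ta))(1+O(\ta^{1/4}))$. The second piece is at most $\bar\Phi(M_\ta)(1+O(\ta))$. Since $x\le M_\ta/2$, we have $\bar\Phi(M_\ta)/\bar\Phi(x)\le e^{-3M_\ta^2/8}\cdot\mathrm{poly}=o(1)$. Hence $\int_x^\infty N=\tfrac12\bar\Phi(x)(1+o(1))$, and dividing by the denominator gives the middle formula.

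For $x\le\mu_n$, I check that $C^*\ta/x$ is negligible against $\bar\Phi(x)$. Indeed $\bar\Phi(\mu_n)\gtrsim e^{-\log\log(n/s_n)/2}/\mu_n=(\log(n/s_n))^{-1/2}\mu_n^{-1}$, while $\ta\le (s_n/n)\log(n/s_n)$, which is much smaller. This is exactly why the split point $\mu_n$ and the range $\cT_n$ are chosen this way. The same comparison also controls the term $\ta/x$ near $x=0$, where I bound $D\ge\phi$ directly. So the ratio is $\tfrac12(1+o(1))$, uniformly in $\ta\in\cT_n$.

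\emph{Case $x>M_\ta/2$.} I bound the numerator by $\int_x^\infty N\le \int_{M_\ta/2}^\infty N\le \bar\Phi(M_\ta/2)(1+o(1))\le \phi(M_\ta/2)\cdot 2/M_\ta$. For the denominator I use $\int_x^\infty D\ge \tfrac{C^*}{2}\ta\int_x^\infty y^{-2}dy\,(1+o(1))=C^*\ta/(2x)$. The trouble is that this lower bound degrades as $x$ grows, since the numerator decays like $\bar\Phi(x)$ while the denominator decays only like $1/x$. So the cleaner route is to bound pointwise: $\int_x^\infty N\le\int_x^\infty \phi(y)(1+O(\ta))dy\le \phi(x)/x$, while $\int_x^\infty D\ge C^*\ta/(2x)(1+o(1))$. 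The ratio is then at most $(2/C^*)\ta^{-1}\phi(x)\le(2/C^*)\ta^{-1}\phi(M_\ta/2)$, which is the claim. Finally, $\ta^{-1}\phi(M_\ta/2)=o(1)$ because $M_\ta^2/8=\ta^{-1/2}/8\gg\log(1/\ta)$, uniformly over $\ta\ge 1/n$.

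\emph{Main obstacle.} The hardest step is uniformity of the $o(1)$ remainders in the denominator asymptotics on $[\mu_n,M_\ta/2]$, where the two competing terms $\bar\Phi(x)$ and $C^*\ta/x$ must both be kept with exact constants. I would handle this by first proving the pointwise expansion $D(y)=\phi(y)+\tfrac{C^*\ta}{2y^2}(1+O(y^{-2}))+O(\sqrt\ta)D(y)$ for $y\ge\mu_n$, refining Lemma \ref{lemde}, and then integrating.
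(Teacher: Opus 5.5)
Your proposal is correct and follows essentially the same route as the paper. The numerator is handled identically. The case $x>M_\ta/2$ uses the same combination of $\bar\Phi(x)\le\phi(x)/x$ with the denominator lower bound $C^*\ta/(2x)$. For the denominator on $[\mu_n,\infty)$, you rederive from Lemma \ref{lemde}, via $\int_1^Y v^{-3/2}e^v dv = Y^{-3/2}e^Y(1+O(1/Y))$, the expansion that the paper quotes directly from Lemma \ref{lemprec}.

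One small slip: near $x=0$ you cannot use $D\ge\phi$, since in fact $D(0)<\phi(0)$. However, the bound $D\ge \phi\,(1-O(\sqrt\ta))$ from Lemma \ref{lemde} is all you need there.
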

\begin{proof}
From formula \eqref{condexp}, we see that it is enough to bound the functions $\int_x^\infty N(y)dy$ and $\int_x^\infty D(y)dy$ respectively.  

Starting with the numerator, we can gather the two bounds obtained for $N$ within the proof of Lemma \ref{lemsval} in the regimes $0\le x\le M_\ta$ and $x\ge M_\ta$. For small enough $\ta$, uniformly in $0\le y\le M_\ta$, 
\begin{equation} \label{techin}
 \phi(y)(1-C_2\ta^{1/4})/2 \le N(y)\le \phi(y)(1+C_2\ta^{1/4})/2. 
\end{equation} 
Also, for $y>M_\ta$, we obtained earlier that $N(y)\le \phi(y)(1+O(\ta))$. 
Integrating this gives, uniformly over $\ta\in \cT_n$, for any $x\in[0,M_\ta]$,
\begin{align*}
 \int_x^\infty N(u) du & \le \int_x^{M_\ta} \phi(u)(1+C_2\ta^{1/4})du /2
 + \int_{M_\ta}^\infty \phi(u)(1+O(\ta))du \\
 & \le (1/2)(\bar\Phi(x)-\bar\Phi(M_\ta))(1+o(1))+\bar\Phi(M_\ta)(1+o(1)).
\end{align*}
For $x\le M_\ta/2$, this gives as upper-bound $(1/2)\bar\Phi(x)(1+o(1))$, noting that $\bar\Phi(M_\ta)=o(\bar\Phi(M_\ta/2))$ uniformly over the range of $\ta$'s considered, and next using $\bar\Phi(M_\ta/2)\le \bar\Phi(x)$ for $x\le M_\ta/2$. For $x\in[M_\ta/2,M_\ta]$, the bound above gives 
$\int_x^\infty N(u) du  \le (1/2)(\bar\Phi(x)+\bar\Phi(M_\ta))(1+o(1))\le \bar\Phi(x)(1+o(1))$. For $x>M_\ta$, we use the uniform bound $N(y)\le \phi(y)(1+O(\ta))$ which gives $\int_x^\infty N(u) du  \le \bar\Phi(x)(1+o(1))$ as well. In order to bound the numerator from below, using now the lower bound in \eqref{techin}, one obtains for any $x\in[0,M_\ta/2]$,
\[ \int_x^\infty N(u)du \ge \int_x^{M_\ta} N(u)du\ge  
(1+o(1))(\bar\Phi(x) - \bar\Phi(M_\ta))/2.
\]
Arguing as we just did for the upper-bound, we also have $\bar\Phi(M_\ta)=o(\bar\Phi(x))$ for such $x$'s, so that $\int_x^\infty N(u)du \ge (1+o(1))\bar\Phi(x)/2$ for any $x\in[0,M_\ta/2]$.
 
For the denominator, in the regime $x\ge\mu_n$,  one combines the expression \eqref{linkde} of $D$ with Lemma \ref{lemprec} applied with $\veps_\ta=(\mu_n/2)^{-1}$: since the remainder term $r_\ta$ is bounded  uniformly with respect to $x\ge \mu_n$ by $O(\sqrt{\ta}+\mu_n^{-2})=o(1)$,  one can integrate the expansion of the Lemma to obtain, for any $x\ge\mu_n$,
\begin{align*}
 \int_x^\infty D(u)du & = \frac{\ta}{\pi} \int_x^\infty \phi(u)I_{-1/2}(u)du= 
 \int_x^\infty (\phi(u) + \frac{\sqrt{2}\tau}{\pi^{3/2}} u^{-2})(1+r_\ta(u))du
 \\
 & = \left(\bar\Phi(x)+ \frac{\ta}{x}C^*/2 \right)(1+o(1)).  
\end{align*}
When $0\le x\le \mu_n$, one uses the expression \eqref{deix} of $D$ from Lemma \ref{lemde}. The integral therein is nonnegative, and bounded from above by $e^{\mu_n^2/2}\int_1^{1\vee (x^2/2)} v^{-3/2}dv\le 2e^{\mu_n^2/2}$, which is always a $o(\pi/\ta)$, uniformly in $\ta\in\cT_n$ and $x\in[0,\mu_n]$. This shows that $D(x)=\phi(x)(1+o(1))$ uniformly for the considered $x$'s and $\tau$'s.  From this one deduces that,   for $x\in[0,\mu_n]$, 
\begin{align*} 
 \int_x^{\mu_n} D(y)dy & = \int_x^{\mu_n}\phi(y)dy + \int_x^{\mu_n} \phi(y) o(1)dy\\
& =\bar\Phi(x)-\bar\Phi(\mu_n)+\bar\Phi(x)o(1).
\end{align*} 
On the other hand, using the estimate obtained in the one but last display with $x=\mu_n$,  
 we have $\int_{\mu_n}^\infty D(u)du=
(\bar\Phi(\mu_n)+ \ta\mu_n^{-1} C^*/2 )(1+o(1))=\bar\Phi(\mu_n)(1+o(1))$ using that $\ta/\mu_n\le (s_n/n)\log(n/s_n)/\mu_n=o(\bar\Phi(\mu_n))$ for our choice of $\mu_n$.  Gathering the previous estimates gives, for $0\le x\le \mu_n$,
\[  \int_x^\infty D(u)du = \bar\Phi(x)(1+o(1)).  \] 
Combining the just obtained respective bounds for numerator and denominator in \eqref{condexp} gives the desired result in the regime $0\le x\le M_\ta/2$.

For $x>M_\ta/2$, note first that  $\Pi[\te<0\given X\ge x]\ge 0$ by definition. The above bounds give
\[ \Pi[\te<0\given X\ge x] \le \frac{\bar\Phi(x)}{\bar\Phi(x)+ \ta C^*/(2x)}(1+o(1))\le 
\frac{2x}{C^*\ta}\bar\Phi(x)(1+o(1)).
\]
Using the standard bound $\bar\Phi(x)\le \phi(x)/x$ for $x>0$ and next that $\phi$ is decreasing on $(0,+\infty)$  gives the last upper-bound in the Lemma. The last part of the statement now follows by noting that $\ta\to \ta^{-1}\phi(M_\ta/2)$ goes to $0$ as $\ta\to 0$. 
\end{proof}  

\subsubsection{Sparsity preserving procedures} \label{sec:sparsitypres}

A multiple-testing procedure $\vphi=\vphi(X)\in\{0,1\}^n$ is said to be {\em sparsity preserving} (\cite{acr24}, Definition 1) over a subset $\Theta$ of sparse vectors $\ell_0[s_n]$  up to a multiplicative factor $(A_n)$ if, as $n\to\infty$,
\[ \sup_{\te\in \Theta} P_\te\left[ \sum_{i=1}^n \vphi(X) > A_ns_n \right] \to 0. \]
This property simply expresses that the procedure $\vphi_n$ does not overshoot the true sparsity $s_n$ by a too large multiplicative factor, and can be seen as a (very) weak form of type I error control, in the sense that if it holds, then the number of false discoveries must be at most $A_n s_n$, so not much larger than $s_n$ if $A_n$ is a constant or grows slowly.
 \begin{proposition} \label{sparsity-pres}
For any fixed $t\in(0,1/2)$, the procedure $\hat\vphi^{S}_t(X)$ is sparsity-preserving over $\Theta=\cL_{0,=}[s_n;b]$ up to a large enough (fixed, i.e. non--$n$ dependent) multiplicative constant $C>0$. 
\end{proposition}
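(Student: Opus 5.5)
We need to show that $\sum_i \hat\vphi^S_t(X)_i\le Cs_n$ with probability tending to one, uniformly over $\Theta=\cL_{0,=}[s_n;b]$. The plan is to bound the number of rejections by $TP+FP\le s_n+FP$. It therefore suffices to show $FP\le (C-1)s_n$ with high probability, and we reuse the upper bound on false positives from the proof of Theorem \ref{thm:fdrcontrol}.

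First restrict to the event $\cA_S$ of \eqref{eventoS}, which has probability $1-o(1)$ by Proposition \ref{propeq}. Note that $\cA_S$ forces $\hat\ta\in[\ta_-,\ta_+]\subset\cT_n$, so Lemma \ref{lemSval} applies. On this event, exactly as in the FDR upper-bound argument, split the null coordinates according to whether $|\veps_i|<\mu_n$, $\mu_n\le|\veps_i|\le M_{\ta_+}/2$ or $|\veps_i|>M_{\ta_+}/2$:
\begin{itemize}
\item For $|\veps_i|<\mu_n$, Lemma \ref{lemSval} gives $S(\veps_i;\hat\ta)=1+o(1)>t$ uniformly, since $t<1/2<1$. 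Hence these coordinates are never rejected for $n$ large.
\item In the middle range, $S<t$ forces $|\veps_i|>\La(\ta_{t,-})$.
\item The last range contributes at most $\Delta$ from \eqref{Deltadef}.
\end{itemize}
Thus on $\cA_S$ we get $FP\le F_+$, where $F_+=\sum_{i\notin S_{\te_0}}\1\{|\veps_i|>\La(\ta_{t,-})\}+\Delta$.

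Next, control $F_+$ in probability rather than in expectation. The computation already done in the proof of Theorem \ref{thm:fdrcontrol} gives
\[
E_{\te_0}F_+\le s_n\,\frac{t}{1-t}\,(\Phi(b)+\delta_1)(1+o(1))+o(s_n)\le 2s_n
\]
for $n$ large. This uses $2\bar\Phi(\La(v))=2/(v\La(v))$ together with Lemma \ref{techlata}.

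The sum $\sum_{i\notin S_{\te_0}}\1\{|\veps_i|>\La(\ta_{t,-})\}$ is binomial with mean of order $s_n$. It is deterministic in its threshold because $\ta_{t,-}$ is nonrandom. Bernstein's inequality (Lemma \ref{bernstein}), with deviation $s_n$ and variance bounded by the mean, gives that it exceeds $3s_n$ with probability at most $e^{-cs_n}=o(1)$. For $\Delta\ge 0$, Markov's inequality and $E\Delta=o(s_n)$ give $P[\Delta>s_n]=o(1)$. Combining these, the total number of rejections is at most $s_n+4s_n=5s_n$ with probability $1-o(1)$, so $C=5$ works.

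Uniformity over $\Theta$ is automatic: all the bounds above depend on $\te_0$ only through $S_{\te_0}$ having cardinality $s_n$. Alternatively, one can use the relabelling symmetry of the procedure noted earlier.

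The main point requiring care is the first step. We must make sure the bound $FP\le F_+$ holds with a deterministic threshold once we are on $\cA_S$, so that the random $\hat\ta$ is removed before applying concentration. The matching inequalities of Lemma \ref{lemSval}, valid uniformly in $\ta\in[\ta_-,\ta_+]$, provide exactly this. Only constant-level precision on $\hat\ta$ is needed here, so even Proposition \ref{propmle} would suffice with a larger constant $C$.
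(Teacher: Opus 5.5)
Your proposal is correct and follows the paper's proof: you bound the number of discoveries by $s_n+FP$, use $FP\le F_+$ on $\cA_S$ via Lemma~\ref{lemSval}, and show $E_{\te_0}F_+\lesssim s_n$ with Lemma~\ref{techlata}. The only difference is cosmetic: you concentrate the binomial part of $F_+$ with Bernstein's inequality and handle $\Delta$ with Markov's inequality, whereas the paper bounds $\mathrm{Var}_{\te}F_+\le Cs_n$ and applies Chebyshev's inequality. Your side remark is also correct: the cruder Proposition~\ref{propmle}, and in fact only its upper bound on $\hat\ta$, would already suffice at the cost of a larger constant.
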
 
\begin{proof}
Within the (first part of the) proof of Theorem \ref{thm:fdrcontrol} the following has been shown on the number FP of false positives of the S--value procedure, for $\te$ any element of $\Theta=\cL_{0,=}[s_n;b]$: 
  $FP\le F_+$ on the event $\cA_S$ and 
\[E_\te F_+ \le nP[|\veps_1|>\La(\ta_{t,-})] + o(s_n) \le 
 s_n\frac{t}{1-t}(\Phi(b)+\delta_1)(1+o(1))+o(s_n)\le Cs_n.\]
Similarly, one sees that $\text{Var}_\te F_+ \le Cs_n$, so that $P_\te[F_+>2Cs_n, \cA_S]=o(1)$  by Tchebychev's inequality. Since $P_{\te}[\cA_S^c]=o(1)$, one concludes $P_\te[FP>2Cs_n]=o(1)$. Since the total number of discoveries is at most $TP+FP\le s_n+FP$, one deduces that the total number of discoveries is at most $(2C+1)s_n$ with probability going to $1$, so that the procedure is sparsity preserving, as desired.  
\end{proof}
 

\subsubsection{Technical Lemmas}

\begin{lemma} \label{lemLa}
For any $v>0$ large enough, the following bounds hold for $\La(\cdot)$ as in \eqref{defLa}
\[ 
\sqrt{2\log{v}-\log(8\pi)}\le \La(v)  \le \sqrt{2\log{v}-\log(2\pi)}.
\]
\end{lemma}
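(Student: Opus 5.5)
Writing $v=H(\La(v))=1/(\La(v)\bar\Phi(\La(v)))$, the plan is to invert this identity using two-sided Mills-ratio bounds for $\bar\Phi$. Throughout, set $u:=\La(v)$, so that $v=1/(u\bar\Phi(u))$. Since $H$ is continuous and increasing on $[1,\infty)$ with $H(u)\to\infty$, taking $v$ large forces $u$ large; in particular we may assume $u\ge 2$ (or any fixed constant we need).

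\emph{Upper bound.} Use the classical lower Mills bound $\bar\Phi(u)\ge \frac{u}{1+u^2}\phi(u)$, valid for $u>0$. It gives
\[ v=\frac{1}{u\bar\Phi(u)}\le \frac{1+u^2}{u^2}\,\frac{1}{\phi(u)}\le \frac{2}{\phi(u)} \]
for $u\ge 1$. Hence $\phi(u)\le 2/v$, that is $e^{-u^2/2}\le 2\sqrt{2\pi}/v$, so
\[ u^2\le 2\log v-2\log(2\sqrt{2\pi})=2\log v-\log(8\pi). \]
This route produces the constant $\log(8\pi)$, which is the constant attached to the \emph{lower} side in the statement. So the two constants need to be checked against the natural asymptotics.

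The sharp behaviour is $u\bar\Phi(u)\sim\phi(u)$, which gives $v\sim 1/\phi(u)$ and hence $u^2=2\log v-\log(2\pi)+o(1)$. I expect the stated inequalities to follow from the one-sided Mills bounds as follows.
\begin{itemize}
\item The upper Mills bound $\bar\Phi(u)\le\phi(u)/u$ gives $v\ge 1/\phi(u)$, i.e. $u^2\le 2\log v-\log(2\pi)$. This is exactly the stated upper bound on $\La(v)$, and it holds for all $u>0$.
\item The lower Mills bound as above gives $v\le 2/\phi(u)$ once $u\ge1$, i.e. $u^2\ge 2\log v-\log(8\pi)$. This is exactly the stated lower bound.
\end{itemize}
So the steps are: (i) note that $v$ large implies $u=\La(v)\ge 1$ (because $H$ is increasing and $H(1)$ is finite); (ii) plug the two Mills inequalities into $v=1/(u\bar\Phi(u))$; (iii) take logarithms and rearrange.

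The only mild obstacle is justifying the lower Mills inequality $\bar\Phi(u)\ge u\phi(u)/(1+u^2)$. The standard way is to set $g(u)=\bar\Phi(u)-u\phi(u)/(1+u^2)$. Then $g(\infty)=0$ and $g'(u)=-2\phi(u)/(1+u^2)^2<0$, so $g$ decreases to $0$ and is therefore positive. One should also check that $(1+u^2)/u^2\le 2$ for $u\ge1$. Once these two facts are in place, the rest is immediate.
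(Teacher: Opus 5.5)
Your final argument (the bullet list) is correct and is essentially the paper's proof. It substitutes $u\bar\Phi(u)\le\phi(u)$ and $u\bar\Phi(u)\ge\phi(u)/2$ into $v^{-1}=u\bar\Phi(u)$ and takes logarithms. The paper gets the second bound from $u\bar\Phi(u)\ge(1-1/u^2)\phi(u)$ for $u>\sqrt2$, while you use the slightly sharper $\bar\Phi(u)\ge u\phi(u)/(1+u^2)$ for $u\ge1$, and your derivative check of that bound is correct.

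One slip: your opening ``Upper bound'' paragraph reverses an inequality. From $e^{-u^2/2}\le 2\sqrt{2\pi}/v$ one gets $u^2\ge 2\log v-\log(8\pi)$, not $\le$. That is exactly the correct assignment you then give in the bullets, so that paragraph should simply be deleted.
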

\begin{proof}
By definition of $\La$ as the reciprocal of $H$, we have $H(\La(v))=v$ that is $v^{-1}=\La(v)\bar\Phi(\La(v))$. By using the bound $u\bar\Phi(u)\le\phi(u)$ for $u>0$, one deduces taking logarithms that $-\La(v)^2/2\ge \log(\sqrt{2\pi}/v)$. Rearranging the expression gives the upper-bound in the Lemma. The lower bound follows similarly by using the standard bound $u\bar\Phi(u)\ge(1-1/u^2)\phi(u)$ which we use for $u>\sqrt{2}$ in the form $u\bar\Phi(u)\ge \phi(u)/2$.
\end{proof}  
 
\begin{lemma} \label{techlata}
For any fixed $t\in(0,1)$, there exists a constant $C=C(t)>0$ such that
\[ \left| \La(\ta_{t,-})  -  \sqrt{2\log(n/s_n)}  \right| 
\le \frac{C+\log\log(n/s_n)}{\sqrt{2\log(n/s_n)}}, \] 
where we recall $\ta_{t,-}=\{2/(C^*\ta_+)\}\{(1-t)/t+o(1)\}$ and $\ta_+= (d+\delta) (s_n/n)\sqrt{\log(n/s_n)}$. In particular, as $n,s_n\to\infty$ with $n/s_n\to\infty$, 
\[
\frac{\sqrt{2\log(n/s_n)}}{\La(\ta_{t,-})} \sim 1,
\]
The statements in both displays of the Lemma also hold with $\ta_{t,-}$ replaced by $\ta_{t,+}=\{2/(C^*\ta_-)\}\{(1-t)/t+o(1)\}$ with $\ta_-=(d-\delta) (s_n/n)\sqrt{\log(n/s_n)}$.
\end{lemma}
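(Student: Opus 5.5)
The plan is to sandwich $\La(v)$ using Lemma \ref{lemLa}, evaluated at $v=\ta_{t,-}$ and at $v=\ta_{t,+}$, and to expand $\log v$ explicitly. Lemma \ref{lemLa} gives, for $v$ large,
\[ \sqrt{2\log v-\log(8\pi)}\le \La(v)\le \sqrt{2\log v-\log(2\pi)}, \]
so $\La(v)^2=2\log v+O(1)$, with the $O(1)$ term a universal constant. Both $\ta_{t,-}$ and $\ta_{t,+}$ diverge, because $\ta_\pm\asymp \ta_n(s_n)\to 0$ and $t$ is fixed. It is therefore legitimate to apply Lemma \ref{lemLa}.

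Next I would compute $\log \ta_{t,\mp}$. Write $L:=\log(n/s_n)$. By definition,
\[ \log\ta_{t,\mp}=\log\Big(\frac{2}{C^*(d\pm\delta)}\Big)+\log\Big(\frac{1-t}{t}+o(1)\Big)+L-\frac12\log L. \]
For fixed $t\in(0,1)$ and small fixed $\delta$ (so that $d-\delta>0$), the first two terms are $O(1)$, with a constant depending on $t$. Hence
\[ \La(\ta_{t,\mp})^2-2L=-\log L+O(1), \qquad\text{so}\qquad |\La(\ta_{t,\mp})^2-2L|\le C(t)+\log\log(n/s_n). \]

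To pass from squares to the values themselves, I would use
\[ |\La-\sqrt{2L}|=\frac{|\La^2-2L|}{\La+\sqrt{2L}}\le \frac{|\La^2-2L|}{\sqrt{2L}}, \]
which holds since $\La\ge0$. This gives the first display with the constant $C(t)$. The second display follows at once: the right-hand side is $O(\log L/\sqrt{L})=o(\sqrt L)$, so $\La(\ta_{t,\mp})/\sqrt{2L}\to1$. The case of $\ta_{t,+}$ is identical, with $d-\delta$ in place of $d+\delta$.

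The only delicate point is uniformity of the $o(1)$ inside $\ta_{t,\pm}$. The argument needs that term to be bounded, so that $\frac{1-t}{t}+o(1)$ stays bounded away from $0$ and from $\infty$. This holds for large $n$ because $t<1$ is fixed, and it is exactly what keeps the logarithm of that factor $O(1)$.
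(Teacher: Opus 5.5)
Your proof is correct and follows the same route as the paper, whose proof simply says to combine Lemma \ref{lemLa} with the definitions of $\ta_{t,\mp}$ and $\ta_\pm$. You have written out the details the paper leaves implicit: the expansion $2\log\ta_{t,\mp}=2\log(n/s_n)-\log\log(n/s_n)+O(1)$, and the passage from $|\La^2-2L|$ to $|\La-\sqrt{2L}|$.
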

\begin{proof}
The result follows by combining Lemma \ref{lemLa} with the definitions of $\ta_{t,-}$ and  $\ta_+$, and similarly for the equivalent with $\ta_{t,+}$.
\end{proof}

\subsection{Concentration results for the MMLE $\hat{\tau}$} \label{sec:estimtau}

In this Section, we study a data-driven choice of $\tau$, namely the marginal maximum likelihood estimator. Recall that following \cite{vsv17}, we set, for $h_\ta$ the horseshoe density with parameter $\ta$,
\begin{equation*} 
\hat\ta = \hat\ta(X) = \underset{\ta\in[1/n,1]}{\text{argmax}}\, \prod_{i=1}^n \int \phi(X_i-\te)
h_\ta(\te)d\te.
\end{equation*}
We shall use a number of results on the marginal likelihood function that were established in \cite{vsv17}. For the reader's convenience these results are gathered in Section \ref{sec:margintec}.

\subsubsection{Proof of Proposition \ref{propmle}, used for $s$--values}

Let us recall the notation
\begin{equation*}
\ta_n(s) = \frac{s}{n}  \sqrt{\log(n/s)},
\end{equation*}  
 for $1\le s\le n$.  
Proposition \ref{propmle}, which we now prove, states that for 
 $\hat\ta$ the marginal maximum likelihood given by \eqref{defmmle} and $b$ a fixed, arbitrary, real number, there exist positive constants $d_1 < d_2$ such that,  for $\cL_0[s_n;b]$ the class of signals \eqref{defclb},
\[ 
\sup_{\te_0 \in  \cL_0[s_n;b]} P_{\te_0}\Big( \hat\ta(X)
\notin \left[d_1 \ta_n(s_n) , d_2 \ta_n(s_n)\right] \Big) =o(1).
\]
\begin{proof}[Proof of Proposition \ref{propmle}]
The fact that the bound $\hat\ta(X)\le d_2 \ta_n(s_n)$ holds with overwhelming probability is Theorem 1 in \cite{vsv17}, which gives the result uniformly over all sparse vectors $\te_0\in \ell_0[s_n]$. For $\te_0\in \cL_0[s_n;b]$, we show that this result is in fact sharp by proving that the lower bound $\hat\ta(X)\ge d_1 \ta_n(s_n)$ holds with high probability. 

Let us first gather some useful notation from \cite{vsv17}. The log-marginal likelihood is 
\begin{equation} \label{Mtau}
 \mathbb{M}_\ta(X):=\sum_{i=1}^n \log \int \phi(X_i-\te) h_\ta(\te)d\te.
\end{equation}
Its derivative with respect to $\ta$ can be written, according to Lemma C.1 in \cite{vsv17}, in the form
\begin{align} \label{decompM}
\frac{\partial}{\partial\ta}  \mathbb{M}_\ta(X)  = \frac{1}{\ta}\sum_{i=1}^n m_\ta(X_i)  & = \frac{1}{\ta}\sum_{i\in S_0^c} m_\ta(X_i) + \frac{1}{\ta}\sum_{i\in S_0} m_\ta(X_i)\\
& =: \cM_0(X,\ta)+\cM_1(X,\ta), 
\end{align}
recalling that $S_0=S_{\te_0}$ denotes the support of $\te_0$, with $S_0^c=\{1,\ldots,n\}\setminus S_0$ and where 
\begin{equation} \label{mtau}
 m_\ta(x)=x^2\frac{I_{1/2}(x)-I_{3/2}(x)}{I_{-1/2}(x)} - \frac{I_{1/2}(x)}{I_{-1/2}(x)}.
\end{equation}
Let us further denote $\zeta_\ta:= \sqrt{2\log(1/\ta)}$ and $\cJ_n:=[1/n, d_2\ta_n(s_n)]$. 
 
Dealing first with the term $\cM_0(X,\ta)$ in \eqref{decompM}, Lemma \ref{lemmzero} below gives
\[  \sup_{\ta\in  \cJ_n} \left|\cM_0(X,\ta)\zeta_\ta/n + C^* \right| = o_{P_{\theta_0}}(1), \]
 by taking $\veps_n=d_2\ta_n(s_n)=o(1)$ therein.

Let us now deal with the term $\cM_1(X,\ta)$ in \eqref{decompM}. Denote
\begin{align}
 \mu_\ta(x) & =\frac{\ta}{\ta+\pi x^2 e^{-x^2/2}/2} \label{mutau}\\ 
  m_n & :=\log\log(n/s_n), \label{defmn}
\end{align}
Lemma \ref{lemmun} below gives  $m_\ta(x)/\mu_\ta(x)=1+o(1)$,  uniformly over $|x|\ge \veps_\ta^{-1}$, for any sequence $\veps_\ta=o(1)$ as $\tau$ goes to $0$. Since we can restrict to $\ta$'s in $\cI_n$, the bound  $m_\ta(x)/\mu_\ta(x)=1+o(1)$ can be used for such $\ta$'s and any $|x|\le m_n$ for $m_n\to \infty$ as in \eqref{defmn}.
 
Let us introduce the set of indices, for $m_n$ as in \eqref{defmn},
\begin{align}
S_{small}&:=\{i\in S_0:\ |X_i|\le m_n \}
\end{align}
and define the events, for $\delta>0$, 
\begin{align} 
\cB_1 & = \left\{ |\{i\in S_0:\ |X_i|>\zeta_{s_n/n}+\delta/2 \}| >s_n\Phi(b-\delta) \right\} \label{eventB}\\ 
\cB_{small} & = \left\{ |S_{small}| \le s_n\sqrt{s_n/n} \right\}. \label{eventBsmall}
\end{align}
By Lemma \ref{lem:b} below, the complements $\cB_1^c, \cB_{small}^c$ of $\cB_1, \cB_{small}$ have vanishing probability. On the event $\cB_1$, denote by $j_1,\ldots,j_N$ the indices satisfying the inequality in the definition of $\cB_1$. By definition $N\ge s_n\Phi(b-\delta)$ and for any such index $j_q$, since $x^2e^{-x^2/2}$ is decreasing for large enough $x$, we have that for large enough $n/s_n$,
\[ X_{j_q}^2e^{-X_{j_q}^2/2} \le 
(\zeta_{s_n/n}+\delta/2)^2 e^{-(\zeta_{s_n/n}+\delta/2)^2/2}
\le 4\frac{s_n}{n} \zeta_{s_n/n}^2e^{-\delta \zeta_{s_n/n}/2},
 \]
 using $\delta\le \zeta_{s_n/n}$ and $e^{-\delta^2/8}\le 1$. The last display is thus a $o(\zeta_{s_n/n}s_n/n)$, uniformly in $\ta$. Since it is  positive, one may denote it $|o(\zeta_{s_n/n}s_n/n)|$. Coming back to \eqref{decompM}, on the event $\cB_1$,
 \[ \cM_1(X,\ta) \ge \frac{1}{\ta}\sum_{i\in S_{small}} m_\ta(X_i) + \frac{s_n\Phi(b-\delta)}{\ta + |o(\zeta_{s_n/n}s_n/n)|}. \]
Indeed, we use that for indices $j$ not in $S_{small}$, $m_\ta(X_j)$ is always positive (because $\mu_\ta(X_j)$ is, for large $n/s_n$), and for indices satisfying the inequality in the definition of $\cB_1$, which are at least $s_n\Phi(b-\delta)$, we use the previous inequalities. Using the global bound $m_\ta(X_i)\ge -1$ (Lemma \ref{lemgenmtau}) for $i\in S_{small}$, one deduces that, on the event $\cB_1\cap \cB_{small}$,
\[ \cM_1(X,\ta) \ge - \frac{s_n}{\ta}\sqrt{s_n/n} + \frac{s_n\Phi(b-\delta)}{\ta + |o(\zeta_{s_n/n}s_n/n)|}\ge \frac{s_n\Phi(b-\delta)+o(1)}{\ta + |o(\zeta_{s_n/n}s_n/n)|}. \] 
One deduces that $\partial \mathbb{M}_\ta(X)/\partial\ta>0$ if the last display minus $C^*n/\zeta_\ta$ is positive, that is if
\[ \ta/\zeta_\ta \le \frac{s_n}{n}\frac{\Phi(b-\delta)+o(1)}{C^*} - \frac{s_n}{n}o(\frac{\zeta_{s_n/n}}{\zeta_\ta}). \] 
The second part of Lemma \ref{lemzeta} gives that $\zeta_{s_n/n}/\zeta_\ta=O(1)$ if $\ta/\zeta_\ta\leqa s_n/n$  so that for the last display to hold it is enough that
\[ \ta/\zeta_\ta \le \frac{s_n}{n}\frac{\Phi(b-\delta)}{C^*}(1+o(1)).\]
A sufficient condition for this inequality to hold is, using  $\zeta_{s_n/n}/\zeta_\ta=O(1)$ if $\ta/ \zeta_\ta\leqa s_n/n$, 
\begin{equation} \label{tau_one}
  \ta \le \zeta_{s_n/n}\frac{s_n}{n}\frac{\Phi(b-\delta)}{C^*}(1+o(1))=:\ta_1.
\end{equation}  
This shows that, on the event $\cB_1\cap \cB_{small}$, the map $\ta\to M_\ta(X)$ has a positive derivative on the interval $[1/n,\ta_1]$, which implies $\hat\ta\ge \ta_1$ on $\cB_1\cap \cB_{small}$ and concludes the proof.
\end{proof}
\begin{lemma} \label{lem:b}
Let $S_0=\{i:\ \te_{0,i}\neq 0\}$. For any real $b$ and $\delta>0$, let
\[ \cB_1 = \left\{ |\{i\in S_0 :\ |X_i|>\zeta_{s_n/n}+\delta/2 \}| >s_n\Phi(b-\delta) \right\}. \]
Then for any such $b$ and $\delta$,
\[ \sup_{\te_0\in \cL_0[s_n;b]} P_{\te_0}[\cB_1^c] = o(1). \]
Further denote the (random) set of integers $S_{small}=\{i:\ |X_i|\le \log\log(n/s_n)\}$. Then if
\[ \cB_{small}=\left\{ |S_{small}|\le s_n\sqrt{s_n/n} \right\},\]
it holds $P_{\te_0}[\cB_{small}^c] = o(1)$ uniformly over $\te_0\in \cL_0[s_n;b]$.
\end{lemma}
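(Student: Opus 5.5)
Both assertions reduce to counting independent Bernoulli variables and applying Bernstein's inequality (Lemma \ref{bernstein}). The probabilities will be computed uniformly over $\te_0\in\cL_0[s_n;b]$. By independence of the $X_i$, each event involves a sum of independent indicators, and we only need the means to be suitably separated from the thresholds.

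\emph{Event $\cB_1$.} Let $N:=|\{i\in S_0:\ |X_i|>\zeta_{s_n/n}+\delta/2\}|$, a sum of $s_n$ independent Bernoulli variables with parameters $p_i$. Take $i\in S_0$ with, say, $\te_{0,i}\ge \zeta_{s_n/n}+b$ (the negative case is symmetric). Then
\[ p_i\ge P[\veps_i>\zeta_{s_n/n}+\delta/2-\te_{0,i}]\ge P[\veps_i>\delta/2-b]=\Phi(b-\delta/2). \]
Hence $E N\ge s_n\Phi(b-\delta/2)$, and this exceeds $s_n\Phi(b-\delta)$ by $c_\delta s_n$ for some $c_\delta>0$. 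The probabilities $p_i$ may differ across $i$. To handle this, either apply Bernstein directly to the centred variables $\1\{\cdot\}-p_i$, using $|Z_i|\le 1$ and total variance $\le s_n$, or use the stochastic ordering of Poisson--Binomial laws (Lemma S-25 in \cite{acr24}) to compare with $Bin(s_n,\Phi(b-\delta/2))$. Either way,
\[ P[N\le s_n\Phi(b-\delta)]\le \exp(-c s_n)=o(1), \]
and the bound is uniform since only the lower bound on $|\te_{0,i}|$ was used.

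\emph{Event $\cB_{small}$.} Interpret $S_{small}$ as in the proof of Proposition \ref{propmle}, with indices in $S_0$; over all $i$ the claim would fail because of the many nulls. For $i\in S_0$,
\[ P[|X_i|\le m_n]\le P\bigl[|\veps_i|\ge \zeta_{s_n/n}+b-m_n\bigr]\le 2\exp\{-(\zeta_{s_n/n}+b-m_n)^2/2\}, \]
with $m_n=\log\log(n/s_n)=o(\zeta_{s_n/n})$. The exponent equals
\[ \log(n/s_n)(1-o(1)) \ge \tfrac34\log(n/s_n) \]
for large $n/s_n$, so $E|S_{small}|\le 2s_n (s_n/n)^{3/4}$. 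Markov's inequality then gives
\[ P[|S_{small}|>s_n\sqrt{s_n/n}]\le 2(s_n/n)^{1/4}=o(1), \]
uniformly over the class.

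\emph{Main obstacle.} The only delicate point is this last exponent comparison. It requires $m_n$ to be negligible relative to $\zeta_{s_n/n}$, so that the Gaussian tail beats $\sqrt{s_n/n}$ by a polynomial factor. It holds with room to spare, so the proof is essentially routine.
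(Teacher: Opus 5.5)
Your proof is correct. For $\cB_1$ it is essentially the paper's argument. The paper first reduces to $\te_{0,i}\ge 0$ by flipping the signs of some $\veps_i$. It then observes that the set in $\cB_1$ contains $\{i\in S_0:\ \veps_i>-b+\delta/2\}$, whose cardinality is exactly $\mathrm{Bin}(s_n,\Phi(b-\delta/2))$, and applies Bernstein to that binomial. You instead lower-bound each success probability by $\Phi(b-\delta/2)$ and apply Bernstein to the centred Poisson--Binomial sum. The two are equivalent, and the constant depends only on $(b,\delta)$ in both.

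For $\cB_{small}$ your route differs slightly from the paper's. The paper also restricts $S_{small}$ to $S_0$, implicitly, by writing ``$S_{small}$ is included in $\{i\in S_0,\ldots\}$''; you are right that this restriction is necessary. It then embeds $S_{small}$ into a binomial count with success probability $2\bar\Phi(\zeta_{s_n/n}+b-m_n)$ and says Bernstein applies ``in a similar way''. You use a first-moment (Markov) bound together with the sharper exponent $\tfrac34\log(n/s_n)$.

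Your route is the more robust one. Bernstein with a deviation of order $s_n\sqrt{s_n/n}$ yields an exponent of at most a constant times $s_n\sqrt{s_n/n}$. It therefore gives a vanishing bound only when $s_n\sqrt{s_n/n}\to\infty$, roughly $s_n\gg n^{1/3}$, and gives nothing useful for, say, $s_n=\log n$. Your bound $2(s_n/n)^{1/4}$ needs only $s_n/n\to 0$, so it covers every sparsity regime allowed in the paper.
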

\begin{proof}
First note that without loss of generality, up to changing the signs of some of the $\veps_i$'s, which does not change the joint distribution of the observations $(X_i)$, one may assume that $\te_{0,i}\ge 0$ for all $i=1,\ldots,n$. The following inclusion holds
\[ \{i\in S_0 :\ \veps_i>\zeta_{s_n/n}-\te_{0,i}+\delta \} 
\subset \{i\in S_0 :\ |X_i|>\zeta_{s_n/n}+\delta \}.
\]
By definition of the class $\cL_0[s_n;b]$ and since we have assumed $\te_{0,i}\ge 0$, we also have that  $\{i\in S_0 :\ \veps_i>\zeta_{s_n/n}-\te_{0,i}+\delta/2 \}$ contains $ \{i\in S_{\te_0} :\ \veps_i>-b+\delta/2 \}$. The cardinality of the latter set is binomially distributed with parameters $s_n$ and 
$P[\veps_i>-b+\delta/2]$, so that
\begin{align*}
 P_{\te_0}\left[ \cB_1^c \right] 
& \le P_{\te_0}\left[ \text{Bin}(s_n, \bar\Phi(-b+\delta/2)) \le s_n\Phi(b-\delta) \right] \\
&  \le P_{\te_0}\left[ \text{Bin}(s_n, \Phi(b-\delta/2))
- s_n \Phi(b-\delta/2) \le s_n\{\Phi(b-\delta)-\Phi(b-\delta/2)\} \right].
\end{align*}
By the mean value theorem, there exists $c_{b,\delta}\in(b-\delta,b-\delta/2)$ such that $\Phi(b-\delta)-\Phi(b-\delta/2)=\Phi'(c_{b,\delta})(-\delta/2)=-C_2\delta$, for a constant $C_2=C_2(b,\delta)=\phi(c_{b,\delta})>0$. An application of Bernstein's inequality (Lemma \ref{bernstein} applied to $-Z_i$ and with $M=1$, $N=s_n$ and $V=s_n\Phi(b-\delta/2)$) gives
\[  P_{\te_0}\left[ \cB_1^c \right]  \le \exp\left\{- \frac12 \frac{(s_nC_2\delta)^2}{s_n\Phi(b-\delta/2) 
+s_nC_2\delta/3}\right\}=e^{-C_3s_n}=o(1),\]
for some constant $C_3>0$ depending only on $\delta,b$, which gives the result for $\cB_1$.

For $\cB_{small}$, one proceeds similarly and bounds $P_{\te_0}\left[\cB_{small}^c]=P_{\te_0}[|S_{small}|>s_n\sqrt{s_n/n}\right]$. First one notes that $S_{small}$ is, with  $m_n=\log\log(n/s_n)$,  included in
\[ \{i\in S_0,\ |\veps_i|\ge |\te_{0,i}|-m_n\}\subset \{i\in S_0,\ |\veps_i|\ge \sqrt{2\log{n/s_n}}+b-m_n\}.\]
The cardinality of the latter set is  distributed as  $\text{Bin}(s_n,2\bar\Phi(\sqrt{2\log{n/s_n}}+b-m_n))$. Since $\sqrt{2\log{n/s_n}}+b-m_n\ge \sqrt{\log{n/s_n}}$ for large enough $n/s_n$, the result follows by using Bernstein's inequality in a similar way as for $\cB_1$.

\end{proof}

\subsubsection{Properties of the MMLE used for the $S$--value} \label{sec:mmle2}

Recall the definition of 
the marginal maximum likelihood estimator $\hat\ta$ in \eqref{defmmle}, that $\ta_n(s_n)=(s_n/n)\sqrt{\log(n/s_n)}$ 
and the definition
\begin{equation*} 
 \cL_{0,=}[s_n;b]= \bigg\{ \theta\in \ell_0[s_n] \::\: |\theta_i| = \sqrt{2\log \frac{n}{s_n}}  + b\ \text{ for all } i\in S_\te,\ |S_\te|=s_n \bigg\}. 
\end{equation*}
\begin{proposition} \label{propeq}
Let $\hat\ta$ be the marginal maximum likelihood given by \eqref{defmmle}. Let $b$ be a fixed real number and let $\delta>0$ arbitrary. Then,  for $\cL_{0,=}[s_n;b]$ as above and $C^*=(2/\pi)^{3/2}$,
\[ 
\sup_{\te_0 \in  \cL_{0,=}[s_n;b]} P_{\te_0}\Big( \hat\ta(X)
\notin \left[ \ta_n(s_n)\frac{\Phi(b-\delta)}{C^*}(1+o(1)) \,,\,
\ta_n(s_n)\frac{\Phi(b+\delta)}{C^*}(1+o(1)) \right] \Big) =o(1).
\]
\end{proposition}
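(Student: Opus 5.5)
The plan is to refine the argument of Proposition \ref{propmle}, studying the sign of the score $\partial \mathbb{M}_\ta(X)/\partial\ta=\cM_0(X,\ta)+\cM_1(X,\ta)$ from \eqref{decompM}, but now with matching constants on both sides. First, the upper bound $\hat\ta\le d_2\ta_n(s_n)$ from \cite{vsv17} (Theorem 1) lets us restrict to $\ta\in\cJ_n=[1/n,d_2\ta_n(s_n)]$. On this range, Lemma \ref{lemmzero} gives $\cM_0(X,\ta)=-C^*(n/\zeta_\ta)(1+o_P(1))$ uniformly. The target is then to show that $\cM_1(X,\ta)=\frac{s_n}{\ta}\,\Phi(b\pm\delta)(1+o(1))\cdot(\text{factor}\to1)$ uniformly for $\ta$ of order $\ta_n(s_n)$. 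Equating the two expressions gives $\ta/\zeta_\ta\approx (s_n/n)\Phi(b)/C^*$. Since $\zeta_\ta=\zeta_{s_n/n}(1+o(1))$ for such $\ta$ (Lemma \ref{lemzeta}), and $\zeta_{s_n/n}=\sqrt2\sqrt{\log(n/s_n)}$, this yields the claimed window, with any $\sqrt2$ absorbed consistently with the definition of $d$ in \eqref{eventoS}.

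For $\cM_1$, I would use Lemma \ref{lemmun}: $m_\ta(x)=\mu_\ta(x)(1+o(1))$ for $|x|\ge m_n$, where $\mu_\ta(x)=\ta/(\ta+\pi x^2e^{-x^2/2}/2)$. On $\cL_{0,=}[s_n;b]$ every signal equals $\zeta_{s_n/n}+b$, so the coordinates $i\in S_0$ split into two groups using the events $\cB_1$ (and its mirror) of Lemma \ref{lem:b}.
\begin{itemize}
\item The first group, $|X_i|>\zeta_{s_n/n}+\delta/2$, has at least $s_n\Phi(b-\delta)$ members with high probability. For these, $x^2e^{-x^2/2}=o(\ta)$ when $\ta\asymp\ta_n(s_n)$, so $m_\ta\approx1$. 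This gives the lower bound, exactly as already carried out in the proof of Proposition \ref{propmle}; it yields $\hat\ta\ge\ta_n(s_n)\Phi(b-\delta)(1+o(1))/C^*$.
\item The second group, $|X_i|<\zeta_{s_n/n}-\delta/2$, has at least $s_n\bar\Phi(b+\delta)$ members by the same Bernstein argument. For these, $x^2e^{-x^2/2}\gg \ta_n(s_n)$ (by a factor $e^{\delta\zeta/2}$), so $\mu_\ta=o(1)$ and these terms contribute $o(s_n)$.
\end{itemize}
The remaining coordinates in the band $||X_i|-\zeta_{s_n/n}|\le\delta/2$ number at most $O(\delta)s_n$, and each satisfies $m_\ta\le 1+o(1)$. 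Hence $\cM_1\le \frac{s_n}{\ta}(\Phi(b+\delta)+o(1))$. For $S_{small}$ the global bound $|m_\ta|\lesssim1$ together with $\cB_{small}$ makes its contribution negligible.

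To conclude, I would show that the score is negative for all $\ta\in[\ta_2,d_2\ta_n(s_n)]$ with $\ta_2=\ta_n(s_n)\Phi(b+\delta)(1+o(1))/C^*$, and positive on $[1/n,\ta_1]$. This uses the monotonicity of $\ta\mapsto\ta/\zeta_\ta$ and the fact that $\zeta_\ta/\zeta_{s_n/n}\to1$ uniformly over $\ta\in[c\,\ta_n,C\,\ta_n]$. Since the score has a sign change only inside $[\ta_1,\ta_2]$, the maximiser $\hat\ta$ lies there. Uniformity over $\cL_{0,=}$ is automatic because all elements of the class are permutations of one another.

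I expect the main obstacle to be the uniformity in $\ta$ of the expansion $m_\ta=\mu_\ta(1+o(1))$ together with the upper bound $m_\ta\le1+o(1)$ for large $x$. One must make sure the $o(1)$ is uniform over the random $X_i$ and over $\ta\in\cJ_n$, and that the band coordinates do not inflate the constant. I would handle this by discretising $\ta$ only through deterministic bounds, since $\mu_\ta$ is monotone in $\ta$ and all bounds above are pointwise in $X$ on high-probability events; no empirical-process argument is then needed.
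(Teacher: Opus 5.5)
Your proposal is correct and follows essentially the paper's proof. The lower bound comes from \eqref{tau_one}. For the upper bound, the event $\cB_2$ of Lemma \ref{lem:b2} forces at least $s_n\bar\Phi(b+\delta)$ support coordinates below $\zeta_{s_n/n}-\delta/2$, where $m_\ta$ is negligible, so at most $s_n\Phi(b+\delta)$ coordinates contribute $(1+o(1))/\ta$ each (your separate band count is not needed); one then inverts $\ta/\zeta_\ta$ via Lemma \ref{lemzeta}.

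One precision: both your argument and the paper's centre the window at $\zeta_{s_n/n}s_n/n=\sqrt{2}\,\ta_n(s_n)$. That factor $\sqrt2$ cannot be absorbed into the $(1+o(1))$, so the displayed statement should read $\sqrt2\,\ta_n(s_n)$, which matches $d=\sqrt2\,\Phi(b)/C^*$ in \eqref{eventoS}.
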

\begin{proof}
The fact that with high probability $\hat{\ta}$ is larger than the left-endpoint of the interval in the statement has already been derived within the proof of Proposition \ref{propmle} (see \eqref{tau_one}, where the precise constant $\Phi(b-\delta)/C^*$ in front of  $\zeta_{s_n/n} s_n/n$ is obtained and where it is actually proved for the larger class $\cL_0[s_n;b]$). We now prove the upper-bound part.
Recall that $\hat\ta$ is a zero of 
\begin{equation}\label{MpropS}
 \frac{\partial}{\partial\ta}  \mathbb{M}_\ta(X) = \frac{1}{\ta}\sum_{i\in S_0^c} m_\ta(X_i) + \frac{1}{\ta}\sum_{i\in S_0} m_\ta(X_i)
 =: \cM_0(X,\ta)+\cM_1(X,\ta). 
\end{equation} 
We already know from the proof of Proposition \ref{propmle}, for $\ta_1$ defined in \eqref{tau_one}, that  $(\partial\mathbb{M}_\ta/\partial\ta)(X)>0$ for $\ta\le\ta_1$. So it is enough to focus on $\ta$'s that verify  $\ta \ge \ta_1$. Similarly, since we already know that $\hat\ta<\ta_2:=d_2\ta_n(s_n)$, it is enough to focus on $\ta\le \ta_2$.  Also, we have already proved that the part with no signal $\cM_0$ verifies, for $\cJ_n:=[1/n, d_2\ta_n(s_n)]$, 
\begin{equation} \label{em0}
  \sup_{\ta\in  \cJ_n} \left|\cM_0(X,\ta)\zeta_\ta/n + C^* \right| = o_{P_{\te_0}}(1). 
\end{equation}  
Let us now deal with the term $\cM_1(X,\ta)$. Below we split the indices in the sum defining $\cM_1$ in three subsets and control their respective contributions to the sum.  
Recall the notation
\[ \mu_\ta(x)=\frac{\ta}{\ta+\pi x^2 e^{-x^2/2}/2}, \]
and that Lemma \ref{lemmun} gives, uniformly over $|x|\ge \veps_\ta^{-1}$, for $\veps_\ta=o(1)$,
that $m_\ta(x)/\mu_\ta(x)=1+o(1)$. Recall the notation from the proof of Proposition \ref{propmle}
\[ \cS_{small} = \{i\in S_0,\ |X_i|\le m_n\}. \]
For indices $i$ in $\cS_{small}$, we use below that $m_\ta$ is bounded by a constant (Lemma \ref{lemgenmtau}). 
To deal with other indices, let us define the event, for $\delta>0$, 
\begin{equation} \label{eventB2}
\cB_2 = \left\{ |\{i\in S_0:\ |X_i|\le \zeta_{s_n/n}-\delta/2 \}| > s_n\bar\Phi(b+\delta) \right\}.
\end{equation}
By Lemma \ref{lem:b2} below, the complement $\cB_2^c$ of $\cB_2$ has vanishing probability. On the event $\cB_2$, let $S_1$ denote the set of indices $i$'s appearing in the definition of $\cB_2$ in the last display and $S_2:=S_0\setminus S_1$. By definition, $|S_2|\le s_n(1-\bar\Phi(b+\delta)) = s_n\Phi(b+\delta)$ on $\cB_2$.

For any  $j\in S_1$, since $x^2e^{-x^2/2}$ is decreasing for large enough $x$, 
\begin{equation} \label{techm1}  
X_{j}^2e^{-X_{j}^2/2} \ge 
(\zeta_{s_n/n}-\delta/2)^2 e^{-(\zeta_{s_n/n}-\delta/2)^2/2}
\ge \frac{s_n}{n} \zeta_{s_n/n}^2e^{\delta \zeta_{s_n/n}/2}/2,
\end{equation}
  for large enough $n/s_n$. 
For any  $j\in S_2$, we use the fact that $\mu_\ta(X_j)\le 1$ to deduce that $m_\ta(X_j)\le (1+o(1))$ uniformly over $\ta\leqa  (s_n/n)\zeta_{s_n/n}$ and $j\in S_2$. 

  Coming back to \eqref{MpropS}, on the event $\cB_2\cap \cB_{small}$ for 
  $\cB_{small}$ as in \eqref{eventBsmall}, using the bounds established in the previous paragraphs for $i$ in $S_{small}, S_1, S_2$, one obtains
   \begin{align*} 
 \cM_1(X,\ta) &= \frac{1}{\ta}\Big(\sum_{i\in S_{small}} + \sum_{i\in S_1\setminus S_{small}}+\ 
 \sum_{i\in S_2}\ \Big) m_\ta(X_i) \\
& \le C\frac{|S_{small}|}{\ta} + |S_1\setminus S_{small}|\frac{1+o(1)}{\ta+ \frac{s_n}{n} \zeta_{s_n/n}^2e^{\delta \zeta_{s_n/n}/2}\pi/4}+|S_2|\frac{(1+o(1))}{\ta} \\
& \le \frac{4n(1+o(1))}{\pi e^{\delta \zeta_{s_n/n}/2}} +\frac{s_n\Phi(b+\delta)(1+o(1))}{\ta}, 
\end{align*}  
using that $|S_{small}|=o(s_n)$ on $\cB_{small}$ as well as $|S_1\setminus S_{small}|\le s_n$ and $\zeta_{s_n/n}^2\ge 1$ and bounding from below the denominator in the middle term in the sum in the last display  by removing $\ta$, which makes the denominator smaller.

From this one sees that $\partial \mathbb{M}_\ta(X)/\partial\ta<0$ if the last display minus $C^*n/\zeta_\ta$ is negative. Note that since as noted before we only need to focus on $\ta$'s for which $\ta\geqa s_n\zeta_{s_n/n}/n$, we have $\zeta_\ta=o( e^{\delta\zeta_{s_n/n}/2} )$, so that the first term on the right hand side of the last display is negligible compared to $C^*n/\zeta_\ta$.
This shows that  $\partial \mathbb{M}_\ta(X)/\partial\ta<0$ if
\[ \frac{s_n\Phi(b+\delta)(1+o(1))}{\ta} - \frac{C^* n}{\zeta_\ta}(1+o(1))<0, \]
where the $o(1)$ in factor of $(C^*n/\zeta_\ta)$ in the last display is chosen in such a way that it cancels both the negligible term in the bound above on $\cM_1$ and the $o_{P_{\te_0}}$--term arising from  \eqref{em0}.  
That is,  $\partial \mathbb{M}_\ta(X)/\partial\ta<0$ if
\[ \frac{\ta}{\zeta_\ta} > \frac{s_n}{C^* n}\Phi(b+\delta)(1+o(1)). \] 
Using Lemma \ref{lemzeta} below, and denoting $y:=s_n\Phi(b+\delta)(1+o(1))/(C^* n)$, one deduces that  the map $\ta\to \mathbb{M}_\ta(X)$ has a negative derivative 
if $\ta>y\zeta_y$.  To conclude, it suffices to check that  $\zeta_{y}=\zeta_{s_n/n}(1+o(1))$. This directly follows from the explicit expressions from which one derives $\zeta_{y}-\zeta_{s_n/n}=o(\zeta_{s_n/n})$, which concludes the proof.
\end{proof}

\begin{lemma} \label{lem:b2}
Let $S_0=\{i:\ \te_{0,i}\neq 0\}$. For any real $b$ and $\delta>0$, let
\[ \cB_2 = \left\{ |\{i\in S_0:\ |X_i|\le \zeta_{s_n/n}-\delta/2 \}| >s_n\bar\Phi(b+\delta) \right\}. \]
Then for any such $b$ and $\delta$,
\[ \sup_{\te_0\in \cL_{0,=}[s_n;b]} P_{\te_0}[\cB_2^c] = o(1). \]
\end{lemma}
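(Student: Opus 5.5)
The plan is to mirror the proof of Lemma \ref{lem:b}, the bound for $\cB_1$, with the inequalities reversed. I will exploit the fact that on $\cL_{0,=}[s_n;b]$ every signal has exact amplitude $|\te_{0,i}|=\zeta_{s_n/n}+b$, where $\zeta_{s_n/n}=\sqrt{2\log(n/s_n)}$. Using the symmetry of the noise, exactly as in the proof of Lemma \ref{lem:b}, I may assume without loss of generality that $\te_{0,i}=\zeta_{s_n/n}+b>0$ for all $i\in S_0$.

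For $i\in S_0$, the event $\{|X_i|\le \zeta_{s_n/n}-\delta/2\}$ contains
\[ \{-\zeta_{s_n/n}+\delta/2 - \te_{0,i}\le \veps_i \le -b-\delta/2\}. \]
Its probability equals $\Phi(-b-\delta/2)-\Phi(-2\zeta_{s_n/n}-b+\delta/2)=\bar\Phi(b+\delta/2)-o(1)$, uniformly in $i$ and in $\te_0$; the lower endpoint diverges to $-\infty$. Since $\bar\Phi(b+\delta/2)>\bar\Phi(b+\delta)$, the count $N:=|\{i\in S_0:\ |X_i|\le \zeta_{s_n/n}-\delta/2\}|$ stochastically dominates a $\text{Bin}(s_n,p_n)$ variable with $p_n=\bar\Phi(b+\delta/2)-o(1)$. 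In fact, for boundary signals $N$ is exactly binomial with a common parameter at least $p_n$, so I can compare directly, or invoke the Poisson--Binomial monotonicity (Lemma S-25 in \cite{acr24}) used earlier.

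It then remains to bound $P[\text{Bin}(s_n,p_n)\le s_n\bar\Phi(b+\delta)]$. The gap between the mean and the threshold is $s_n\{\bar\Phi(b+\delta/2)-\bar\Phi(b+\delta)-o(1)\}\ge s_n c_{b,\delta}$ for large $n$. Here $c_{b,\delta}>0$ is obtained by the mean value theorem, as $\tfrac{\delta}{2}\phi(\xi)$ for some $\xi\in(b+\delta/2,b+\delta)$, halved to absorb the $o(1)$. Bernstein's inequality (Lemma \ref{bernstein}), applied to the centred variables $-(\1_i-p_n)$ with $M=1$ and $V\le s_n$, then gives $P_{\te_0}[\cB_2^c]\le e^{-C s_n}=o(1)$. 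All bounds depend on $\te_0$ only through the common amplitude, so they hold uniformly over $\cL_{0,=}[s_n;b]$.

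There is no serious obstacle here. The only point needing care is that the lower tail $\Phi(-2\zeta_{s_n/n}-b+\delta/2)$ is negligible, and this holds since $s_n/n\to0$ forces $\zeta_{s_n/n}\to\infty$.
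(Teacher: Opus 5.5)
Your proposal is correct and follows essentially the same route as the paper: reduce to positive signals, identify $\{|X_i|\le \zeta_{s_n/n}-\delta/2\}$ with $\{-2\zeta_{s_n/n}-b+\delta/2\le \veps_i\le -b-\delta/2\}$ so that the count is $\text{Bin}(s_n,\bar\Phi(b+\delta/2)+o(1))$ uniformly over $\cL_{0,=}[s_n;b]$, separate the mean from the threshold $s_n\bar\Phi(b+\delta)$ via the mean value theorem, and conclude with Bernstein's inequality. Your ``containment'' is in fact an equality, as the paper uses, so the stochastic-domination remark is not needed.
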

\begin{proof}
As in the proof of Lemma \ref{lem:b} we assume without loss of generality that $\te_{0,i}\ge 0$ for  $i=1,\ldots,n$, so that by definition of  $\cL_{0,=}[s_n;b]$ the nonzero values equal $\zeta_{s_n/n}+b$, and
\begin{align*}
\{i\in S_0& :\ |X_i|\le \zeta_{s_n/n}-\delta/2 \} \\
& =  \{ i\in S_0 :\ -\zeta_{s_n/n}-\te_{0,i} + \delta/2< \veps_i < \zeta_{s_n/n}-\te_{0,i} -\delta/2 \} 
\\
& = \{ i\in S_0 :\ -2\zeta_{s_n/n}-b + \delta/2 < \veps_i< -b-\delta/2 \} 
\end{align*}
The cardinality of the latter set is binomially distributed with parameters $s_n$ and 
$\kappa_n:=\Phi(-b-\delta/2)-\Phi(-2\zeta_{s_n/n}-b+\delta/2)=\bar\Phi(b+\delta/2)+o(1)$, so that
\begin{align*}
 P_{\te_0}\left[ \cB_2^c \right] 
& = P_{\te_0}\left[ \text{Bin}(s_n, \kappa_n) \le s_n\bar\Phi(b+\delta) \right] \\
&  \le P_{\te_0}\left[ \text{Bin}(s_n, \kappa_n) - s_n \kappa_n
 \le s_n\{\bar\Phi(b+\delta)-\bar\Phi(b+\delta/2)-o(1)\} \right].
\end{align*}
By the mean value theorem, there exists $d_{b,\delta}\in(b+\delta/2,b+\delta)$ such that $\bar\Phi(b+\delta)-\bar\Phi(b+\delta/2)=\bar\Phi'(d_{b,\delta})(\delta/2)=-C_3\delta$, for a constant $C_3=C_3(b,\delta)=\phi(d_{b,\delta})/2>0$. We write the binomial distribution as $\sum_{j=1}^{s_n} \zeta_j$ with $\zeta_j$ iid Be$(\kappa_n)$ and set $Z_i=-(\zeta_j-\kappa_n)$. An application of Bernstein's inequality (Lemma \ref{bernstein} applied to $Z_i$ and with $M=1$, $N=s_n$ and $V\le s_n\kappa_n\le s_n$) gives
\[  P_{\te_0}\left[ \cB_2^c \right]  \le \exp\left\{- \frac12 \frac{(s_nC_4\delta)^2}{s_n
+s_nC_4\delta/3}\right\}=e^{-C_5s_n}=o(1),\]
for some constant $C_5>0$ depending only on $\delta,b$, which concludes the proof.
\end{proof}

\begin{lemma} \label{lemzeta}
The map $F:\ta\to \ta/\sqrt{2\log(1/\ta)}=\ta/\zeta_\ta$ is strictly increasing and invertible from $(0,1/4]$ into $(0,(\sqrt{32\log{4}})^{-1}]=:K$ and its inverse $F^{-1}$ verifies 
\[
F^{-1}(t) \le t\zeta_t,\qquad \text{for any } t\in K.\]
Also, if $\ta/\zeta_\ta\le Cs_n/n$ for $C>0$, then $\zeta_{s_n/n}\le D \zeta_\ta$ for some $D>0$, for large enough $n/s_n$. 
\end{lemma}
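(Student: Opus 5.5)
We prove Lemma \ref{lemzeta} by elementary calculus, treating its three claims in turn: monotonicity, the bound on the inverse, and the comparison of $\zeta$'s.

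\emph{Monotonicity and range.} Write $F(\ta)=\ta/\zeta_\ta$ with $\zeta_\ta=\sqrt{2\log(1/\ta)}$. On $(0,1/4]$ we have $\log(1/\ta)>0$, so $\zeta_\ta$ is well defined and positive. As $\ta$ increases, $\zeta_\ta$ decreases, hence $1/\zeta_\ta$ increases. Thus $F$ is the product of two positive increasing functions, one of them ($\ta$) strictly increasing, so $F$ is strictly increasing. It is continuous, with $F(\ta)\to0$ as $\ta\to0$ and $F(1/4)=(1/4)/\sqrt{2\log 4}=(\sqrt{32\log4})^{-1}$. Hence $F$ is a bijection from $(0,1/4]$ onto $K=(0,(\sqrt{32\log4})^{-1}]$.

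\emph{Bound on the inverse.} Since $F$ is increasing, $F^{-1}(t)\le t\zeta_t$ is equivalent to $F(t\zeta_t)\ge t$ whenever $t\zeta_t\le1/4$. If instead $t\zeta_t>1/4$, the bound is trivial, because $F^{-1}(t)\le 1/4$ by definition of the range. Now $F(t\zeta_t)\ge t$ reads $\zeta_t\ge\zeta_{t\zeta_t}$, i.e.\ $\log(1/t)\ge\log(1/(t\zeta_t))$, i.e.\ $\zeta_t\ge1$. This holds because $t\le K$-endpoint $<e^{-1/2}$, which gives $2\log(1/t)\ge1$. This settles the first claim.

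\emph{Comparison of $\zeta$'s.} Suppose $\ta/\zeta_\ta\le Cs_n/n$. There are two cases.
\begin{itemize}
\item If $\ta\le s_n/n$, then $\zeta_\ta\ge\zeta_{s_n/n}$ directly, so the claim holds with $D=1$.
\item Otherwise $\ta>s_n/n$. Then $\ta\le C(s_n/n)\zeta_\ta\le C(s_n/n)\zeta_{s_n/n}$, so
\[ \log(1/\ta)\ge\log(n/s_n)-\log C-\tfrac12\log(2\log(n/s_n)). \]
For $n/s_n$ large, the right-hand side is at least $\tfrac12\log(n/s_n)$. Hence $\zeta_\ta\ge\zeta_{s_n/n}/\sqrt2$, and $D=\sqrt2$ works.
\end{itemize}

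The only mild obstacle is the bookkeeping of ranges in the inverse bound: making sure $t\zeta_t$ lies in the domain of $F$, or handling the trivial case when it does not. The remaining steps are one-line monotonicity arguments.
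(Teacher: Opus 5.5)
Your proof is correct. It uses the same elementary monotonicity facts as the paper but organises two of the steps differently.

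\textbf{Monotonicity.} Your product-of-increasing-functions argument is equivalent to the paper's check that $F'>0$.

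\textbf{Inverse bound.} The paper uses the fixed-point identity $F^{-1}(t)=t\,\zeta_{F^{-1}(t)}$. Since $\zeta\ge 1$ on $(0,1/4]$, this gives $F^{-1}(t)\ge t$, hence $\zeta_{F^{-1}(t)}\le\zeta_t$ and so $F^{-1}(t)\le t\zeta_t$. This never evaluates $F$ at $t\zeta_t$, so no case distinction is needed. Your route evaluates $F(t\zeta_t)$ and therefore does need one, and you were right to include it. The case $t\zeta_t>1/4$ genuinely occurs: near the right endpoint of $K$ (about $0.15$) one has $t\zeta_t\approx 0.29$.

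\textbf{Comparison of the $\zeta$'s.} The paper chains the two claims. It applies $F^{-1}$ to $\tau/\zeta_\tau\le Cs_n/n$ to get $\tau\le (Cs_n/n)\zeta_{Cs_n/n}$, and then expands the logarithm. This implicitly uses that $\tau$ lies in the domain $(0,1/4]$ of $F$ and that $Cs_n/n\in K$, which holds for large $n/s_n$.

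Your case split on $\tau\le s_n/n$ versus $\tau>s_n/n$ obtains the analogous bound $\tau\le C(s_n/n)\zeta_{s_n/n}$ directly from the monotonicity of $\zeta$. It is therefore self-contained, independent of the first claim, and valid for any $\tau\in(0,1)$. The final logarithmic estimate and the constant $D=\sqrt2$ are the same in both proofs.
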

\begin{proof}
First, one checks that $F'>0$ on the considered interval so that $F$ has an inverse $F^{-1}$. Next, using $F^{-1}\circ F(t)=t$ one gets $F^{-1}(t)=t\zeta_{F^{-1}(t)}\ge t$ using $\zeta_u\ge 1$ for $u$ in the considered range of $F^{-1}$. Since $u\to \zeta_u$ is decreasing, using the previous inequality again one gets $F^{-1}(t)\le t\zeta_t$. 
 
To prove the second part of the statement, one applies $F^{-1}$ to both parts of the inequality $\ta/\zeta_\ta\le Cs_n/n$ to get $\ta \le (Cs_n/n)\zeta_{Cs_n/n}$. Since $u\to  \zeta_u$ is decreasing, one gets $\zeta_\ta \ge \zeta_{(Cs_n/n)\zeta_{Cs_n/n}}$. One further writes $\zeta_{(Cs_n/n)\zeta_{Cs_n/n}}^2=\zeta_{s_n/n}^2-2\log(C\zeta_{s_n/n})\ge \zeta_{s_n/n}^2/2$ for large enough $n/s_n$, which shows that $\zeta_\ta\ge \zeta_{s_n/n}/\sqrt{2}$, which concludes the proof.
\end{proof}

\subsection{Properties of the marginal likelihood function} \label{sec:margintec}

This section gathers a number of results (and well as some useful notation) that were established in \cite{vsv17} on the marginal likelihood from which $\hat\ta$ is defined. 

{\em The marginal density of $X$.} By definition, the marginal density of $X_1=\theta_1+\veps_1$ in the Bayesian model at point $X_1=x$  equals the convolution $(\phi*h_\ta)(x)$, for $h_\ta$ the horseshoe density of parameter $\ta$. 
This quantity plays a central role for $s$--values, since it is the denominator in the $\Pi[ \te <0 \given X=x]=N(x)/D(x)$. In Lemma \ref{lem:Sval}, we have seen that 
\[
D(x) = \int_0^{\infty}  \phi_{0,1 + (\la\tau)^2}(x) \frac{2}{\pi}\dfrac{d\la}{1 + \la^2}.
\]
An equivalent way of writing $D(x)$ is via the following functions $I_k$ introduced in \cite{vsv17}: for $k\in\{-1/2,1/2,3/2\}$, 
\begin{equation} \label{defik}
 I_k(x):=\int_0^1 z^k \frac{1}{\ta^2+(1-\ta^2)z} e^{y^2 z/2}. 
\end{equation} 
A direct computation shows (Equation (C.2) in Appendix C of \cite{vsv17}) that 
\begin{equation} \label{linkde}
 D(x) =  \frac{\ta}{\pi} I_{-1/2}(x)\phi(x).
\end{equation}

{\em The marginal likelihood and its properties.} The log-marginal likelihood is 
\begin{equation*} 
 \mathbb{M}_\ta(X):=\sum_{i=1}^n \log \int \phi(X_i-\te) h_\ta(\te)d\te.
\end{equation*}

Its derivative with respect to $\ta$ can be written, according to Lemma C.1 in \cite{vsv17}, in the form
\begin{align*}
\frac{\partial}{\partial\ta}  \mathbb{M}_\ta(X)  = \frac{1}{\ta}\sum_{i=1}^n m_\ta(X_i)  & = \frac{1}{\ta}\sum_{i\in S_0^c} m_\ta(X_i) + \frac{1}{\ta}\sum_{i\in S_0} m_\ta(X_i)\\
& =: \cM_0(X,\ta)+\cM_1(X,\ta), \label{decompM}
\end{align*}
recalling that $S_0=S_{\te_0}$ denotes the support of $\te_0$, with $S_0^c=\{1,\ldots,n\}\setminus S_0$ and where 
\begin{equation*} 
 m_\ta(x)=x^2\frac{I_{1/2}(x)-I_{3/2}(x)}{I_{-1/2}(x)} - \frac{I_{1/2}(x)}{I_{-1/2}(x)}.
\end{equation*}

The following general properties of $m_\ta$ are established in \cite{vsv17},
\begin{lemma}[Lemma C.7 (i) of \cite{vsv17}] \label{lemgenmtau}
The function $m_\ta$ as above satisfies the global bound, for some universal constant $C$, all $\ta\in[0,1]$ and $x\in\RR$,
\[ -1 \le m_\ta(x) \le C. \]
\end{lemma}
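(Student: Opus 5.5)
The lower bound follows from pointwise comparisons of the integrands defining the $I_k$'s, and the upper bound from splitting the integration interval $[0,1]$ at $z=1/2$ and estimating a weighted average of $z(1-z)$.

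Write $w(z)=e^{x^2z/2}/(\ta^2+(1-\ta^2)z)$, which is a positive weight on $(0,1]$, so that $I_k(x)=\int_0^1 z^k w(z)\,dz$ for $k\in\{-1/2,1/2,3/2\}$ as in \eqref{defik}.

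\emph{Lower bound.} On $[0,1]$ we have $z^{1/2}-z^{3/2}=z^{1/2}(1-z)\ge 0$, so $I_{1/2}-I_{3/2}\ge 0$. The first term of $m_\ta$ in \eqref{mtau} is therefore nonnegative. Also $z^{1/2}\le z^{-1/2}$ on $(0,1]$, so $0\le I_{1/2}\le I_{-1/2}$ and the second term $-I_{1/2}/I_{-1/2}$ is at least $-1$. Hence $m_\ta(x)\ge -1$ for all $\ta\in[0,1]$ and $x\in\RR$.

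\emph{Upper bound.} Since $-I_{1/2}/I_{-1/2}\le 0$, it suffices to bound
\[ x^2\,\frac{\int_0^1 z^{1/2}(1-z)w(z)\,dz}{\int_0^1 z^{-1/2}w(z)\,dz} \]
by a universal constant. For $|x|\le x_0$, with $x_0$ a fixed constant, I would use $z^{1/2}(1-z)=z\cdot z^{-1/2}(1-z)\le \tfrac14 z^{-1/2}$. The ratio is then at most $1/4$, so the whole expression is at most $x_0^2/4$.

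For $|x|>x_0$, I would bound the denominator from below by its contribution over $[1/2,1]$. On that interval $\ta^2+(1-\ta^2)z\in[1/2,1]$ and $z^{-1/2}\ge 1$, so the denominator is at least
\[ \int_{1/2}^1 e^{x^2z/2}\,dz=\frac{2}{x^2}e^{x^2/2}\left(1-e^{-x^2/4}\right)\ \ge\ \frac{c\,e^{x^2/2}}{x^2}. \]
The numerator is split as $\int_0^{1/2}+\int_{1/2}^1$.

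On $[0,1/2]$ the key (and only slightly delicate) observation is that $\ta^2+(1-\ta^2)z\ge z$, because $\ta^2(1-z)\ge0$. This bound is uniform in $\ta\in[0,1]$, including $\ta$ close to $1$. It gives
\[ \int_0^{1/2} z^{1/2}(1-z)w(z)\,dz\le \int_0^{1/2} z^{-1/2}e^{x^2z/2}\,dz\le \sqrt{2}\,e^{x^2/4}. \]
Multiplying by $x^2$ and dividing by the denominator bound leaves at most $C x^4e^{-x^2/4}$, which is bounded uniformly in $x$.

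On $[1/2,1]$ the weight is at most $2e^{x^2z/2}$. Substituting $u=1-z$,
\[ \int_{1/2}^1 z^{1/2}(1-z)w(z)\,dz\le 2e^{x^2/2}\int_0^\infty u\,e^{-x^2u/2}\,du=\frac{8e^{x^2/2}}{x^4}. \]
Multiplying by $x^2$ and dividing by $c\,e^{x^2/2}/x^2$ gives at most $8/c$.

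Collecting the two ranges of $x$ and both pieces of the numerator yields $m_\ta(x)\le C$ for a universal constant $C$, uniformly in $\ta\in[0,1]$ and $x\in\RR$. The only point needing care is this uniformity in $\ta$ on $[0,1/2]$, which the inequality $\ta^2+(1-\ta^2)z\ge z$ handles. Everything else is an elementary Laplace-type estimate showing that the normalised measure $z^{-1/2}w(z)\,dz$ places mass within $O(x^{-2})$ of $z=1$.
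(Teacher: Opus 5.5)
Your proof is correct and complete. The lower bound follows from $z^{3/2}\le z^{1/2}\le z^{-1/2}$ on $(0,1]$. The upper bound follows from the split at $z=1/2$, using $\tau^2+(1-\tau^2)z\ge z$ on $[0,1/2]$ and $\tau^2+(1-\tau^2)z\in[1/2,1]$ on $[1/2,1]$. The explicit integrals, $\int_{1/2}^1 e^{x^2z/2}dz=\frac{2}{x^2}e^{x^2/2}(1-e^{-x^2/4})$ and $\int_0^\infty u e^{-x^2u/2}du=4/x^4$, are also right.

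The paper gives no argument of its own here; it imports the statement verbatim from \cite{vsv17}. So your write-up is a self-contained verification rather than an alternative route. What it buys is a bound with explicit constants that is uniform over the whole range $\tau\in[0,1]$ at once. By contrast, the estimates on the $I_k$'s that the paper does reproduce (Lemmas~\ref{lemde} and~\ref{lemprec}) are asymptotic as $\tau\to0$, with $(1+O(\sqrt{\tau}))$-type remainders. Lemma~\ref{lemprec} moreover holds only for large $|x|$. Those results would not by themselves give such a global statement, whereas your two elementary inequalities on $\tau^2+(1-\tau^2)z$ do.

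Two minor remarks:
\begin{enumerate}
\item At $\tau=0$ the integrand of $I_{-1/2}$ behaves like $z^{-3/2}$ near $0$, so $I_{-1/2}=+\infty$ while $I_{1/2}$ and $I_{1/2}-I_{3/2}$ stay finite. Thus $m_0$ is only defined by convention, as $0$, which is also the pointwise limit as $\tau\to0$; your bounds hold trivially there.
\item Your closing heuristic can be made exact. The normalised density $z^{-1/2}w(z)/I_{-1/2}(x)$ on $[0,1]$ is the posterior density of $Z=\tau^2\lambda^2/(1+\tau^2\lambda^2)$ given $X=x$. Hence $m_\tau(x)=E[x^2Z(1-Z)-Z\mid X=x]$. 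This makes the lower bound $-1$ immediate, and shows that the upper bound is exactly the statement $x^2E[Z(1-Z)\mid X=x]=O(1)$, which your Laplace-type estimate establishes.
\end{enumerate}
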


{\em Study of the term $\cM_0$.} The following Lemma is a direct consequence of bounds obtained in \cite{vsv17}. Let us recall the notation $\zeta_\ta:= \sqrt{2\log(1/\ta)}$. 

\begin{lemma} \label{lemmzero}
For any $\veps_n\to 0$, for $S_0^c=\{i: \te_{0,i}=0\}$, $\cM_0$ as in \eqref{decompM}, $C^*=(2/\pi)^{3/2}$,
\[ \sup_{1/n\le \ta\le \veps_n} 
\frac{\zeta_\ta}{n} \cM_0(X,\ta) = - C^* \left(1+o_{P_{\te_0}}(1)\right). \]
\end{lemma}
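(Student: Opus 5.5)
The approach is a law of large numbers for the null part of the score, resting on the uniform-in-$\ta$ pointwise asymptotics of $m_\ta$ from \cite{vsv17}. Write $\cM_0(X,\ta)=\ta^{-1}\sum_{i\in S_0^c} m_\ta(\veps_i)$, since $X_i=\veps_i$ off the support. The claim then amounts to showing that the empirical average of $m_\ta(\veps_i)$ over the $n-s_n$ null indices equals $-C^*\ta/\zeta_\ta\,(1+o_P(1))$, uniformly in $\ta\in[1/n,\veps_n]$. The factor $(n-s_n)/n\to1$ because $s_n/n\to0$.

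\emph{Step 1 (expectation).} I would first show $E\,m_\ta(\veps_1)=-C^*(\ta/\zeta_\ta)(1+o(1))$ uniformly on $[1/n,\veps_n]$. This is the content of the expectation bounds in \cite{vsv17} (their Lemmas C.4--C.8). Heuristically, $E\,m_\ta(\veps_1)=\ta\,\partial_\ta\int\log(\phi*h_\ta)\,\phi$, and $(\phi*h_\ta)/\phi=1+\ta\,\psi(x)+\dots$ with $\psi(x)\approx (\sqrt2/\pi^{3/2})x^{-2}e^{x^2/2}$ in the range $|x|\lesssim\zeta_\ta$.

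The negative sign comes from the bulk region $|x|\le\zeta_\ta$, where $m_\ta\approx\mu_\ta-1\cdot(\text{small})$ fails. The cleaner route is the identity $E_0[m_\ta(\veps)]=\ta\int\partial_\ta\log D$ combined with Lemma \ref{lemde}. There, $m_\ta(x)\approx-\ta\psi(x)$ for $|x|\lesssim\zeta_\ta$ and $m_\ta\approx\mu_\ta(x)\le 1$ beyond, so the expectation is governed by
\[
-\int_{|x|\le \zeta_\ta}\ta\,\frac{\sqrt2}{\pi^{3/2}}\,\frac{\phi(x)e^{x^2/2}}{x^2}\,dx \;+\; O\big(P(|\veps|>\zeta_\ta)\big).
\]
The first term has the order $\ta/\zeta_\ta$ with constant $2\sqrt2/(\pi^{3/2}\sqrt{2\pi})\cdot$(integration near the boundary), which should produce $(2/\pi)^{3/2}$. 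The tail term is of order $\ta/\zeta_\ta$ as well, so both contributions must be tracked sharply. I will take this constant from \cite{vsv17} (their Proposition C.2 gives exactly $-C^*\ta/\zeta_\ta$) rather than recomputing it.

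\emph{Step 2 (fluctuations at fixed $\ta$).} I would bound $\mathrm{Var}\,m_\ta(\veps_1)\lesssim \ta/\zeta_\ta$, using $|m_\ta|\le C$ (Lemma \ref{lemgenmtau}) and the fact that $|m_\ta(x)|$ is only of order one on $|x|\gtrsim\zeta_\ta$, a set of probability $\asymp\ta/\zeta_\ta$, and small elsewhere. Chebyshev or Bernstein (Lemma \ref{bernstein}) then give a relative error of order $(n\ta/\zeta_\ta)^{-1/2}$. This is small because $n\ta\ge1$; the extreme $\ta\approx1/n$ is borderline, but there $\zeta_\ta\approx\sqrt{2\log n}$ and Bernstein still yields a $\log$-factor gain, which may require adjusting to the exponential form.

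\emph{Step 3 (uniformity in $\ta$) — main obstacle.} I would discretize $[1/n,\veps_n]$ by a geometric grid $\ta_k=(1+\eta)^k/n$ with $O(\eta^{-1}\log n)$ points. The plan is to use Bernstein with exponential tails and a union bound, which requires $n\ta/\zeta_\ta\gg\log\log n$; this holds except near $\ta\approx1/n$. Between grid points I would use the monotonicity of $\ta\mapsto m_\ta(x)$ (as in \cite{vsv17}, where $m_\ta$ is increasing in $\ta$ for each $x$), together with the fact that the target $\ta/\zeta_\ta$ varies by a factor $1+O(\eta)$ over each cell, to sandwich $\cM_0$; letting $\eta\to0$ then finishes the argument. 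The delicate part is the lowest range of $\ta$, where I would fall back on the explicit supremum bounds of \cite{vsv17} (their Lemma C.4/C.5 uniform statements). Indeed, the lemma is essentially a repackaging of those bounds with the constant made explicit.
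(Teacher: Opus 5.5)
Your skeleton is the paper's: expectation asymptotics from Proposition C.2 of \cite{vsv17}, a uniform concentration bound, and $|S_0^c|=n(1+o(1))$. The paper gets the concentration step directly from Lemma C.3 of \cite{vsv17}, which gives $\sup_{1/n\le\ta\le\veps_n}\zeta_\ta|S_0^c|^{-1}|\cM_0(X,\ta)-E_{\te_0}\cM_0(X,\ta)|=o_{P_{\te_0}}(1)$ over the whole interval. The gap is in your attempt to re-derive that step yourself.

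\emph{Step 3.} The map $\ta\mapsto m_\ta(x)$ is not increasing for each $x$. For small $\ta$, $m_\ta(0)=-I_{1/2}(0)/I_{-1/2}(0)\approx-2\ta/\pi$ decreases in $\ta$, whereas for large $|x|$ one has $m_\ta\approx\mu_\ta$, which increases in $\ta$. So the sandwich between grid points fails on the entire range, not only near $1/n$. A Lipschitz-type substitute is also delicate: $E_0|m_\ta(\veps)|\asymp\ta$ exceeds the target $\ta/\zeta_\ta$ by a factor $\zeta_\ta$, because the mean arises from a cancellation.

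\emph{Step 2.} The relative error $(n\ta/\zeta_\ta)^{-1/2}$ is not small merely because $n\ta\ge1$. It equals $(2\log n)^{1/4}$ at $\ta=1/n$ and stays $\gtrsim1$ for all $\ta\lesssim\zeta_{1/n}/n$. Bernstein with $|m_\ta|\le C$ gives an exponent of order $n\ta/\zeta_\ta\to0$ there, so there is no logarithmic gain. What actually controls the low range is a sharper localisation. The function $m_\ta$ is of order one only where $\mu_\ta\gtrsim1$, i.e.\ for $|x|\ge x_\ta$ with $\pi x_\ta^2e^{-x_\ta^2/2}/2=\ta$. This event has null probability $\asymp\ta\zeta_\ta^{-3}$; note that at $|x|=\zeta_\ta$ one still has $\mu_\ta\approx2/(\pi\zeta_\ta^2)$. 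It yields $E_0m_\ta(\veps)^2\lesssim\ta\zeta_\ta^{-3}$, and hence a Chebyshev relative error $(n\ta\zeta_\ta)^{-1/2}\to0$ even at $\ta=1/n$.

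\emph{Step 1.} Your heuristic is also off, though this is harmless since you take the constant from Proposition C.2. By Lemma \ref{lemmun}, $m_\ta\approx\mu_\ta>0$ once $|x|\to\infty$, so ``$m_\ta\approx-\ta\psi$ on $|x|\lesssim\zeta_\ta$'' has the wrong sign. Your displayed integral is of order $\ta$, not $\ta/\zeta_\ta$, and it diverges at $0$. The actual mechanism is as follows. Since $\ta\partial_\ta D=m_\ta D$ and $\int D=1$, we get $E_0m_\ta(\veps)=-\int m_\ta(D-\phi)$. This integral is dominated by the region $|x|>x_\ta$, where $m_\ta\approx1$ and $D-\phi\approx\sqrt2\,\ta/(\pi^{3/2}x^2)$, which gives $-(2/\pi)^{3/2}\ta/\zeta_\ta$.

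The simplest repair is to drop your Steps 2--3 and invoke Lemma C.3 of \cite{vsv17} for the uniform concentration on all of $[1/n,\veps_n]$; your ``Lemma C.4/C.5'' are not the relevant references. Combined with Proposition C.2 and $|S_0^c|/n\to1$, this is exactly the paper's proof.
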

\begin{proof}
 Lemma C.3 in  \cite{vsv17} grants that $\cM_0(X,\ta)$  in \eqref{decompM} is close to its expectation up to a term $o_P(n/\zeta_\ta)$, that is
\[ \sup_{1/n\le \ta\le \veps_n}  \frac{\zeta_\ta}{|S_0^c|}\left|
  \cM_0(X,\ta) -   
E_{\te_0} \cM_0(X,\ta)
 \right| =o_{P_{\te_0}}(1).\]
Proposition C.2 in \cite{vsv17} implies a control of the expectation itself 
\[ \sup_{1/n\le \ta\le \veps_n} 
\frac{\zeta_\ta}{|S_0^c|} E_{\te_0} \cM_0(X,\ta) = -(2/\pi)^{3/2} (1+o(1)). \]
Combining both displays with $|S_0^c|=n(1+o(1))$ gives the result.
\end{proof}
 
{\em Study of the term $\cM_1$.} The following result  is useful when studying $\cM_1$. Let us denote, for any positive $x,\ta$,
\[ \mu_\ta(x)=\frac{\ta}{\ta+\pi x^2 e^{-x^2/2}/2}. \]
\begin{lemma}[Lemma C.7 (vi) in \cite{vsv17}] \label{lemmun}
For $m_\ta$ as in \eqref{mtau} and $\mu_\ta$ as above, for any $\veps_\ta=o(1)$, as $\ta\to 0$,
\[ \sup_{|x|\ge 1/\veps_\ta} \left|\frac{m_\ta(x)}{\mu_\ta(x)} - 1 \right| =o(1).\]
\end{lemma}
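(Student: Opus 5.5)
The plan is a direct Laplace-type expansion of the three integrals $I_{-1/2}, I_{1/2}, I_{3/2}$ in \eqref{defik} for large $|x|$. We then plug these into the expression \eqref{mtau} of $m_\ta$. By symmetry we take $x\ge 1/\veps_\ta$, so $x\to\infty$ uniformly.

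\emph{Split of each integral.} Write $g_\ta(z)=1/(\ta^2+(1-\ta^2)z)$ and split each $I_k$ as $\int_0^{1/2}+\int_{1/2}^1$.
\begin{itemize}
\item On $[1/2,1]$ we have $g_\ta(z)=1+O(1-z)+O(\ta^2)$. Substituting $z=1-2w/x^2$ gives the standard expansion
\[ \int_{1/2}^1 z^{k}(1-z)^j g_\ta(z)e^{x^2z/2}dz = j!\,(2/x^2)^{j+1}e^{x^2/2}\bigl(1+O(x^{-2})+O(\ta^2)\bigr), \]
uniformly in $\ta\le\ta_0$. We use it with $j\in\{0,1\}$ and $k\in\{-1/2,1/2\}$.
\item On $[0,1/2]$ the integrand is at most $z^{k}g_\ta(z)e^{x^2/4}$. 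For $k\ge 1/2$ the integral $\int_0^{1/2}z^k g_\ta\,dz$ is bounded uniformly in $\ta$. For $k=-1/2$, Lemma \ref{lemde} (equivalently \eqref{linkde} together with \eqref{deix}) gives $I_{-1/2}(x)=\bigl(\pi/\ta+2e^{x^2/2}x^{-2}(1+O(x^{-2}))\bigr)(1+O(\sqrt\ta))$.
\end{itemize}

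\emph{Numerator of $m_\ta$.} Write $x^2(I_{1/2}-I_{3/2})=x^2\int z^{1/2}(1-z)g_\ta e^{x^2z/2}$. Using the expansion with $j=1$, its $[1/2,1]$ part equals $4e^{x^2/2}x^{-2}(1+O(x^{-2}))$, while its $[0,1/2]$ part is $O(x^2e^{x^2/4})$. Similarly $I_{1/2}=2e^{x^2/2}x^{-2}(1+O(x^{-2}))+O(e^{x^2/4})$. The leading terms combine as $4-2=2$ without cancelling to lower order, so
\[ x^2(I_{1/2}-I_{3/2})-I_{1/2} = \frac{2e^{x^2/2}}{x^2}\bigl(1+O(x^{-2})+O(x^4e^{-x^2/4})+O(\ta^2)\bigr). \]
The relative error is $o(1)$ uniformly over $x\ge 1/\veps_\ta$.

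\emph{Ratio.} Dividing by $I_{-1/2}$ gives
\[ m_\ta(x)=\frac{2e^{x^2/2}x^{-2}}{\pi/\ta+2e^{x^2/2}x^{-2}}(1+o(1)). \]
Multiplying numerator and denominator by $\ta x^2e^{-x^2/2}/2$ turns this into $\ta/(\ta+\pi x^2e^{-x^2/2}/2)\,(1+o(1))=\mu_\ta(x)(1+o(1))$. The $o(1)$ remainders are uniform, since they are multiplicative in both the numerator and the two summands of the denominator.

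\emph{Main obstacle.} The delicate point is the numerator. Its two pieces are each of order $e^{x^2/2}/x^2$, so the expansion near $z=1$ must be carried to the correct order, with factors $2$ and $4$, and with a remainder that is relatively $O(x^{-2})$ uniformly in $\ta$. I would handle this by writing $z^{1/2}g_\ta(z)=1+O(1-z)+O(\ta^2)$ on $[1/2,1]$ and integrating $e^{-x^2(1-z)/2}(1-z)^j$ exactly.
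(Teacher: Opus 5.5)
Your argument is correct. The paper does not prove this statement: it quotes it from \cite{vsv17}. There it rests on the expansions of $I_{-1/2}$, $I_{1/2}$ and $I_{1/2}-I_{3/2}$ in Lemmas C.9--C.10 of that paper, which are reproduced in Appendix~\ref{Appendix:tauMMLE} after the substitution $v=x^2z/2$. These give $x^2(I_{1/2}-I_{3/2})-I_{1/2}\sim e^{x^2/2}/(x^2/2)$ and $I_{-1/2}\sim \pi/\ta+e^{x^2/2}/(x^2/2)$, hence $m_\ta\sim\mu_\ta$.

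You reach the same two asymptotics by a different path for the numerator. You split at $z=1/2$ and do a Laplace expansion at $z=1$. You bound the $[0,1/2]$ pieces crudely by $O(x^2e^{x^2/4})$, which is legitimate because $z^{1/2}g_\ta(z)$ is integrable near $0$ uniformly in $\ta$. You import only the $I_{-1/2}$ expansion. For that step Lemma~\ref{lemprec} is the more direct citation than Lemma~\ref{lemde}: it already gives relative error $O(\sqrt{\ta}+\veps_\ta^2)$ on $|x|>1/\veps_\ta$, so you do not need the asymptotics of $\sqrt{y}\int_1^y v^{-3/2}e^v\,dv$.

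Your route buys a self-contained treatment of the numerator and an explicit remainder of order $\veps_\ta^2+\sqrt{\ta}$. You also correctly identify why a first-order expansion is enough. In the form \eqref{mtau}, the leading terms $4e^{x^2/2}/x^2$ and $2e^{x^2/2}/x^2$ do not cancel. So the relative errors $O(x^{-2})+O(\ta^2)$ in each piece carry over directly to their difference.
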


\begin{lemma}[Lemma C.9 in \cite{vsv17}] \label{lemprec}
Let $I_{-1/2}$ be defined by \eqref{defik} with $k=-1/2$. Given $\veps_\ta=o(1)$ as $\ta\to 0$, there exist functions $r_\ta$ with $\sup_{x> 1/\veps_\ta} |r_\ta(x)| = O(\sqrt{\ta}+\veps_\ta^2)$ as $\ta\to 0$ such that
\[ I_{-1/2}(u) = \left(\frac{\pi}{\ta} + \frac{e^{u^2/2}}{u^2/2} \right)(1+r_\ta(u)). \]
\end{lemma}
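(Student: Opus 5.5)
The statement to prove is Lemma \ref{lemprec}: the expansion $I_{-1/2}(u)=\left(\pi/\ta+e^{u^2/2}/(u^2/2)\right)(1+r_\ta(u))$, uniformly over $u>1/\veps_\ta$. I would work directly from the definition \eqref{defik} with $k=-1/2$,
\[ I_{-1/2}(u)=\int_0^1 z^{-1/2}\,\frac{e^{u^2z/2}}{\ta^2+(1-\ta^2)z}\,dz, \]
and split the interval of integration as $[0,\ta]\cup[\ta,1/2]\cup[1/2,1]$. The aim is to show that the first piece produces $\pi/\ta$, the last piece produces $e^{u^2/2}/(u^2/2)$, and everything else is absorbed into the remainder $r_\ta$.

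\emph{First piece, $[0,\ta]$.} Substitute $z=\ta^2 v$. This piece becomes
\[ \frac{1}{\ta}\int_0^{1/\ta}\frac{e^{u^2\ta^2 v/2}}{\sqrt v\,(1+(1-\ta^2)v)}\,dv. \]
If the exponential factor is close to $1$, Lemma \ref{lemitau} gives the value $(\pi/\ta)(1+O(\sqrt\ta))$. However, $e^{u^2\ta^2v/2}$ is close to $1$ only when $u^2\ta\lesssim 1$, and that fails for large $u$. So in that regime I would not try to control this piece separately. Instead I would argue that both this piece and $\pi/\ta$ are negligible relative to the $e^{u^2/2}/(u^2/2)$ term.

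\emph{Last piece, $[1/2,1]$.} Here the denominator equals $z(1+O(\ta^2/z))$, so the integral is
\[ \int_{1/2}^1 z^{-3/2}e^{u^2z/2}\,dz\,(1+O(\ta^2)). \]
Laplace's method near $z=1$ (integrate by parts once) gives $\frac{e^{u^2/2}}{u^2/2}\bigl(1+O(u^{-2})\bigr)$, up to an additive error of order $e^{u^2/4}$. Since $u>1/\veps_\ta$, the relative error $O(u^{-2})$ is $O(\veps_\ta^2)$.

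\emph{Middle piece, $[\ta,1/2]$.} Bound the denominator below by $(1-\ta^2)z$, which gives at most $\int_\ta^{1/2}z^{-3/2}e^{u^2z/2}\,dz$. Split this at $z=2/u^2$ (or use $e^{u^2z/2}\le e^{u^2/4}$ throughout). The result is at most
\[ C\bigl(\ta^{-1/2}+u\,e^{u^2/4}\bigr). \]
Relative to $\pi/\ta$, the $\ta^{-1/2}$ term is $O(\sqrt\ta)$. Relative to $e^{u^2/2}/u^2$, the term $u\,e^{u^2/4}$ is $O(u^3e^{-u^2/4})=o(u^{-2})$.

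\emph{Combining.} All error terms are nonnegative or of controlled sign, so the relative errors with respect to the sum $\pi/\ta+e^{u^2/2}/(u^2/2)$ combine. The one delicate point is the cross regime $u^2\ta\gtrsim1$, where the exponential distorts the first piece. In that regime the entire $[0,\ta]$ piece is at most $e^{u^2\ta/2}\,\ta^{-1}\,\pi(1+O(\ta^2))$, and this must be shown to be $o$ of $e^{u^2/2}/u^2$ with the right rate.

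\textbf{Main obstacle.} I expect this bookkeeping to be the hardest step: obtaining a remainder that is uniformly $O(\sqrt\ta+\veps_\ta^2)$ across all $u>1/\veps_\ta$. I would handle it in two cases.
\begin{itemize}
\item If $u^2\le \ta^{-1/2}$, then $u^2\ta\le\sqrt\ta$, so the exponential factor in the first piece is $1+O(\sqrt\ta)$ and the first-piece estimate above applies directly.
\item If $u^2>\ta^{-1/2}$, then $e^{u^2/2}/u^2$ dominates $\ta^{-1}e^{u^2\ta/2}$ by a factor of order $\exp(u^2/4)/u^2$. This factor is superpolynomially large in $1/\ta$, so the whole first piece can be absorbed as a relative $o(\sqrt\ta)$ error.
\end{itemize}
In both cases the remainder $r_\ta$ is then $O(\sqrt\ta+\veps_\ta^2)$, uniformly in $u>1/\veps_\ta$, which is the claimed bound.
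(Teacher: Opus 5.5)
Your proof is correct. It takes a different route from the paper, which gives no proof of this lemma and quotes it from \cite{vsv17}. In the paper's own terms, the statement is the large-$u$ case of the uniform-in-$x$ expansion quoted in Lemma \ref{lemde}, namely $I_{-1/2}(x)=\bigl(\pi/\tau+\sqrt{x^2/2}\int_1^{x^2/2}v^{-3/2}e^v\,dv\bigr)(1+O(\sqrt\tau))$. One combines it with the elementary asymptotics $\sqrt{y}\int_1^y v^{-3/2}e^v\,dv=(e^y/y)(1+O(1/y))$ at $y=u^2/2>1/(2\varepsilon_\tau^2)$.

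Your argument is self-contained, and each step checks out:
\begin{itemize}
\item the split $[0,\tau]\cup[\tau,1/2]\cup[1/2,1]$;
\item the integration by parts on $[1/2,1]$;
\item the bound $C(\tau^{-1/2}+ue^{u^2/4})$ on the middle piece;
\item the case distinction at $u^2=\tau^{-1/2}$.
\end{itemize}
Together these give a remainder $O(\sqrt\tau+\varepsilon_\tau^2)$ uniformly in $u>1/\varepsilon_\tau$.

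The cited route buys more: an expansion valid for every $x$, which the paper also needs for $x\le\mu_n$ in Lemma \ref{lemSval}. Yours is elementary but tailored to $u>1/\varepsilon_\tau$. Without the uniform expansion, you pay with the crossover case analysis. You can avoid that analysis entirely by writing $e^{u^2z/2}=1+(e^{u^2z/2}-1)$. The part coming from $1$ equals $(\pi/\tau)(1+O(\tau))$ for every $u$. The remaining part equals $\int_0^1 z^{-3/2}(e^{u^2z/2}-1)\,dz$ up to negligible corrections.

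Two small corrections.
\begin{itemize}
\item Drop the parenthetical alternative for the middle piece (``use $e^{u^2z/2}\le e^{u^2/4}$ throughout''); it does not work. It yields $2\tau^{-1/2}e^{u^2/4}$, and in the crossover regime $e^{u^2/2}/u^2\asymp 1/\tau$ this is of order $u/\tau$, larger than the target by a factor $u$. Your stated bound comes from the split at $z=2/u^2$. When $2/u^2<\tau$ the crude bound is fine, since then $\tau^{-1/2}<u$.
\item In the case $u^2>\tau^{-1/2}$, the ratio $(e^{u^2/2}/u^2)/(\tau^{-1}e^{u^2\tau/2})$ equals $\tau e^{u^2(1-\tau)/2}/u^2$, not $e^{u^2/4}/u^2$. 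The missing factor $\tau$ is harmless because $\tau>u^{-4}$ there, so the ratio is still at least $u^{-6}e^{u^2/4}$ for $\tau\le 1/2$. That is enough to absorb both the first piece and $\pi/\tau$.
\end{itemize}
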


\subsection{Link between $s$--values and marginal credible intervals procedures}\label{app:linkmci}
Let us recall the expression of the $s$--value
\begin{align*} 
	s(x;\ta) & = 2\left( \Pi(\te<0 \given X=x) \wedge  \Pi(\te>0 \given X=x)\right)
\end{align*}
and suppose we are in the sequence model to fix ideas (the argument goes through under a mild assumption otherwise, see below). Since the posterior distribution has a density on $\RR$, it does not charge $0$, and $\te<0$ in the last display (respectively $\te\ge 0$) can also be written $\te\le 0$ (resp. $\te>0$). 
In what follows for $t\in(0,1)$ we denote by $z^t(x)$  the quantile at level $t$  of the
marginal posterior distribution of $\theta$ given $X = x$, and the quantile function is the usual inverse of the posterior distribution function, since the posterior density is (strictly) positive on $\RR$. One can write, for any $t\in(0,1)$,
\begin{align*}
	\vphi^s_t(x) &= \1\left\{ s(x;\ta) < t \right\}\\
	&=\1\left\{ \Pi(\te \le 0 \given X=x) <t/2 \right\} + 
	\1\left\{ \Pi(\te > 0 \given X=x) <t/2 \right\} 
	\\
	&=\1\left\{ z^{t/2}(x) > 0 \right\} + \1\left\{ z^{1-t/2}(x) < 0 \right\}.
	\end{align*} 
Let us denote by $\cI(x)$ the centered two--sided posterior credible interval at level $1-t$, that is
\[ \cI_{1-t}(x) = [z^{t/2}(x),z^{1-t/2}(x)]. \] 
Noting $\left\{ z^{t/2}(x) > 0 \right\} \cup \left\{ z^{1-t/2}(x) < 0 \right\} = \left\{ 0\notin \cI(x) \right\}$, we have
\[ \vphi^s_t(x) = \1\left\{ 0\notin \cI_{1-t}(x) \right\}.  \]
In \cite{cr20}, Section 5.1, in the sequence model for the  spike--and--slab prior and the $\ell$--value procedure, an analogous relationship was obtained, but with a two--sided posterior credible interval of credibility level $1-2t$ (and {\em not} $1-t$ as is the case here; this important difference comes from the fact mentioned earlier in the paper that for spike--and--slab posteriors, $0$ already catches the whole mass of the spike). 

More generally, it is not hard to check that the previous argument generalises for other models (see Section \ref{subsec:gen}), as long as the marginal posterior distribution of $\te_i$ given the data has a (strictly) positive density.

\section{Simulations}   \label{app:sims}

\subsection{Sequence model} \label{app:seqmod}

In this Section, we give more details about the algorithms behind the simulations in Section \ref{sec:simu} in the case of data coming from the sequence model. First, for the empirical Bayes approach used therein, we give details about how the marginal maximum likelihood estimator $\hat\ta$ is obtained. Then, since the R package {\tt horseshoe} is used to generate samples from the posterior, we explain how in the present multiple testing context one can use an adaptive grid that avoids sampling from a number $n$ of coefficients (given that here we take $n$'s of the order $10^4$, using the original package would be prohibitive). Next, we provide details on implementing  the $S$--value procedure, for which we used an approximation of its explicit expression. Finally, we provide further numerical experiments in this setting: the effect of taking a larger sparsity level, as well as implementing hierarchical Bayes.

\subsubsection{Estimate $\tau$ with MMLE } \label{Appendix:tauMMLE}

The hyperparameter $\tau$ is estimated following the marginal maximum likelihood method (see (\ref{defmmle})), 
\[
\hat\ta = \hat\ta(X) = \underset{\ta\in[1/n,1]}{\text{argmax}}\, 
\mathbb{M}_\ta(X)
\] 
where  	$\mathbb{M}_\ta(X)$ is the log-marginal likelihood defined,  for $h_\ta$ the horseshoe density with parameter $\ta$, as
\begin{equation*} 
	\mathbb{M}_\ta(X)=\sum_{i=1}^n \log \int \phi(X_i-\te) h_\ta(\te)d\te.
\end{equation*}
The work \cite{vsv17} proposed an  estimator of $\tau$ in the Sequence model with a Horseshoe prior based on maximum marginal likelihood estimateur, that is obtained by optimisation of the function $\mathbb{M}_{\tau}(X)$. However the function `HS.MMLE' in the R package {\tt horseshoe} \cite{horseshoe_package} runs for  vectors of length of the order $400$ only if the true signal value is very sparse.

In order to be able to perform inference for high-dimensions (of the order of $n=10^4$), here we propose to use an approximation; note that we do not perform this directly at the level of the criterion function $\mathbb{M}_\ta$, which would result in numerical instability when computing its derivative to find a zero thereof. Rather, we perform the approximation at the derivative level. The derivative of $\mathbb{M}_\ta$ with respect to $\ta$ can be written, according to Lemma C.1 in \cite{vsv17}
\begin{equation*} 
	\frac{\partial}{\partial\ta}  \mathbb{M}_\ta(X)  = \frac{1}{\ta}\sum_{i=1}^n m_\ta(X_i),  
\end{equation*}
where 
\begin{equation*}
	m_\ta(x)=x^2\frac{I_{1/2}(x)-I_{3/2}(x)}{I_{-1/2}(x)} - \frac{I_{1/2}(x)}{I_{-1/2}(x)}.
\end{equation*}
Lemma C.9 and Lemma C.10 in \cite{vsv17} give 
\[
I_{-1/2}(x) =(\frac{\pi}{\tau} +  \sqrt{\frac{x^2}{2}} \int_1^{\min(1,\frac{x^2}{2})} \frac{e^v}{v^{3/2}}dv) (1+O(\sqrt{\tau})),
\]
\[
I_{1/2}(x) = (\frac{1}{\sqrt{\frac{x^2}{2}}} \int_0^{\frac{x^2}{2}}  \frac{e^v}{\sqrt{v}}dv)
(1+O(\sqrt{\tau})),
\]
\[
I_{1/2}(x)-I_{3/2}(x) = (\frac{1}{\sqrt{\frac{x^2}{2}}} \int_0^{\frac{x^2}{2}} \frac{1-\frac{2v}{x^2}}{\sqrt{v}} e^v dv)(1+O(\sqrt{\tau})).
\]
This motivates the definition of the following approximation of $m_{\tau}(x)$.
\begin{equation} \label{mtildetau}
	\tilde{m}_{\tau}(x) = \frac{\sqrt{2} |x| \int_0^{\frac{x^2}{2}} \frac{1-\frac{2v}{x^2}}{\sqrt{v}} e^v dv - \frac{\sqrt{2}}{|x|} \int_0^{\frac{x^2}{2}}  \frac{e^v}{\sqrt{v}}dv}{\frac{\pi}{\tau} +  \frac{|x|}{\sqrt{2}} \int_1^{\min(1,\frac{x^2}{2})} \frac{e^v}{v^{3/2}}dv}
\end{equation}
We propose to estimate $\tau$ by solving with respect to $\tau$ the equation 
\begin{equation} \label{eqtauhat}
	\sum_{i=1}^{n} \tilde{m}_{\tau}(X_i)= 0,
\end{equation}
over $\tau \in [1/n,1]$ (the restriction to $\tau$'s not too close to $0$ is standard in the present sparsity setting; procedures with even smaller $\tau$'s would tend to be overly conservative). The function  $\tau \in [1/n,1] \rightarrow \sum_{i=1}^{n} \tilde{m}_{\tau}(X_i)$ is increasing in $\tau$ and the solution $\hat{\tau}$ of (\ref{eqtauhat}) is obtained by evaluating each integral in (\ref{mtildetau}) numerically.\\

\subsubsection{Comparison between the binary search and the {\tt horseshoe}  package for the $s$--value procedure} \label{Appendix:comparison.VdP.dichotomie}

To apply the $\vphi^{s}_t(X)$ procedure, we need to estimate the $s$--values $s(X_i;\tau)$ from a  sample of the posterior distribution $\Pi( \cdot \given X=X_i)$. This can be done using the R package {\tt horseshoe} \cite{horseshoe_package}. However, when $n$ is very large, the MCMC used to generate such a sample can be computationally expensive and we propose an approach to avoid to generate a posterior sample for each $i \in \{1, \ldots, n\}$.

Numerically, we can evaluate the $s$--values
only at a few points $x$  (but not $n$ of them), using posterior samples.  When $0<x_1<x_2$, we observe numerically that $s(x_1,\ta)>s(x_2,\ta)$: that is, the function $x\to s(x;\ta)$ appears numerically to be monotone on $\RR^+$  (we believe this is indeed the case, although do not prove it here; note that due to the expression of $s(x;t)$ as a ratio of continuous mixtures, the study of its monotonicity is not straightforward). The idea is therefore to `invert' the map $x\to s(x;t)$, viewing the $s$--value procedure $\vphi^{s}_t$
as a thresholding procedure that selects the $|X_i |$'s larger than some threshold 
\[
s(X_i;\ta) < t \ \Leftrightarrow \ |X_i| > \hat{\xi}(\tau,t).
\]

Suppose $x>0$. 
The value $\hat{\xi}(\tau,t)$ at which the $s$--value crosses $t$ can be found by a binary search algorithm as follows. 
We start with two initial boundary values $
0<z_1^{(0)}<z_2^{(0)}$, with a very small $z_1^{(0)}$ and large $z_2^{(0)}$, so that 
\[ s(z_1^{(0)};\ta)-t>0>s(z_2^{(0)};\ta)-t. \]
Next, we set $z_3^{(0)}=(z_1^{(0)}+z_2^{(0)})/2$ to be their midpoint. 
For some small parameter $\delta>0$, we evaluate the $s$--value  $s(\cdot;\ta)$  at the three points $z_-^{(0)}=z_3^{(0)}-\delta$, $z_3^{(0)}$  and $z_+^{(0)}=z_3^{(0)}+\delta$.
If the three values $s(a;\ta)-t$ for $a=z_-^{(0)}, z_3^{(0)}, z_+^{(0)}$ have not all the same sign, we stop the algorithm. This means that the crossing point $\hat{\xi}(\tau,t)$ is located in the interval between $[z_-^{(0)}, z_+^{(0)}]$ and we return $z_3^{(0)}$ as an approximation of $\hat{\xi}(\tau,t)$. 
Otherwise, if the three quantities are all positive, then the crossing point must lie to the right of $z_3^{(0)}$. We therefore replace the left endpoint by the midpoint and keep the right endpoint unchanged.
Conversely, if the three quantities are all negative, then the crossing point lies to the left of $z_3^{(0)}$, and we replace the right endpoint by the midpoint and keep the left endpoint unchanged. 
The procedure is then repeated recursively. At the $k$-th iteration, we compute the midpoint
$z_3^{(k)}=\frac{z_1^{(k)}+z_2^{(k)}}2$, and we 
evaluate $s(\cdot;\tau)-t$ at the three points
$z_3^{(k)}-\delta, z_3^{(k)}, z_3^{(k)}+\delta$
and continue updating the interval endpoints as long as these three values have the same sign. The algorithm terminates at the first iteration for which the signs are not all identical, indicating that the level crossing occurs within a neighborhood around the current midpoint. The corresponding midpoint is taken as the estimate $\hat{\xi}(\tau,t)$. Once the threshold $\hat{\xi}(\tau,t)$ is estimated, we compare each $|X_i|$ with it.
 Consequently, 
 the function from the {\tt horseshoe} package that generates a posterior sample is  called only a few times  instead of $n$ times, which is particularly advantageous for large $n$. 

For the $Cs$--value based procedure, we introduce the following algorithm. 
Let $\hat{\xi}(\tau,t)$ denote the threshold obtained from the binary search algorithm described above, and define
$\mathcal R=\{1\le i\le n:\ |X_i|>\hat{\xi}(\tau,t)\}$ and $c_{\mathcal R}$ its cardinality. 
We begin by evaluating $s(X_i;\tau)$ only for the observations in $\mathcal R$.  If $\mathcal R$ is empty, we initialize it with the observation having the largest absolute value.
The  computed $s$--values are then ordered increasingly, and for each $k \in \{1, \ldots, c_{\mathcal R}\}$ we calculate the cumulative means
$\sum_{i=1}^k s_{(i)} / k$
where $s_{(1)}\le\cdots\le s_{(c_{\mathcal R})}$ denote the ordered $s$--values.
If all these cumulative means are smaller than $t$, then the available observations in $\mathcal R$ are not sufficient to determine $\hat{k}_{C_s}$ defined in \eqref{kchap}. In that case, we enlarge the set $\mathcal R$ by adding the next observation in decreasing order of $|X_i|$, evaluate its $s$--value, and compute again the cumulative means. This procedure is repeated until at least one cumulative mean exceeds $t$.
Once this occurs, we approximate $\hat{k}_{C_s}$ by  the quantity
$$ \max\left\{1 \le k \le c_{\mathcal R}:\ \ \frac{1}{k} \sum_{i=1}^k s_{(i)} \le t \right\},$$
and the $Cs$--value procedure rejects all the $s$--values smaller than $s_{\hat{k}_{C_s}}$.

The FDR and TDR obtained using either the binary search approach or the {\tt horseshoe} package can be seen to be numerically  close, with the binary search approach offering a substantial gain in both computational time and memory requirements for storing posterior samples.
In terms of computational time, for $n=10000$,  the $s$--value procedure is between $15$ and $23$ times faster when using the binary search than the {\tt horseshoe} package, as the search of the value at which the $s$--value crosses $t$ only takes a few iterations. 
The computational time of the $Cs$--value procedure depends on the number of iterations required to expand $\mathcal {R}$ until $\hat{k}_{Cs}$ can be evaluated. It takes on average $1.3$ iterations when $s=10$ and $23.7$ iterations when $s=100$, making the binary search $18$ times faster when $s=10$ and $4$ times faster when $s=100$.\\

\subsubsection{$S$--value procedure} \label{Appendix:SvalueEquivalent}

To apply the $S$--value procedure, one possibility would be to first derive  a sampler from conditional distributions of $\te_1$ given $X_1\ge x$ (as these cannot be directly obtained from the usual conditionals $\cL(\te_1\given X_1=x)$). Since this is expected to be a delicate task for more general models (where we rather advocate the use of the $Cs$--value procedure, which is simpler since solely based on the $s$--values and hence can be done from posterior samples), here our purpose will be mostly illustrative and we propose to use a simple approximation of  the $S$--value \eqref{svalx}, whose expression we recall here
\begin{align*}  
	S(x;\ta) & = 2\left( \Pi_\ta(\te_1<0 \given X_1\ge x) \wedge  \Pi_\ta(\te_1>0 \given X_1\le x)\right).
\end{align*}
Namely,  we use the theoretical equivalents from Section \ref{sec-bSval} to approximate the posterior probability as follows, with $C^*=(2/\pi)^{3/2}$
\[
\begin{aligned}
	& \Pi_{\tau}(\theta < 0 \mid X \ge x)
	\approx \frac{\bar{\Phi}(x)}{2\bar{\Phi}(x) + \dfrac{C^* \tau}{x}},
	\qquad \text{for } x > \mu_n; 
	 \\[0.8em]
	& \Pi_{\tau}(\theta > 0 \mid X \le x)  
	\approx \frac{\Phi(x)}{2\Phi(x) - \dfrac{C^* \tau}{x}}, 
	\qquad \text{for } x < -\mu_n; 
	 \\[0.8em]
	& \Pi_{\tau}(\theta < 0 \mid X \ge x)
	\approx \Pi_{\tau}(\theta > 0 \mid X \le x)
	\approx 1/2, 
	\qquad \text{for } x \in [-\mu_n, \mu_n].
\end{aligned} 
\]

\subsubsection{Further simulations in the sequence model} \label{Appendix:sequencemodel}

Figure \ref{fig::risk} presents the multiple testing risk $\cR=\FDR+\FNR$ for the $s$--value and $S$--value procedures in the sparse sequence model with $n=10000$ and $s_n=10$, using an empirical Bayes approach, together with the minimax risk $\overline{\Phi}(b)$ as a function of  the real parameter $b$.
The $s$--value procedure achieves a risk close to the optimal one for very large signals. The $S$--value procedure suffers a loss in this regime  due to the additional threshold $t$, although it remains close to optimal when $t$ is small ($t=5\%$). However, it provides very good finite-sample performance for intermediate signal strengths. These results indicate that choosing a small target level $t$ yields a good compromise between finite-sample performance and asymptotic near-optimality of the $\mathcal{R}$-risk.

\begin{figure}[!h]
	\includegraphics[scale=0.55]{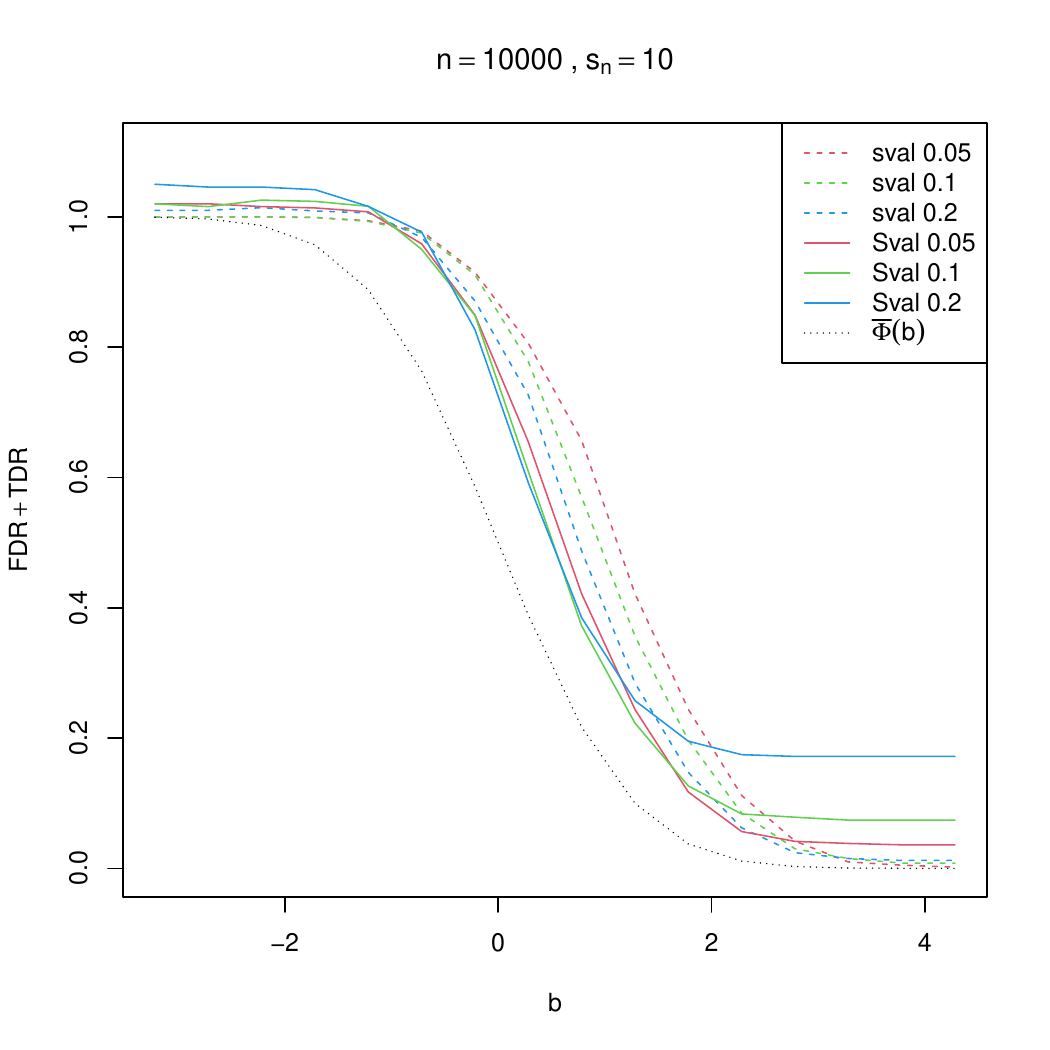}
	\centering
	\caption{ Risk for the $s$--value and $Cs$-- value procedures with threshold $t \in  \{0.05, 0.1, 0.2\}$, as well as the minimax risk $\overline{\Phi}(b)$ as a function of  the real parameter $b$; sequence model with $n = 10000; s=10$,  alternative all equal to $\mu$ and an  Empirical Bayes approach; $100$	replications.} 
	\label{fig::risk}
\end{figure}

Figure 	\ref{fig:all.Horseshoe.s100} presents the FDR and TDR in the sparse sequence model for  $n=10000, s_n=100$, using an  empirical Bayes approach. The results are simular to those obtained in Figure \ref{fig:all.Horseshoe.s10}, although we note that for the weaker sparsity $s_n /n = 0.01$, the FDR values	are slightly higher above the threshold $t$. This suggests that the asymptotical regime has not been reached yet for this setting. This is in line with what has been observed empirically for the C$\ell$--value procedure for spike--and--slab posteriors, see e.g. \cite{cr20} (Supplement, Section S--8.1 and Figure S--3) for more details.
	
\begin{figure}[!h]
	\includegraphics[scale=0.55]{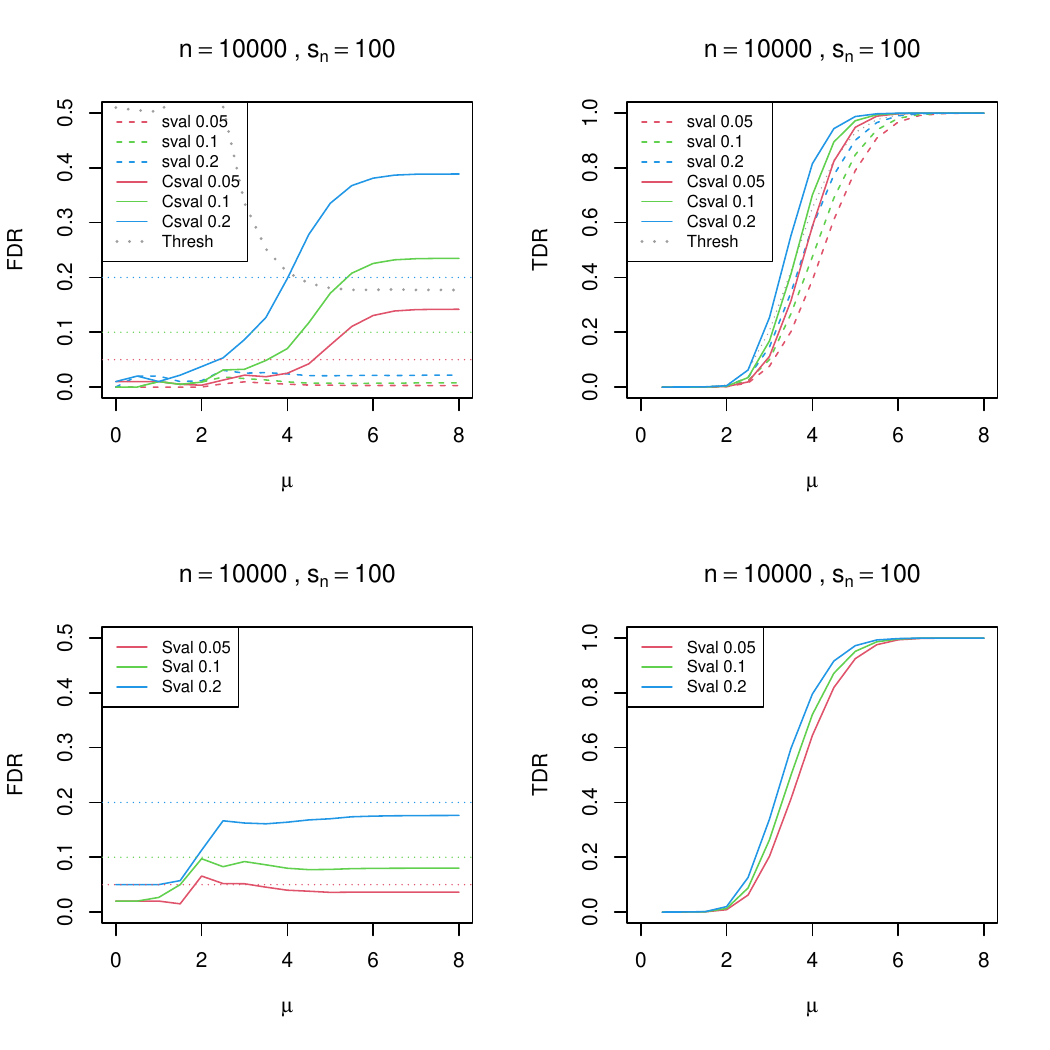}
	\centering
	\caption{Sequence model. Empirical Bayes approach. FDR and TDR of the $s$--value and $Cs$-- value procedures with threshold $t \in  \{0.05, 0.1, 0.2\}$, as well as those of the thresholding method with $\kappa=0.5$ (top graph); FDR and TDR of the $S$--value procedure with threshold $t \in  \{0.05, 0.1, 0.2\}$ (bottom graph); $n = 10000; s_n=10^2$; alternative all equal to $\mu$; $100$	replications.} 
	\label{fig:all.Horseshoe.s100}
\end{figure}

\noindent Figure \ref{SeqModel_Horseshoe_fullBayes} presents results in the sparse sequence model, using a  fully hierarchical Bayesian approach.  We use the R package {\tt Mhorseshoe}, specifying, in the particular case of the sequence model, that the design matrix $X$ is the identity matrix of size $n$.  Simulations are performed for $n=5000$ and $s=50$ with $20$ replications,  as they are more time-consuming that using fonctions specific to the sequence model. The $s$--value and $Cs$--value procedures yield results similar to those from the empirical Bayes approach.

\begin{figure}[!h]
	\includegraphics[scale=0.55]{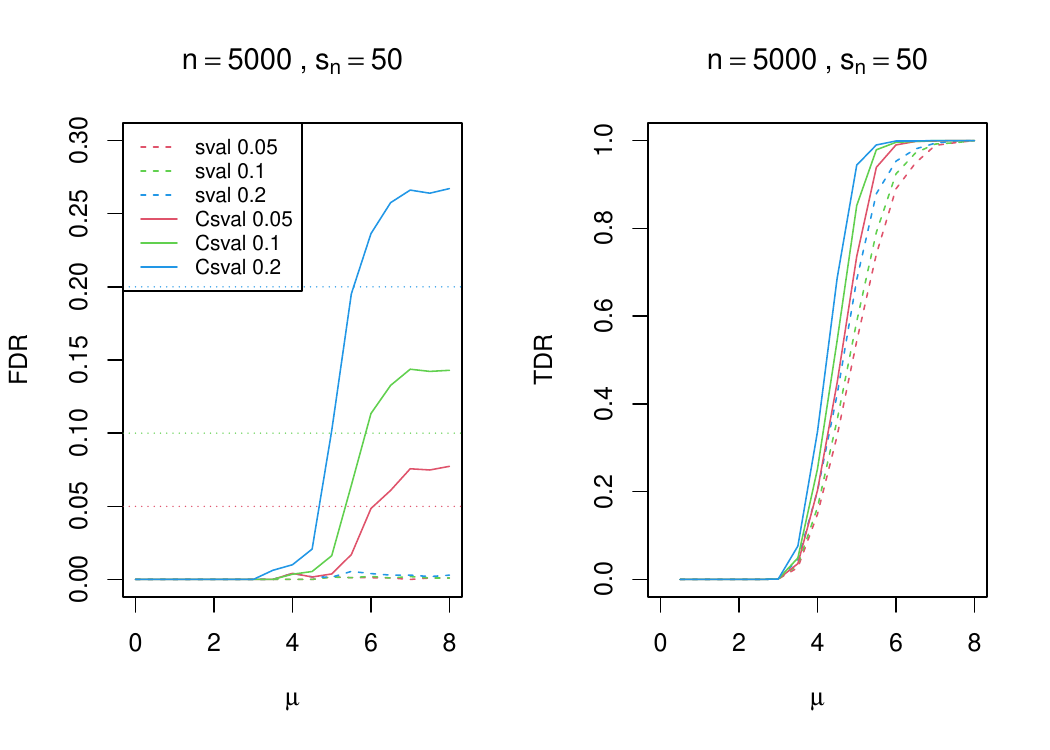}
	\centering
	\caption{Sequence model.  Hierarchical Bayes approach. FDR and TDR of the $s$--value and $Cs$-- value procedures with threshold $t \in  \{0.05, 0.1, 0.2\}$; $n = 5000; s=50$; alternative all equal to $\mu$; $20$	replications (expensive in time).} 
	\label{SeqModel_Horseshoe_fullBayes}
\end{figure}

\subsection{Further simulations in the regression model } \label{Appendix:regressionmodel}

Figures \ref{fig:regression.Horseshoe_n2000_p4000}, \ref{fig:regression.Horseshoe_n3000_p6000}, \ref{fig:regression.Horseshoe_n4000_p8000}, \ref{fig:regression.Horseshoe_n5000_p10000} present additional results for the regression model with the uncorrelated design (i) and various values of $(n,p)$ and $s$. The behavior of the $s$--value and $Cs$--value procedures is similar to that observed in Section \ref{sec:linreg}, as illustrated in Figure \ref{fig:regression.Horseshoe.uncorrelated}.

\begin{figure}[!ht]
	\includegraphics[scale=0.55]{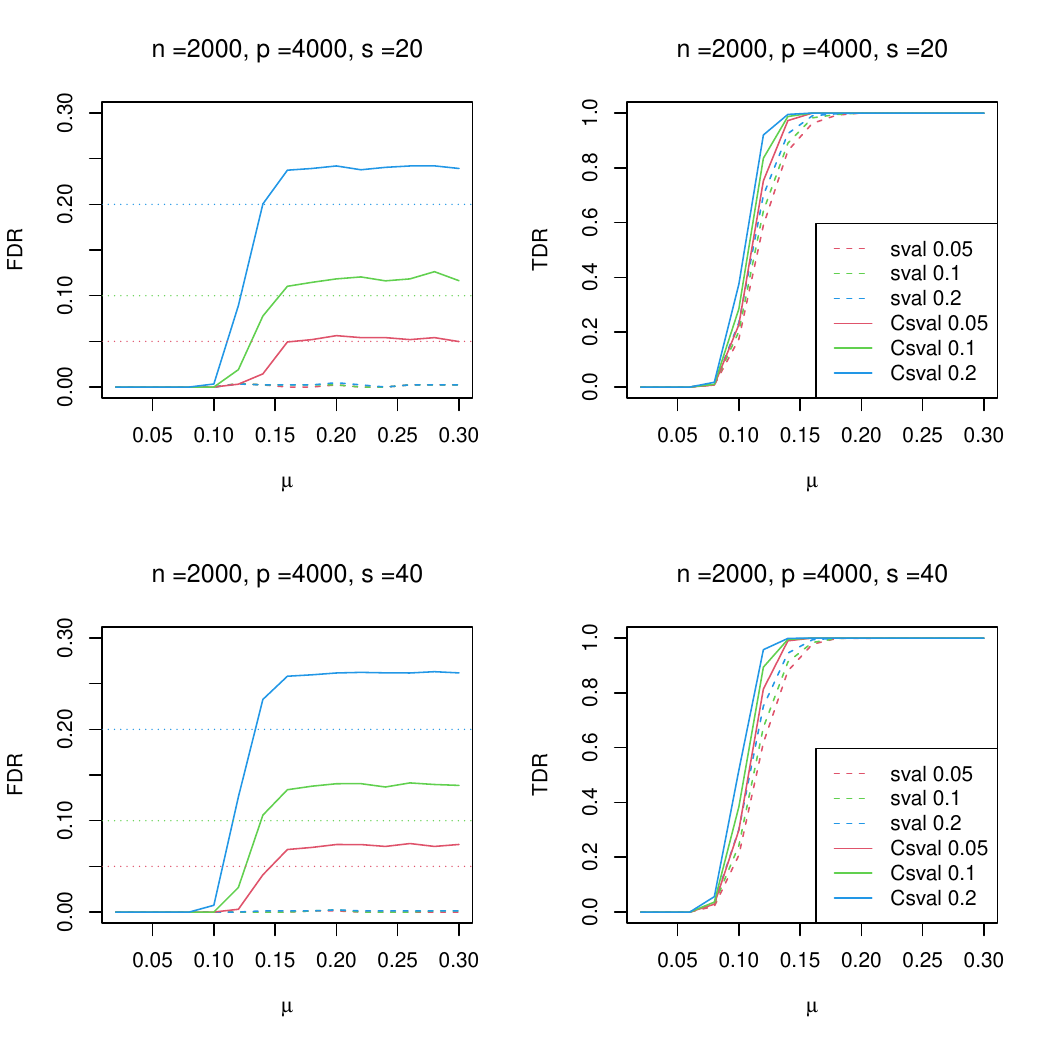}
	\centering
	\caption{Linear Regression with uncorrelated features (i), performance of $s$--value and C$s$--value procedures. }
	\label{fig:regression.Horseshoe_n2000_p4000}
\end{figure}

\begin{figure}[!ht]
	\includegraphics[scale=0.55]{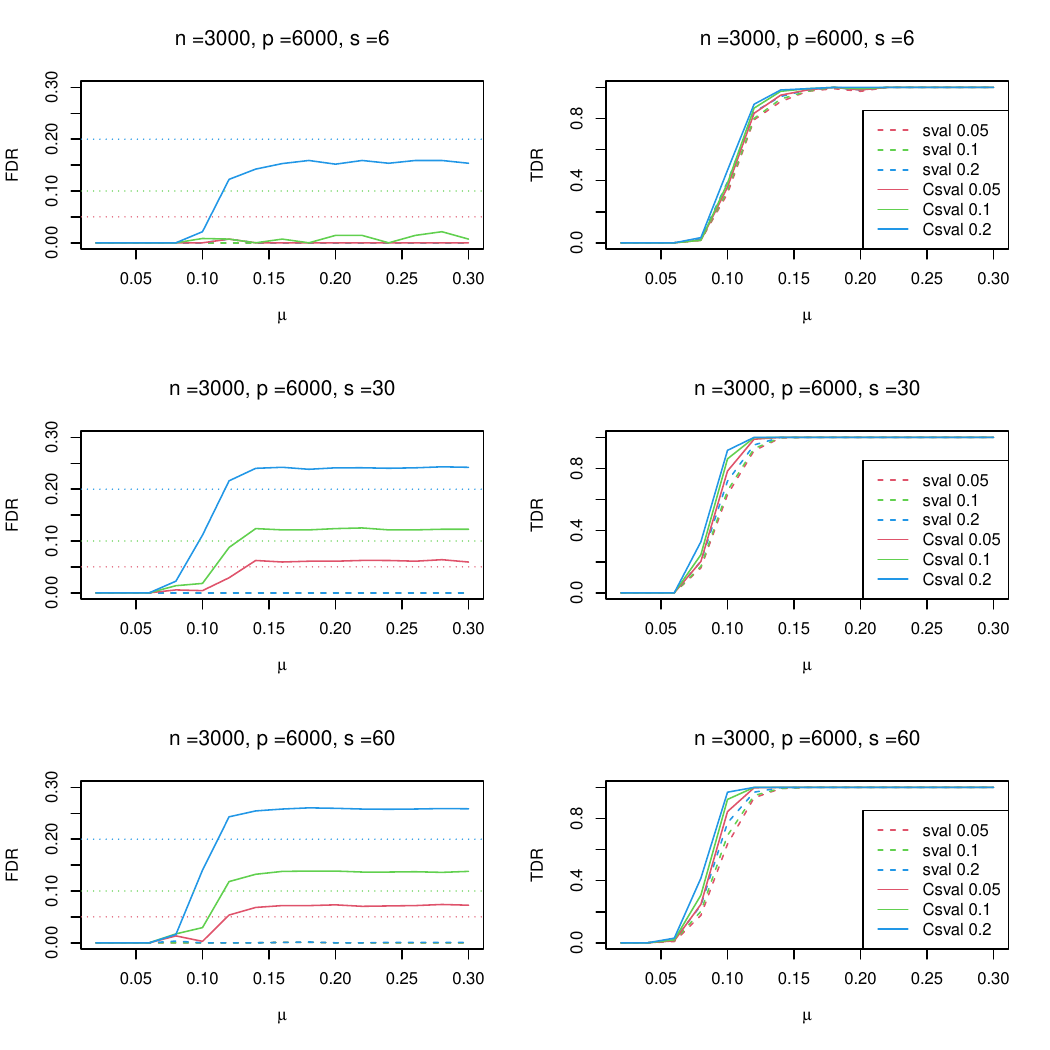}
	\centering
	\caption{Linear Regression with uncorrelated features (i), performance of $s$--value and C$s$--value procedures. }
	\label{fig:regression.Horseshoe_n3000_p6000}
\end{figure}

\begin{figure}[!ht]
	\includegraphics[scale=0.55]{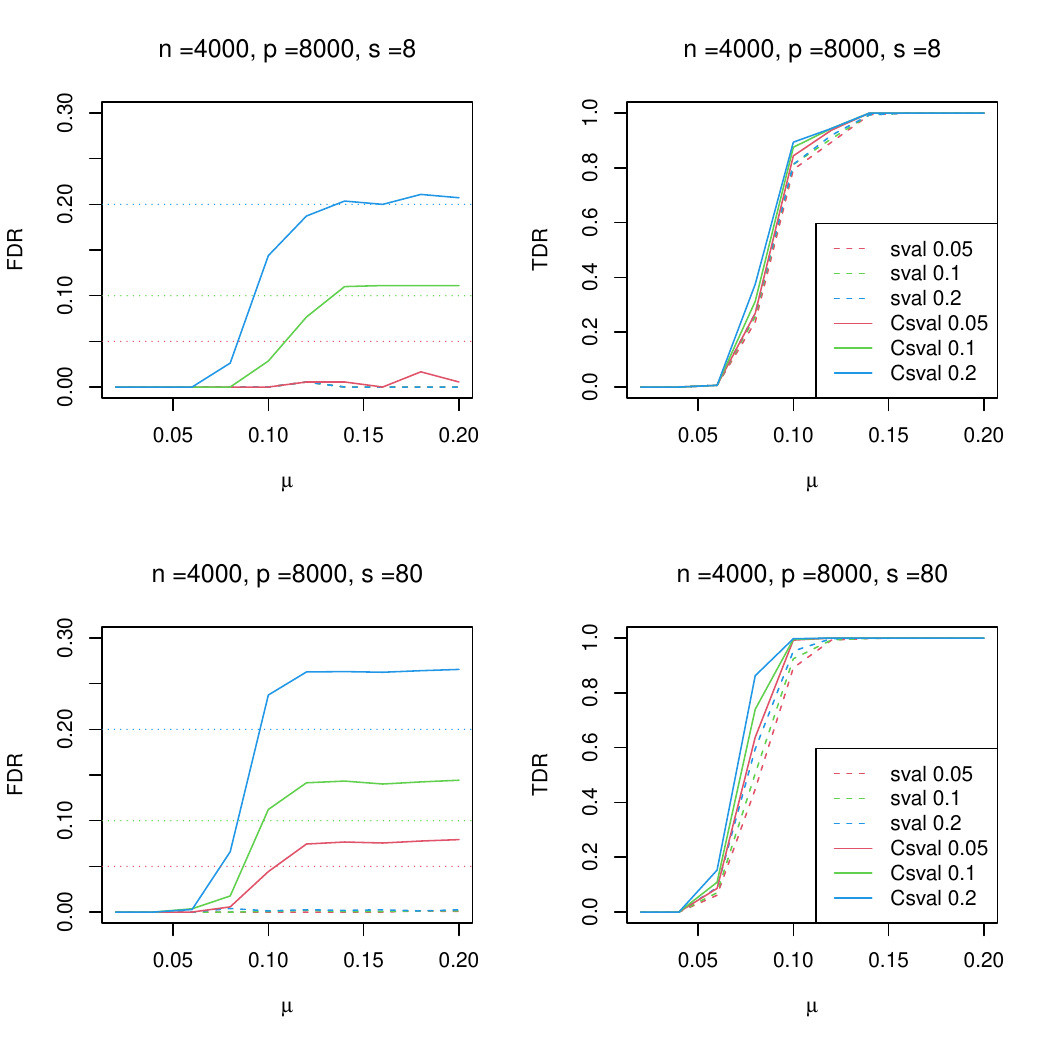}
	\centering
	\caption{Linear Regression with uncorrelated features (i), performance of $s$--value and C$s$--value procedures. }
	\label{fig:regression.Horseshoe_n4000_p8000}
\end{figure}

\begin{figure}[!ht]
	\includegraphics[scale=0.55]{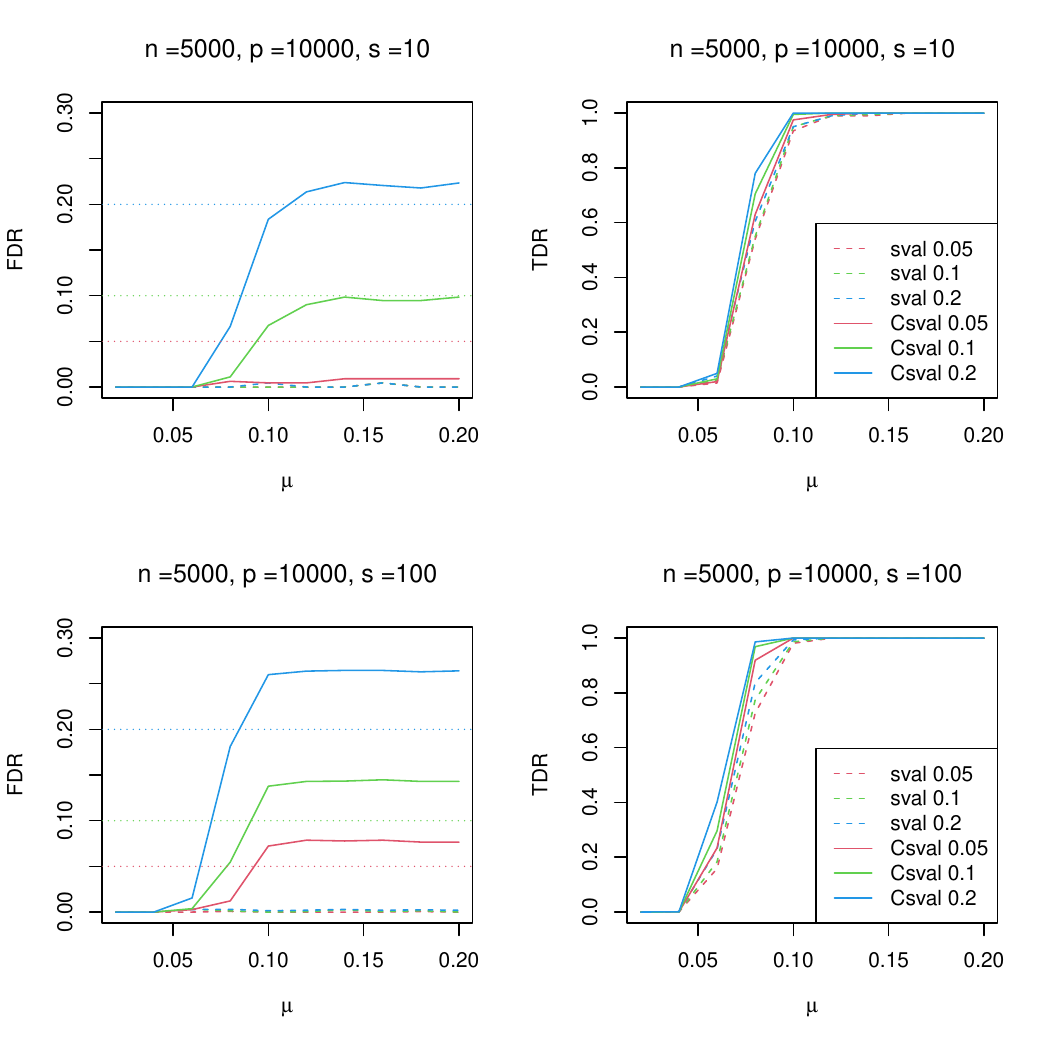}
	\centering
	\caption{Linear Regression with uncorrelated features (i), performance of $s$--value and C$s$--value procedures. }
	\label{fig:regression.Horseshoe_n5000_p10000}
\end{figure}

\noindent Moreover, to assess the impact of the generation of the design matrix, we  perfom simulations under the following two settings: (a)  the design matrix  $X$ is generated  at each replication  and (b)  the design matrix  $X$ is generated once for all replications. The results are presented in Figures \ref{fig:generatedata.independentRegression}, 	\ref{fig:generatedata.dependentRegressionAR1}, 	\ref{fig:generatedata.dependentRegressionAR0}
For the very correlated design (iii), we note that, even though the FDR curves are less smooth that for the other designs,  the FDR still appears to be globally controlled for both procedures.
 
\begin{figure}[!h]
	\includegraphics[scale=0.5]{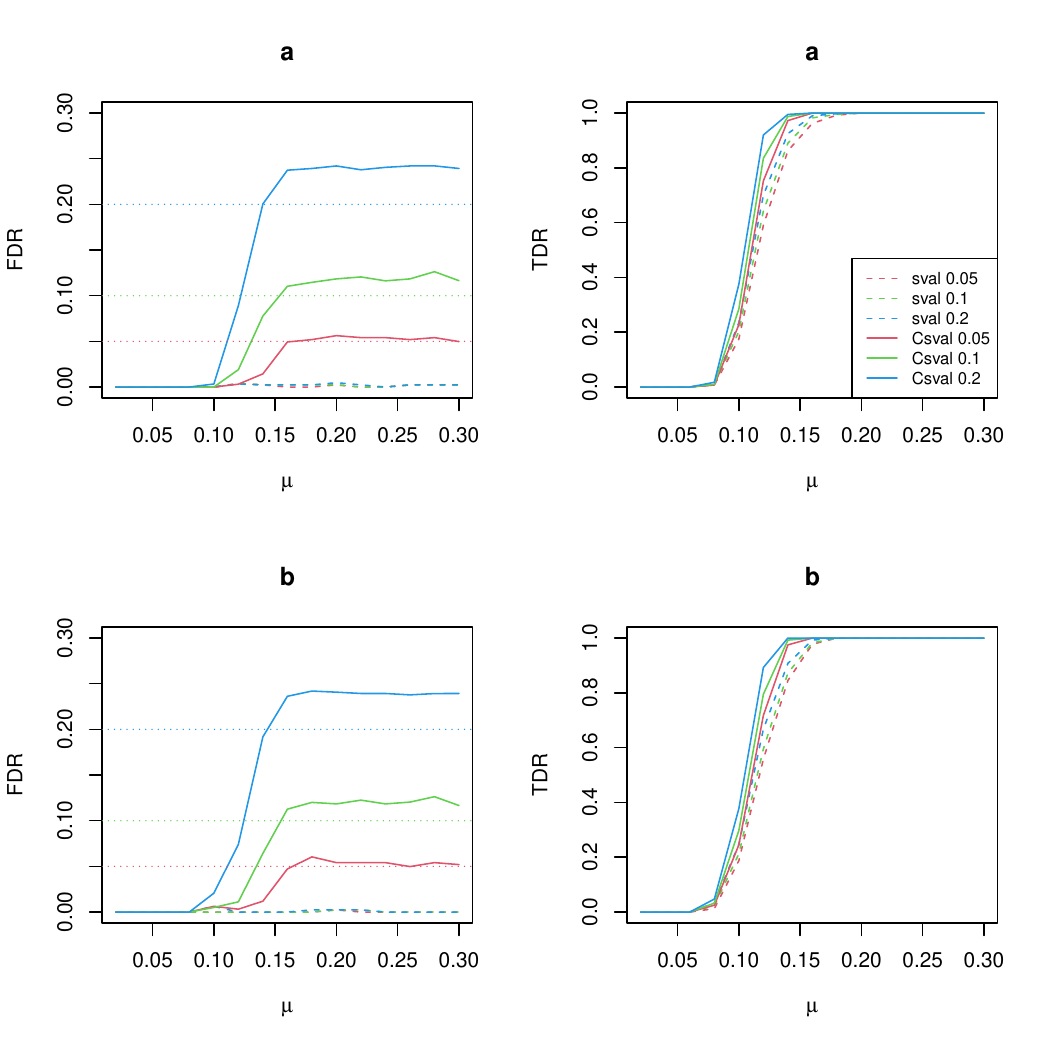}
	\centering
	\caption{Linear Regression with uncorrelated features (i). a: $X$ is generated  at each replication, b: $X$ 		is generated once for all replications. $n=2000; p=4000; s=20$; $20$ replications }
	\label{fig:generatedata.independentRegression}
\end{figure}

\begin{figure}[!h]
	\includegraphics[scale=0.5]{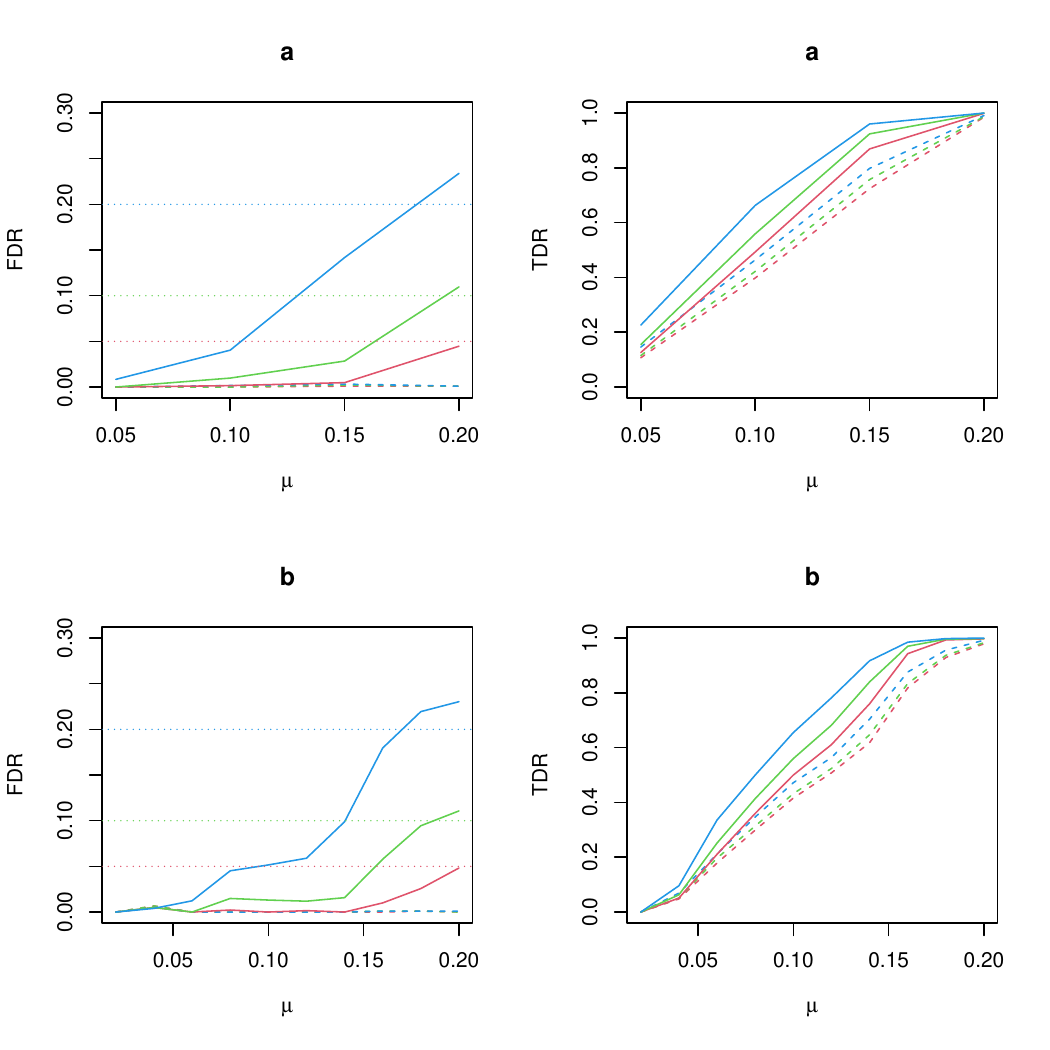}
	\centering
	\caption{Linear Regression with  design (ii). a: $X$ is generated  at each replication, b: $X$
		is generated once for all replications. $n=2000; p=4000; s=20$; 
		 $50$ replications (the scales for $\mu$ are slightly different in the two graphs)}
	\label{fig:generatedata.dependentRegressionAR1}
\end{figure}

\begin{figure}[!hb]
	\includegraphics[scale=0.5]{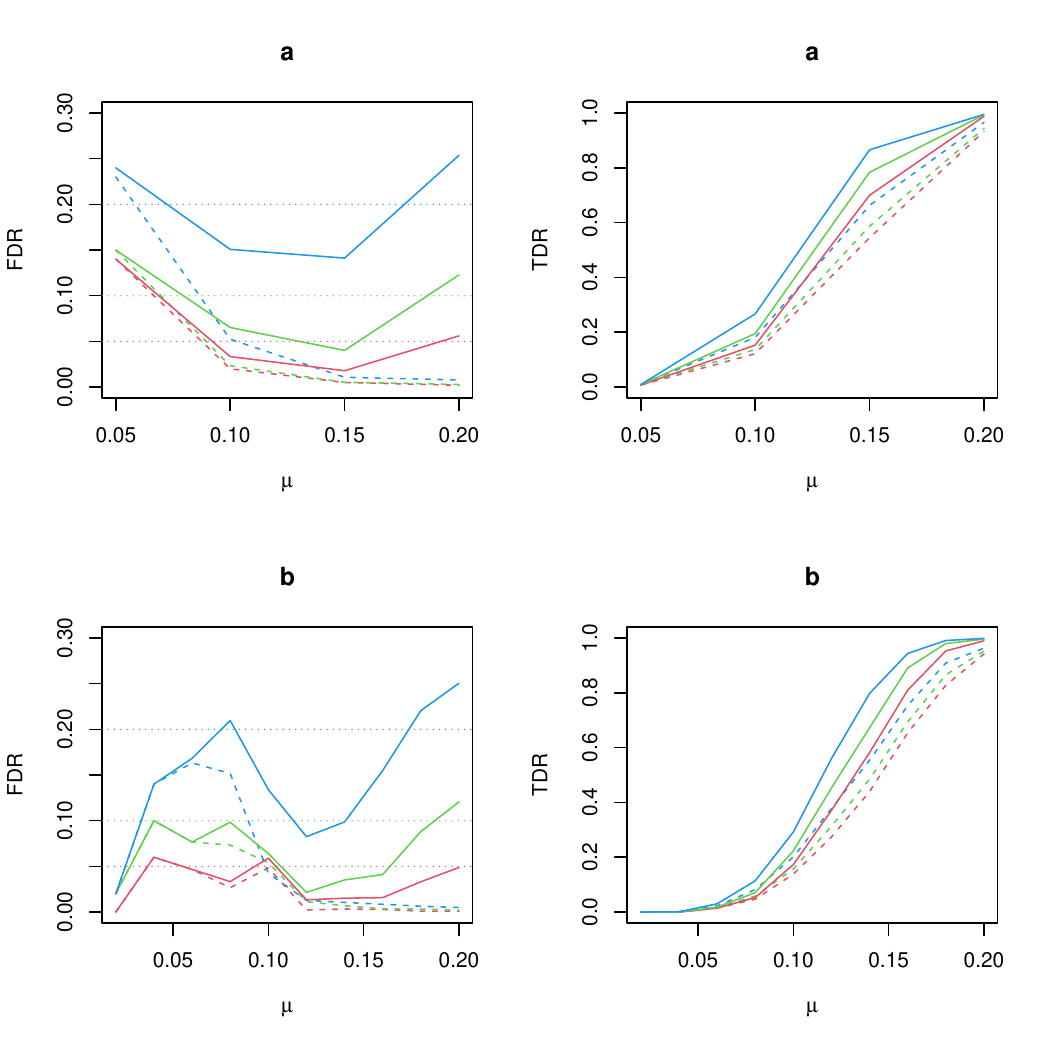}
	\centering
	\caption{Linear Regression with  design (iii). a: $X$ is generated  at each replication, b: $X$ 
		is generated once for all replications. $n=2000; p=4000; s=20$; 
			 $50$ replications. 
			 }
	\label{fig:generatedata.dependentRegressionAR0}
\end{figure}

\pagebreak

\subsection{Further simulations in Gaussian Graphical Models} \label{app_dl}

Throughout most of the paper, we considered a horseshoe prior on the parameter of interest. However, the proposed $s$--value and $Cs$--value procedures are not restricted to this specific choice and can be applied with other priors as well.

In particular, \cite{Chandra26} adopt a Dirichlet-Laplace (DL) prior in the context of  Gaussian graphical model, modeling the precision matrix $\Omega$ via a low-rank plus diagonal decomposition $\Omega= \Lambda \Lambda^T + D$ for some  matrix $\Lambda$ and some diagonal matrix $D$. The Dirichlet-Laplace prior belongs to the class of continuous shrinkage priors that encourage most entries to be close to zero  without producing exact zeros. 
To detect the edges of the graph, that is the non-zero entries of the precision matrix, we propose to apply the $s$--value and $Cs$--value procedures to the posterior samples of each entry of the precision matrix. More precisely, let  $X \sim \mathcal{N}_p(0, \Omega)$, where $\Omega=(w_{ij})_{1 \le i, j \le p}$ is  the symmetric precision matrix. For each entry $w_{ij}$ with $1 \le i<j \le p$, one defines an $s$-value from its marginal posterior distribution given the observed data $X$
\begin{align*}  \label{svalGGM}
	s_{ij}(X) & = 2\left( \Pi(w_{ij}<0 \given X) \wedge  \Pi(w_{ij}>0 \given X)\right).
\end{align*}
The $s$--value and $Cs$--value procedures are then defined analogously to those introduced in the rest of the paper.
We follow the same simulation framework as \cite{Chandra26}, namely the RSM setting (randomly
 generated arbitrarily structured sparse precision matrix) with $n = p = 100$. We apply  
the $s$--value and $Cs$--value procedures with threshold $t$ in $\{0.05,0.1, 0.2\}$. We  evaluate the FDP and TDP, defined as the proportion of errors among the detected edges, and the proportion of true edges that are successfully recovered, over $50$ replications.

\begin{figure}[!h]
	\includegraphics[scale=0.6]{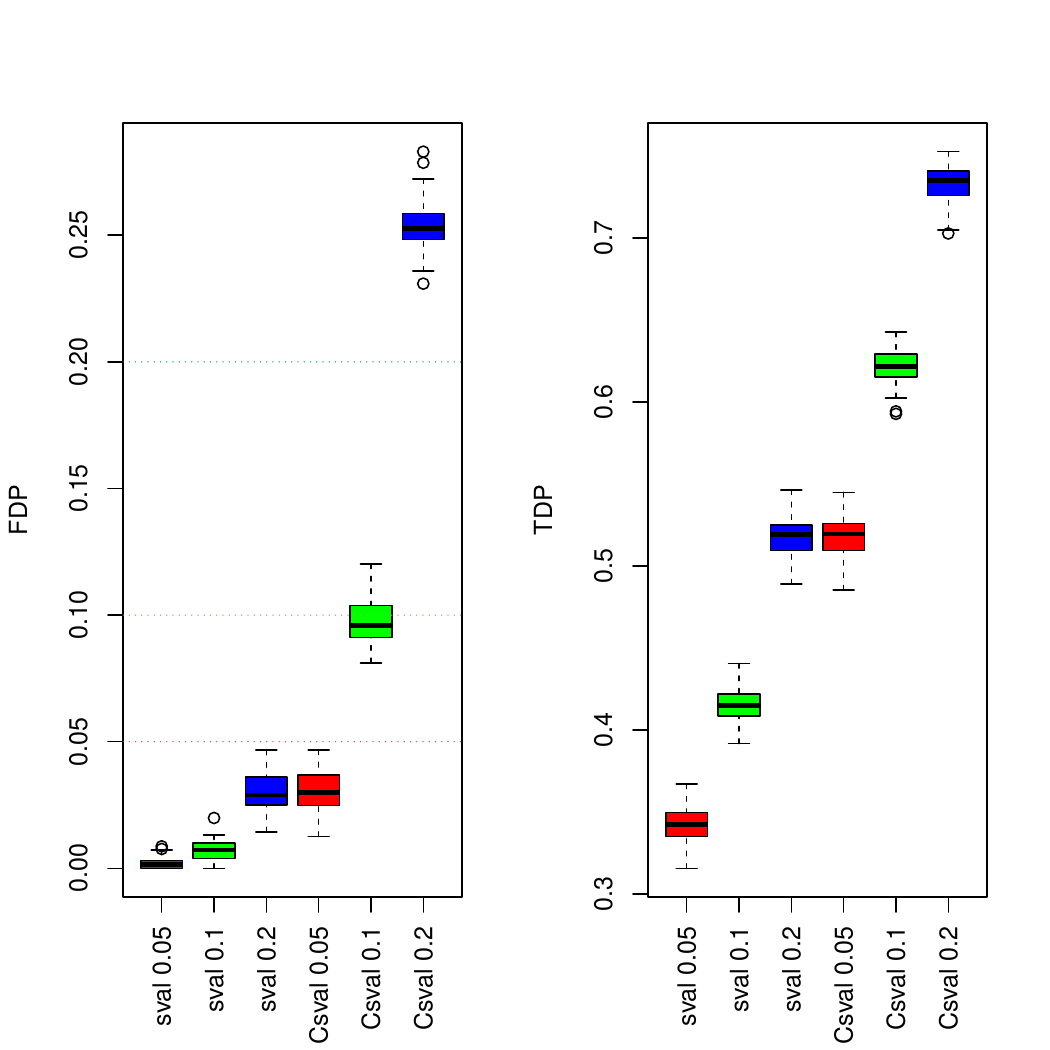}
	\centering
	\caption{RSM setting with $n=p=100$, performance of $s$--value and $Cs$--value procedures with threshold $t \in  \{0.05, 0.1, 0.2\}$; FDP and TDP are evaluated with 50 replications }
	
	\label{fig:SimuChandra26}
\end{figure}

Results are presented in Figure \ref{fig:SimuChandra26}.
We observe that the $s$--value procedure controls the FDR at the target level and is, as expected, slightly conservative. In contrast, the $Cs$--value procedure can sometimes yield an FDR that slightly exceeds the desired threshold. These findings are consistent with what we have already observed for these procedures across different models using a Horseshoe prior on the parameter of interest.
In terms of power, the $Cs$--value procedure with threshold $t=0.1$ provides a good compromise between FDR control and the ability to detect edges.
This confirms the usefulness of the $s$--value and $Cs$--value, which can be applied whenever posterior samples are available, even when deviating from the theoretical setting initially developed for the Horseshoe prior.

\section{Spike-and-Slab LASSO: a comparison and simulations} \label{app_ssl}

In this section we illustrate that $s$--values are very close to inclusion probabilities in the case of a shrinkage prior that is a mixture of two continuous distributions. We do so for the spike--and--slab LASSO prior \cite{rockovageorge17}, which is  a discrete mixture of two Laplace distributions (a continuous `spike' and a continuous `slab'), so one can define $\ell$--values as the posterior probability to belong to the `spike' part.  

\subsection{Definition} 

The spike-and-slab LASSO prior on $\te$ with  parameter $w\in[0,1]$ sets 
\begin{equation} \label{priorlasso}
 \Pi_w \sim \bigotimes_{i=1}^n\, (1-w)G_0(\cdot) + w G_1(\cdot), 
\end{equation} 
where the distributions  $G_0$ and $G_1$ are chosen as
\[ G_0= \text{Lap}(\la_0),\qquad G_1 = 
\begin{cases}
& \text{Lap}(\la_1) \\
\text{ or } \\
& \text{Cauchy}(1/\la_1).
\end{cases} \]
For $G_1$  Laplace, this  leads to the spike and slab LASSO prior of \cite{rockovageorge17}, and to the SSL prior of \cite{cm18} in the case of a $G_1$ Cauchy. 
Let $\ga_0, \ga_1$ denote the Lebesgue--densities of $G_0, G_1$. Typical choices of $\la_0$ and $\la_1$ are $\la_0=n$ and $\la_1$ equal to a constant such as $.5$ or $1$. In sparse settings, $w$ is chosen to be vanishing with $n$; a default choice is $w=1/n$, but often one rather estimates $w$ from the data and sets $w=\hat{w}$, for instance the marginal maximum likelihood estimator (which is expected to have similar properties than $\hat{\ta}$ for the horseshoe prior; see \cite{cm18} for some results in this direction). \\

By Bayes' formula the posterior distribution under \eqref{model} and \eqref{priorlasso} with fixed $w\in[0,1]$ is 
\begin{equation} \label{post2}
\Pi_w[\cdot\given X] 
\sim \bigotimes_{i=1}^n\, (1-\al(X_i))G_{0,X_i}(\cdot) + \al(X_i) G_{1,X_i}(\cdot),
\end{equation}
where $g_k(x)=\phi*G_k(x)=\int \phi(x-u)dG_k(u)$ is the convolution of $\phi$ and $G_k$ at point $x\in\RR$ for $k=0,1$,  the posterior weight $\al(X_i)$ is defined through the function $\al(\cdot)$ given by
\[ \al(x) = \al_w(x) =\frac{w g_1(x)}{(1-w)g_0(x) + w g_1(x)}, \]
and the distribution $G_{k,X_i}$ has density with respect to Lebesgue measure
\begin{align*}
\ga_{k,X_i}(\cdot) & := \frac{\phi(X_i-\cdot) \ga_k(\cdot)}{g_k(X_i)}.
\end{align*}

\subsection{$\ell$-- and $s$--values for the spike--and--slab LASSO}\label{SSLformula}

For standard spike--and--slab priors, (multiple) testing can be based on the $\ell$--value $\ell_i(X)=\Pi[\te_i=0\given X]$.
By analogy to the standard spike--and--slab prior, for the spike--and--slab LASSO it seems natural to define the $\ell$--value $\ell(x;w)$ as the posterior probability that the class `0' is selected, that is
\begin{equation} \label{lvalssl}
 \ell(x;w) = 1-\al(x) = \frac{(1-w)g_0(x)}{(1-w)g_0(x)+wg_1(x)}.
\end{equation} 
Further define, for any real $x$,
\[ g_{0,+}(x) = \int_0^{\infty}\phi(x-u)\ga_0(u)du,\qquad
g_{0,-}(x) = \int_{-\infty}^{0}\phi(x-u)\ga_0(u)du,
 \]
and similarly for $g_{1,+}, g_{1,-}$ replacing $\ga_0$ by $\ga_1$ in the last display. 
By definition $g_{0,+}+g_{0,-}=g_0$ and
\[ \Pi(\te<0\given X) = \frac{(1-w)g_{0,-}(X)+wg_{1,-}(X)}{
(1-w)g_0(X)+wg_1(X)}, \]
from which one deduces the expression of the $s$--value for the spike--and--slab LASSO
\begin{equation} \label{svssl}
 s_L(x;w) = 2 \frac{(1-w)g_{0,-}(|x|)+wg_{1,-}(|x|)}{
(1-w)g_0(x)+wg_1(x)}. 
\end{equation}

\subsection{The $s$--value is an analogue of the $\ell$--value: intuition for spike--and--slab LASSO}

To provide an intuition, we wish to compare the two formulas for the $\ell$--value \eqref{lvalssl} and $s$--value \eqref{svssl}. 
Suppose we have one observation $X=\te+\veps$ and equip $\te$ with a SSL prior with parameters $\la_0, \la_1, w$, with $\la_0=n, \la_1=1$ (say) and $w=1/n$ (say). 

By symmetry we can assume $X\ge 0$, and we distinguish two regimes 
\begin{enumerate}
\item[a)] $X$ `small' or `moderately large' say bounded or growing to infinity   slower than $\sqrt{2\log{n}}$; 
\item[b)] $X$ `very large', growing to infinity faster than $\sqrt{2\log{n}}$.
\end{enumerate}

Since $\la_0$ is very large, we have $g_0(X)\approx \phi(X)$ (see also \eqref{gophi} for a formal bound)  and similarly $g_{0,-}(X)\approx \phi(X)\int_{-\infty}^0\ga_0(u)du=\phi(X)/2$. This means that, for the terms involving the spike density $\gamma_0$ only, the numerators in \eqref{lvalssl} and \eqref{svssl} are nearly the same. 

In regime a),  the terms involving $\ga_1$ are of smaller order since $w$ is very small and $\phi_0(X)$ is not too small yet. This means that in regime a),  \eqref{lvalssl} and \eqref{svssl} are close; also, in this regime, the terms involving $\ga_0$ dominate so that both quantities stay roughly close to $1$, and the procedures do not reject the null hypothesis. In the extreme case of $X=0$, the $s$--value even equals $1$ and the procedure always rejects. 

In regime b) on the other hand, to show that  \eqref{lvalssl} and \eqref{svssl} are close, it suffices to note that $w g_{1,-}(X)\le w\phi(X)$ (bounding $\phi(x-u)$ in the integral by $\phi(x)$ for $u<0$) is negligible compared to $(1-w)g_{0,-}(X)\approx (1-w)\phi(X)\approx \phi(X)$. Thus, both expressions have a numerator of the same order and the same denominator. To show that both procedures reject, one notes that $wg_1(X)\approx w\ga_1(X)$ for large $X$, as the tails of $g_1$ are the same of those of $\ga_1$ (see \cite{js04}, Lemma 1), so $wg_1(X)\gg (1-w)g_0(X)\approx \phi(X)$ for large $X$ and the term $wg_1(X)$ dominates in the denominator and the numerators, of order $\phi(X)$, are negligible compared to it as noted above.
This intuition is formalised by the next Lemma,  
that confirms that both $s_L$-- and $\ell$--value are close to each other when the `spike' is sufficiently marked ($\la_0$ large enough, which holds if one takes e.g. $\la_0=n$) and $w$ is small (which is typical under sparsity assumptions).

\begin{lemma} \label{lemboundssl}
For any real $x$, any $w\in[0,1]$, with 
the notation as in \eqref{lvalssl}--\eqref{svssl}, 
\[ u_1(x)\cdot \ell(x;w) \le s_L(x;w) \le u_2(x)\cdot \ell(x;w),\]
where $u_1(\cdot), u_2(\cdot)$ are defined by, for any $\la_0, \la_1>0$ and $\ga_1(0)=\la_1$,
\begin{align*} 
u_1(x) & = \frac{1}{1+\la_0^{-2}\phi(x)^{-1}}
\frac{\la_0}{\la_0+|x|}(1-\la_0^{-2}), \\
u_2(x) & = \frac{\phi}{g_0}(0)
\left(\frac{\la_0}{\la_0+|x|}+\frac{w}{1-w}\ga_1(0)\sqrt{2\pi} \right).
\end{align*}
\end{lemma}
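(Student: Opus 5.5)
The plan is to compare $s_L$ and $\ell$ directly through their ratio, which has a simple structure.

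\emph{Reduction.} Since $\ga_0,\ga_1$ are symmetric, $g_0,g_1$ are even, so the denominators in \eqref{lvalssl} and \eqref{svssl} coincide. Dividing, for $x\ge0$ (the case $x<0$ follows by evenness),
\[ \frac{s_L(x;w)}{\ell(x;w)} = \frac{2g_{0,-}(x)}{g_0(x)} + \frac{w}{1-w}\,\frac{2g_{1,-}(x)}{g_0(x)}. \]
For $u<0$ we will use the identity $\phi(x-u)=\phi(x)e^{xu-u^2/2}$. This reduces both bounds to estimating $g_{0,-}$, $g_{1,-}$ and $g_0$ in terms of $\phi(x)$.

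\emph{Upper bound.} For the first term, bound $e^{-u^2/2}\le1$ and use $\ga_0(u)=(\la_0/2)e^{-\la_0|u|}$. This gives
\[ g_{0,-}(x)\le \phi(x)\frac{\la_0}{2}\int_0^\infty e^{-(\la_0+x)v}dv=\phi(x)\frac{\la_0}{2(\la_0+x)}. \]
For the second term, use that $\ga_1$ is maximal at $0$ (true for both the Laplace and Cauchy slabs). This yields $g_{1,-}(x)\le \ga_1(0)\int_{-\infty}^0\phi(x-u)du=\ga_1(0)\bar\Phi(x)$. We then apply $\bar\Phi(x)\le \phi(x)\sqrt{2\pi}/2$ for $x\ge0$, since the Mills ratio $\bar\Phi/\phi$ is decreasing and equals $\sqrt{2\pi}/2$ at $0$. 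Hence $2g_{1,-}(x)\le \ga_1(0)\sqrt{2\pi}\,\phi(x)$.

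It remains to replace $\phi(x)/g_0(x)$ by $(\phi/g_0)(0)$. We have
\[ \frac{g_0(x)}{\phi(x)}=\int e^{xu-u^2/2}\ga_0(u)\,du=\int \cosh(xu)\,e^{-u^2/2}\ga_0(u)\,du \]
by symmetry of $\ga_0$. This is nondecreasing in $x\ge0$, so $\phi(x)/g_0(x)\le(\phi/g_0)(0)$. Combining these estimates gives the factor $u_2(x)$.

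\emph{Lower bound.} Drop the nonnegative $g_{1,-}$ term, so it is enough to bound $2g_{0,-}(x)/g_0(x)$ from below. For the numerator, use $e^{-v^2/2}\ge 1-v^2/2$, which gives
\[ g_{0,-}(x)\ge \phi(x)\frac{\la_0}{2}\Big[\frac{1}{\la_0+x}-\frac{1}{(\la_0+x)^3}\Big]\ge \phi(x)\frac{\la_0}{2(\la_0+x)}(1-\la_0^{-2}). \]
For the denominator, Taylor expand $\phi(x-u)$ around $x$ to second order. The linear term $-u\phi'(x)$ integrates to zero against the symmetric $\ga_0$. The remainder is at most $\tfrac{u^2}{2}\sup|\phi''|\le u^2/2$, and $\int u^2\ga_0(u)\,du=2/\la_0^2$. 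Together these give $g_0(x)\le \phi(x)+\la_0^{-2}=\phi(x)(1+\la_0^{-2}\phi(x)^{-1})$, and the ratio produces $u_1(x)$.

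The only step requiring some care is this second-order bound on $g_0$. A first-order bound would only give an error of order $\la_0^{-1}$, so the symmetry cancellation of the linear term is what yields the $\la_0^{-2}$ in $u_1$. Everything else is an elementary one-dimensional integral.
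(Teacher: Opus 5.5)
Your proof is correct and follows essentially the same route as the paper. You use the same split of $s_L/\ell$ into a $g_{0,-}/g_0$ term and a $\frac{w}{1-w}\,g_{1,-}/g_0$ term, the same two-sided estimate $\frac{\la_0}{\la_0+|x|}(1-\la_0^{-2})\le 2g_{0,-}(|x|)/\phi(x)\le\frac{\la_0}{\la_0+|x|}$, the bound $2g_{1,-}(|x|)\le\ga_1(0)\sqrt{2\pi}\,\phi(x)$, monotonicity of $\phi/g_0$, and $g_0\le\phi+\la_0^{-2}$. Your direct integral bounds on $g_{0,-}$ are exactly the Mills-ratio inequalities the paper applies at $|x|+\la_0$ after writing $g_{0,-}(x)=(\la_0/2)\phi(x)\bar\Phi(x+\la_0)/\phi(x+\la_0)$. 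The only difference is that you prove inline the auxiliary facts the paper cites from the literature: the $\cosh$ argument for monotonicity, the bound on $g_{1,-}$ (where you rightly make explicit that it needs $\ga_1$ to be maximal at $0$), and the second-order Taylor bound on $g_0-\phi$.
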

In particular, the next Corollary confirms that in the regime a) above (i.e. $X=x$ not too large), then $s_L$ and $\ell$ only differ by a multiplicative factor that goes to $1$ as $\la_0\to\infty, w\to 0$. In the regime b), $u_2(x)$ is still bounded from above by a constant close to $1$, and goes to $0$ for extremely large $x$, which just shows that the $s$--value rejects more easily (which is desirable in that case since $X$ is large). 
\begin{corollary} \label{corsv}
Suppose $\phi(x)\ge \la_0^{-1-\delta}$ for some $\delta\in(0,1/2]$. Then, as $\la_0\to\infty$, and for bounded $\la_1>0$,
\[ u_1(x) \ge 1-O(\la_0^{-1/2}),\qquad\  u_2(x) \le 1+ O\left(\frac{\sqrt{\log{\la_0}}}{\la_0}\right) + O\left(w \vee \la_0^{-2}\right).\]
For any real $x$, one has, as $\la_0\to\infty$, 
\[ u_2(x) \le 1+o(1)+O(w).\]
\end{corollary}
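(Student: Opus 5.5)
The plan is to estimate the three factors of $u_1$ and the two factors of $u_2$ separately, using only the explicit formulas of Lemma \ref{lemboundssl}, the Laplace form $\ga_0(u)=(\la_0/2)e^{-\la_0|u|}$ of the spike density, and the hypothesis $\phi(x)\ge \la_0^{-1-\delta}$. Two consequences of that hypothesis do most of the work:
\[ \frac{\la_0^{-2}}{\phi(x)}\le \la_0^{-1+\delta}\le \la_0^{-1/2}, \qquad x^2\le 2(1+\delta)\log\la_0-\log(2\pi), \]
the first because $\delta\le 1/2$, the second by taking logarithms in $\phi(x)\ge\la_0^{-1-\delta}$. In particular $|x|=O(\sqrt{\log\la_0})$.

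\emph{Lower bound on $u_1$.} Write $u_1(x)$ as the product of three factors. The first is $1/(1+\la_0^{-2}\phi(x)^{-1})\ge 1-\la_0^{-2}\phi(x)^{-1}\ge 1-\la_0^{-1/2}$. The second is $\la_0/(\la_0+|x|)\ge 1-|x|/\la_0=1-O(\sqrt{\log \la_0}/\la_0)$. The third, $1-\la_0^{-2}$, is already explicit. Multiplying the three, the $\la_0^{-1/2}$ term dominates both $\sqrt{\log\la_0}/\la_0$ and $\la_0^{-2}$, which gives $u_1(x)\ge 1-O(\la_0^{-1/2})$.

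\emph{Upper bound on $u_2$.} The only nontrivial quantity is the prefactor $(\phi/g_0)(0)$. Since $\phi(u)=\phi(0)e^{-u^2/2}\ge \phi(0)(1-u^2/2)$, integrating against $\ga_0$ gives
\[ g_0(0)=\int\phi(u)\ga_0(u)\,du\ge \phi(0)\Bigl(1-\tfrac12 E U^2\Bigr)=\phi(0)(1-\la_0^{-2}), \qquad U\sim \text{Lap}(\la_0), \]
because $EU^2=2/\la_0^2$. Hence $(\phi/g_0)(0)\le (1-\la_0^{-2})^{-1}=1+O(\la_0^{-2})$, uniformly in $x$. For the bracket, $\la_0/(\la_0+|x|)\le 1$ trivially. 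Using $\ga_1(0)=\la_1$ bounded, the other bracket term is $\frac{w}{1-w}\la_1\sqrt{2\pi}=O(w)$, where I take $w$ bounded away from $1$ (e.g. $w\le 1/2$), which is the sparse regime the $O(w)$ is meant for. Therefore
\[ u_2(x)\le \bigl(1+O(\la_0^{-2})\bigr)\bigl(1+O(w)\bigr)=1+O(w\vee\la_0^{-2}), \]
which is at least as strong as the claimed bound; the $O(\sqrt{\log\la_0}/\la_0)$ term is only needed if one instead expands $\la_0/(\la_0+|x|)$. The same computation never used the hypothesis on $\phi(x)$, so it also yields $u_2(x)\le 1+o(1)+O(w)$ for every real $x$ as $\la_0\to\infty$.

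The only step requiring any care is the lower bound on $g_0(0)$, i.e. showing the spike convolution at $0$ is $\phi(0)$ up to $1+O(\la_0^{-2})$. The second-order Taylor bound on $\phi$ combined with the Laplace second moment handles it directly. Everything else is elementary manipulation of the explicit $u_1,u_2$.
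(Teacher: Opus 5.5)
Your proof is correct and follows essentially the paper's route: you bound the factors of $u_1$ using $\lambda_0^{-2}/\phi(x)\le\lambda_0^{\delta-1}$ and $|x|=O(\sqrt{\log\lambda_0})$, and for $u_2$ you use $\lambda_0/(\lambda_0+|x|)\le 1$ together with $(\phi/g_0)(0)=1+O(\lambda_0^{-2})$. The one difference is that you derive $(\phi/g_0)(0)=1+O(\lambda_0^{-2})$ directly from $e^{-v}\ge 1-v$ and the Laplace second moment, whereas the paper gets it from $|g_0-\phi|\le\lambda_0^{-2}$ (Lemma \ref{lem-classic}, i.e.\ Lemma 17 of \cite{cm18}); both give the same bound.
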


The proof of both statements can be found below. \\

Figure \ref{fig:illus_ssl} illustrates the closeness between $s_L$ and $\ell$ as above for $\la_0=10^4$ and $w=0.01$. Analogous comments as above can be made for the $S$--value compared to the $q$--value. While for spike--and--slab LASSO one could consider using both $\ell$--value or $s$--value for multiple testing purposes (and our scheme of proof for the horseshoe could be followed to do so), for the horseshoe or other shrinkage priors that are continuous mixture, the $\ell$--value is not defined, hence the usefulness of the $s$--value for such more complex mixture distributions.

\begin{figure}[!h]
\includegraphics[scale=0.7]{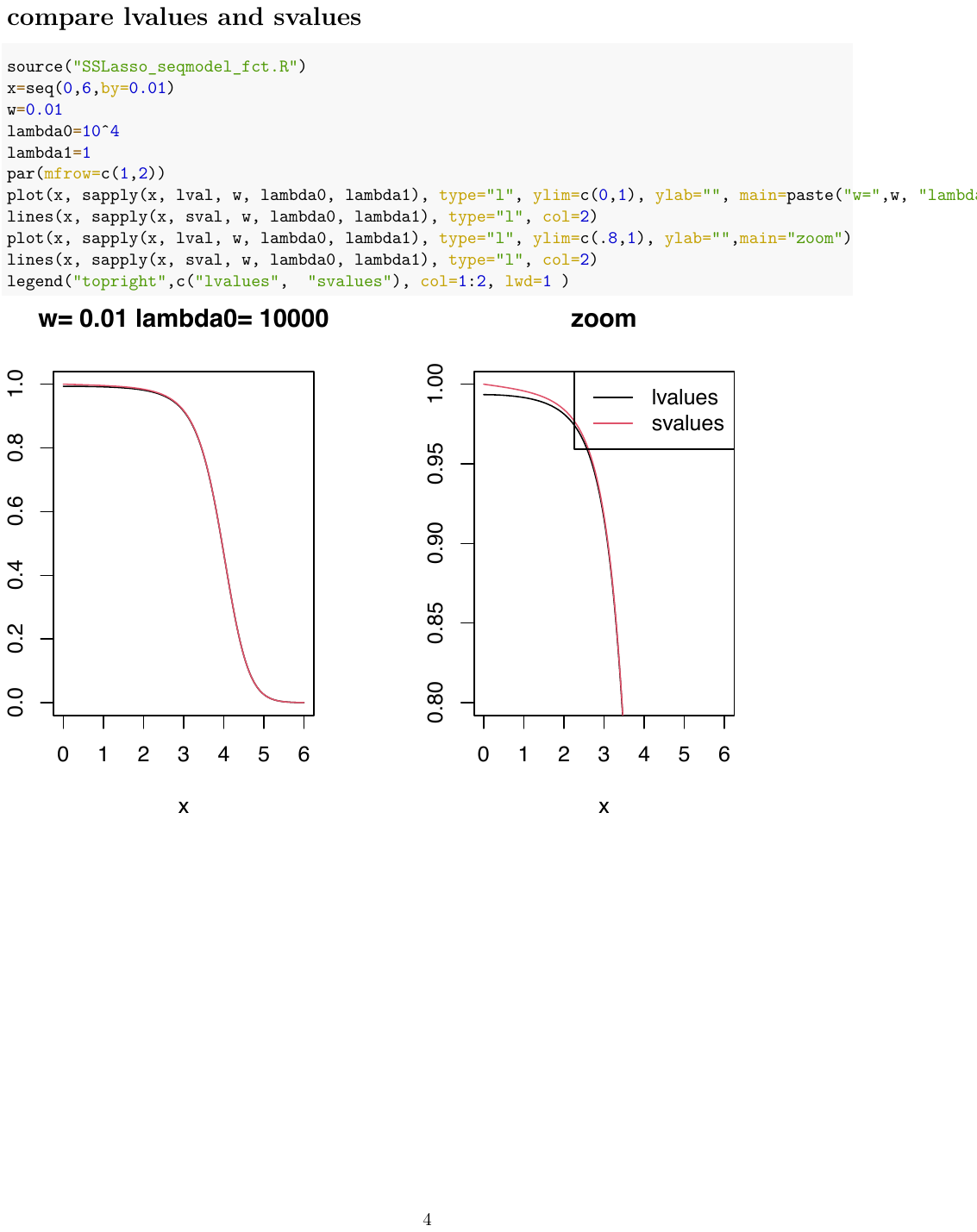}
\centering
	\caption{Comparison of $\ell$--value or $s$--value for the spike--and--slab LASSO with parameters $\la_0=10^4$ and $w=0.01$. The figure on the right is a zoom of the first.}  
	\label{fig:illus_ssl}
\end{figure}

\begin{proof}[Proof of Corollary \ref{corsv}]
Under the assumed condition on $x$, we have, as $\la_0\to\infty$, 
\[\frac{1}{1+\la_0^{-2}\phi(x)^{-1}} \ge (1+\la_0^{\delta-1})^{-1}=1-O(\la_0^{\delta-1}),
 \] 
as well as $|x|\le \sqrt{2\log(\la_0^{1+\delta}/\sqrt{2\pi})}$, so that, as $\la_0\to\infty$, 
\[  \frac{\la_0}{\la_0+|x|} = 1+O(\sqrt{\log{\la_0}}/\la_0). \]
Since $|g_0-\phi|\le \la_0^{-2}$ by Lemma \ref{lem-classic}, we know that $(\phi/g_0)(0)=1/(1+O(\la_0^{-2}))=1+O(\la_0^{-2})$. Putting the previous estimates together gives the first part of the statement.
 
For the second part of the statement, one has  $\la_0/(\la_0+|x|)\le 1$; using again that $(\phi/g_0)(0)=1+O(\la_0^{-2})$ as $\la_0\to\infty$, the result follows.
\end{proof}
\begin{proof}[Proof of Lemma \ref{lemboundssl}]
To prove the upper bound, one compares the numerator of $s_L(x;w)$ to $(1-w)g_0(x)$; we thus wish to bound from above the quantity
\[
\frac{g_{0,-}}{g_0}(|x|)
+\frac{w}{1-w}\frac{g_{1,-}(|x|)}{g_0(x)}. \]
For the last ratio, we use Lemma \ref{lem-classic}, eq. \eqref{gminus}, which implies $g_{1,-}(|x|)\le \ga_1(0)\phi(t)\sqrt{2\pi}/2$ combined with the monotonicity of $\phi/g$, which gives $\phi/g\le (\phi/g)(0)$. In order to bound $g_{0,-}/g_0$, one writes
\begin{align*}
g_{0,-}(x) & = \int_{-\infty}^0 \phi(x) e^{ux-u^2/2} \ga_0(u) du \\
& = \phi(x)  \int_{-\infty}^0 (\la_0/2) e^{-(u-(x+\la_0))^2/2} e^{(x+\la_0)^2/2} du
 =  (\la_0/2) \phi(x)   \frac{\overline{\Phi}(x+\la_0)}{\phi(x+\la_0)}.
\end{align*}
Now Lemma \ref{lem-stand} applied at point $|x|+\la_0$ gives 
\[2(g_{0,-}/g_0)(|x|)\le \frac{\phi}{g_0}(x)\frac{\la_0}{\la_0+|x|}.  \]
The upper-bound of the Lemma follows by using the monotonicity of $\phi/g$.

To prove the lower bound of the Lemma, one first ignores the term with $g_{1,-}$ which is nonnegative; then one uses the identity on $g_{0,-}$ obtained in the first display of the present proof, and combines it with the lower bound in Lemma \ref{lem-stand} to obtain
\[ 2(g_{0,-}/g_0)(|x|)\ge \frac{\phi}{g_0}(x) \la_0\left(\frac1{\la_0+|x|}-
\frac1{(\la_0+|x|)^3} \right)\ge \frac{\phi}{g_0}(x) \frac{\la_0}{\la_0+|x|}
\left(1- \la_0^{-2}\right).
\]
The lower bound of the Lemma follows by using, in the last display, that $|g_0-\phi|\le \la_0^{-2}$ implies
\[ \frac{\phi}{g_0}(x) \ge \frac{\phi(x)}{\phi(x)+\la_0^{-2}}= 
\frac{1}{1+\la_0^{-2}\phi(x)^{-1}}. \qedhere\]
\end{proof}
\vspace{.1cm}

\begin{lemma} \label{lem-classic}
For an arbitrary slab $\ga$ and $g=\phi*\ga$, 
\begin{itemize}
\item the function $g$ is symmetric i.e. $g(-x)=g(x)$ for all $x$;
\item the function $\phi/g$ is decreasing on $[0,\infty)$;
\item for any $x\ge 0$, we have 
\begin{equation} \label{gminus}
 g_-(x)=\int_{-\infty}^0 \phi(x-t)\ga(t)dt \le \ga(0)\phi(x)\sqrt{2\pi}/2;
\end{equation}
\item for any real $x$, we have 
\begin{equation} \label{gophi}
|g_0-\phi|(x)\le \la_0^{-2}.
\end{equation}
\end{itemize}
\end{lemma}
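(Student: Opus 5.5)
The four items are elementary, and I would prove them one at a time. I will use only two properties of the slab, both of which hold for the Laplace and Cauchy densities considered above: $\ga$ is symmetric about $0$, and $\ga$ is nonincreasing on $[0,\infty)$, so that $\sup_t \ga(t)=\ga(0)$. I take these as standing assumptions on the ``arbitrary slab'' in the statement. The spike $\ga_0=\text{Lap}(\la_0)$ has density $(\la_0/2)e^{-\la_0|u|}$ and second moment $\int u^2\ga_0(u)du = 2/\la_0^2$.

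\emph{Symmetry.} Substitute $u\mapsto -u$ in $g(-x)=\int\phi(-x-u)\ga(u)du$. Then use $\phi(-y)=\phi(y)$ and $\ga(-u)=\ga(u)$ to get $g(-x)=g(x)$.

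\emph{Monotonicity of $\phi/g$.} Write $\phi(x-u)=\phi(x)e^{xu-u^2/2}$, so that
\[ \frac{g}{\phi}(x)=\int e^{xu-u^2/2}\ga(u)du=\int \cosh(xu)\,e^{-u^2/2}\ga(u)du, \]
where the second equality symmetrises the integrand using the symmetry of $\ga$. For each fixed $u$, the map $x\mapsto\cosh(xu)$ is nondecreasing on $[0,\infty)$. Hence $g/\phi$ is nondecreasing there, and $\phi/g$ is nonincreasing, which is what ``decreasing'' means in the statement.

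\emph{Bound \eqref{gminus}.} Use the same factorisation. For $x\ge 0$ and $t\le 0$ we have $xt\le 0$, so $\phi(x-t)=\phi(x)e^{xt-t^2/2}\le \phi(x)e^{-t^2/2}$. Also $\ga(t)\le \ga(0)$ by unimodality. Therefore
\[ g_-(x)\le \ga(0)\phi(x)\int_{-\infty}^0 e^{-t^2/2}dt=\ga(0)\phi(x)\sqrt{2\pi}/2. \]

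\emph{Bound \eqref{gophi}.} Since $\ga_0$ integrates to one, $g_0(x)-\phi(x)=\int(\phi(x-u)-\phi(x))\ga_0(u)du$. A second-order Taylor expansion in $u$ gives
\[ \phi(x-u)-\phi(x)=-u\phi'(x)+\tfrac{u^2}{2}\phi''(\xi_{x,u}). \]
The linear term integrates to zero by the symmetry of $\ga_0$, which is what makes a bound of order $\la_0^{-2}$ possible. This leaves
\[ |g_0-\phi|(x)\le \tfrac12\|\phi''\|_\infty\int u^2\ga_0(u)du=\|\phi''\|_\infty\,\la_0^{-2}. \]
Finally, $\phi''(y)=(y^2-1)\phi(y)$, and $\sup_y|y^2-1|\phi(y)$ is attained at $y=0$, where it equals $1/\sqrt{2\pi}<1$. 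Hence $|g_0-\phi|(x)\le \la_0^{-2}$.

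None of the steps is a real obstacle. The only point requiring care is to make the hypotheses on $\ga$ explicit, namely symmetry and monotonicity on $[0,\infty)$, since the lemma says ``arbitrary slab''. The uniform bound on $\phi''$ is the one small computation to check.
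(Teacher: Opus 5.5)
Your proof is correct. It is more self-contained than the paper's, which defers most items to the literature: the monotonicity of $\phi/g$ is taken from Lemma 1 of \cite{js04}, \eqref{gophi} from Lemma 17 of \cite{cm18}, and only \eqref{gminus} is argued, following Lemma S--38 of \cite{cr20}.

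\emph{Monotonicity and \eqref{gophi}.} Your $\cosh$ representation of $g/\phi$ is the standard argument behind the cited monotonicity result. Your Taylor-expansion proof of \eqref{gophi} replaces a citation with a transparent computation. It shows where the rate $\la_0^{-2}$ comes from: the first moment of the symmetric spike vanishes, its second moment is $2/\la_0^2$, and $\|\phi''\|_\infty=\phi(0)$. It even gives the sharper constant $1/\sqrt{2\pi}$.

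\emph{The bound \eqref{gminus}.} Both routes start from the factorisation $\phi(x-t)=\phi(x)e^{xt-t^2/2}$ and the bound $\ga(t)\le\ga(0)$. The paper keeps the factor $e^{xt}$, which gives $g_-(x)\le \ga(0)\bar\Phi(x)$. It then uses that the Mills ratio $\bar\Phi/\phi$ is decreasing to bound $(g_-/\phi)(x)$ by its value $\ga(0)\sqrt{2\pi}/2$ at $x=0$. You drop $e^{xt}\le 1$ immediately. Your version is shorter and needs no Mills-ratio monotonicity. The paper's intermediate bound is sharper for large $x$, of order $\phi(x)/x$, but that extra decay is not needed for the stated inequality.

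\emph{Hypotheses.} Making the hypotheses on $\ga$ explicit is a genuine improvement, because ``arbitrary slab'' cannot be taken literally. The symmetry item fails for non-symmetric $\ga$, and \eqref{gminus} needs $\ga(t)\le \ga(0)$ for $t\le 0$. Symmetry alone suffices for the monotonicity of $\phi/g$.
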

\begin{proof}
The monotonicity of $\phi/g$ (equivalently, $g/\phi$) is established in Lemma 1 in \cite{js04}. The inequality \eqref{gminus} follows by expanding $\phi(x-t)$, see Lemma S--38 in \cite{cr20} (the upper-bound therein has a $\phi(1)$ factor missing at the denominator, so we rewrite here  the inequality for completeness). As in the proof of  Lemma S--38 in \cite{cr20}, for $x\ge 0$,
\[ (g_-/\phi)(x) \le \ga(0)\overline\Phi(x)/\phi(x). \]
As $x\to \overline\Phi(x)/\phi(x)$ is decreasing on $[0,+\infty)$, the last display is bounded by $\ga(0)\overline\Phi(0)/\phi(0)=\ga(0)\sqrt{2\pi}/2$, 
which gives the result.  
 For \eqref{gophi}, see Lemma 17 in \cite{cm18}.
\end{proof}

\begin{lemma} \label{lem-stand}
The survival function $\bar\Phi$ of a $\cN(0,1)$ variable verifies, for any $x>0$,
\[ \frac{1}{x}-\frac{1}{x^3} \le  \frac{\ol{\Phi}}{\phi}(x) \le \frac{1}{x}.\]
\end{lemma}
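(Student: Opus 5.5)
The statement is the two-sided Gaussian tail ratio bound $1/x-1/x^3\le \bar\Phi(x)/\phi(x)\le 1/x$ for $x>0$, the classical Mills ratio inequality. I will prove both sides by comparing derivatives of explicit functions with $\bar\Phi$, using only $\phi'(u)=-u\phi(u)$ and $\bar\Phi(x)=\int_x^\infty\phi(u)\,du\to 0$ as $x\to\infty$.

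\emph{Upper bound.} For $u\ge x>0$ we have $\phi(u)\le (u/x)\phi(u)$. Integrating over $[x,\infty)$ gives
\[
\bar\Phi(x)\le \frac1x\int_x^\infty u\phi(u)\,du=\frac{\phi(x)}{x},
\]
since $-\phi$ is an antiderivative of $u\phi(u)$ and $\phi(u)\to 0$ at infinity. Dividing by $\phi(x)>0$ yields $\bar\Phi(x)/\phi(x)\le 1/x$.

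\emph{Lower bound.} Set $g(x):=\phi(x)(1/x-1/x^3)$. A direct differentiation using $\phi'=-u\phi$ gives
\[
g'(x)=\phi(x)\Big(-1+\frac1{x^2}-\frac1{x^2}+\frac3{x^4}\Big)=-\phi(x)\Big(1-\frac3{x^4}\Big).
\]
Then
\[
\frac{d}{dx}\big(\bar\Phi(x)-g(x)\big)=-\phi(x)+\phi(x)\Big(1-\frac3{x^4}\Big)=-\frac{3\phi(x)}{x^4}<0 .
\]
So $\bar\Phi-g$ is decreasing on $(0,\infty)$. It tends to $0$ at infinity, because both $\bar\Phi(x)$ and $g(x)$ tend to $0$. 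A decreasing function with limit $0$ at infinity is nonnegative, so $\bar\Phi(x)\ge g(x)$ for all $x>0$. Dividing by $\phi(x)$ gives the left inequality.

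For $x\le 1$ the lower bound is trivial anyway, since then $1/x-1/x^3\le 0<\bar\Phi(x)/\phi(x)$; the monotonicity argument covers all $x>0$ regardless.

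The only step needing care is computing $g'$ correctly and justifying the limits at infinity. Both follow from $x^k\phi(x)\to 0$ for every $k$.
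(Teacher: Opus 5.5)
Your proof is correct. The upper bound from $\phi(u)\le (u/x)\phi(u)$ on $[x,\infty)$ is sound. So is the lower bound, which uses $\frac{d}{dx}\bigl(\bar\Phi(x)-g(x)\bigr)=-3\phi(x)/x^4<0$ together with $\bar\Phi-g\to 0$ at infinity. Integrating that derivative gives the identity $\bar\Phi(x)-g(x)=3\int_x^\infty \phi(u)u^{-4}\,du\ge 0$, which is the usual two-fold integration by parts.

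The paper does not prove this lemma; it quotes it as the classical Mills-ratio bound. It also uses the same bound without proof elsewhere, e.g.\ $u\bar\Phi(u)\ge(1-1/u^2)\phi(u)$ in the proof of Lemma~\ref{lemLa}. Your argument therefore supplies the standard proof that the paper takes for granted.
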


\subsection{Simulations in the sequence model with the Spike-and-Slab LASSO prior}

We present simulations results for the sequence model  \eqref{model}  with the horseshoe prior distribution \eqref{priorlasso} on the unknown parameter $\theta$. We consider 
$n=10^4$, with sparsity levels $s_n=10$ or $s_n=100$  and constant alternatives $\theta_{0,i} = \mu$ if $1 \le i \le s_n$ and $0$ otherwise.  The signal strength $\mu$ varies over the grid $\{0, 0.5, 1, 1.5, \ldots, 8\}$. 
The spike distribution $G_0$ is chosen to be a Laplace distribution with parameter $\lambda_0=n$ while the  slab distribution $G_1$ is taken either as a Laplace distribution with parameter $\lambda_1=1/2$ or as a quasi-Cauchy distribution (as already done in \cite{cr20}) with density 
$$\gamma_1(x) = (2\pi)^{-1/2}(1-|x| \bar{\Phi}(x)/\phi(x))
$$
We adopt an empirical Bayes approach in which 
the unknown parameter $w$ is estimated by the marginal maximum likelihood method, following \cite{js05} and adapted to the slab distribution (Laplace or quasi-Cauchy). 

We compare several multiple testing procedures. First, we consider the $\ell$--value procedure
$$	\vphi^{\ell}_t(X) = \1\left\{ \ell_i(X) < t \right\},\quad 1\le i\le n.
$$
based on the $\ell$--value $\ell_i(X)=\ell(X_i;w)$ with the  expression $\ell(x;w)$  given in \eqref{lvalssl}. We also consider the corresponding $Cl$--value procedure, defined analogously to the $Cs$--value procedure but using  $\ell$--values instead of  $s$--values. Next, we consider the $s$--value procedure based on the  $s$--values $s_L(x,w)$  defined in \eqref{svssl}, together with the associated $Cs$--value procedure constructed from the  $s_L(x,w)$ values. 
As the Spike-and-Slab LASSO prior allows to evaluate the posterior probability that the class `0' is selected, we also consider the $q$--value introduced by Storey \cite{Storey2003} and defined by 
$q_i(X)=q(X_i; w)$, 
where the quantity $q(x; w)=\Pi[\te_i=0\given |X_i|\ge |x|]$ is defined as
\begin{align*}
q(x;w)&=\
\frac{(1-w)\overline{F}_{g_0}(|x|)}{
	(1-w)\overline{F}_{g_0}(|x|)+w\overline{F}_{g_1}(|x|)}, \\
\overline{F}_{g_k}(x)&= \int_x^{\infty}g_{k}(t)dt,  \qquad k \in \{0,1\}
\end{align*}
xhere $g_k(x)=\phi*G_k(x)=\int \phi(x-u)dG_k(u)$ is the convolution of $\phi$ and $G_k$ at point $x\in\RR$ for $k=0,1$. 
We finaly consider the $S$--value procedure \eqref{Svpro} with 
\[
S(X; w) = 2 \min \left (\Pi(\te<0\given X \ge x), \Pi(\te>0\given X \le x)\right ),
\]
\[
 \Pi(\te<0\given X \ge x) = \frac{(1-w)\overline{F}_{g_0,-}(x)+w\overline{F}_{g_1, -}(x)}{ 	(1-w)\overline{F}_{g_0}(x)+w\overline{F}_{g_1}(x)}, 
 \]
 \[
 \Pi(\te>0\given X \le x) =
 \frac{(1-w)\overline{F}_{g_0,-}(-x)+w\overline{F}_{g_1, -}(-x)}{ 	(1-w)\overline{F}_{g_0}(-x)+w\overline{F}_{g_1}(-x)},
\]
\[
\overline{F}_{g_k,-}(x)= \int_x^{\infty}g_{k_-}(t)dt, \qquad \overline{F}_{g_k,+}(x)= \int_x^{\infty}g_{k_+}(t)dt, \qquad k \in \{0,1\}
\]
and $g_k,-(.),g_k,+(.) $ defined in Appendix \ref{SSLformula}.

The FDR and the True Discovery Rate of each procedure are evaluated empirically with $50$ replications and are presented in Figures \ref{fig:SSLn=10} and \ref{fig:SSLn=100} for the Laplace Slab,  and in Figures \ref{fig:SSLCn=10} and \ref{fig:SSLCn=100} for the quasi-Cauchy Slab.
These simulation results confirm that, under a Spike-and-Slab prior, the $s$--value procedure behaves similarly to the $\ell$--value procedure, the $Cs$--value procedure behaves similarly to $Cl$--value procedure, and the  $S$--value procedure  behaves similarly to the $q$--value procedure.   This close correspondence in the Spike-and-Slab setting motivates the introduction of the $s$-- and $S$--values as counterparts of the $\ell$-- and $q$--values for continuous priors that do not generate exact zeros, and for which the latter quantities cannot be defined.

\begin{figure}[!h]
	\includegraphics[scale=0.9]{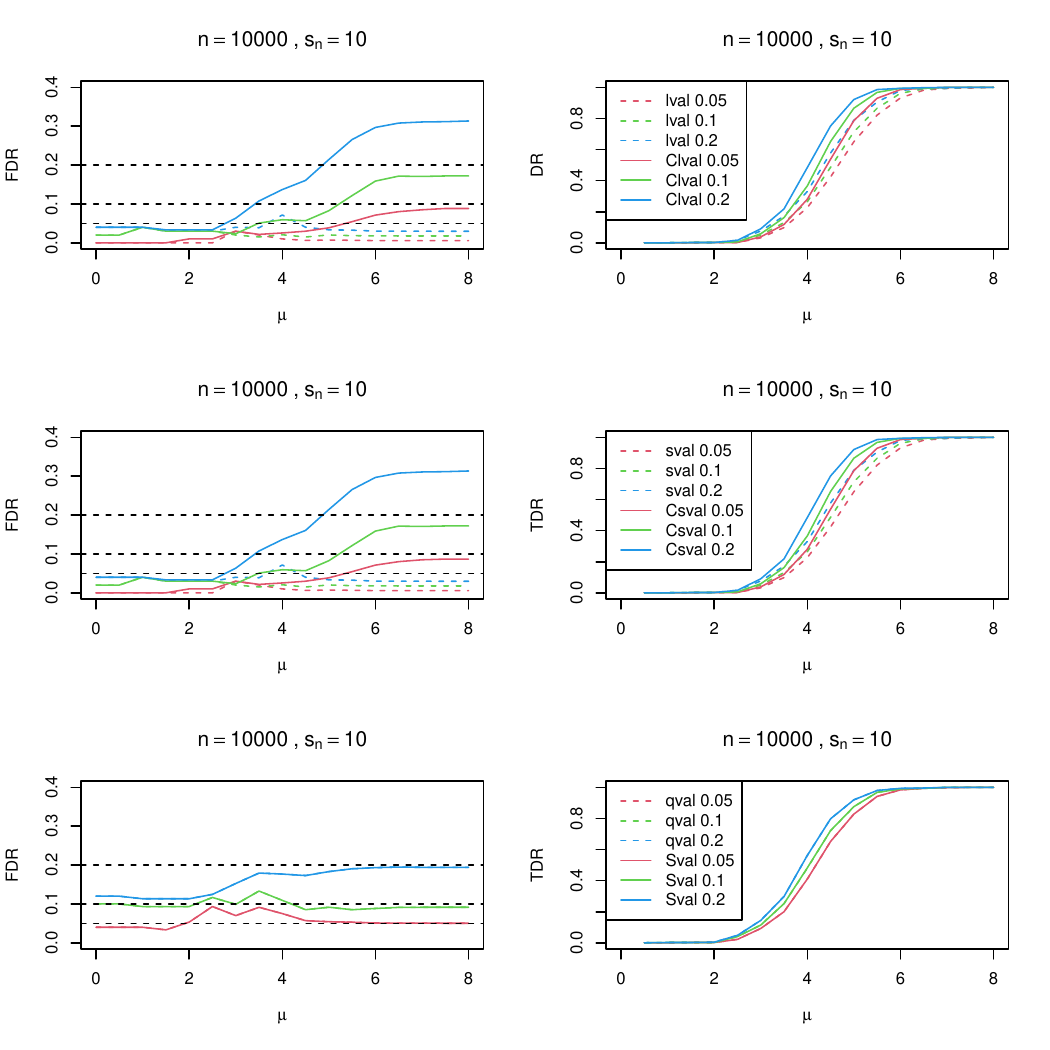}
    \centering
	\caption{Spike-and-Slab prior with a Laplace distribution for the slab. FDR of several multiple testing procedures with threshold $t \in  \{0.05, 0.1, 0.2\}$}
	\label{fig:SSLn=10}

\end{figure}

\begin{figure}[t]
	\includegraphics[scale=0.9]{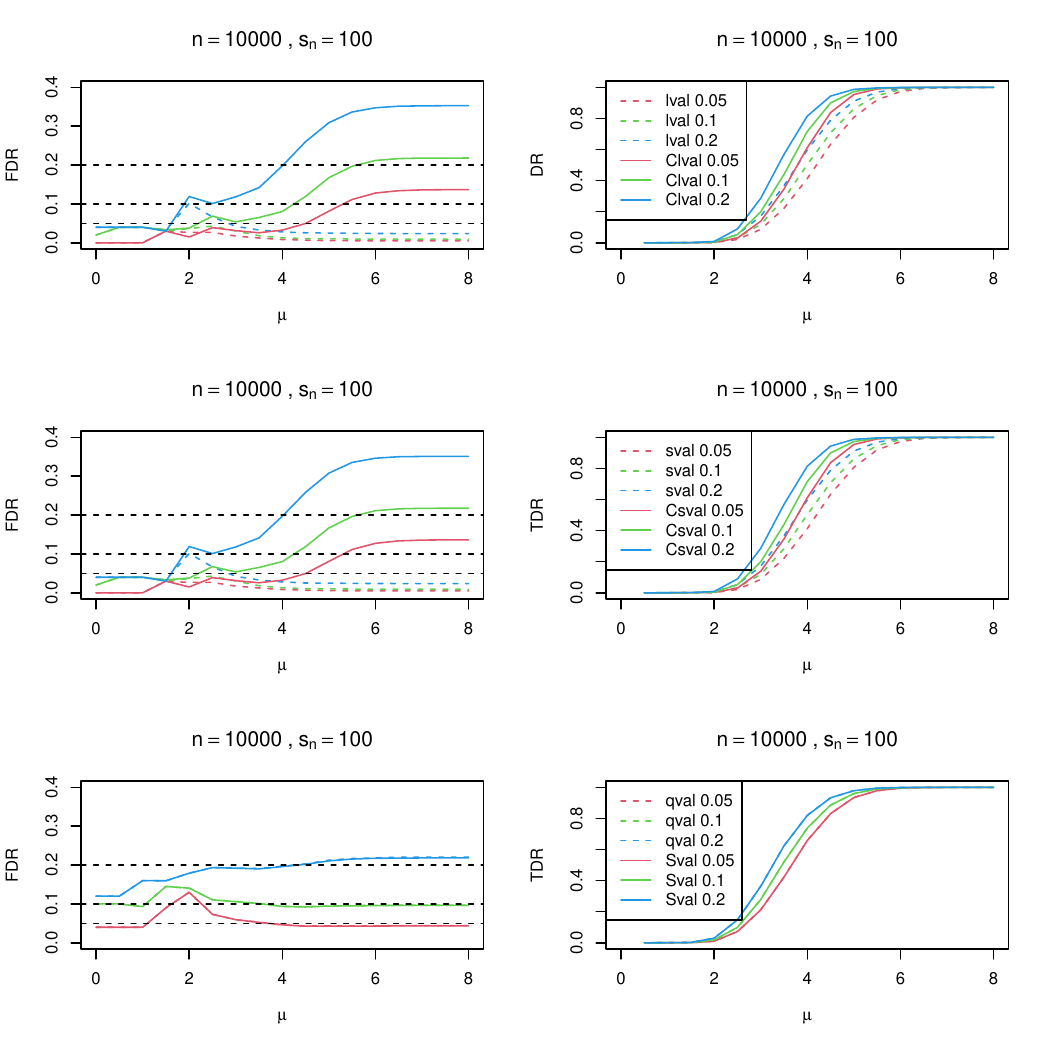}
	\centering
	\caption{Spike-and-Slab prior with a Laplace distribution for the slab. FDR of several multiple testing  procedures with threshold $t \in  \{0.05, 0.1, 0.2\}$}
	\label{fig:SSLn=100}
\end{figure}

\begin{figure}[t]
	\includegraphics[scale=0.9]{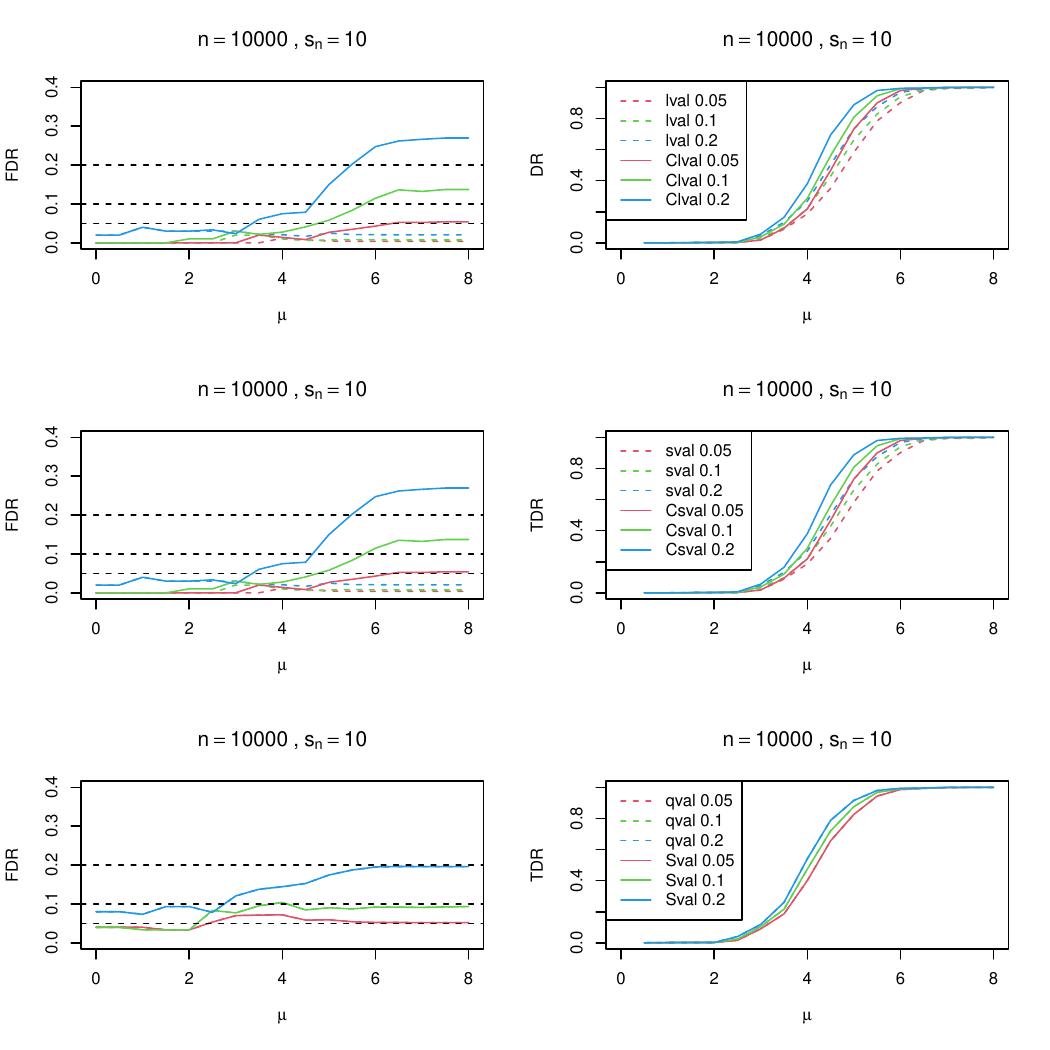}
	\centering
	\caption{Spike-and-Slab prior with a quasi-Cauchy distribution for the slab. FDR of several multiple testing procedures with threshold $t \in  \{0.05, 0.1, 0.2\}$}
	\label{fig:SSLCn=10}
\end{figure}

\begin{figure}[t]
	\includegraphics[scale=0.9]{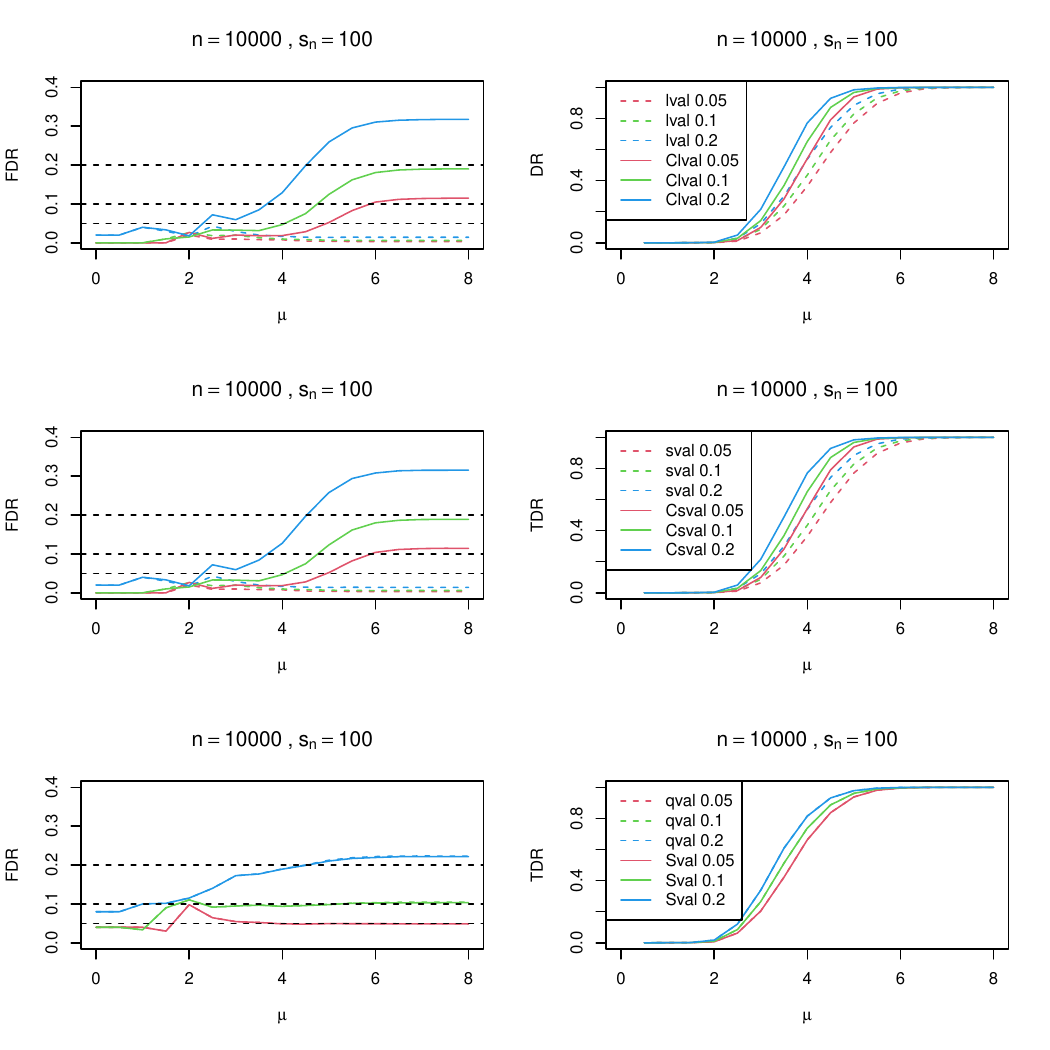}
	\centering
	\caption{Spike-and-Slab prior with a quasi-Cauchy distribution for the slab. FDR of several multiple testing procedures with threshold $t \in  \{0.05, 0.1, 0.2\}$}
	\label{fig:SSLCn=100}
\end{figure}

\end{document}